\newtheorem{theorem}{Theorem}[section]
\newtheorem{proposition}[theorem]{Proposition} 
\newtheorem{corollary}[theorem]{Corollary}
\newtheorem{lemma}[theorem]{Lemma}
\newtheorem{remark}[theorem]{Remark}
\newtheorem{remarks}[theorem]{Remarks}
\newtheorem{definition}{Definition}[section]
\begin{document}

\title[Matrix liberation process, II]{Matrix liberation process \\ \Small{II: Relation to Orbital free entropy}}
\dedicatory{Dedicated to Professor Dan-Virgil Voiculescu on the occasion of his 70th birthday}
\author{Yoshimichi Ueda}
\address{
Graduate School of Mathematics, 
Nagoya University, 
Furocho, Chikusaku, Nagoya, 464-8602, Japan
}
\email{ueda@math.nagoya-u.ac.jp}
\thanks{Supported by Grant-in-Aid for Challenging Exploratory Research 16K13762 and Grant-in-Aid for Scientific Research (B) JP18H01122.}
\subjclass[2010]{60F10; 15B52; 46L54.}
\keywords{Random matrix; Stochastic process; Unitary Brownian motion; Large deviation; Large N limit; Free probability; Orbital free entropy}
\date{}
\maketitle
\begin{abstract}
We investigate the concept of orbital free entropy from the viewpoint of matrix liberation process. We will  show that many basic questions around the definition of orbital free entropy are reduced to the question of full large deviation principle for the matrix liberation process.  We will also obtain a large deviation upper bound for a certain family of random matrices that is an essential ingredient to define the orbital free entropy. The resulting rate function is made up into a new approach to free mutual information.
\end{abstract}

\allowdisplaybreaks{

\section{Introduction}

This paper is a sequel to our previous one \cite{Ueda:JOTP19} on the matrix liberation process, and devoted to explaining how the matrix liberation process is connected to the orbital free entropy $\chi_\mathrm{orb}$. Here, the negative of orbital free entropy may be regarded as a possible microstate approach to mutual information in free probability.

The key concept of free probability theory, initiated by Voiculescu in the early 80s, is the so-called free independence, which is a kind of statistical independence. Voiculescu then discovered around 1990 that  the large $N$ limit of independent (suitable) random matrices produces freely independent non-commutative random variables. In the 90s, in order to understand the notion of free independence deeply, Voiculescu introduced and studied several notions of free entropy (the microstate and the microstate-free ones), which are both analogs of Shannon's entropy and expected to agree. Then, these notions of free entropy were further studied by Biane, Guionnet, Shlyakhtenko and many others from several viewpoints, including large deviation theory and optimal transportation theory. (See \cite{Voiculescu:BLMS02} for early history on free entropy.) 

On the other hand, the information theory suggests us to introduce a free probability analog of mutual information that should characterize the freely independent situation as a unique minimizer. The main difficulty in such an attempt is the lack of free probability analog of relative entropy, and thus a completely new idea was (and probably still is) necessary. It was also Voiculescu \cite{Voiculescu:AdvMath99} who first attempted to develop the theory of mutual information in free probability. His approach is based upon the  liberation theory that he started to develop there with the microstate-free approach to free entropy. The most important concept in the liberation theory is the liberation process, a natural non-commutative probabilistic interpolation between given non-commutative random variables and their freely independent copies. Voiculescu's idea of liberation theory is completely non-commutative in nature, and has no origin in the classical probability theory. Hence the liberation theory is quite attractive from the view point of noncommutative analysis.  

Almost a decade later, we introduced, in a joint work \cite{HiaiMiyamotoUeda:IJM09} with Hiai and Miyamoto, the second candidate for mutual information in free probability, which we call the orbital free entropy, and its definition involves the adjoint actions of Haar-distributed unitary random matrices to the matrix space $M_N^{sa}$ of $N\times N$ self-adjoint matrices and follows the basic idea of microstate approach to free entropy. (Some considerations looking for better variants of orbital free entropy were made by Biane and Dabrowski \cite{BianeDabrowski:AdvMath13}, and a direct generalization dropping the hyperfiniteness for given random multi-variables was then given by us \cite{Ueda:IUMJ14}.) The liberation process is exactly the large $N$ limit of the matrix liberation process introduced in \cite{Ueda:JOTP19} and its `invariant measure' (or its limit distribution as time goes to $\infty$) exactly arises as the `distribution' of the adjoint actions of Haar-distributed unitary random matrices. Thus it is natural to consider the matrix liberation process for the conjectural unification between Voiculescu's and our approaches to mutual information in free probability. 

As a very first step, we proved in \cite{Ueda:JOTP19}, following the idea of \cite{BianeCapitaineGuionnet:InventMath03}, the large deviation upper bound with a good rate function that completely characterizes the corresponding liberation process as a unique minimizer. The next ideal steps on this line of research should be: (1) proving the large deviation lower bound with the same rate function, (2) applying the contraction principle to the resulting large deviation upper/lower bounds at time $T=\infty$, and (3) identifying the resulting rate function with Voiculescu's free mutual information. 

In this paper, we will mainly work on item (2). As a consequence, we will clarify how the matrix liberation process might resolve several technical drawbacks around the definition of orbital free entropy. As another consequence, we will get a large deviation upper bound result by applying the established contraction principle at $T = \infty$ to the one for the matrix liberation process in our previous paper \cite{Ueda:JOTP19}. We will then make the resulting rate function up into a new microstate-free candiadate for free mutual information. Items (1) and (3) are left as sequels to this paper. 

The precise contents of this paper are as follows. Sections 2 and 3 are preliminaries, and sections 4, 5 and 6 form the main body of this paper. The subsequent sections concern related materials.  

In section 2, we will give one of the key technical lemmas. It is about the long time behavior of the large $N$ limit of the logarithm of the heat kernel on $\mathrm{U}(N)$ divided by $N^2$. This seems to be of independent interest. Then we will give a slightly modified definition of orbital free entropy in section 3. 

In section 4, building on the previous work \cite{Ueda:JOTP19} we will prove that any large deviation upper or lower bound with speed $N^2$ for the matrix liberation process starting at given several deterministic matrices, say $\xi_{ij}(N)$, with limit joint distribution implies the corresponding one with the same speed for the corresponding random matrices $U_N^{(i)}\xi_{ij}(N)U_N^{(i)}{^*}$ with independent Haar-distributed unitary random matrices $U_N^{(i)}$. This explicitly relates the matrix liberation process with the orbital free entropy. Combining this with the main result of \cite{Ueda:JOTP19} we will obtain a large deviation upper bound for $U_N^{(i)}\xi_{ij}(N)U_N^{(i)}{^*}$. 

In section 5, we will investigate the resulting rate function for $U_N^{(i)}\xi_{ij}(N)U_N^{(i)}{^*}$ in some detail; we will prove that it admits a unique minimizer, which is precisely given by freely independent copies of the initially given non-commutative random multi-variables. This fact supports the validity of full large deviation principle with speed $N^2$ and the same rate function for $U_N^{(i)}\xi_{ij}(N)U_N^{(i)}{}^*$, because this unique minimizer property also follows from the conjectural full large deviation principle as well as the fact that the orbital free entropy completely characterizes the free independence (under the assumption of having matricial microstates). Moreover, this unique minimizer property suggests that the rate function can be regarded as a possible microstate-free candidate for free mutual information, and hence that the rate function ought have to have a coordinate-free fashion. 

In section 6, we will give such a coordinate-free formulation. The coordinate-free formulation will be shown to be a quantity for a given finite family of subalgebras in a tracial $W^*$-probability space, which satisfies a desired set of properties (see subsection 6.7) that any kind of free mutual information has to satisify and, of course, Voiculescu's one does. 

In section 7, we will explain how the proofs given in the previous paper \cite{Ueda:JOTP19} also work well for several independent unitary Brownian motions with deterministic matrices (which are assumed to have the large $N$ limit joint distribution), and compare its consequences with the corresponding results on the matrix liberation process. In section 8, we will give an explicit description in terms of free cumulants for the conditional expectation of the (time-dependent) liberation cyclic derivative $E_{\mathcal{N}(\tau)}(\pi_{\tilde{\tau}}(\Pi^s(\mathfrak{D}_s^{(k)} P)))$ (see section 4 for the notation), which is the most essential component of the rate function. The description is a complement to a rather ad-hoc computation made in section 5. Finally, in the appendix, we explain some basic facts on universal free products of unital $C^*$-algebras for the reader's convenience. 

\subsection*{Glossary} 
\begin{itemize} 
\item $\Vert\,-\,\Vert_\infty$ denotes the operator norm. 
\item $M_N \supset M_N^{sa}$ denote the $N\times N$ complex matrices and the $N\times N$ self-adjoint matrices. For each $R > 0$, $(M_N^{sa})_R$ denotes the subset of $A \in M_N^{sa}$ with $\Vert A \Vert_\infty \leq R$. 
\item $\mathrm{Tr}_N$ denotes the usual (i.e., non-normalized) trace on $M_N$, and $\mathrm{tr}_N$ does its normalized one. We consider the Hilbert-Schmidt norm $\Vert A\Vert_{HS} := \sqrt{\mathrm{Tr}_N(A^* A)}$ on $M_N$. It is known that $M_N^{sa}$ equipped with $\Vert\,\cdot\,\Vert_{HS}$ is naturally identified with the $N^2$-dimensional Euclidean space $\mathbb{R}^{N^2}$. Thus $M_N = M_N^{sa} + \sqrt{-1}M_N^{sa}$ equipped with $\Vert\,\cdot\,\Vert_{HS}$ is also naturally identified with the $2N^2$-dimensional Euclidean space $\mathbb{R}^{2N^2} = \mathbb{R}^{N^2}\oplus\mathbb{R}^{N^2}$. 
\item $\mathrm{U}(N)$ denotes the $N\times N$ unitary matrices equipped with the Haar probability measure $\nu_N$; {\it n.b.}, the symbol $\nu_N$ differs from the one $\gamma_{\mathrm{U}(N)}$ in \cite{HiaiMiyamotoUeda:IJM09}, \cite{Ueda:IUMJ14}. A Haar-distributed $N\times N$ random unitary matrix means a random variable with values in $\mathrm{U}(N)$, whose probability distribution measure is exactly $\nu_N$. 
\item $TS(\mathcal{A})$ denotes the tracial states on a unital $C^*$-algebra $\mathcal{A}$. For a given subset $\mathcal{X}$ of a $W^*$-algebra, we denote by $\overline{\mathcal{X}}^w$ its closure in the $\sigma$-weak topology (i.e., the weak$^*$ topology induced from the predual). For a unital $*$-homomorphism $\pi : \mathcal{A} \to \mathcal{B}$ between unital $C^*$-algebras, $\pi^* : TS(\mathcal{B}) \to TS(\mathcal{A})$ denotes the dual map $\varphi \in TS(\mathcal{B}) \mapsto \varphi\circ\pi \in TS(\mathcal{A})$. 
\item For a random variable $X$ in the usual sense, $\mathbb{E}[X]$ denotes the expectation of $X$. Moreover, for a random variable $Y$ with values in a topological space, we write $\mathbb{P}(Y \in A) := \mathbb{E}[\mathbf{1}_A(Y)]$ for any Borel subset $A$; this is the distribution measure of $Y$. Here $\mathbf{1}_A$ denotes the indicator function of $A$.   
\end{itemize}  

\subsection*{Remark on Part I} We have investigated the matrix liberation process $\Xi^\mathrm{lib}(N)$ starting at (deterministic) $\Xi(N) = (\Xi_i(N))_{i=1}^{n+1}$ with $\Xi_i(N) = (\xi_{ij}(N))_{j=1}^{r(i)} \in (M_N^{sa})^{r(i)}$. Here, we remark that $r(i) = \infty$ is allowable; namely, each $\Xi_i(N)$ may be a countably infinite family of $N\times N$ self-adjoint matrices, and all the results given in part I still hold true in this more general situation without essential changes. In fact, we only need to change the metric $d$ on the continuous tracial states $TS^c\big(C^*_R\langle x_{\bullet\diamond}(\,\cdot\,)\rangle\big)$ (see subsection 4.3 below) as follows. Let $\mathcal{W}_{\leq\ell} $ be all the words of length \emph{not greater than $\ell$} in indeterminates $x_{ij}=x_{ij}^*$ with $1 \leq i \leq n+1$, $1 \leq j \leq \ell$ (remark this restriction on $j$, which guarantees that $\mathcal{W}_{\leq\ell}$ is a finite set), and we define
\begin{equation}\label{Eq1.1} 
d(\tau_1,\tau_2) = 
\sum_{m=1}^\infty\sum_{\ell=1}^\infty \frac{1}{2^{m+\ell}} \max_{w \in \mathcal{W}_{\leq\ell}} \sup_{(t_1,\dots,t_\ell) \in [0,m]^\ell} \big(|\tau_1(w(t_1,\dots,t_\ell))-\tau_2(w(t_1,\dots,t_\ell))|\wedge1\big)
\end{equation}
for $\tau_1,\tau_2 \in TS^c\big(C^*_R\langle x_{\bullet\diamond}(\,\cdots\,)\rangle\big)$. Here, $w(t_1,\dots,t_\ell)$ is constructed by substituting $x_{i_k j_k}(t_k)$ for $x_{i_k j_k}$ in a given word $w = x_{i_1 j_1}\cdots x_{i_{\ell'} j_{\ell'}}$ with $\ell' \leq \ell$. 

\subsection*{Acknowledgements} We thank the CRM, Montr\'{e}al and the organizers of the thematic one-month program `New Developments in Free Probability and Applications' held there in March 2019, where we wrote a part of this paper. We also thank Thierry L\'{e}vy for his inspiring lectures and discussions at Kyoto and Nagoya in Oct.\ 2018 and David Jekel for his detailed feedback to the first version of this paper, which enabled us to improve the presentation of this paper. Finally, we thank the referee for his/her careful reading and pointing out some typos. 

\subsection*{Added in proof} We have further investigated the rate functions in this paper after the submission. As one of its simple consequences, we confirmed that $I^{\mathrm{lib}}_{\sigma_0,\infty}(\tau)=I_{\sigma_0}^{\mathrm{lib}}(\tau)$ certainly holds if $I_{\sigma_0}^{\mathrm{lib}}(\tau) < +\infty$ (see subsection 4.6 for the notation). We will give those details elsewhere. 

\section{The long time behavior of the large $N$ limit of the Heat kernel on $\mathrm{U}(N)$}  

In this section, we will investigate the long time behavior of the large $N$ limit of the logarithm of the heat kernel on $\mathrm{U}(N)$ by utilizing a recent work on the Douglas and Kazakov transition due to Thierry L\'{e}vy and Ma\"{i}da \cite{LevyMaida:ESAIM:Proc15} (based on Guionnet and Ma\"{i}da's work \cite{GuionnetMaida:PTRF05}) as well as Li and Yau's classical work on parabolic kernels \cite{LiYau:ActaMath86}. The consequence (Lemma \ref{L2.1}) will play a key role in section 4 to establish the contraction principle \emph{at time $T = \infty$} for large deviation upper/lower bounds with speed $N^2$ for the matrix liberation process $\Xi^\mathrm{lib}(N)$.

\medskip
Consider $\mathrm{U}(N)$ as a Riemannian manifold of dimension $N^2$ by the inner product on the corresponding Lie algebra $\mathfrak{u}(N) = \sqrt{-1}M_N^{sa}$:  
\[
\langle X\,|\,Y\rangle := -N\mathrm{Tr}_N(XY), \quad X,Y \in \mathfrak{u}(N). 
\]
Let $\mathrm{Ric}$ be the Ricci curvature associated with this Riemannian structure. It is known, by e.g., \cite[Lemma F.27]{AndersonGuionnetZeitouni-Book}, that  
\[
\mathrm{Ric}(X,X) = \frac{N}{2}(\langle X\,|\,X\rangle - \langle X\,|\,(1/N)\sqrt{-1}I_N \rangle^2) \geq 0
\]
for every $X \in \mathfrak{u}(N)$. 

\medskip
Let $p_{N,t}(U)$ be the heat kernel on $\mathrm{U}(N)$ with respect to this Riemannian structure as in \cite[section 3.1]{LevyMaida:ESAIM:Proc15}. Looking at the Fourier expansion of $p_{N,t}$ (see e.g., \cite[Eq.(21)]{LevyMaida:ESAIM:Proc15}) we observe that 
\[
\max_{U \in \mathrm{U}(N)} p_{N,t}(U) = p_{N,t}(I_N) 
\]
holds for every $t > 0$. Recall that $p_{N,t}(U) = p_N(U,I_N,t/2)$, where $p_N(U,V,t)$, $U,V \in \mathrm{U}(N)$, $t > 0$, is a unique fundamental solution of the heat equation $\partial_t u = \Delta u$ with the Laplacian $\Delta$ on $\mathrm{U}(N)$ equipped with the above Riemannian structure. See e.g., \cite[p.135]{Chavel-Book} for the notion of fundamental solutions of heat equations. It is well known, see e.g.\ \cite[Theorem 1 in V.III.1]{Chavel-Book}, that $p_N$ is strictly positive. Since the Ricci curvature is non-negative as we saw before, we can apply Li--Yau's theorem \cite[Theorem 2.3]{LiYau:ActaMath86} to $u(U,t) := p_N(U,I_N,t)$ and obtain that  
\[
p_N(I_N,I_N,\varepsilon t) \leq p_N(U,I_N,t) \varepsilon^{-N^2/2} \exp\Big(\frac{d_N(I_N,U)^2}{4(1-\varepsilon)t}\Big)
\]
for every $t >0$, $0 < \varepsilon < 1$ and $U \in \mathrm{U}(N)$, where $d_N(I_N,U)$ denotes the Riemannian distance between $I_N$ and $U$. Since $\max_{U \in \mathrm{U}(N)} d_N(I_N,U) = N\pi$ (see e.g.\ the proof of \cite[Proposition 4.1]{LevyMaida:JFA10}), the above inequality with $t=T/2$ implies that 
\begin{equation*}
p_{N,\varepsilon T}(I_N) \,\varepsilon^{N^2/2}\exp\Big(-\frac{(N\pi)^2}{2(1-\varepsilon)T}\Big) \leq p_{N,T}(I_N)\,\varepsilon^{N^2/2}\exp\Big(-\frac{d_N(I_N,U)^2}{2(1-\varepsilon)T}\Big) \leq p_{N,T}(U)
\end{equation*} 
for every $T >0$, $0 < \varepsilon < 1$ and $U \in \mathrm{U}(N)$. Consequently, we have obtained that 
\begin{equation*}
\frac{1}{N^2} \log p_{N,\varepsilon T}(I_N) + \frac{1}{2}\log\varepsilon -\frac{\pi^2}{2(1-\varepsilon)T}
\leq \frac{1}{N^2}\log p_{N,T}(U) \leq \frac{1}{N^2}\log p_{N,T}(I_N). 
\end{equation*} 
for every $t>0$, $0 < \varepsilon < 1$ and $U \in \mathrm{U}(N)$. By \cite[Theorem 1.1]{LevyMaida:ESAIM:Proc15}, it is known that 
\[
F(T) := \lim_{N\to\infty}\frac{1}{N^2}\log p_{N,T}(I_N) = \lim_{N\to\infty} \frac{1}{N^2}\log\Big(\max_{U\in\mathrm{U}(N)}p_{N,T}(U)\Big)
\]
exists and defines a continuous function on $(0,+\infty)$. Thus, we have 
\begin{equation*}
F(\varepsilon T) + \frac{1}{2}\log\varepsilon - \frac{\pi^2}{2(1-\varepsilon)T} \leq \varliminf_{N\to\infty}\frac{1}{N^2}\log p_{N,T}(U) \leq \varlimsup_{N\to\infty}\frac{1}{N^2}\log p_{N,T}(U) \leq F(T)
\end{equation*}
for every $T>0$, $0 < \varepsilon < 1$ and $U \in \mathrm{U}(N)$. In particular, we obtain that
\begin{equation}\label{Eq2.1} 
F(\varepsilon T) + \frac{1}{2}\log\varepsilon - \frac{\pi^2}{2(1-\varepsilon)T} 
\leq \varliminf_{N\to\infty} \frac{1}{N^2}\log\Big(\min_{U\in\mathrm{U}(N)}p_{N,T}(U)\Big) \leq  F(T)
\end{equation}
for every $T > 0$ and $0 < \varepsilon < 1$. 

\medskip
Assume that $T > \pi^2$ in what follows. We need 
the complete elliptic functions of the first kind and the second kind: 
\[
K = K(k) := \int_0^1 \frac{ds}{\sqrt{(1-s^2)(1-k^2 s^2)}}, \quad 
E = E(k) := \int_0^1 \sqrt{\frac{1-k^2 s}{1-s^2}}\,ds.  
\]
With $T = 4K(2E-(1-k^2)K)$, \cite[Propositions 4.2, 5.2]{LevyMaida:ESAIM:Proc15} show that
\begin{align*}
F(T) 
&= 
\frac{K(2E-(1-k^2)K)}{6} + \frac{1}{2}\log\Big(\frac{1}{4}\frac{1}{(2E-(1-k^2)K)^2}(1-k^2)\Big) \\
&\qquad\qquad+ 
\frac{2(1+k^2)K}{3(2E-(1-k^2)K)} + \frac{((1-k^2)K)^2}{12(2E-(1-k^2)K)^2}.  
\end{align*}
Recall that 
\[
K = \log\frac{4}{\sqrt{1-k^2}} + o(1) = \frac{3}{2}\log2 - \frac{1}{2}\log(1-k) + o(1) \quad \text{as $k\to1-0$}
\]
(see e.g.\ \cite[p.11]{ByrdFriedman-Handbook}). This immediately implies that $\lim_{k\to1-0} (1-k)^\alpha K = 0$ for any $\alpha > 0$. We also have $E = 1$ at $k=1$. By the well-known formulas $dK/dk = (E-(1-k^2)K)/(k(1-k^2))$ and $dE/dk = (E-K)/k$,  $0 < k < 1$ (see \cite[p.282]{ByrdFriedman-Handbook}), we have $d(2E-(1-k^2)K)/dk = (1-k^2)dK/dk$. It is clear that $K$ is increasing in $k$. Hence $T$ is an increasing function in $k$. Then, we observe that $T \to +\infty$ if and only if $k \to 1-0$. Moreover, we have
\begin{align*}
F(T) 
&= 
\Big(\frac{E}{3}+\frac{2(1+k^2)}{3(2E-(1-k^2)K)}\Big)K - \frac{3}{2}\log 2 + \frac{1}{2}\log(1-k) + o(1) \\
&= 
\frac{(E-1)K}{3} + \frac{2((1-k^2)K^2 - (1-k^2)K - 2(E-1)K)}{3(2E - (1-k^2)K)} \\
&\qquad\qquad\qquad\qquad\qquad\quad+ \Big(K - \frac{3}{2}\log2 + \frac{1}{2}\log(1-k)\Big) + o(1) \\
&= 
\frac{(E-1)K}{3} + \frac{2((1-k^2)K^2 - (1-k^2)K - 2(E-1)K)}{3(2E - (1-k^2)K)} + o(1)
\end{align*}  
as $k\to1-0$. Since $dE/dk = (E-K)/k$, $0 < k < 1$ again, L'Hospital's rule (see e.g.\  \cite[Theorem 5.13]{Rudin:CalculusBook}) enables us to confirm that $\lim_{k\to1-0} (E-1)/(1-k)^{1/2} = 0$ and hence 
\[
\lim_{k\to1-0} (E-1)K = \lim_{k\to1-0} \Big(\frac{E-1}{(1-k)^{1/2}}\cdot(1-k)^{1/2}K\Big) = 0. 
\]
Consequently, we get $\lim_{T\to+\infty}F(T) = 0$. 

\medskip
Taking the limit of \eqref{Eq2.1} as $T\to+\infty$ we have 
\begin{align*}
\frac{1}{2}\log\varepsilon 
&\leq \varliminf_{T\to+\infty} \varliminf_{N\to\infty} \frac{1}{N^2}\log\Big(\min_{U\in\mathrm{U}(N)}p_{N,T}(U)\Big) \\
&\leq \lim_{T\to+\infty}\lim_{N\to\infty} \frac{1}{N^2}\log\Big(\max_{U\in\mathrm{U}(N)}p_{N,T}(U)\Big) = 0
\end{align*}
for all $0 < \varepsilon < 1$. Since $\varepsilon$ can arbitrarily be close to $1$, we finally obtain the next lemma, which will play a key role in \S4. 

\begin{lemma}\label{L2.1} With 
\[
L(T) :=  \varliminf_{N\to\infty} \frac{1}{N^2}\log\Big(\min_{U\in\mathrm{U}(N)}p_{N,T}(U)\Big), \quad 
U(T) := \varlimsup_{N\to\infty} \frac{1}{N^2}\log\Big(\max_{U\in\mathrm{U}(N)}p_{N,T}(U)\Big) = F(T)
\]
we have 
\[
\lim_{T\to+\infty} L(T)  = \lim_{T\to+\infty} U(T) = 0. 
\]
\end{lemma}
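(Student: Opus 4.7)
The plan is to prove the lemma by sandwiching $\frac{1}{N^2}\log\min_{U}p_{N,T}(U)$ between quantities that all vanish as $T\to+\infty$. The upper bound is immediate: since $\max_{U}p_{N,T}(U)=p_{N,T}(I_N)$ (read off from the Fourier expansion of the heat kernel on $\mathrm{U}(N)$), the quantity $U(T)$ coincides with $F(T)$ by the very definition of $F$. All the real work lies in controlling the minimum from below.

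For the lower bound I would apply the Li--Yau parabolic Harnack inequality to $u(U,t):=p_N(U,I_N,t)$. Nonnegativity of the Ricci curvature, which is the hypothesis of Li--Yau, is the formula recalled at the start of the section, and the Riemannian diameter of $\mathrm{U}(N)$ under the chosen metric is exactly $N\pi$, which bounds the exponential factor uniformly. Combining these with the Levy--Maida existence of $F$ produces the two-sided estimate
\begin{equation*}
F(\varepsilon T)+\tfrac{1}{2}\log\varepsilon-\tfrac{\pi^2}{2(1-\varepsilon)T}\;\leq\;L(T)\;\leq\;U(T)\;=\;F(T)
\end{equation*}
for every $T>0$ and $\varepsilon\in(0,1)$.

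The next step is to show $\lim_{T\to+\infty}F(T)=0$ using the explicit elliptic-function formula from \cite{LevyMaida:ESAIM:Proc15}, parameterized by the modulus $k\in(0,1)$ via $T=4K(2E-(1-k^2)K)$. Since $dK/dk>0$ forces $T$ to be monotone in $k$, and $T\to+\infty$ corresponds exactly to $k\to 1-0$, I would expand each summand in the formula using the standard asymptotic $K=-\tfrac{1}{2}\log(1-k)+\tfrac{3}{2}\log 2+o(1)$, the trivial limit $E\to 1$, and an L'H\^opital computation yielding $(E-1)/(1-k)^{1/2}\to 0$. A small finite amount of bookkeeping then shows all apparent divergences cancel.

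With $F(T)\to 0$ and the sandwich in hand, passing $T\to+\infty$ in the two-sided estimate yields
\begin{equation*}
\tfrac{1}{2}\log\varepsilon\;\leq\;\varliminf_{T\to+\infty}L(T)\;\leq\;\varlimsup_{T\to+\infty}U(T)=0,
\end{equation*}
and letting $\varepsilon\to 1-0$ finishes the proof. The only genuinely delicate step is the elliptic-function asymptotics: the individual summands in the formula for $F(T)$ diverge logarithmically as $k\to 1-0$, so the cancellation has to be tracked carefully against the $\tfrac{1}{2}\log(1-k)$ term coming from the logarithmic factor, with the subleading contributions like $(E-1)K$ controlled via the L'H\^opital estimate above. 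Everything else in the proof is an assembly of Harnack-type bounds and elementary manipulation of limits.
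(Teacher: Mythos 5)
Your proposal is correct and follows essentially the same route as the paper: the two-sided estimate via Li--Yau's Harnack inequality and the diameter bound $N\pi$, the identification $U(T)=F(T)$ via the Fourier expansion, the L\'evy--Ma\"ida elliptic-function asymptotics (using $K=-\tfrac{1}{2}\log(1-k)+\tfrac{3}{2}\log 2 + o(1)$, $E\to 1$, and the L'H\^opital estimate $(E-1)/(1-k)^{1/2}\to 0$) to show $F(T)\to 0$, and the final squeeze with $\varepsilon\to 1-$. The only place you defer is the explicit cancellation bookkeeping in the $F(T)\to 0$ computation, but you correctly identify every ingredient it requires.
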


\section{Orbital free entropy revisited}

Let $\Xi = (\Xi_i)_{i=1}^{n+1}$ with $\Xi_i = (\Xi_i(N))_{N\in\mathbb{N}}$ be a finite family of sequences of (deterministic) multi-matrices such that each $\Xi_i(N) = (\xi_{ij}(N))_{j=1}^{r(i)}$, $1 \leq i \leq n+1$, is chosen from $((M_N^{sa})_R)^{r(i)}$ with $r(i) \in \mathbb{N}\cup\{\infty\}$ \emph{for some $R>0$.} We sometimes write $\Xi = (\Xi(N))_{N\in\mathbb{N}}$ with $\Xi(N) = ((\xi_{ij}(N))_{j=1}^{r(i)})_{i=1}^{n+1}$. As in \cite{Ueda:JOTP19} we consider the universal $C^*$-algebra $C^*_R\langle x_{\bullet \diamond}\rangle$ generated by $x_{ij} = x_{ij}^*$, $1 \leq i \leq n+1, j \geq 1$, such that $\Vert x_{ij}\Vert_\infty \leq R$ for all $i,j$, into which the universal unital $*$-algebra $\mathbb{C}\langle x_{\bullet\diamond}\rangle$ generated by the $x_{ij} = x_{ij}^*$ is faithfully and norm-densely embedded. Similarly, we define $\mathbb{C}\langle x_{i\diamond} \rangle \hookrightarrow C^*_R\langle x_{i\diamond}\rangle$ by fixing the first suffix $i$ of generators. These universal $C^*$-algebras are constructed as universal free products of copies of $C[-R,R]$, and each generator $x_{ij}$ is given by the coordinate function $f(t)=t$ in the $(i,j)$th copy of $C[-R,R]$. The above embedding properties are guaranteed by Proposition \ref{PA4}. The $*$-homomorphism given by $x_{ij} \mapsto \xi_{ij}(N)$ enables us to define tracial states $\mathrm{tr}^{\Xi(N)} \in TS(C^*_R\langle x_{\bullet \diamond}\rangle)$ as well as $\mathrm{tr}^{\Xi_i(N)} \in TS(C^*_R\langle x_{i\diamond}\rangle)$, $1 \leq i \leq n+1$, by $P = P(x_{\bullet\diamond}) \mapsto \mathrm{tr}_N(P(\xi_{\bullet\diamond}(N)))$ ({\it n.b.}, these notations differ a little bit from those in \cite{Ueda:JOTP19}). Remark that we can alternatively define $\mathrm{tr}^{\Xi_i(N)}$ to be the restriction of $\mathrm{tr}^{\Xi(N)}$ to $C^*_R\langle x_{i\diamond}\rangle$ ($\hookrightarrow C^*_R\langle x_{\bullet\diamond}\rangle$ faithfully by \cite[Theorem 3.1]{Blackadar:IUMJ78} with Lemma \ref{LA1}). \emph{We also assume that each $\Xi_i$, $1 \leq i \leq n+1$, has a limit distribution as $N \to \infty$; namely, there exists a $\sigma_{0,i} \in TS(C^*_R\langle x_{i\diamond}\rangle)$ such that $\lim_{N\to\infty} \mathrm{tr}^{\Xi_i(N)} = \sigma_{0,i}$ in the weak$^*$ topology.} (This is the minimum requirement for $\Xi$ to define $\chi_\mathrm{orb}(\sigma\mid\Xi)$ below.) In what follows, we denote by $TS_\mathrm{fda}(C^*_R\langle x_{i\diamond}\rangle)$ all the tracial states that arise in this way for a fixed $1\leq i \leq n+1$. We also define $TS_\mathrm{fda}(C^*_R\langle x_{\bullet\diamond}\rangle)$ similarly.

\medskip
Let us introduce a variant of orbital free entropy, say $\chi_\mathrm{orb}(\sigma\,|\,\Xi)$ for $\sigma \in TS(C^*_R\langle x_{\bullet \diamond}\rangle)$, which is essentially the same as the old one in \cite[section 4]{HiaiMiyamotoUeda:IJM09} for hyperfinite non-commutative random multi-variables. 

Define $\mathbf{U} = (U_i)_{i=1}^n \in \mathrm{U}(N)^n \mapsto \mathrm{tr}^{\Xi(N)}_\mathbf{U} \in TS\big(C^*_R\big\langle x_{\bullet\diamond}\big\rangle\big)$ by $\mathrm{tr}^{\Xi(N)}_\mathbf{U} := \mathrm{tr}_N\circ\Phi_\mathbf{U}^{\Xi(N)}$, where $\Phi_\mathbf{U}^{\Xi(N)} : C^*_R\big\langle x_{\bullet\diamond} \big\rangle \to M_N(\mathbb{C})$ is a unique $*$-homomorphism sending $x_{ij}$ $(1 \leq i \leq n+1)$ to $U_i\xi_{ij}(N)U_i^*$ with $\mathbf{U} = (U_i)_{i=1}^n$ and $x_{n+1\,j}$ to $\xi_{n+1\,j}(N)$, respectively. Consider an open neighborhood $O_{m,\delta}(\sigma)$, $m \in \mathbb{N}$, $\delta>0$, at $\sigma$ in the weak$^*$ topology on $TS(C^*_R\langle x_{\bullet \diamond}\rangle)$ defined to be all the $\sigma' \in TS(C^*_R\langle x_{\bullet \diamond}\rangle)$ such that 
\[
|\sigma'(x_{i_1 j_1}\cdots x_{i_p j_p}) - \sigma(x_{i_1 j_1}\cdots x_{i_p j_p})| < \delta
\]
whenever $1 \leq i_k \leq n+1$, $1 \leq j_k \leq m$, $1 \leq k \leq p$ and $1 \leq p \leq m$. Then we define 
\begin{equation}\label{Eq7}
\begin{aligned}
\chi_\mathrm{orb}(\sigma\,|\,\Xi(N)\,;N,m,\delta) &:=\log\nu_N^{\otimes n}\Big(
\big\{\mathbf{U} \in \mathrm{U}(N)^n\,\big|\,\mathrm{tr}_{\mathbf{U}}^{\Xi(N)} \in O_{m,\delta}(\sigma)\big\} 
\Big), \\
\chi_\mathrm{orb}(\sigma\,|\,\Xi\,;m,\delta) &:= \varlimsup_{N\to\infty} \frac{1}{N^2}\chi_\mathrm{orb}(\sigma\,|\,\Xi\,;N,m,\delta), \\
\chi_\mathrm{orb}(\sigma\,|\,\Xi) &:= \lim_{\substack{m\to\infty \\ \delta\searrow0}}\chi_\mathrm{orb}(\sigma\,|\,\Xi;m,\delta) 
\end{aligned}
\end{equation}
with $\log0 := -\infty$. Remark that $\chi_\mathrm{orb}(\sigma\,|\,\Xi) = -\infty$, if $\sigma$ does not agree with $\sigma_{0,i}$ on $C^*_R\langle x_{i\diamond}\rangle$ for some $1\leq i \leq n+1$. This is a natural property; see \cite[Proposition 3.1]{HiaiUeda:CMP15} as well as Remark \ref{R6.2}.

\medskip 
We could prove in \cite[Lemma 4.2]{HiaiMiyamotoUeda:IJM09} that $\chi_\mathrm{orb}(\sigma\,|\,\Xi)$ depends only on the given $\sigma_{0,i}$, $1 \leq i \leq n+1$, that is, it is independent of the choice of $\Xi$, when each tuple $(x_{ij})_{j=1}^{r(i)}$ produces a hyperfinite von Neumann algebra via the GNS construction associated with $\sigma_{0,i}$. However, we suspected that this is not always the case. Hence, in \cite{Ueda:IUMJ14}, in order to remove the dependency of $\Xi$ we took the supremum of $\chi_\mathrm{orb}(\sigma\,|\,\mathbf{A}\,;N,m,\delta)$ all over the tuples $\mathbf{A}$ of multi-matrices in place of $\Xi(N)$ to define $\chi_\mathrm{orb}(\mathbf{X}_1,\dots,\mathbf{X}_{n+1})$ (see the  review below). Here, we will examine another simpler way of removing the dependency. So far, we have only assumed that each $\Xi_i$ has a limit distribution as $N\to\infty$, that is, $\lim_{N\to\infty} \mathrm{tr}^{\Xi_i(N)} = \sigma_{0,i}$. In what follows, we need the stronger assumption that  the whole $\Xi$ has a limit distribution as $N\to\infty$, that is, $\lim_{N\to\infty} \mathrm{tr}^{\Xi(N)} = \sigma_0$.

Let another $\sigma_0 \in TS(C^*_R\langle x_{\bullet\diamond}\rangle)$ be given in such a way that its restriction to $C^*_R\langle x_{i\diamond}\rangle$ is $\sigma_{0,i}$ for every $1 \leq i \leq n+1$. Then we define 
\begin{equation}\label{Eq8}
\chi_\mathrm{orb}(\sigma\,|\,\sigma_0) := \sup\Big\{ \chi_\mathrm{orb}(\sigma\,|\,\Xi)\,\Big|\, \Xi = (\Xi(N))_{N\in\mathbb{N}}\,; \lim_{N\to\infty} \mathrm{tr}^{\Xi(N)} = \sigma_0 \Big\}.
\end{equation}
We define it to be $-\infty$ if $\sigma_0$ does not fall into $TS_\mathrm{fda}(C_R^*\langle x_{\bullet\diamond}\rangle)$. Remark that $\chi_\mathrm{orb}(\sigma\,|\,\Xi)$ is well defined in the above definition, since $\lim_{N\to\infty} \mathrm{tr}^{\Xi(N)} = \sigma_0$ implies that $\lim_{N\to\infty} \mathrm{tr}^{\Xi_i(N)} = \sigma_{0,i}$ for every $1 \leq i \leq n+1$. Moreover, taking the supremum all over the possible approximations $\Xi$ to $\sigma_0$ is motivated from the large deviation upper bound for the matrix liberation process starting at $\Xi(N)$ \cite{Ueda:JOTP19} (see the next section), because the rate function that we found there is independent of the choice of approximations $\Xi$. We will prove two propositions, which suggest that $\chi_\mathrm{orb}(\sigma\,|\,\sigma_0)$ should be the same for a large class of $\sigma_0$.

\medskip
We next recall the original orbital free entropy introduced in \cite{Ueda:IUMJ14} (with a non-essential modification \cite[Remark 3.3]{Ueda:ArchMath17}) in the current setting. Let $\pi_\sigma : C_R^*\langle x_{\bullet\diamond}\rangle \curvearrowright \mathcal{H}_\sigma$ be the GNS representation associated with $\sigma$. Set $X^\sigma_{ij} := \pi_\sigma(x_{ij})$, $1 \leq i \leq n+1, j \geq 1$, and then write $\mathbf{X}^\sigma_i = (X^\sigma_{ij})_{j=1}^{r(i)}$, $1 \leq i \leq n+1$. Remark that the joint distribution of those $\mathbf{X}^\sigma_1,\dots,\mathbf{X}^\sigma_{n+1}$ with respect to the tracial state on $\pi_\sigma(C_R^*\langle x_{\bullet\diamond}\rangle)''$ induced from $\sigma$ is exactly $\sigma$. On the other hand, if we have uniformly norm-bounded non-commutative self-adjoint random multi-variables $\mathbf{X}_1 = (X_{1j})_{j=1}^{r(1)},\dots,\mathbf{X}_{n+1}=(X_{n+1\,j})_{j=1}^{r(n+1)}$ in a $W^*$-probability space $(\mathcal{M},\tau)$, i.e., $X_{ij}^* = X_{ij}$ and $R := \sup_{i,j} \Vert X_{ij}\Vert_\infty < +\infty$, then we have a unique tracial state $\sigma_{(\mathbf{X}_i)} \in TS(C^*_R\langle x_{\bullet \diamond}\rangle)$ naturally, that is, $\sigma _{(\mathbf{X}_i)}(x_{i_1 j_1}\cdots x_{i_m j_m}) := \tau(X_{i_1 j_1}\cdots X_{i_m j_m})$ for example. For any $\mathbf{A} = (\mathbf{A}_i)_{i=1}^{n+1}$ with $\mathbf{A}_i = (A_{ij})_{j=1}^{r(i)} \in ((M_N^{sa})_R)^{r(i)}$, $1 \leq i \leq n+1$, we define 
\begin{align*}
\chi_\mathrm{orb}(\mathbf{X}_1,\dots,\mathbf{X}_{n+1}; \mathbf{A}, N,m,\delta) 
&:= 
\log\nu_N^{\otimes n}\Big(
\big\{ \mathbf{U} \in \mathrm{U}(N)^n\,\big|\, \mathrm{tr}_\mathbf{U}^\mathbf{A} \in O_{m,\delta}(\sigma_{(\mathbf{X}_i)}) \big\}\Big), \\
\bar{\chi}_\mathrm{orb}(\mathbf{X}_1,\dots,\mathbf{X}_{n+1}; N,m,\delta) 
&:= 
\sup_\mathbf{A} \chi_\mathrm{orb}(\mathbf{X}_1,\dots,\mathbf{X}_{n+1}; \mathbf{A}, N,m,\delta), \\
\bar{\chi}_\mathrm{orb}(\mathbf{X}_1,\dots,\mathbf{X}_{n+1};m,\delta) 
&:=
\varlimsup_{N\to\infty} \frac{1}{N^2}\bar{\chi}_\mathrm{orb}(\mathbf{X}_1,\dots,\mathbf{X}_{n+1}; N,m,\delta), \\
\chi_\mathrm{orb}(\mathbf{X}_1,\dots,\mathbf{X}_{n+1}) 
&:= 
\lim_{\substack{m\to\infty \\ \delta \searrow 0}} \chi_\mathrm{orb}(\mathbf{X}_1,\dots,\mathbf{X}_{n+1};m,\delta),  
\end{align*}
where $\mathrm{tr}_\mathbf{U}^\mathbf{A}$ is defined in the same manner as the $\mathrm{tr}_\mathbf{U}^{\Xi(N)}$ above. Note that the above definition clearly works even when $r(i)=\infty$ for every $1 \leq i \leq n+1$. 

\medskip
The next proposition suggests which approximating sequences $\Xi$ are suitable to define the orbital free entropy. 

\begin{proposition}\label{P3.2} We have 
\[
\chi_\mathrm{orb}(\sigma\,|\,\sigma_0) \leq \chi_\mathrm{orb}(\mathbf{X}^\sigma_1,\dots,\mathbf{X}^\sigma_{n+1}),
\]
and equality holds when $\sigma = \sigma_0$. 
\end{proposition}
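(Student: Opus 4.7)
The first inequality is the easy direction. Fix any admissible $\Xi$ (one with $\mathrm{tr}^{\Xi(N)}\to\sigma_0$; if none exists then $\chi_\mathrm{orb}(\sigma\mid\sigma_0)=-\infty$ and the inequality is vacuous). By construction of the GNS representation, the joint distribution of $(\mathbf{X}_i^\sigma)_{i=1}^{n+1}$ is exactly $\sigma_{(\mathbf{X}_i^\sigma)}=\sigma$; hence each $\Xi(N)$ is a legitimate choice for the supremand $\mathbf{A}$ in the definition of $\bar\chi_\mathrm{orb}$, and
\[
\chi_\mathrm{orb}(\sigma\mid\Xi(N);N,m,\delta)
=\chi_\mathrm{orb}(\mathbf{X}_1^\sigma,\dots,\mathbf{X}_{n+1}^\sigma;\Xi(N),N,m,\delta)
\leq\bar\chi_\mathrm{orb}(\mathbf{X}_1^\sigma,\dots,\mathbf{X}_{n+1}^\sigma;N,m,\delta).
\]
Dividing by $N^2$, passing to $\varlimsup_N$, then letting $m\to\infty$, $\delta\searrow0$, and finally taking the supremum over $\Xi$ yields $\chi_\mathrm{orb}(\sigma\mid\sigma_0)\leq\chi_\mathrm{orb}(\mathbf{X}_1^\sigma,\dots,\mathbf{X}_{n+1}^\sigma)$.

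For the reverse inequality when $\sigma=\sigma_0$, the key ingredient is a Haar-invariance identity. For any $\mathbf{A}\in\bigoplus_{i=1}^{n+1}((M_N^{sa})_R)^{r(i)}$ and any $\mathbf{U}_0=(U_{0,i})_{i=1}^n\in\mathrm{U}(N)^n$, form
\[
\Xi[\mathbf{U}_0](N):=\bigl((U_{0,i}A_{ij}U_{0,i}^*)_{j=1}^{r(i)}\bigr)_{i=1}^{n}\sqcup(A_{n+1,j})_{j=1}^{r(n+1)}.
\]
A direct inspection of $\Phi_\mathbf{V}^{\Xi[\mathbf{U}_0](N)}$ gives $\mathrm{tr}_\mathbf{V}^{\Xi[\mathbf{U}_0](N)}=\mathrm{tr}_{\mathbf{V}\mathbf{U}_0}^{\mathbf{A}}$ for every $\mathbf{V}\in\mathrm{U}(N)^n$, and translation invariance of $\nu_N^{\otimes n}$ under $\mathbf{V}\mapsto\mathbf{V}\mathbf{U}_0$ then produces
\[
\nu_N^{\otimes n}\bigl(\{\mathbf{V}:\mathrm{tr}_\mathbf{V}^{\Xi[\mathbf{U}_0](N)}\in O\}\bigr)
=\nu_N^{\otimes n}\bigl(\{\mathbf{W}:\mathrm{tr}_\mathbf{W}^{\mathbf{A}}\in O\}\bigr)
\]
for every Borel $O\subset TS(C^*_R\langle x_{\bullet\diamond}\rangle)$.

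I now build, by diagonalization, a single admissible $\Xi$ that realises the reverse inequality. Assume $\chi_\mathrm{orb}(\mathbf{X}_1^{\sigma_0},\dots,\mathbf{X}_{n+1}^{\sigma_0})>-\infty$ (the other case is trivial). Fix $m_k\nearrow\infty$ and $\delta_k\searrow0$. Using $\bar\chi_\mathrm{orb}(\mathbf{X}^{\sigma_0};m_k,\delta_k)=\varlimsup_N\frac{1}{N^2}\bar\chi_\mathrm{orb}(\mathbf{X}^{\sigma_0};N,m_k,\delta_k)$, I choose (diagonally in $k$) $N_k\nearrow\infty$ and tuples $\mathbf{A}^{(k)}$ with
\[
\tfrac{1}{N_k^2}\chi_\mathrm{orb}(\mathbf{X}_1^{\sigma_0},\dots,\mathbf{X}_{n+1}^{\sigma_0};\mathbf{A}^{(k)},N_k,m_k,\delta_k)
\geq \bar\chi_\mathrm{orb}(\mathbf{X}_1^{\sigma_0},\dots,\mathbf{X}_{n+1}^{\sigma_0};m_k,\delta_k)-\tfrac{1}{k}.
\]
Since the right-hand side is finite, the corresponding Haar set is non-empty; pick $\mathbf{U}_0^{(k)}$ from it and set $\Xi(N_k):=\Xi[\mathbf{U}_0^{(k)}](N_k)$, so that $\mathrm{tr}^{\Xi(N_k)}=\mathrm{tr}_{\mathbf{U}_0^{(k)}}^{\mathbf{A}^{(k)}}\in O_{m_k,\delta_k}(\sigma_0)$. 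For $N\notin\{N_k\}$, define $\Xi(N)$ as the $N$-th term of a fixed auxiliary sequence $\Xi^{\mathrm{aux}}$ with $\mathrm{tr}^{\Xi^{\mathrm{aux}}(N)}\to\sigma_0$ (such a sequence exists since $\sigma_0\in TS_\mathrm{fda}$, otherwise $\chi_\mathrm{orb}(\sigma_0\mid\sigma_0)=-\infty$ and there is nothing to prove). Then $\mathrm{tr}^{\Xi(N)}\to\sigma_0$, so $\Xi$ is admissible.

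For any $(m,\delta)$ and all $k$ large enough that $m_k\geq m$ and $\delta_k\leq\delta$, the inclusion $O_{m_k,\delta_k}(\sigma_0)\subset O_{m,\delta}(\sigma_0)$ combined with the Haar-invariance identity gives
\[
\tfrac{1}{N_k^2}\log\nu_N^{\otimes n}\bigl(\{\mathbf{V}:\mathrm{tr}_\mathbf{V}^{\Xi(N_k)}\in O_{m,\delta}(\sigma_0)\}\bigr)
\geq \bar\chi_\mathrm{orb}(\mathbf{X}_1^{\sigma_0},\dots,\mathbf{X}_{n+1}^{\sigma_0};m_k,\delta_k)-\tfrac{1}{k},
\]
whose right-hand side tends to $\chi_\mathrm{orb}(\mathbf{X}_1^{\sigma_0},\dots,\mathbf{X}_{n+1}^{\sigma_0})$ as $k\to\infty$. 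Since $\varlimsup_N$ of any sequence bounds its values along a subsequence from above, $\chi_\mathrm{orb}(\sigma_0\mid\Xi;m,\delta)\geq\chi_\mathrm{orb}(\mathbf{X}_1^{\sigma_0},\dots,\mathbf{X}_{n+1}^{\sigma_0})$ for every $m,\delta$; letting $(m,\delta)\to(\infty,0)$ and taking the supremum over $\Xi$ finishes the equality case. The only non-routine step is the diagonalization: one must engineer a single admissible sequence $\Xi$ that simultaneously inherits, through the Haar trick, the near-optimal Haar masses produced by the unconstrained tuples $\mathbf{A}^{(k)}$, none of which need possess a limit distribution; the filler $\Xi^{\mathrm{aux}}$ on the complementary indices is exactly what preserves admissibility without spoiling the lower bound.
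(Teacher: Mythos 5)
Your proof is correct and follows essentially the same strategy as the paper's: the first inequality by recognising $\Xi(N)$ as an admissible microstate tuple $\mathbf{A}$, and the reverse by diagonalising over near-optimal tuples $\mathbf{A}^{(k)}$, conjugating each by a Haar-chosen $\mathbf{U}_0^{(k)}$ to build an admissible sequence, filling the remaining $N$, and invoking right-invariance of $\nu_N^{\otimes n}$. The differences (generic $(m_k,\delta_k)$ in place of the paper's $(k,1/k)$, an auxiliary filler $\Xi^{\mathrm{aux}}$ instead of reusing the $\Xi$ fixed at the start) are cosmetic.
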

\begin{proof}
Let $\Xi = (\Xi(N))_{N\in\mathbb{N}}$ with $\Xi_i(N) = (\xi_{ij}(N))_{j=1}^{r(i)}$, $1 \leq i \leq n+1$, be as in definition \eqref{Eq8}. Clearly, 
\[
\chi_\mathrm{orb}(\sigma\,|\,\Xi\,;N,m,\delta) 
=
\chi_\mathrm{orb}(\mathbf{X}^\sigma_1,\dots,\mathbf{X}^\sigma_{n+1};\Xi(N),N,m,\delta)
\leq 
\bar{\chi}_\mathrm{orb}(\mathbf{X}^\sigma_1,\dots,\mathbf{X}^\sigma_{n+1};N,m,\delta) 
\]
holds for every $N$, $m$ and $\delta$. This immediately implies $\chi_\mathrm{orb}(\sigma\,|\,\Xi) \leq \chi_\mathrm{orb}(\mathbf{X}^\sigma_1,\dots,\mathbf{X}^\sigma_{n+1})$. Since $\Xi$ has arbitrarily been chosen, we obtain $\chi_\mathrm{orb}(\sigma\,|\,\sigma_0) \leq \chi_\mathrm{orb}(\mathbf{X}^\sigma_1,\dots,\mathbf{X}^\sigma_{n+1})$. 

\medskip
We next prove the latter assertion. We may and do assume that $\chi_\mathrm{orb}(\mathbf{X}^{\sigma}_1,\dots,\mathbf{X}^{\sigma}_{n+1}) > -\infty$; otherwise the desired equality trivially holds as $-\infty = -\infty$ by the first part. We can inductively choose an increasing sequence $N_k$ in such a way that 
\begin{align*}
\bar{\chi}_\mathrm{orb}(\mathbf{X}^{\sigma}_1,\dots,\mathbf{X}^{\sigma}_{n+1};k,1/k) - \frac{1}{k} 
&< 
\frac{1}{N_k^2}\bar{\chi}_\mathrm{orb}(\mathbf{X}^{\sigma}_1,\dots,\mathbf{X}^{\sigma}_{n+1};N_k,k,1/k) \\
&< 
\bar{\chi}_\mathrm{orb}(\mathbf{X}^{\sigma}_1,\dots,\mathbf{X}^{\sigma}_{n+1};k,1/k) + \frac{1}{k}
\end{align*}
holds for every $k$; hence  
\[
\chi_\mathrm{orb}(\mathbf{X}^{\sigma}_1,\dots,\mathbf{X}^{\sigma}_{n+1}) = 
\lim_{k\to\infty}\frac{1}{N_k^2}\bar{\chi}_\mathrm{orb}(\mathbf{X}^{\sigma}_1,\dots,\mathbf{X}^{\sigma}_{n+1}; N_k,k,1/k). 
\]
For each $k$ one can choose $\mathbf{A}(N_k) = (\mathbf{A}_i(N_k))_{i=1}^{n+1}$ with $\mathbf{A}_i(N_k) = (A_{ij}(N_k))_{j=1}^{r(i)} \in ((M_{N_k}^{sa})_R)^{r(i)}$, $1 \leq i \leq n+1$, in such a way that 
\[
-\infty < \bar{\chi}_\mathrm{orb}(\mathbf{X}^{\sigma}_1,\dots,\mathbf{X}^{\sigma}_{n+1};N_k,k,1/k) -1 < \chi_\mathrm{orb}(\mathbf{X}^{\sigma}_1,\dots,\mathbf{X}^{\sigma}_{n+1};\mathbf{A}(N_k),N_k,k,1/k). 
\]
By definition, for each $k$ there exists $\mathbf{U}(N_k) \in \mathrm{U}(N_k)^n$ such that $\mathrm{tr}_{\mathbf{U}(N_k)}^{\mathbf{A}(N_k)} \in O_{k,1/k}(\sigma)$. With $\mathbf{U}(N_k) = (U_i(N_k))_{i=1}^n$ we define $\mathbf{B}(N_k) = ((B_{ij}(N_k))_{j=1}^{r(i)})_{i=1}^{n+1}$ by
\[
B_{ij}(N_k) := 
\begin{cases} 
U_i(N_k) A_{ij}(N_k) U_i(N_k)^* & (1 \leq i \leq n), \\
A_{n+1 j}(N_k) & (i = n+1). 
\end{cases}
\]
Let $\Xi = (\Xi(N))_{N\in\mathbb{N}}$ with $\Xi_i(N) = (\xi_{ij}(N))_{j=1}^{r(i)}$, $1 \leq i \leq n+1$, be the one chosen at the beginning of this proof. (The existence of such a sequence follows from $\chi_\mathrm{orb}(\mathbf{X}^{\sigma}_1,\dots,\mathbf{X}^{\sigma}_{n+1}) > -\infty$; see e.g.\ \cite[Lemma 2.1]{HiaiUeda:CMP15}.) Define $\Xi' = (\Xi'(N))_{N\in\mathbb{N}}$ by 
\[
\Xi'(N) := 
\begin{cases} 
\mathbf{B}(N_k) & (N = N_k), \\
\Xi(N) & (\text{otherwise}). 
\end{cases}
\]
Since 
\[
\mathrm{tr}^{\Xi'(N_k)} = \mathrm{tr}_{\mathbf{U}(N_k)}^{\mathbf{A}(N_k)} \in O_{k,1/k}(\sigma),
\]
it is easy to see that $\mathrm{tr}^{\Xi'(N)}$ converges to $\sigma$ in the weak$^*$ topology on $TS(C^*_R\langle x_{\bullet\diamond}\rangle)$. Since
\[
\mathrm{tr}^{\Xi'(N_k)}_\mathbf{U} = \mathrm{tr}_{(U_i U_i(N_k))_{i=1}^n}^{\mathbf{A}(N_k)}, 
\qquad \mathbf{U} = (U_i)_{i=1}^n \in \mathrm{U}(N_k)^n
\]
for every $k$ and since $\nu_N$ is invariant under right-multiplication, we observe that 
\[
\chi_\mathrm{orb}(\mathbf{X}^{\sigma}_1,\dots
,\mathbf{X}^{\sigma}_{n+1}; \mathbf{A}(N_k), N_k, k, 1/k) = \chi_\mathrm{orb}(\sigma\,|\,\Xi'\,;N_k,k,1/k) 
\]
for every $k$. Thus, for each $m \in \mathbb{N}$, $\delta>0$, we have 
\[
\chi_\mathrm{orb}(\sigma\,|\,\Xi'\,;N_k,k,1/k) 
\leq 
\chi_\mathrm{orb}(\sigma\,|\,\Xi'\,;N_k,m,\delta)
\]
for all sufficiently large $k$. Thus, for every $m \in \mathbb{N}$, $\delta>0$, we obtain that 
\begin{align*} 
\chi_\mathrm{orb}(\mathbf{X}^{\sigma}_1,\dots,\mathbf{X}^{\sigma}_{n+1}) 
&= 
\lim_{k\to\infty}\frac{1}{N_k^2} \bar{\chi}_\mathrm{orb}(\mathbf{X}^{\sigma}_1,\dots,\mathbf{X}^{\sigma}_{n+1}; N_k,k,1/k) \\
&= 
\lim_{k\to\infty} \frac{1}{N_k^2}\Big(\bar{\chi}_\mathrm{orb}(\mathbf{X}^{\sigma}_1,\dots,\mathbf{X}^{\sigma}_{n+1}; N_k,k,1/k) - 1\Big) \\
&\leq 
\varlimsup_{k\to\infty} \frac{1}{N_k^2} \chi_\mathrm{orb}(\sigma\,|\,\Xi'\,;N_k,m,\delta)  \\
&\leq 
\varlimsup_{N\to\infty}\frac{1}{N^2} \chi_\mathrm{orb}(\sigma\,|\,\Xi'\,;N,m,\delta) \\
&= 
\chi_\mathrm{orb}(\sigma\,|\,\Xi'; m,\delta).
\end{align*}
Therefore, by taking the limit as $m\to\infty$, $\delta\searrow0$ we have 
\[
\chi_\mathrm{orb}(\mathbf{X}^{\sigma}_1,\dots,\mathbf{X}^{\sigma}_{n+1}) \leq \chi_\mathrm{orb}(\sigma_0\,|\,\Xi') \leq \chi_\mathrm{orb}(\sigma\,|\,\sigma).
\]
With the former assertion we are done.  
\end{proof}

Another natural choice of initial tracial state $\sigma_0$ is available; the tracial state is determined by making the resulting random multi-variables $\mathbf{X}^{\sigma_0}_i$, $1 \leq i \leq n+1$, freely independent. The $\chi_\mathrm{orb}(\sigma\,|\,\sigma_0)$ with this choice of $\sigma_0$ is nothing but an unpublished variation of orbital free entropy due to Dabrowski, and the proposition below shows that it turns out to be the same as our original  $\chi_\mathrm{orb}(\mathbf{X}^\sigma_1,\dots,\mathbf{X}^\sigma_{n+1})$ in \cite{Ueda:IUMJ14}.

\begin{proposition}\label{P3.3} 
When the $\mathbf{X}^{\sigma_0}_i$, $1 \leq i \leq n+1$, are freely independent, then $\chi_\mathrm{orb}(\sigma\,|\,\sigma_0) = \chi_\mathrm{orb}(\mathbf{X}^\sigma_1,\dots,\mathbf{X}^\sigma_{n+1})$. 
\end{proposition}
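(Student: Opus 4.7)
By Proposition \ref{P3.2} the inequality $\chi_\mathrm{orb}(\sigma\,|\,\sigma_0)\leq \chi_\mathrm{orb}(\mathbf{X}^{\sigma}_1,\dots,\mathbf{X}^{\sigma}_{n+1})$ is automatic, so the task is to establish the reverse inequality under the freeness hypothesis on $\sigma_0$. I would assume $\chi_\mathrm{orb}(\mathbf{X}^{\sigma}_1,\dots,\mathbf{X}^{\sigma}_{n+1}) > -\infty$ (otherwise there is nothing to prove) and begin, as in the proof of Proposition \ref{P3.2}, by extracting an increasing subsequence $N_k$ and matrix tuples $\mathbf{A}(N_k) = (\mathbf{A}_i(N_k))_{i=1}^{n+1}$ that nearly attain $\bar{\chi}_\mathrm{orb}(\mathbf{X}^{\sigma}_1,\dots,\mathbf{X}^{\sigma}_{n+1}; N_k,k,1/k)$; note that the marginals $\mathrm{tr}^{\mathbf{A}_i(N_k)}$ are then forced to converge to $\sigma_{0,i}$ because $\chi_\mathrm{orb}(\mathbf{X}^\sigma)$ is $-\infty$ unless the marginals of the test tuple match those of the target (cf. \cite[Proposition 3.1]{HiaiUeda:CMP15}).

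The new ingredient is to randomize by independent Haar-distributed unitaries $V_i(N_k)\in\mathrm{U}(N_k)$, $1\leq i \leq n$, and form
\[
\mathbf{B}(N_k) := \bigl((V_i(N_k)A_{ij}(N_k)V_i(N_k)^*)_{j=1}^{r(i)}\bigr)_{i=1}^{n}\,\sqcup\,\bigl(A_{n+1\,j}(N_k)\bigr)_{j=1}^{r(n+1)}.
\]
Since $\sigma_0$ is by hypothesis the free product of the $\sigma_{0,i}$, Voiculescu's asymptotic freeness theorem for deterministic matrices with independent Haar unitary conjugations guarantees that $\mathrm{tr}^{\mathbf{B}(N_k)}\to\sigma_0$ in probability (indeed almost surely). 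Hence for each $k$ we can pin down a deterministic realization of $(V_i(N_k))_{i=1}^n$ so that $\mathrm{tr}^{\mathbf{B}(N_k)}\in O_{k,1/k}(\sigma_0)$. Extending $(\mathbf{B}(N_k))$ arbitrarily to a full sequence $\Xi'(N)$ that still approximates $\sigma_0$ (possible because $\sigma_0\in TS_\mathrm{fda}(C^*_R\langle x_{\bullet\diamond}\rangle)$ is forced by the finiteness assumption) yields $\mathrm{tr}^{\Xi'(N)}\to\sigma_0$.

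Now the key calculation uses right-invariance of $\nu_N^{\otimes n}$: since $\mathrm{tr}^{\Xi'(N_k)}_\mathbf{U} = \mathrm{tr}^{\mathbf{A}(N_k)}_{(U_iV_i(N_k))_{i=1}^n}$, the change of variables $U_i\mapsto U_i V_i(N_k)^*$ gives
\[
\chi_\mathrm{orb}(\sigma\,|\,\Xi'\,;N_k,m,\delta) = \chi_\mathrm{orb}(\mathbf{X}^{\sigma}_1,\dots,\mathbf{X}^{\sigma}_{n+1}; \mathbf{A}(N_k), N_k,m,\delta)
\]
for every $m,\delta$. As in the proof of Proposition \ref{P3.2}, choosing $m\leq k$ and $\delta\geq 1/k$ eventually, one passes to the double limit: first letting $k\to\infty$ along $N_k$ to recover $\chi_\mathrm{orb}(\mathbf{X}^{\sigma}_1,\dots,\mathbf{X}^{\sigma}_{n+1})$ on the left-hand side, and then letting $m\to\infty$, $\delta\searrow0$ on the right to obtain $\chi_\mathrm{orb}(\sigma\,|\,\Xi')\leq\chi_\mathrm{orb}(\sigma\,|\,\sigma_0)$. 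Combining with Proposition \ref{P3.2} finishes the proof.

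The step I expect to be most delicate is the passage to a deterministic sequence of $V_i(N_k)$'s realizing asymptotic freeness simultaneously along the entire subsequence; fortunately this is only a diagonal extraction since Voiculescu's theorem gives almost sure convergence of $\mathrm{tr}^{\mathbf{B}(N_k)}$ to $\sigma_0$ in the weak$^*$ metric. The rest is essentially a re-run of the end of the proof of Proposition \ref{P3.2}, with the key conceptual change being that the randomization restores freeness so that the approximating sequence lives above $\sigma_0$ rather than above $\sigma$.
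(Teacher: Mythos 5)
Your proposal uses the same core mechanism as the paper (randomization by independent Haar unitaries plus right-invariance of $\nu_N^{\otimes n}$), so it is essentially on the right track. However, the route you take is unnecessarily indirect and carries a gap. You re-run the entire subsequence extraction from the proof of Proposition~\ref{P3.2} (extracting $N_k$ and $\mathbf{A}(N_k)$ and randomizing them), which forces you to face an interpolation problem: after randomization you only have matrices $\mathbf{B}(N_k)$ at the dimensions $N_k$, and you need a full sequence $\Xi'(N)$, $N\in\mathbb{N}$, with $\mathrm{tr}^{\Xi'(N)}\to\sigma_0$. You assert that this is possible because ``$\sigma_0 \in TS_\mathrm{fda}(C^*_R\langle x_{\bullet\diamond}\rangle)$ is forced by the finiteness assumption,'' but this is precisely what needs an argument: the finiteness hypothesis only gives microstates for $\sigma$ (and hence for each marginal $\sigma_{0,i}$), and getting a full approximating sequence for the free product $\sigma_0$ requires either a padding construction or another instance of the asymptotic-freeness trick applied to a full sequence. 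As stated, your proof is circular at this point, or at least has an unfilled hole.

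The paper's proof avoids this entirely by using Proposition~\ref{P3.2} in the opposite direction. Since the latter proposition already established $\chi_\mathrm{orb}(\mathbf{X}^\sigma_1,\dots,\mathbf{X}^\sigma_{n+1}) = \chi_\mathrm{orb}(\sigma\,|\,\sigma)$, it suffices to show $\chi_\mathrm{orb}(\sigma\,|\,\sigma) \leq \chi_\mathrm{orb}(\sigma\,|\,\sigma_0)$, that is, to show that every approximating sequence for $\sigma$ can be traded for one approximating $\sigma_0$ with equal $\chi_\mathrm{orb}$. So one takes an \emph{arbitrary full sequence} $\Xi=(\Xi(N))_{N\in\mathbb{N}}$ with $\mathrm{tr}^{\Xi(N)}\to\sigma$, chooses by almost-sure asymptotic freeness (Hiai--Petz Theorem 4.3.1) deterministic unitaries $V_i(N)$ for \emph{every} $N$, forms $\Xi'(N)$ by conjugation, observes that $\mathrm{tr}^{\Xi'(N)}\to\sigma_0$ (because $\sigma_0$ is the free product), and concludes $\chi_\mathrm{orb}(\sigma\,|\,\Xi)=\chi_\mathrm{orb}(\sigma\,|\,\Xi')\leq\chi_\mathrm{orb}(\sigma\,|\,\sigma_0)$ from right-invariance. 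Taking the supremum over $\Xi$ finishes. No interpolation is needed because the conjugation acts dimensionwise on a full sequence that is already in hand. You should restructure your argument along these lines: there is no need to re-extract $N_k$ and $\mathbf{A}(N_k)$, and doing so creates the gap you yourself flag at the end of your write-up.
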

\begin{proof} 
By Proposition \ref{P3.2} we may and do assume $\chi_\mathrm{orb}(\mathbf{X}^\sigma_1,\dots,\mathbf{X}^\sigma_{n+1}) > -\infty$, and it suffices to prove 
$$
\chi_\mathrm{orb}(\sigma\,|\,\sigma_0) \geq \chi_\mathrm{orb}(\sigma\,|\,\sigma)\ \big(= \chi_\mathrm{orb}(\mathbf{X}^\sigma_1,\dots,\mathbf{X}^\sigma_{n+1})\big). 
$$
To this end, let $\Xi = (\Xi(N))_{N=1}^\infty$ with $\Xi(N) = (\Xi_i(N))_{i=1}^{n+1}$, $\Xi_i(N) = (\xi_{ij}(N))_{j=1}^{r(i)} \in ((M_N^{sa})_R)^{r(i)}$, $1 \leq i \leq n+1$, be such that $\lim_{N\to\infty}\mathrm{tr}^{\Xi(N)} = \sigma$ in the weak$^*$ topology. Choose an independent family of Haar-distributed unitary random matrices $V_N^{(i)}$, $1 \leq i \leq n$. It is known, see e.g.\ \cite[Theorem 4.3.1]{HiaiPetz:Book}, that $V_N^{(1)},\dots,V_N^{(n)}, \Xi(N)$ are asymptotically free  almost surely as $N\to\infty$ and moreover that the subfamily $V_N^{(1)},\dots,V_N^{(n)}$ converges to a freely independent family of Haar unitaries in distribution almost surely as $N\to\infty$ too. Thus, thanks to the almost sure convergence, we can choose deterministic sequences $V_i(N)$, $1 \leq i \leq n$, from random sequences $V_N^{(i)}$, $1 \leq i \leq n$ such that $V_1(N),\dots,V_n(N),\Xi(N)$ converge to the same family of non-commutative random variables in distribution as $N\to\infty$. Define $\Xi' = (\Xi'(N))_{N=1}^\infty$ with $\Xi'(N) = (\Xi'_i(N))_{i=1}^{n+1}$, $\Xi'_i(N) = (\xi'_{ij}(N))_{j=1}^{r(i)}$ by  
\[
\xi'_{ij}(N) := 
\begin{cases} 
V_i(N)\xi_{ij}(N)V_i(N)^* &(1 \leq i \leq n), \\
\xi_{n+1 j}(N) &(i=n+1). 
\end{cases}
\]
Then, the $\Xi'_i(N)$, $1 \leq i \leq n+1$, are asymptotically free as $N\to\infty$. Therefore, we conclude that $\lim_{N\to\infty}\mathrm{tr}^{\Xi'(N)} = \sigma_0$ in the weak$^*$ topology. Remark that 
\[
\mathrm{tr}^{\Xi'(N)}_\mathbf{U} = \mathrm{tr}_{(U_i V_i(N))_{i=1}^n}^{\Xi(N)}, 
\qquad \mathbf{U} = (U_i)_{i=1}^n \in \mathrm{U}(N)^n 
\]
holds for every $N$. Therefore, thanks to the invariance of $\nu_N$ under right-multiplication, we conclude, as in the proof of Proposition \ref{P3.2}, that 
\[
\chi_\mathrm{orb}(\sigma\,|\,\Xi) = \chi_\mathrm{orb}(\sigma\,|\,\Xi') \leq \chi_\mathrm{orb}(\sigma\,|\,\sigma_0). 
\]
Since $\Xi$ has arbitrarily been chosen, we are done. 
\end{proof}  

The above proof suggests that $\chi_\mathrm{orb}(\sigma\,|\,\sigma_0)$ coincides with $\chi_\mathrm{orb}(\mathbf{X}^\sigma_1,\dots,\mathbf{X}^\sigma_{n+1})$ for a large class of tracial states $\sigma_0 \in TS_\mathrm{fda}(C^*_R\langle x_{\bullet\diamond}\rangle)$. 

\section{Orbital free entropy and Matrix liberation process} 

Building on our previous work \cite{Ueda:JOTP19} we will clarify how some fundamental questions concerning the orbital free entropy $\chi_\mathrm{orb}$ are precisely reduced to the conjectural large deviation principle for the matrix liberation process. Lemma \ref{L2.1} will play a key role in what follows. 

\subsection{Non-commutative coordinates} 
Let $C^*_R\langle x_{\bullet\diamond}(\,\cdot\,)\rangle \subset C^*_R\langle x_{\bullet\diamond}(\,\cdot\,),v_\bullet(\,\cdot\,)\rangle$ be the universal unital $C^*$-algebras generated by $x_{ij}(t) = x_{ij}(t)^*$, $1 \leq i \leq n+1, j \geq 1, t \geq 0$, and $v_i(t)$, $1 \leq i \leq n$, $t \geq 0$, with subject to $\Vert x_{ij}(t)\Vert_\infty \leq R$ and $v_i(t)^* v_i(t) = v_i(t)v_i(t)^* = 1 = v_i(0)$. These universal $C^*$-algebras are constructed as universal free products of uncountably many $C[-R,R]$ and $C(\mathbb{T})$, and generators $x_{ij}(t)$ and $u_i(t)$ are given by coordinate functions $f(t) = t$ in $t \in [-R,R]$ or $g(z) = z$ in $z \in \mathbb{T}$ of component algebras. Proposition \ref{PA3} guarantees the inclusion of two universal $C^*$-algebras. Recall that $j$ may run over the natural numbers $\mathbb{N}$ as we remarked at the end of section 1. The universal $*$-algebras $\mathbb{C}\langle x_{\bullet\diamond}(\,\cdot\,)\rangle \subset \mathbb{C}\langle x_{\bullet\diamond}(\,\cdot\,),v_\bullet(\,\cdot\,)\rangle$ generated by the same indeterminates $x_{ij}(t)$ and $v_i(t)$ can naturally be regarded as norm-dense $*$-subalgebras of  $C^*_R\langle x_{\bullet\diamond}(\,\cdot\,)\rangle \subset C^*_R\langle x_{\bullet\diamond}(\,\cdot\,),v_\bullet(\,\cdot\,)\rangle$, respectively. Proposition \ref{PA4} guarantees this fact. For each $T\geq0$, the correspondence $x_{ij} \mapsto x_{ij}(T)$, $1\leq i \leq n+1, j \geq 1$, defines a unique (injective) $*$-homomorphism $\pi_T : C^*_R\langle x_{\bullet\diamond}\rangle \to C^*_R\langle x_{\bullet\diamond}(\,\cdot\,)\rangle$ with notation $C^*_R\langle x_{\bullet\diamond}\rangle $ in section 3.  

\subsection{Time-dependent liberation derivative}
We introduce the derivation 
\[
\delta_s^{(k)} : \mathbb{C}\langle x_{\bullet\diamond}(\,\cdot\,)\rangle \to  \mathbb{C}\langle x_{\bullet\diamond}(\,\cdot\,),v_\bullet(\,\cdot\,)\rangle\otimes_\mathrm{alg} \mathbb{C}\langle x_{\bullet\diamond}(\,\cdot\,),v_\bullet(\,\cdot\,)\rangle, 
\quad 1 \leq k \leq n, s \geq 0, 
\]
which sends each $x_{ij}(t)$ to 
\[ 
\delta_{i,k}\mathbf{1}_{[0,t]}(s)\big(x_{kj}(t)v_k(t-s)\otimes v_k(t-s)^* - v_k(t-s)\otimes v_k(t-s)^* x_{kj}(t)\big).
\]
Then we write $\mathfrak{D}_s^{(k)} := \theta\circ\delta_s^{(k)}$, $1\leq k\leq n, s\geq0$, where $\theta$ denotes the flip-multiplication mapping $a\otimes b \mapsto ba$. 

\subsection{Continuous tracial states}
A tracial state $\tau$ on $C^*_R\langle x_{\bullet\diamond}(\,\cdot\,)\rangle$ is said to be continuous if $t \mapsto \pi_\tau(x_{ij}(t))$ is strongly continuous for every $1\leq i \leq n+1, j \geq 1$, where $\pi_\tau : C^*_R\langle x_{\bullet\diamond}(\,\cdot\,)\rangle \curvearrowright \mathcal{H}_\tau$ is the GNS representation associated with $\tau$. We denote by $TS^c(C^*_R\langle x_{\bullet\diamond}(\,\cdot\,)\rangle)$ all the continuous tracial states. The space $TS^c(C^*_R\langle x_{\bullet\diamond}(\,\cdot\,)\rangle)$ becomes a complete metric space endowed with metric $d$ defined by \eqref{Eq1.1}, which defines the topology of uniform convergence on finite time intervals. 

\subsection{Liberation process $\tau^s$ starting at a given time}
We extend a given $\tau \in TS^c(C^*_R\langle x_{\bullet\diamond}(\,\cdot\,)\rangle)$ to a unique $\tilde{\tau} \in TS^c(C^*_R\langle x_{\bullet\diamond}(\,\cdot\,),v_\bullet(\,\cdot\,)\rangle)$ in such a way that the $v_i(t)$ are $*$-freely independent of $C^*_R\langle x_{\bullet\diamond}(\,\cdot\,)\rangle$ and form a $*$-freely independent family of left-multiplicative free unitary Brownian motions under this extension $\tilde{\tau}$. This extension of tracial state can be constructed, via the GNS representation $\pi_\tau : C^*_R\langle x_{\bullet\diamond}(\,\cdot\,)\rangle \curvearrowright \mathcal{H}_\tau$, by taking a suitable reduced free product. We write 
\[
(\mathcal{N}(\tau) \subset \mathcal{M}(\tau)) := \big(\pi_{\tilde{\tau}}(C^*_R\langle x_{\bullet\diamond}(\,\cdot\,)\rangle)'' \subset \pi_{\tilde{\tau}}(C^*_R\langle x_{\bullet\diamond}(\,\cdot\,),v_\bullet(\,\cdot\,)\rangle)''\big)
\] 
on $\mathcal{H}_\tau$, where $\pi_{\tilde{\tau}} : C_R^*\langle x_{\bullet\diamond}(\,\cdot\,),v_\bullet(\,\cdot\,)\rangle \curvearrowright \mathcal{H}_ {\tilde{\tau}}$ is the GNS representation associated with ${\tilde{\tau}}$. Write $x_{ij}^\tau(t) := \pi_{\tilde{\tau}}(x_{ij}(t))$ and $v_i^\tau(t) := \pi_{\tilde{\tau}}(v_i(t))$ and the canonical extension of ${\tilde{\tau}}$ to $\mathcal{M}(\tau)$ is still denoted by the same symbol ${\tilde{\tau}}$ for simplicity. We denote by $E_{\mathcal{N}(\tau)}$ the ${\tilde{\tau}}$-preserving conditional expectation from $\mathcal{M}(\tau)$ onto $\mathcal{N}(\tau)$, which is known to exist and to be unique as a standard fact on von Neumann algebras. Consider an `abstract' non-commutative process in $C^*_R\langle x_{\bullet\diamond}(\,\cdot\,),v_\bullet(\,\cdot\,)\rangle$
\[
t \mapsto x_{ij}^s(t) := 
\begin{cases} 
v_i((t-s)\vee0)x_{ij}(s\wedge t)v_i((t-s)\vee0)^*  & (1 \leq i \leq n), \\
x_{n+1\,j}(t) & (i=n+1) 
\end{cases}
\]
and the corresponding `concrete' non-commutative stochastic process in $\mathcal{M}(\tau)$
\[
t \mapsto x_{ij}^{\tau^s}(t) := \pi_{\tilde{\tau}}(x_{ij}^s(t)) = 
\begin{cases}
v_i^\tau((t-s)\vee0) x_{ij}^\tau(s\wedge t) v_i^\tau((t-s)\vee0)^* & (1 \leq i \leq n), \\
x_{n+1\,j}^\tau(t) & (i=n+1). 
\end{cases} 
\] 
By universality, this process $x_{ij}^{\tau^s}(t)$ clearly defines a tracial state $\tau^s \in TS^c(C^*_R\langle x_{\bullet\diamond}(\,\cdot\,)\rangle)$. 

By the $*$-homomorphism $\Gamma : C^*_R\langle x_{\bullet\diamond}(\,\cdot\,)\rangle \to C^*_R\langle x_{\bullet\diamond}\rangle$ sending each $x_{ij}(t)$ to $x_{ij}$, we obtain $\Gamma^*(\sigma_0) := \sigma_0 \circ \Gamma \in TS(C^*_R\langle x_{\bullet\diamond}(\,\cdot\,) \rangle)$ with a given $\sigma_0 \in TS(C^*_R\langle x_{\bullet\diamond}\rangle)$ and set $\sigma_0^\mathrm{lib} := \Gamma^*(\sigma_0)^0 \in TS(C^*_R\langle x_{\bullet\diamond}(\,\cdot\,) \rangle)$ ($\Gamma^*(\sigma_0)^0$ is defined in the same way as $\tau^s$ with $s=0$), which we call \emph{the liberation process starting at $\sigma_0$} (precisely its empirical distribution).  

\subsection{New description of $\tau^s$}
By universality, we have a unique unital $*$-homomorphism  
$\Pi^s : C^*_R\langle x_{\bullet\diamond}(\,\cdot\,),v_\bullet(\,\cdot\,)\rangle \to C^*_R\langle x_{\bullet\diamond}(\,\cdot\,),v_\bullet(\,\cdot\,)\rangle$ sending $x_{ij}(t)$ and $v_i(t)$ to $x_{ij}^s(t)$ and $v_i(t)$, respectively. By using this $*$-homomorphism we obtain a unital $*$-homomorphism 
\[
\begin{matrix} 
\pi_{\tilde{\tau}}\circ\Pi^s & : & C_R^*\langle x_{\bullet\diamond}(\,\cdot\,),v_\bullet(\,\cdot\,)\rangle 
& \overset{\Pi^s}{\longrightarrow} & 
C_R^*\langle x_{\bullet\diamond}(\,\cdot\,),v_\bullet(\,\cdot\,)\rangle 
&\overset{\pi_{\tilde{\tau}}}{\longrightarrow} & 
\mathcal{M}(\tau) \\
&&(x_{ij}(t), v_i(t))  
& \mapsto & 
(x_{ij}^s(t), v_i(t)) 
& \mapsto & 
(x_{ij}^{\tau^s}(t), v_i^\tau(t)). 
\end{matrix} 
\]
Then $\pi_{\tilde{\tau}}(\Pi^s(\mathfrak{D}_s^{(k)}P))$, $P \in \mathbb{C}\langle x_{\bullet\diamond}(\,\cdot\,)\rangle$, becomes the element of $\mathcal{M}(\tau)$ obtained by substituting $(x_{ij}^{\tau^s}(t), v^\tau_i(t))$ for $(x_{ij}(t), v_i(t))$ in $\mathfrak{D}_s^{(k)}P$. Moreover, we have $\tau^s = \tilde{\tau}\circ\Pi^s$ on $C^*_R\langle x_{\bullet\diamond}(\,\cdot\,)\rangle$.    

\subsection{Rate function}
To a given $\sigma_0 \in TS(C^*_R\langle x_{\bullet\diamond}\rangle)$
we associate two functionals $I_{\sigma_0}^\mathrm{lib}, I_{\sigma_0,\infty}^\mathrm{lib} : TS^c(C_R^*\langle x_{\bullet\diamond}(\,\cdot\,)\rangle) \to [0,+\infty]$ as follows. For any $\tau \in TS^c(C_R^*\langle x_{\bullet\diamond}(\,\cdot\,)\rangle$, $P = P^* \in \mathbb{C}\langle x_{\bullet\diamond}(\,\cdot\,)\rangle$ and $t \in [0,\infty]$ we first define 
\begin{equation}\label{Eq9}
I_{\sigma_0,t}^\mathrm{lib}(\tau,P) := \tau^t(P) - \sigma_0^\mathrm{lib}(P) - \frac{1}{2}\sum_{k=1}^n \int_0^t \Vert E_{\mathcal{N}(\tau)}(\pi_{\tilde{\tau}}(\Pi^s(\mathfrak{D}_s^{(k)}P)))\Vert^2_{\tilde{\tau},2}\,ds
\end{equation}
with regarding $\tau$ as $\tau^\infty$ (since $\tau^t(P) = \tau(P)$ when $t$ is large enough), where $\Vert\,-\,\Vert_{\tilde{\tau},2}$ denotes the $2$-norm on the tracial $W^*$-probability space $(\mathcal{M}(\tau),\tilde{\tau})$. We remark that the integrand in \eqref{Eq9} agrees with that given in  \cite{Ueda:JOTP19} (though their representations are different at first glance), and moreover that the integration above is well defined even when $t=\infty$, because $\mathfrak{D}^{(k)}_s P = 0$ when $s$ is large enough. Then we define  
\[
I_{\sigma_0}^\mathrm{lib}(\tau) := \sup_{\substack{ P = P^* \in \mathbb{C}\langle x_{\bullet\diamond}(\,\cdot\,)\rangle \\ t > 0}} I_{\sigma_0,t}^\mathrm{lib}(\tau,P), \quad 
I_{\sigma_0,\infty}^\mathrm{lib}(\tau) := \sup_{P = P^* \in \mathbb{C}\langle x_{\bullet\diamond}(\,\cdot\,)\rangle} I_{\sigma_0,\infty}^\mathrm{lib}(\tau,P). 
\]
\emph{Each of the functionals $I_{\sigma_0}^\mathrm{lib}, I_{\sigma_0,\infty}^\mathrm{lib}$ is shown, in \cite[Proposition 5.6, Proposition 5.7(3)]{Ueda:JOTP19}} ({\it n.b.}, their proofs work well even for the modification $I_{\sigma_0,\infty}^\mathrm{lib}$ without any essential changes), \emph{to be a well-defined, good rate function with unique minimizer. Moreover, the minimizer for both functionals is identified with the liberation process $\sigma_0^\mathrm{lib}$ starting at $\sigma_0$ for both functionals.} Remark that the proofs of \cite[Proposition 5.6, Proposition 5.7(3)]{Ueda:JOTP19} do not use the assumption that $\sigma_0$ falls into $TS_\mathrm{fda}(C^*_R\langle x_{\bullet\diamond}\rangle)$, and thus the functionals $I_{\sigma_0}^\mathrm{lib}, I_{\sigma_0,\infty}^\mathrm{lib}$ can be considered in the general setting. Remark that $I_{\sigma_0,\infty}^\mathrm{lib}(\tau) \leq I_{\sigma_0}^\mathrm{lib}(\tau)$ obviously holds, but it is a question whether equality holds or not.   

\medskip
Here is a simple lemma, which can be applied to $I = I_{\sigma_0}^\mathrm{lib}$ or $I = I_{\sigma_0,\infty}^\mathrm{lib}$. Recall that $\pi_T : C^*_R\langle x_{\bullet\diamond}\rangle \to C^*_R\langle x_{\bullet\diamond}(\,\cdot\,)\rangle$ is the unique injective $*$-homomorphism sending each $x_{ij}$ to $x_{ij}(T)$. In the lemma below, we will use the map $\pi_T^* : TS^c(C^*\langle x_{\bullet\diamond}(\,\cdot\,)\rangle) \to TS^c(C^*\langle x_{\bullet\diamond}\rangle)$ induced from $\pi_T$, see the glossary in section 1.

\begin{lemma}\label{L4.1} For any functional $I : TS^c(C^*\langle x_{\bullet\diamond}(\,\cdot\,)\rangle) \to [0,+\infty]$, the new one $J : TS(C^*_R\langle x_{\bullet\diamond}\rangle) \to [0,+\infty]$ defined by  
\begin{align*}
J(\sigma) 
:&= \lim_{\substack{m\to\infty \\ \delta \searrow 0}} \varlimsup_{T\to\infty} \inf\big\{ I(\tau) \mid \tau \in TS^c(C^*_R\langle x_{\bullet\diamond}(\,\cdot\,)\rangle), \pi_T^*(\tau) \in O_{m,\delta}(\sigma) \big\} \\
&= \sup_{\substack{m\in\mathbb{N} \\ \delta > 0}} \varlimsup_{T\to\infty} \inf\big\{ I(\tau) \mid \tau \in TS^c(C^*_R\langle x_{\bullet\diamond}(\,\cdot\,)\rangle), \pi_T^*(\tau) \in O_{m,\delta}(\sigma) \big\}
\end{align*}
for any $\sigma \in TS(C^*_R\langle x_{\bullet\diamond}\rangle)$ (with notation $O_{m,\delta}(\sigma)$ in the previous section) is a well-defined rate function, where $TS(C^*_R\langle x_{\bullet\diamond}\rangle)$ is endowed with the weak$^*$ topology and the infimum over the empty set is taken to be $+\infty$. Moreover, replacing $O_{m,\delta}(\sigma)$ with the closed neighborhood $F_{m,\delta}(\sigma)$ in the above definition of $J(\sigma)$ does not affect its value, where $F_{m,\delta}(\sigma)$ is all the $\sigma' \in TS(C^*_R\langle x_{\bullet\diamond}\rangle)$ such that 
\[
|\sigma'(x_{i_1 j_1}\cdots x_{i_p j_p}) - \sigma(x_{i_1 j_1}\cdots x_{i_p j_p})| \leq \delta
 \]
whenever $1 \leq i_k \leq n+1$, $1 \leq j_k \leq m$, $1 \leq k \leq p$ and $1 \leq p \leq m$. 
\end{lemma}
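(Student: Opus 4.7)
The plan is to establish the lemma in three steps: (a) the two displayed formulas coincide and the definition makes sense; (b) $J$ is lower semicontinuous on $TS(C^*_R\langle x_{\bullet\diamond}\rangle)$ with the weak$^*$ topology (this is what makes it a rate function, since its range is already in $[0,+\infty]$); (c) the closed-neighborhood version agrees with the open-neighborhood one. All three steps are essentially monotonicity/triangle-inequality manipulations once set up correctly; the only step requiring any care is (b).

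For (a), observe that $O_{m,\delta}(\sigma) \supset O_{m',\delta'}(\sigma)$ whenever $m \leq m'$ and $\delta \geq \delta'$. Hence the set $\{\tau \in TS^c(C^*_R\langle x_{\bullet\diamond}(\,\cdot\,)\rangle) : \pi_T^*(\tau) \in O_{m,\delta}(\sigma)\}$ shrinks as $m$ increases or $\delta$ decreases, so the infimum of $I$ over it is non-decreasing, as is the $\varlimsup_{T\to\infty}$. Therefore the limit $(m\to\infty,\delta\searrow0)$ exists in $[0,+\infty]$ and equals the supremum over $(m,\delta)$, giving the asserted equality of the two formulas.

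For (b), let $\sigma_n \to \sigma$ in the weak$^*$ topology; we must show $J(\sigma) \leq \varliminf_n J(\sigma_n)$. Fix $m \in \mathbb{N}$ and $\delta > 0$. Since $\sigma_n \to \sigma$ in the weak$^*$ topology, for every sufficiently large $n$ we have $\sigma_n \in O_{m,\delta/2}(\sigma)$. By the triangle inequality applied to the finitely many monomials of length $\leq m$ in generators with second index $\leq m$, we obtain the inclusion $O_{m,\delta/2}(\sigma_n) \subset O_{m,\delta}(\sigma)$ for such $n$. Consequently,
\[
\inf\bigl\{I(\tau) : \pi_T^*(\tau) \in O_{m,\delta}(\sigma)\bigr\}
\leq
\inf\bigl\{I(\tau) : \pi_T^*(\tau) \in O_{m,\delta/2}(\sigma_n)\bigr\}
\]
for every $T>0$ and every sufficiently large $n$. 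Taking $\varlimsup_{T\to\infty}$, then $\varliminf_n$, then $\sup_{m,\delta}$ on the left side yields $J(\sigma) \leq \varliminf_n J(\sigma_n)$, which is lower semicontinuity. Since the range of $J$ is clearly in $[0,+\infty]$, $J$ is a rate function.

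For (c), write $J_{\mathrm{op}}$ and $J_{\mathrm{cl}}$ for the open- and closed-neighborhood versions. The trivial inclusion $O_{m,\delta}(\sigma) \subset F_{m,\delta}(\sigma)$ gives $J_{\mathrm{cl}}(\sigma) \leq J_{\mathrm{op}}(\sigma) = J(\sigma)$. Conversely, for any $\delta' > \delta > 0$ one has $F_{m,\delta}(\sigma) \subset O_{m,\delta'}(\sigma)$, so
\[
\inf\bigl\{I(\tau) : \pi_T^*(\tau) \in O_{m,\delta'}(\sigma)\bigr\}
\leq
\inf\bigl\{I(\tau) : \pi_T^*(\tau) \in F_{m,\delta}(\sigma)\bigr\},
\]
and taking $\varlimsup_{T\to\infty}$ then $\sup_{m,\delta'>0}$ on the left and $\sup_{m,\delta>0}$ on the right yields $J(\sigma) \leq J_{\mathrm{cl}}(\sigma)$. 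The main (only) obstacle is the triangle-inequality argument in (b), which needs the (decisive) fact that the defining seminorms for the neighborhoods $O_{m,\delta}$ involve only finitely many monomials, so that weak$^*$ convergence delivers the required uniformity.
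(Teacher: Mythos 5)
Your proof is correct and follows essentially the same route as the paper: monotonicity of $O_{m,\delta}(\sigma)$ in $(m,\delta)$ for well-definedness, the triangle-inequality inclusion $O_{m,\delta/2}(\sigma_n) \subset O_{m,\delta}(\sigma)$ for $\sigma_n$ close to $\sigma$ for lower semicontinuity, and the sandwich $O_{m,\delta} \subseteq F_{m,\delta} \subseteq O_{m,\delta'}$ ($\delta' > \delta$) for the closed-neighborhood version. The only cosmetic differences are that the paper argues (b) contrapositively via a level $L < J(\sigma)$ and uses $\delta' = 2\delta$ in (c).
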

\begin{proof} 
If $m_1 \leq m_2$ and $\delta_1 \geq \delta_2 > 0$, then $O_{m_1,\delta_1}(\sigma) \supseteq O_{m_2,\delta_2}(\sigma)$ so that 
\begin{align*}
&\varlimsup_{T\to\infty}\inf\big\{ I(\tau) \mid \tau \in TS^c(C_R^*\langle x_{\bullet\diamond}(\,\cdot\,)\rangle), \pi_T^*(\tau) \in O_{m_1,\delta_1}(\sigma) \big\} \\
&\qquad \leq 
\varlimsup_{T\to\infty}\inf\big\{ I(\tau) \mid \tau \in TS^c(C_R^*\langle x_{\bullet\diamond}(\,\cdot\,)\rangle), \pi_T^*(\tau) \in O_{m_2,\delta_2}(\sigma) \big\}.
\end{align*}
Therefore, taking $\lim_{m\to\infty, \delta\searrow0}$ in the definition of $J(\sigma)$ is actually well defined and coincides with taking the supremum all over $m \in \mathbb{N}$ and $\delta>0$. 

We then confirm that $J$ is lower semicontinuous. Assume that $\sigma_k \to \sigma$ in $TS(C^*_R\langle x_{\bullet\diamond}\rangle)$ as $k\to\infty$. Choose an arbitrary $0 \leq L < J(\sigma)$. Then there exist $m_0 \in \mathbb{N}$ and $\delta_0 > 0$ such that 
\[
\varlimsup_{T\to\infty} \inf\big\{ I(\tau) \mid \tau \in TS^c(C_R^*\langle x_{\bullet\diamond}(\,\cdot\,)\rangle), \pi_T^*(\tau) \in O_{m_0,\delta_0}(\sigma_k) \big\} > L. 
\]
Then, there exists $k_0 \in \mathbb{N}$ such that if $k \geq k_0$, then $O_{m_0,\delta_0/2}(\sigma_k) \subseteq O_{m_0,\delta_0}(\sigma)$ and hence
\begin{align*} 
J(\sigma_k)
&\geq  
\varlimsup_{T\to\infty} \inf\big\{ I(\tau) \mid \tau \in TS^c(C_R^*\langle x_{\bullet\diamond}(\,\cdot\,)\rangle), \pi_T^*(\tau) \in O_{m_0,\delta_0/2}(\sigma_k) \big\} \\
&\geq 
\varlimsup_{T\to\infty} \inf\big\{ I(\tau) \mid \tau \in TS^c(C_R^*\langle x_{\bullet\diamond}(\,\cdot\,)\rangle), \pi_T^*(\tau) \in O_{m_0,\delta_0}(\sigma) \big\}  > L, 
\end{align*}
where the first inequality follows from the fact that $\lim_{m\to\infty, \delta\searrow 0} = \sup_{m,\delta}$ in the definition of $J(\sigma)$ as remarked before. Therefore, we obtain that $\varliminf_{k\to\infty} J(\sigma_k) \geq L$, which guarantees that $J$ is lower semicontinuous. 

Since $O_{m,\delta}(\sigma) \subseteq F_{m,\delta}(\sigma) \subseteq O_{m,2\delta}(\sigma)$, we have 
\begin{align*}
&\inf\big\{ I(\tau) \mid \tau \in TS^c(C_R^*\langle x_{\bullet\diamond}(\,\cdot\,)\rangle), \pi_T^*(\tau) \in O_{m,\delta}(\sigma_k) \big\} \\
&\qquad\geq 
\inf\big\{ I(\tau) \mid \tau \in TS^c(C_R^*\langle x_{\bullet\diamond}(\,\cdot\,)\rangle), \pi_T^*(\tau) \in F_{m,\delta}(\sigma_k) \big\} \\
&\qquad\qquad\geq
\inf\big\{ I(\tau) \mid \tau \in TS^c(C_R^*\langle x_{\bullet\diamond}(\,\cdot\,)\rangle), \pi_T^*(\tau) \in O_{m,2\delta}(\sigma_k) \big\}
\end{align*}
for every $m\in\mathbb{N}$ and $\delta>0$. This implies the last assertion.  
\end{proof} 

The above lemma clearly holds true even if $\varlimsup_{T\to\infty}$ is replaced with $\varliminf_{T\to\infty}$ in the definition of $J$. We also remark that $TS(C^*_R\langle x_{\bullet\diamond}\rangle)$ is weak$^*$ compact, and hence $J$ is trivially a good rate function. 

\subsection{Matrix liberation process}
Let $\Xi(N) = ((\xi_{ij}(N))_{j=1}^{r(i)})_{i=1}^{n+1}$ with $\xi_{ij}(N) \in (M_N^{sa})_R$ be an approximation to a given $\sigma_0 \in TS_\mathrm{fda}(C^*_R\langle x_{\bullet\diamond}\rangle)$. Let $U_N^{(i)}(t)$, $1\leq i\leq n$, be independent, left-increment unitary Brownian motions on $\mathrm{U}(N)$, and we define the matrix liberation process $\Xi^\mathrm{lib}(N)(t) = ((\xi_{ij}^\mathrm{lib}(N)(t))_{j=1}^{r(i)})_{i=1}^n$, $t\geq0$,  starting at $\Xi(N)$ by
\[
\xi_{ij}^\mathrm{lib}(N)(t) := 
\begin{cases}
U_N^{(i)}(t)\xi_{ij}(N)U_N^{(i)}(t)^* & (1 \leq i \leq n), \\
\xi_{n+1 j}(N) & (i=n+1). 
\end{cases}
\]
Then, via the $*$-homomorphism $\pi_{\Xi^\mathrm{lib}(N)} : C^*_R\langle x_{\bullet\diamond}(\,\cdot\,)\rangle \to M_N$ determined by $x_{ij}(t) \mapsto \xi_{ij}^\mathrm{lib}(N)(t)$, $1\leq i \leq n+1, j \geq 1, t\geq0$, we obtain a tracial state $\tau_{\Xi^\mathrm{lib}(N)} := \mathrm{tr}_N\circ\pi_{\Xi^\mathrm{lib}(N)}$, which falls into $TS^c(C^*_R\langle x_{\bullet\diamond}(\,\cdot\,)\rangle)$. This tracial state is a random variable in $TS^c(C^*_R\langle x_{\bullet\diamond}(\,\cdot\,)\rangle)$ in the ordinary sense, and hence we can consider the probability $\mathbb{P}(\tau_{\Xi^\mathrm{lib}(N)} \in \Theta)$ of any Borel subset $\Theta \subseteq TS^c(C^*_R\langle x_{\bullet\diamond}(\,\cdot\,)\rangle)$. By \cite[Theorem 5.8]{Ueda:JOTP19} we already know that \emph{the sequence of probability measures $\mathbb{P}(\tau_{\Xi^\mathrm{lib}(N)} \in \cdot\,)$ satisfies the large deviation upper bound with speed $N^2$ and the above rate function $I^\mathrm{lib}_{\sigma_0}$}. 

\subsection{Contraction principle at $T=\infty$}
Let $\mathbf{U}_N = (U_N^{(i)})_{i=1}^n$ be an $n$-tuple of independent $N\times N$ unitary random matrices distributed under the Haar probability measure $\nu_N$ on $\mathrm{U}(N)$. The random tracial state $\mathrm{tr}_{\mathbf{U}_N}^{\Xi(N)} \in TS(C_R^*\langle x_{\bullet\diamond}\rangle)$ is defined in the same manner as in \S3. A well-known, standard result on the heat kernel measure on $\mathrm{U}(N)$ implies that $\mathbb{E}[\pi_T^*(\tau_{\Xi^\mathrm{lib}(N)})(a)]$ converges to $\mathbb{E}[\mathrm{tr}_{\mathbf{U}_N}^{\Xi(N)}(a)]$ as $T\to\infty$ for every $a \in C^*_R\langle x_{\bullet\diamond}\rangle$. The usual method to obtain the large deviation upper/lower bound with speed $N^2$ for $\mathbb{P}(\mathrm{tr}_{\mathbf{U}_N}^{\Xi(N)}\in\,\cdot\,)$ from that for $\mathbb{P}(\tau_{\Xi^\mathrm{lib}(N)} \in \,\cdot\,)$ in the same scale is to show that (a kind of) the exponential convergence of $\pi_T^*(\tau_{\Xi^\mathrm{lib}(N)})$ to $\mathrm{tr}_{\mathbf{U}_N}^{\Xi(N)}$ as $T\to\infty$ (see e.g.\ \cite[\S4.2.2]{DemboZeitouni:Book}). Nevertheless, we will be able to prove the next proposition by utilizing Lemma \ref{L2.1} without establishing the exponential convergence.

\begin{proposition}\label{P4.2} Assume that the sequence of probability measures $\mathbb{P}(\tau_{\Xi^\mathrm{lib}(N)} \in \,\cdot\,)$ satisfies the large deviation upper (lower) bound with speed $N^2$ and rate function $I^+$ (resp.\ $I^-$). Then $\mathbb{P}(\mathrm{tr}_{\mathbf{U}_N}^{\Xi(N)}\in\,\cdot\,)$ also satisfies the large deviation upper (resp.\ lower) bound with speed $N^2$ and the following rate function:
\begin{align*}
J^+(\sigma) &:= \lim_{\substack{m\to\infty \\ \delta\searrow0}}\varlimsup_{T\to\infty}\inf\{I^+(\tau) \mid \tau \in TS^c(C_R^*\langle x_{\bullet\diamond}(\,\cdot\,)\rangle), \pi_T^*(\tau) \in O_{m,\delta}(\sigma)\} \\
\Big(\text{resp.}\quad
J^-(\sigma) &:= \lim_{\substack{m\to\infty \\ \delta\searrow0}}\varliminf_{T\to\infty}\inf\{I^-(\tau) \mid \tau \in TS^c(C_R^*\langle x_{\bullet\diamond}(\,\cdot\,)\rangle), \pi_T^*(\tau) \in O_{m,\delta}(\sigma)\}\,\Big)
\end{align*}
for every $\sigma \in TS(C^*_R\langle x_{\bullet\diamond}\rangle)$, where the infimum over the empty set is taken to be $+\infty$. 

In particular, if the sequence of probability measures $\mathbb{P}(\tau_{\Xi^\mathrm{lib}(N)} \in \,\cdot\,)$ satisfies the full large deviation principle with speed $N^2$, that is, the above large deviation upper and lower bounds with $I^+=I^-$, then $J := J^+ = J^-$ and
\begin{align*}
\chi_\mathrm{orb}(\sigma\,|\,\sigma_0) 
&= 
\chi_\mathrm{orb}(\sigma\,|\,\Xi) = -J(\sigma) \\
&= 
\lim_{\substack{m\to\infty\\\delta\searrow0}}\varliminf_{N\to\infty}\frac{1}{N^2}\log \nu_N^{\otimes n}(\{ \mathbf{U} \in \mathrm{U}(N)^n \mid \mathrm{tr}_{\mathbf{U}}^{\Xi(N)} \in O_{m,\delta}(\sigma)\})
\end{align*}
holds for every $\sigma \in TS(C^*_R\langle x_{\bullet\diamond}\rangle)$ and any choice of approximating sequence $\Xi = (\Xi(N))_{N\in\mathbb{N}}$ to $\sigma_0 \in TS_\mathrm{fda}(C^*_R\langle x_{\bullet\diamond}\rangle)$.
\end{proposition}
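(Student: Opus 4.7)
The plan is to combine the assumed LDP bounds for $\tau_{\Xi^\mathrm{lib}(N)}$ with the heat-kernel estimates of Lemma \ref{L2.1}, in lieu of establishing an exponential convergence of $\pi_T^*(\tau_{\Xi^\mathrm{lib}(N)})$ to $\mathrm{tr}_{\mathbf{U}_N}^{\Xi(N)}$ as $T\to\infty$ (which is the usual route to ``contraction at $T=\infty$'' but is unavailable here). The starting point is that the joint law of $(U_N^{(1)}(T),\dots,U_N^{(n)}(T))$ has density $\prod_{i=1}^n p_{N,T}(U_i)$ with respect to $\nu_N^{\otimes n}$ on $\mathrm{U}(N)^n$, and that $\pi_T^*(\tau_{\Xi^\mathrm{lib}(N)}) = \mathrm{tr}_{(U_N^{(i)}(T))_{i=1}^n}^{\Xi(N)}$; hence for every Borel $A\subseteq TS(C^*_R\langle x_{\bullet\diamond}\rangle)$,
\begin{equation*}
\Big(\min_{U\in\mathrm{U}(N)} p_{N,T}(U)\Big)^{\!n} \mathbb{P}\big(\mathrm{tr}_{\mathbf{U}_N}^{\Xi(N)} \in A\big) \leq \mathbb{P}\big(\pi_T^*(\tau_{\Xi^\mathrm{lib}(N)}) \in A\big) \leq \Big(\max_{U\in\mathrm{U}(N)} p_{N,T}(U)\Big)^{\!n} \mathbb{P}\big(\mathrm{tr}_{\mathbf{U}_N}^{\Xi(N)} \in A\big),
\end{equation*}
and Lemma \ref{L2.1} ensures that both extreme prefactors vanish on the exponential scale $N^2$ as first $N\to\infty$ and then $T\to\infty$.

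For the upper bound, fix any closed $C\subseteq TS(C^*_R\langle x_{\bullet\diamond}\rangle)$. The weak$^*$ continuity of $\pi_T^*$ makes $(\pi_T^*)^{-1}(C)$ closed in $TS^c(C^*_R\langle x_{\bullet\diamond}(\,\cdot\,)\rangle)$, and applying the hypothesised LDP upper bound for $\tau_{\Xi^\mathrm{lib}(N)}$ to this preimage, together with the left-hand inequality above, gives for every $T>0$
\[
\varlimsup_{N\to\infty}\frac{1}{N^2}\log\mathbb{P}(\mathrm{tr}_{\mathbf{U}_N}^{\Xi(N)}\in C) \leq -nL(T) - \inf\big\{I^+(\tau) \,\big|\, \pi_T^*(\tau) \in C\big\}.
\]
Since the left-hand side is $T$-independent, passing to $\varliminf_{T\to\infty}$ on the right-hand side and using $L(T)\to 0$ from Lemma \ref{L2.1} upgrades this to
\[
\varlimsup_{N\to\infty}\frac{1}{N^2}\log\mathbb{P}(\mathrm{tr}_{\mathbf{U}_N}^{\Xi(N)}\in C) \leq -\varlimsup_{T\to\infty}\inf\big\{I^+(\tau) \,\big|\, \pi_T^*(\tau) \in C\big\}.
\]
Specialising to $C = F_{m,\delta}(\sigma)$, letting $m\to\infty,\delta\searrow 0$, and invoking Lemma \ref{L4.1} to replace $F_{m,\delta}$ by $O_{m,\delta}$ in the resulting expression, yields the local upper bound $\lim_{m,\delta}\varlimsup_N \frac{1}{N^2}\log\mathbb{P}(\mathrm{tr}_{\mathbf{U}_N}^{\Xi(N)} \in F_{m,\delta}(\sigma)) \leq -J^+(\sigma)$. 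A standard finite-cover argument over the weak$^*$ compact space $TS(C^*_R\langle x_{\bullet\diamond}\rangle)$ (cf.~\cite[\S4.1]{DemboZeitouni:Book}) then upgrades this to the global LDP upper bound with rate function $J^+$. The lower bound is entirely symmetric: for open $O$ and $\sigma\in O$ one picks some $O_{m,\delta}(\sigma)\subseteq O$, uses the right-hand inequality of the key estimate, applies the LDP lower bound to the open set $(\pi_T^*)^{-1}(O_{m,\delta}(\sigma))$, and uses $U(T)=F(T)\to 0$ to obtain $\varliminf_N \frac{1}{N^2}\log\mathbb{P}(\mathrm{tr}_{\mathbf{U}_N}^{\Xi(N)}\in O)\geq -J^-(\sigma)$, then takes the supremum over $\sigma\in O$.

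For the ``in particular'' conclusion, $I^+=I^-$ forces $J^+=J^-=:J$ and a full LDP for $\mathrm{tr}_{\mathbf{U}_N}^{\Xi(N)}$. Sandwiching $\mathbb{P}(\mathrm{tr}_{\mathbf{U}_N}^{\Xi(N)}\in O_{m,\delta}(\sigma))$ between the LDP upper bound applied to the closed set $F_{m,\delta}(\sigma)$ and the LDP lower bound applied to the open set $O_{m,\delta}(\sigma)$, passing to $\lim_{m\to\infty,\delta\searrow 0}$, and invoking lower semicontinuity of $J$ together with the fact that $\{O_{m,\delta}(\sigma)\}$ is a neighborhood basis at $\sigma$ (which forces $\sup_{m,\delta}\inf_{\sigma'\in F_{m,\delta}(\sigma)}J(\sigma') = J(\sigma)$), one obtains
\[
\chi_\mathrm{orb}(\sigma\,|\,\Xi) = -J(\sigma) = \lim_{m\to\infty,\delta\searrow 0}\varliminf_{N\to\infty}\frac{1}{N^2}\log\nu_N^{\otimes n}\big(\big\{\mathbf{U}\in\mathrm{U}(N)^n : \mathrm{tr}_\mathbf{U}^{\Xi(N)} \in O_{m,\delta}(\sigma)\big\}\big).
\]
Finally, since the rate function $I^\mathrm{lib}_{\sigma_0}$ of \cite{Ueda:JOTP19} (and hence $J$) depends only on $\sigma_0$, the identity $\chi_\mathrm{orb}(\sigma\,|\,\Xi) = -J(\sigma)$ holds for every approximating sequence $\Xi$ to $\sigma_0$, so taking the supremum over such $\Xi$ in definition \eqref{Eq8} yields $\chi_\mathrm{orb}(\sigma\,|\,\sigma_0) = -J(\sigma)$. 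The principal technical obstacle is the absence of exponential convergence at speed $N^2$ of the marginal heat-kernel measure on $\mathrm{U}(N)$ to Haar measure as $T\to\infty$; Lemma \ref{L2.1} circumvents this by supplying the direct Radon--Nikodym comparison above, whose defect degrades only sub-exponentially.
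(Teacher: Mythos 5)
Your proof is correct and follows essentially the same route as the paper: you sandwich the heat-kernel law $\pi_T^*(\tau_{\Xi^{\mathrm{lib}}(N)})$ against Haar by the Radon--Nikodym bounds $\min p_{N,T} \leq d\nu_{N,T}/d\nu_N \leq \max p_{N,T}$, feed the assumed LDP for $\tau_{\Xi^{\mathrm{lib}}(N)}$ through $(\pi_T^*)^{-1}$ (which is exactly the contraction principle the paper cites), let Lemma \ref{L2.1} kill the prefactors as $T\to\infty$, and then use Lemma \ref{L4.1} together with weak$^*$ compactness to upgrade the local bounds on the neighborhood basis $\{F_{m,\delta},O_{m,\delta}\}$ to the full LDP, just as the paper does via \cite[Theorem 4.1.11, Lemma 1.2.18]{DemboZeitouni:Book}. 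The ``in particular'' sandwich and the observation that $J$ inherits $\Xi$-independence from $I^{\mathrm{lib}}_{\sigma_0}$ likewise match the paper's reasoning (and in fact make its terse final sentence a little more explicit).
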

\begin{proof} 
Set 
\[
I_T^\pm(\sigma) := \inf\{ I^\pm(\tau) \mid \tau \in TS^c(C_R^*\langle x_{\bullet\diamond}(\,\cdot\,)\rangle),  \pi_T^*(\tau) = \sigma \}, \quad \sigma \in TS(C^*_R\langle x_{\bullet\diamond}\rangle). 
\]
By the contraction principle (see e.g.\ \cite[Theorem 4.2.1]{DemboZeitouni:Book}), $\mathbb{P}(\pi_T^*(\tau_{\Xi^\mathrm{lib}(N)}) \in \,\cdot\,)$ satisfies the large deviation upper (resp.\ lower) bound with speed $N^2$ and the rate function $I_T^+$ (resp.\ $I_T^-$). Write $\mathbf{U}_N(t) = \big(U_N^{(i)}(t)\big)_{i=1}^n$, $t \geq 0$, and define the random tracial state $\mathrm{tr}_{\mathbf{U}_N(T)}^{\Xi(N)}$ in the same manner as $\mathrm{tr}_{\mathbf{U}_N}^{\Xi(N)}$. Let $L(T) \leq U(T)$ as well as $\nu_{N,T}$ and $\nu_N$ be as in the previous sections. Observe that 
\[
\mathbb{P}(\pi_T^*(\tau_{\Xi^\mathrm{lib}(N)}) \in \,\cdot\,) 
= 
\mathbb{P}(\mathrm{tr}_{\mathbf{U}_N(T)}^{\Xi(N)} \in \,\cdot\,) 
= \nu_{N,T}^{\otimes n}(\{ \mathbf{U} \in \mathrm{U}(N)^n \mid \mathrm{tr}_{\mathbf{U}}^{\Xi(N)} \in \,\cdot\,\})   
\]
as well as 
\begin{equation}\label{Eq10}
\mathbb{P}(\mathrm{tr}_{\mathbf{U}_N}^{\Xi(N)}\in\,\cdot\,) = \nu_N^{\otimes n}(\{ \mathbf{U} \in \mathrm{U}(N)^n \mid \mathrm{tr}_{\mathbf{U}}^{\Xi(N)} \in \,\cdot\,\}). 
\end{equation}
Since 
\[
\Big(\min_{U\in\mathrm{U}(N)}p_{N,T}(U)\Big)\nu_N \leq 
\nu_{N,T} \leq \Big(\max_{U\in\mathrm{U}(N)}p_{N,T}(U)\Big)\nu_N,
\]
we observe that 
\begin{align*}
&\frac{n}{N^2}\log\min_{U\in\mathrm{U}(N)}p_{N,T}(U) + 
\frac{1}{N^2}\log\mathbb{P}(\mathrm{tr}_{\mathbf{U}_N}^{\Xi(N)}\in\,\cdot\,) \\
&\qquad\leq 
\frac{1}{N^2}\log\mathbb{P}(\pi_T^*(\tau_{\Xi^\mathrm{lib}(N)}) \in\,\cdot\,) \\
&\qquad\qquad\leq 
\frac{n}{N^2}\log\max_{U\in\mathrm{U}(N)}p_{N,T}(U) +
\frac{1}{N^2}\log\mathbb{P}(\mathrm{tr}_{\mathbf{U}_N}^{\Xi(N)}\in\,\cdot\,).
\end{align*}
Now, we will use the functions $L(T), U(T)$ in $T$ introduced in Lemma \ref{L2.1}. If we assume the large deviation upper (resp.\ lower) bound for $\mathbb{P}(\pi_T^*(\tau_{\Xi^\mathrm{lib}(N)}) \in \,\cdot\,)$, then   
\begin{align*}
&nL(T) + \varlimsup_{N\to\infty} \frac{1}{N^2}\log\mathbb{P}(\mathrm{tr}_{\mathbf{U}_N}^{\Xi(N)}\in\Lambda) \\
&\leq \varlimsup_{N\to\infty} \frac{1}{N^2}\log\mathbb{P}(\pi_T^*(\tau_{\Xi^\mathrm{lib}(N)}) \in\Lambda) \leq 
-\inf\{ I_T^+(\sigma) \mid \sigma \in \Lambda\} 
\end{align*}
for any closed $\Lambda \subset TS(C^*_R\langle x_{\bullet\diamond}\rangle)$ (resp.\ 
\begin{align*}
&nU(T) + \varliminf_{N\to\infty} \frac{1}{N^2}\log\mathbb{P}(\mathrm{tr}_{\mathbf{U}_N}^{\Xi(N)}\in\Gamma) \\
&\geq \varliminf_{N\to\infty} \frac{1}{N^2}\log\mathbb{P}(\pi_T^*(\tau_{\Xi^\mathrm{lib}(N)})\in\Gamma) \geq 
-\inf\{ I_T^-(\sigma) \mid \sigma \in \Gamma\} 
\end{align*}
for any open $\Gamma \subset TS(C^*_R\langle x_{\bullet\diamond}\rangle)$). It follows by Lemma \ref{L2.1} that 
\begin{align*}
\lim_{\substack{m\to\infty \\ \delta \searrow 0}}\varlimsup_{N\to\infty}\log\frac{1}{N^2}\mathbb{P}(\mathrm{tr}_{\mathbf{U}_N}^{\Xi(N)}\in O_{m,\delta}(\sigma)) 
&\leq 
-\lim_{\substack{m\to\infty \\ \delta \searrow 0}}\varlimsup_{T\to\infty}\inf\{ I_T^+(\sigma') \mid \sigma' \in F_{m,\delta}(\sigma)\} \\
\Big(\text{resp.\ }\lim_{\substack{m\to\infty \\ \delta\searrow0}}\varliminf_{N\to\infty}\log\frac{1}{N^2}\mathbb{P}(\mathrm{tr}_{\mathbf{U}_N}^{\Xi(N)}\in O_{m,\delta}(\sigma)) 
&\geq 
-\lim_{\substack{m\to\infty \\ \delta\searrow0}}\varliminf_{T\to\infty}\inf\{ I_T^-(\sigma') \mid \sigma' \in O_{m,\delta}(\sigma)\}\,\Big)
\end{align*}
for every $\sigma \in TS(C^*_R\langle x_{\bullet\diamond}\rangle)$. Observe that 
\[
\inf\{I_T^\pm(\sigma') \mid \sigma' \in \Theta\} = \inf\{I^\pm(\tau) \mid \tau \in TS^c(C^*_R\langle x_{\bullet\diamond}(\,\cdot\,)\rangle), \pi_T^*(\tau)\in \Theta \}
\]
for any $\Theta \subset TS(C^*_R\langle x_{\bullet\diamond}\rangle)$. By Lemma \ref{L4.1}, 
\[
\lim_{\substack{m\to\infty \\ \delta\searrow0}}\varlimsup_{T\to\infty}\inf\{ I_T^+(\sigma') \mid \sigma' \in O_{m,\delta}(\sigma)\} = 
\lim_{\substack{m\to\infty \\ \delta \searrow 0}}\varlimsup_{T\to\infty}\inf\{ I_T^+(\sigma') \mid \sigma' \in F_{m,\delta}(\sigma)\}
\]
(resp.\ the same identity with replacing $\varlimsup_{T\to\infty}$ and $I_T^+$ with $\varliminf_{T\to\infty}$ and $I_T^-$, respectively) holds and defines a rate function. Since $TS(C_R^*\langle x_{\bullet\diamond}\rangle)$ is weak$^*$ compact, we finally conclude by \cite[Theorem 4.1.11, Lemma 1.2.18]{DemboZeitouni:Book} that $\mathbb{P}(\mathrm{tr}_{\mathbf{U}_N}^{\Xi(N)}\in\,\cdot\,)$ satisfies the large deviation upper (resp.\ lower) bound with speed $N^2$ and the rate function $J^+$ (resp.\ $J^-$). 

For the last assertion, we first point out that
\begin{equation}\label{Eq11}
\begin{aligned}
-J^-(\sigma) 
&\leq 
\lim_{\substack{m\to\infty \\ \delta\searrow0}}\varliminf_{N\to\infty} \frac{1}{N^2}\log\mathbb{P}(\mathrm{tr}_{\mathbf{U}_N}^{\Xi(N)}\in O_{m,\delta}(\sigma)) \\
&\leq 
\lim_{\substack{m\to\infty \\ \delta\searrow0}}\varlimsup_{N\to\infty} \frac{1}{N^2}\log\mathbb{P}(\mathrm{tr}_{\mathbf{U}_N}^{\Xi(N)}\in O_{m,\delta}(\sigma)) 
\leq -J^+(\sigma). 
\end{aligned}
\end{equation}
Since $I^+ = I^-$, we have $-J^-(\sigma) \geq -J^+(\sigma)$ for every $\sigma \in TS(C^*_R\langle x_{\bullet\diamond}\rangle)$. Therefore, we conclude that equality holds in \eqref{Eq11}. This together with \eqref{Eq10} immediately implies the last assertion. 
\end{proof}

It is plausible that the definition of the orbital free entropy $\chi_\mathrm{orb}(\mathbf{X}_1,\dots,\mathbf{X}_{n+1})$ can still be defined independently of the choice of approximating sequence $\Xi = (\Xi(N))_{N\in\mathbb{N}}$ (under the constraint that $\mathrm{tr}^{\Xi(N)}$ converges to the joint distribution of the $\mathbf{X}_i$) without assuming the hyperfiniteness of each random multi-variable $\mathbf{X}_i$. 

As mentioned before, we have already established that the sequence of probability measures $\mathbb{P}(\tau_{\Xi^\mathrm{lib}(N)} \in \,\cdot\,)$ satisfies the large deviation upper bound with speed $N^2$ and the rate function $I_{\sigma_0}^\mathrm{lib}$. Hence, we can prove the next corollary. 

\begin{corollary}\label{C4.3} The sequence of probability measures $\mathbb{P}(\mathrm{tr}_{\mathbf{U}_N}^{\Xi(N)}\in\,\cdot\,)$ satisfies the large deviation upper bound with speed $N^2$ and the rate function 
\begin{equation*}
J_{\sigma_0}^\mathrm{lib}(\sigma) := \lim_{\substack{m\to\infty \\ \delta\searrow0}}\varlimsup_{T\to\infty}\inf\{I^\mathrm{lib}_{\sigma_0}(\tau) \mid \tau \in TS^c(C_R^*\langle x_{\bullet\diamond}(\,\cdot\,)\rangle), \pi_T^*(\tau) \in O_{m,\delta}(\sigma)\},
\end{equation*}
where the infimum over the empty set is taken to be $+\infty$. 
Moreover, $\chi_\mathrm{orb}(\sigma\,|\,\sigma_0) \leq -J_{\sigma_0}^\mathrm{lib}(\sigma)$ holds for every $\sigma \in TS(C^*_R\langle x_{\bullet\diamond}\rangle)$.
\end{corollary}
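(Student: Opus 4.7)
The plan is to reduce the corollary to Proposition \ref{P4.2} combined with a standard lower semicontinuity argument. First, I will invoke \cite[Theorem 5.8]{Ueda:JOTP19}, which asserts that $\mathbb{P}(\tau_{\Xi^\mathrm{lib}(N)}\in\,\cdot\,)$ satisfies the large deviation upper bound with speed $N^2$ and rate function $I_{\sigma_0}^\mathrm{lib}$. Feeding this into Proposition \ref{P4.2} with $I^+ = I_{\sigma_0}^\mathrm{lib}$ immediately produces the large deviation upper bound for $\mathbb{P}(\mathrm{tr}_{\mathbf{U}_N}^{\Xi(N)}\in\,\cdot\,)$ with rate function $J_{\sigma_0}^\mathrm{lib}$, which is the first assertion.

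For the inequality $\chi_\mathrm{orb}(\sigma\,|\,\sigma_0) \leq -J_{\sigma_0}^\mathrm{lib}(\sigma)$, I fix any approximating sequence $\Xi$ with $\lim_{N\to\infty}\mathrm{tr}^{\Xi(N)} = \sigma_0$ and exploit \eqref{Eq10} to identify $\mathbb{P}(\mathrm{tr}_{\mathbf{U}_N}^{\Xi(N)}\in\,\cdot\,)$ with $\nu_N^{\otimes n}(\{\mathbf{U} \in \mathrm{U}(N)^n \mid \mathrm{tr}_\mathbf{U}^{\Xi(N)}\in\,\cdot\,\})$. Since $F_{m,\delta}(\sigma)$ is weak$^*$-closed and contains $O_{m,\delta}(\sigma)$, the large deviation upper bound obtained above gives
\[
\varlimsup_{N\to\infty} \frac{1}{N^2}\log \nu_N^{\otimes n}\big(\{\mathbf{U} \mid \mathrm{tr}_\mathbf{U}^{\Xi(N)}\in O_{m,\delta}(\sigma)\}\big) \leq -\inf\{J_{\sigma_0}^\mathrm{lib}(\sigma') \mid \sigma' \in F_{m,\delta}(\sigma)\}
\]
for every $m \in \mathbb{N}$ and $\delta>0$.

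The remaining step is to pass to the limit as $m\to\infty$, $\delta\searrow0$. The left-hand side converges to $\chi_\mathrm{orb}(\sigma\,|\,\Xi)$ by the definition \eqref{Eq7}. On the right-hand side, since $\sigma \in F_{m,\delta}(\sigma)$ the infimum is at most $J_{\sigma_0}^\mathrm{lib}(\sigma)$; conversely, for any sequence $\sigma_{m,\delta} \in F_{m,\delta}(\sigma)$ nearly achieving the infimum, the definition of $F_{m,\delta}(\sigma)$ forces $\sigma_{m,\delta} \to \sigma$ in the weak$^*$ topology, and lower semicontinuity of $J_{\sigma_0}^\mathrm{lib}$ (a rate function by Lemma \ref{L4.1}) yields $\liminf J_{\sigma_0}^\mathrm{lib}(\sigma_{m,\delta}) \geq J_{\sigma_0}^\mathrm{lib}(\sigma)$. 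Hence the infimum tends to $J_{\sigma_0}^\mathrm{lib}(\sigma)$, and we deduce $\chi_\mathrm{orb}(\sigma\,|\,\Xi) \leq -J_{\sigma_0}^\mathrm{lib}(\sigma)$. Taking the supremum over all admissible $\Xi$ as in \eqref{Eq8} then yields the claim.

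I expect the first assertion to be a purely mechanical invocation of Proposition \ref{P4.2}. The only mildly delicate point is the passage from the open neighborhood $O_{m,\delta}(\sigma)$ (appearing in the definition of $\chi_\mathrm{orb}$) to the closed neighborhood $F_{m,\delta}(\sigma)$ (required by the LDP upper bound), but the inclusion $O_{m,\delta}(\sigma) \subseteq F_{m,\delta}(\sigma) \subseteq O_{m,2\delta}(\sigma)$ together with the lower semicontinuity of $J_{\sigma_0}^\mathrm{lib}$ removes this subtlety, just as in the last assertion of Lemma \ref{L4.1}. No real obstacle is anticipated.
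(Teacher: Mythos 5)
Your proof is correct and follows essentially the same route as the paper's. The first assertion is exactly the paper's: invoke \cite[Theorem 5.8]{Ueda:JOTP19} to get the LDP upper bound for $\mathbb{P}(\tau_{\Xi^\mathrm{lib}(N)}\in\,\cdot\,)$ with rate $I_{\sigma_0}^\mathrm{lib}$, then feed it into Proposition \ref{P4.2} (with Lemma \ref{L4.1} ensuring the resulting $J_{\sigma_0}^\mathrm{lib}$ is a rate function). For the second assertion, the paper simply cites the observation already recorded as inequality \eqref{Eq11} inside the proof of Proposition \ref{P4.2} — namely $\lim_{m,\delta}\varlimsup_N\frac{1}{N^2}\log\mathbb{P}(\mathrm{tr}_{\mathbf{U}_N}^{\Xi(N)}\in O_{m,\delta}(\sigma)) \leq -J_{\sigma_0}^\mathrm{lib}(\sigma)$ — and then takes the supremum over $\Xi$ using that $J_{\sigma_0}^\mathrm{lib}$ depends only on $\sigma_0$; you instead re-derive that same inequality from the general closed-set LDP upper bound applied to $F_{m,\delta}(\sigma)$, combined with $F_{m,\delta}(\sigma)\downarrow\{\sigma\}$ and lower semicontinuity of $J_{\sigma_0}^\mathrm{lib}$. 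This is a fully valid unpacking of what the paper abbreviates, so the two arguments are the same in substance. One tiny remark: for your final step you only need $\liminf_{m,\delta}\inf_{F_{m,\delta}(\sigma)}J_{\sigma_0}^\mathrm{lib}\geq J_{\sigma_0}^\mathrm{lib}(\sigma)$ (the lower-semicontinuity direction); the reverse inequality via $\sigma\in F_{m,\delta}(\sigma)$ is not used in the conclusion, though it is of course true.
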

\begin{proof}
The first assertion immediately follows from Lemma \ref{L4.1} and Proposition \ref{P4.2}. 

For the second assertion, we first observe that 
\begin{align*} 
\chi_\mathrm{orb}(\sigma\,|\,\Xi) 
=
\lim_{\substack{m\to\infty \\ \delta\searrow0}}\varlimsup_{N\to\infty}\frac{1}{N^2}\log\mathbb{P}(\mathrm{tr}_{\mathbf{U}_N}^{\Xi(N)} \in O_{m,\delta}(\sigma)) \leq -J_{\sigma_0}^\mathrm{lib}(\sigma)
\end{align*}
for every $\sigma \in TS(C^*_R\langle x_{\bullet\diamond}\rangle)$. 
Since $J_{\sigma_0}^\mathrm{lib}$ is independent of the choice of approximation $\Xi$ to $\sigma_0$, we conclude that $\chi_\mathrm{orb}(\sigma\,|\,\sigma_0) \leq -J_{\sigma_0}^\mathrm{lib}(\sigma)$ for every $\sigma \in TS(C^*_R\langle x_{\bullet\diamond}\rangle)$.
\end{proof}

\begin{remark}\label{R4.4} {\rm 
Several questions on the matrix liberation process $\Xi^\mathrm{lib}(N)$ toward the completion of developing the theory of orbital free entropy are in order. 
\begin{itemize} 
\item[(Q1)] Show that $J_{\sigma_0}^\mathrm{lib}(\sigma)=0$ implies that the $\mathbf{X}^{\sigma}_i$ are freely independent. (This is a question about minimizers of $J_{\sigma_0}^\mathrm{lib}$.)
\item[(Q2)] Identify $J_{\sigma_0}^\mathrm{lib}(\sigma)$ with Voiculescu's free mutual information $i^*(W^*(\mathbf{X}_1^{\sigma});\dots;W^*(\mathbf{X}_{n+1}^{\sigma}))$ (at least when $\sigma=\sigma_0$ or when the $\mathbf{X}_i^{\sigma_0}$ are freely independent) if possible. Here each $W^*(\mathbf{X}_i^{\sigma})$ denotes the von Neumann subalgebra generated by $\mathbf{X}_i^{\sigma} = (X_{ij}^{\sigma})_{j=1}^{r(i)}$.  
\item[(Q3)] Prove a large deviation lower bound with speed $N^2$ for the sequence of probability measures $\mathbb{P}(\tau_{\Xi^\mathrm{lib}(N)} \in \,\cdot\,)$. It is preferable to identify its rate function with $I_{\sigma_0}^\mathrm{lib}$. 
\end{itemize} 
The affirmative answer to (Q2) shows $\chi_\mathrm{orb} \leq -i^*$. On the other hand, as we saw in Proposition \ref{P4.2}, the affirmative complete answer to (Q3) enables one to define $\chi_\mathrm{orb}$ independently of the choice of approximating sequence at least when $\sigma=\sigma_0$ or when $\sigma_0$ is the `empirical distribution' of a freely independent family as in (Q2). Also, the affirmative complete answers to both (Q2) and (Q3) show $\chi_\mathrm{orb} = -i^*$. Finally, the affirmative answer to (Q2) or (Q3) solves (Q1) in the affirmative; hence (Q1) is a test for both (Q2) and (Q3).} 
\end{remark}

\section{Minimizer of the Rate function $J_{\sigma_0}^\mathrm{lib}$}

In this section, we will solve (Q1) of Remark \ref{R4.4} in the affirmative. 

\medskip
The next lemma is probably known to specialists, but we include its proof for the sake of the completeness of this paper. 

\begin{lemma}\label{L5.1} The limit $\sigma_0^\mathrm{fr} := \lim_{T\to\infty}\pi_T^*(\sigma_0^\mathrm{lib})$ exists in $TS(C^*_R\langle x_{\bullet\diamond}\rangle)$, and we have
\begin{itemize}
\item[(i)] $\sigma_0^\mathrm{fr}$ agrees with $\sigma_0$ on each $C^*_R \langle x_{i\diamond}\rangle $, $i=1,\dots,n+1$;
\item[(ii)] the $\mathbf{X}_i^{\sigma_0^\mathrm{fr}}$, $1 \leq i \leq n+1$, are freely independent.
\end{itemize}
\end{lemma}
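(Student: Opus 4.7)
The plan is to unpack the definition of $\sigma_0^{\mathrm{lib}}$, apply the classical fact that the free unitary Brownian motion converges to a free Haar unitary in $*$-distribution as time goes to infinity, and then use the standard characterization of free independence via conjugation by free Haar unitaries.

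First I would unpack the definition. By the formula $\sigma_0^{\mathrm{lib}} = \widetilde{\Gamma^*(\sigma_0)}\circ\Pi^0$ from Subsection~4.5, for any word $w = x_{i_1 j_1}\cdots x_{i_p j_p}$ one has
\begin{equation*}
\pi_T^*(\sigma_0^{\mathrm{lib}})(w) \;=\; \widetilde{\Gamma^*(\sigma_0)}\!\left(\prod_{k=1}^p V_{i_k}(T)\,X^{\sigma_0}_{i_k j_k}\,V_{i_k}(T)^*\right),
\end{equation*}
where I set $V_i(T) := v_i^{\Gamma^*(\sigma_0)}(T)$ for $1\le i\le n$ and $V_{n+1}(T) := I$. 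Under the tracial extension $\widetilde{\Gamma^*(\sigma_0)}$, the $V_i(t)$ are mutually $*$-free left-multiplicative free unitary Brownian motions, and the family $\{V_i(\cdot)\}_{i=1}^n$ is $*$-free from $\{X^{\sigma_0}_{ij}\}_{i,j}$ (which arise from $\Gamma^*(\sigma_0)$ and are constant in $t$).

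Next I would invoke Biane's classical result that a free unitary Brownian motion at time $T$ converges in $*$-distribution to a free Haar unitary as $T\to\infty$. Combined with the aforementioned $*$-freeness, the joint $*$-distribution of $(V_1(T),\ldots,V_n(T),(X^{\sigma_0}_{ij})_{i,j})$ converges to that of $(u_1,\ldots,u_n,(X^{\sigma_0}_{ij})_{i,j})$ in some tracial $W^*$-probability space, where $u_1,\ldots,u_n$ are $*$-free Haar unitaries and the whole family $\{u_i\}$ is $*$-free from $\{X^{\sigma_0}_{ij}\}$. Consequently the limit
\begin{equation*}
\sigma_0^{\mathrm{fr}}(w) \;:=\; \lim_{T\to\infty}\pi_T^*(\sigma_0^{\mathrm{lib}})(w) \;=\; \tau\!\left(\prod_{k=1}^p u_{i_k}\,X^{\sigma_0}_{i_k j_k}\,u_{i_k}^*\right),
\end{equation*}
exists for every word $w$ (with the convention $u_{n+1}:=I$), and since $TS(C^*_R\langle x_{\bullet\diamond}\rangle)$ is weak$^*$ compact and moments determine the state, this gives convergence in $TS(C^*_R\langle x_{\bullet\diamond}\rangle)$.

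For (i), I would use traciality: for any word $w$ in the $x_{ij}$ with fixed first index $i$,
\begin{equation*}
\sigma_0^{\mathrm{fr}}(x_{i,j_1}\cdots x_{i,j_p}) = \tau(u_i\,X^{\sigma_0}_{i,j_1}\cdots X^{\sigma_0}_{i,j_p}\,u_i^*) = \tau(X^{\sigma_0}_{i,j_1}\cdots X^{\sigma_0}_{i,j_p}) = \sigma_0(x_{i,j_1}\cdots x_{i,j_p}),
\end{equation*}
so that $\sigma_0^{\mathrm{fr}}$ agrees with $\sigma_0$ on $C^*_R\langle x_{i\diamond}\rangle$; the case $i=n+1$ is immediate since $u_{n+1}=I$. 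For (ii), I would quote the classical result (see, e.g., Voiculescu--Dykema--Nica or Nica--Speicher) that if $\mathcal{A}_1,\ldots,\mathcal{A}_{n+1}$ are unital $*$-subalgebras of a tracial $W^*$-probability space and $u_1,\ldots,u_n$ are $*$-free Haar unitaries that are $*$-free from $\mathcal{A}_1\vee\cdots\vee\mathcal{A}_{n+1}$, then $u_1\mathcal{A}_1 u_1^*,\ldots,u_n\mathcal{A}_n u_n^*,\mathcal{A}_{n+1}$ are freely independent. Applied to $\mathcal{A}_i := W^*(\mathbf{X}^{\sigma_0}_i)$, this yields (ii).

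The main point to get right is the joint convergence in Step~2, but it follows from Biane's convergence of free unitary Brownian motion combined with the fact that mixed moments factor through the $*$-free product structure, so this amounts to invoking classical results rather than doing new work; the rest is routine.
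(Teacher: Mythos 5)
Your proof is correct, but it follows a genuinely different route from the paper's. You invoke Biane's theorem that a free unitary Brownian motion converges in $*$-distribution to a free Haar unitary as $T\to\infty$, combine this with the observation that the joint $*$-distribution of a $*$-free family is determined by (and depends continuously on) the marginal laws to get joint convergence, and then quote Voiculescu's classical result that conjugation by $*$-free Haar unitaries produces free independence. The paper instead only uses the single moment $\tau(v_i(T)) = e^{-T/2}$ from Biane's lemma and proves (ii) directly via an explicit quantitative estimate: it shows that every alternating product $\tau(v_{i_1}(T)x_1^\circ v_{i_1}(T)^*\cdots v_{i_m}(T)x_m^\circ v_{i_m}(T)^*)$ of $\tau$-centered elements decays like $(2^{m-1}-1)(\sup_j\Vert x_j^\circ\Vert_\infty)^m e^{-T/2}$, by iteratively peeling off $v_{i_k}(T)^*v_{i_{k+1}}(T)$ as mean-plus-centered and using $*$-freeness to kill the fully centered remainder. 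That single estimate yields both the existence of the limit and (ii) simultaneously, and — importantly for the paper's overall economy — it is precisely the computational template that is reused in the proof of Lemma~5.2, where an explicit $e^{(s-T)/2}$ bound is needed. Your approach is shorter and more conceptual, but the one step you state without proof — that marginal convergence of the $V_i(T)$ forces joint convergence of the whole $*$-free family with the constants $X^{\sigma_0}_{ij}$ — should be flagged as a consequence of the fact that mixed moments across a $*$-free family are universal polynomials in the marginal moments (or, equivalently, of the alternating centered decomposition that the paper carries out explicitly).
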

\begin{proof}
By construction it is clear that $\pi_T^*(\sigma_0^\mathrm{lib})$ agrees with $\sigma_0$ on $C^*_R\langle x_{i\diamond}\rangle$ for each $1 \leq i \leq n+1$. Hence (i) trivially holds. Thus it suffices to prove only (ii). 

Let $(\mathcal{M},\tau)$ be a tracial $W^*$-probability space and $\mathcal{N} \subset \mathcal{M}$ be a $W^*$-subalgebra. Let $\{v_i(t)\}_{i=1}^n$ be a $*$-freely independent family of free left unitary Brownian motions in $\mathcal{M}$ such that the family is $*$-freely independent of $\mathcal{N}$. Set $v_{n+1}(t) := 1$ for all $t \geq 0$ for the ease of notations. In order to prove (ii), it suffices to prove that  
\begin{align*}
|\tau(v_{i_1}(T) x_1^\circ v_{i_1}(T)^* v_{i_2}(T) x_2^\circ v_{i_2}(T)^* \cdots v_{i_m}(T) x_m^\circ v_{i_m}(T)^*)|& \\
\leq (2^{m-1}-1)& \Big(\sup_{1\leq j\leq m}\Vert x_j^\circ\Vert_\infty\Big)^m e^{-T/2} 
\end{align*}
whenever $m \geq 1$, $i_k \neq i_{k+1}$ ($1 \leq k \leq m-1$) and $x_k^\circ \in \mathcal{N}$ with $\tau(x_k^\circ) = 0$ ($1 \leq k \leq m$). When $m=1$, the left-hand side must be $0$; thus the desired fact trivially holds. Thus we may assume $m \geq 2$. 

Recall that $\tau(v_i(t)) = e^{-t/2}$ for every $t \geq 0$ and $1 \leq i \leq n$. This is a particular case of Biane's result \cite[Lemma 1]{Biane:Fields97}. Since $v_{i_k}(T)$ and $v_{i_{k+1}}(T)$ are $*$-freely independent, we have  
\begin{equation}\label{Eq5.1}
0 \leq \tau(v_{i_k}(T)^* v_{i_{k+1}}(T)) = \overline{\tau(v_{i_k}(T))}\tau(v_{i_{k+1}}(T)) 
= \begin{cases} 
e^{-T/2} &(\text{$i_k$ or $i_{k+1}$ is $n+1$}), \\    
e^{-T} \leq e^{-T/2} & (\text{otherwise})
\end{cases}
\end{equation}
for every $1 \leq k \leq m-1$. Hence we obtain that 
\begin{align*}
&|\tau(v_{i_1}(T) x_1^\circ v_{i_1}(T)^* v_{i_2}(T) x_2^\circ v_{i_2}(T)^* \cdots v_{i_m}(T) x_m^\circ v_{i_m}(T)^*)| \\
&\leq 
\tau(v_{i_1}(T)^* v_{i_2}(T)) |\tau(v_{i_1}(T) x_1^\circ x_2^\circ v_{i_2}(T)^* \cdots v_{i_m}(T) x_m^\circ v_{i_m}(T)^*)| \\ 
&\quad+ 
|\tau(v_{i_1}(T) x_1^\circ (v_{i_1}(T)^* v_{i_2}(T))^\circ x_2^\circ v_{i_2}(T)^* \cdots v_{i_m}(T) x_m^\circ v_{i_m}(T)^*)| \\  
&\leq 
\Big(\sup_{1\leq j\leq m}\Vert x_j^\circ\Vert_\infty\Big)^m e^{-T/2} +  |\tau(v_{i_1}(T) x_1^\circ (v_{i_1}(T)^* v_{i_2}(T))^\circ x_2^\circ v_{i_2}(T)^* \cdots v_{i_m}(T) x_m^\circ v_{i_m}(T)^*)|
\end{align*}
with $(v_{i_1}(T)^* v_{i_2}(T))^\circ := v_{i_1}(T)^* v_{i_2}(T) - \tau(v_{i_1}(T)^* v_{i_2}(T))1$.  We continue this procedure for $v_{i_2}(T)^* v_{i_3}(T)$ and so on until $v_{i_{m-1}}(T)^* v_{i_m}(T)$ inductively, and obtain 
\begin{align*} 
&|\tau(v_{i_1}(T) x_1^\circ v_{i_1}(T)^* v_{i_2}(T) x_2^\circ v_{i_2}(T)^* \cdots v_{i_m}(T) x_m^\circ v_{i_m}(T)^*)| \\
&\leq (1+2+\cdots+2^{m-2}) \Big(\sup_{1\leq j\leq m}\Vert x_j^\circ\Vert_\infty\Big)^m e^{-T/2} \\
&\quad+  |\tau(v_{i_1}(T) x_1^\circ (v_{i_1}(T)^* v_{i_2}(T))^\circ x_2^\circ (v_{i_2}(T)^* v_{i_3}(T))^\circ \cdots (v_{i_{m-1}}(T)v_{i_m}(T))^\circ x_m^\circ v_{i_m}(T)^*)|, 
\end{align*}
where we used $\Vert (v_{i_1}(T)^* v_{i_2}(T))^\circ \Vert_\infty \leq 2$. 
By the $*$-free independence between $\mathcal{N}$ and $\{v_i(t)\}_{i=1}^n$, 
\[
\tau(v_{i_1}(T) x_1^\circ (v_{i_1}(T)^* v_{i_2}(T))^\circ x_2^\circ (v_{i_2}(T)^* v_{i_3}(T))^\circ \cdots (v_{i_{m-1}}(T)^* v_{i_m}(T))^\circ x_m^\circ v_{i_m}(T)^*) = 0,
\]
implying the desired estimate. 
\end{proof}

\begin{lemma}\label{L5.2} For any $\tau \in TS^c(C^*_R\langle x_{\bullet\diamond}(\,\cdot\,)\rangle)$ with $I_{\sigma_0,\infty}^\mathrm{lib}(\tau) < +\infty$ and any $P \in \mathbb{C}\langle x_{\bullet\diamond} \rangle$ we have 
$$
\Vert E_{\mathcal{N}(\tau)}(\pi_{\tilde{\tau}}(\Pi^s(\mathfrak{D}_s^{(k)}\pi_T(P)))) \Vert_\infty \leq C\,\mathbf{1}_{[0,T]}(s)\,e^{(s-T)/2}
$$
for some constant $C=C(P) > 0$ depending only on $P$. 
\end{lemma}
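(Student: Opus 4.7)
By linearity I may reduce to the case where $P = x_{i_1 j_1}\cdots x_{i_p j_p}$ is a monomial. A direct Leibniz-rule calculation using the definitions of $\delta_s^{(k)}$ and $\mathfrak{D}_s^{(k)} = \theta\circ\delta_s^{(k)}$ yields
\[
\mathfrak{D}_s^{(k)}(\pi_T(P)) = \mathbf{1}_{[0,T]}(s)\,v_k(T-s)^*\Big(\sum_{\ell:\,i_\ell=k}[R_\ell L_\ell,\,x_{kj_\ell}(T)]\Big)v_k(T-s),
\]
where $L_\ell$ and $R_\ell$ are the sub-monomials of $\pi_T(P)$ lying to the left and right of position $\ell$. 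Crucially, the dependence on $v_k(T-s)$ appears only through the outer conjugation, and when every $i_m$ equals $k$ the inner sum telescopes to zero.

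Setting $u_k := v_k^\tau(T-s)$ and $W_\ell := \pi_{\tilde{\tau}}(\Pi^s(R_\ell L_\ell))$, one has $u_k^* x_{kj_\ell}^{\tau^s}(T)\, u_k = x_{kj_\ell}^\tau(s) \in \mathcal{N}(\tau)$ together with the identity $u_k^*[A,B]u_k = [u_k^* A u_k,\,u_k^* B u_k]$. Combining with the $\mathcal{N}(\tau)$-bimodule property of $E_{\mathcal{N}(\tau)}$ then gives
\[
E_{\mathcal{N}(\tau)}\bigl(\pi_{\tilde{\tau}}(\Pi^s(\mathfrak{D}_s^{(k)}\pi_T(P)))\bigr) = \mathbf{1}_{[0,T]}(s)\sum_{\ell:\,i_\ell=k}\bigl[E_{\mathcal{N}(\tau)}(u_k^* W_\ell u_k),\,x_{kj_\ell}^\tau(s)\bigr],
\]
so the task becomes bounding the operator norm of this sum by $C(P)\,e^{-(T-s)/2}$.

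For this key estimate I would exploit the reduced-free-product structure: the $v_i^\tau(T-s)$'s are free unitary Brownian motions, $\ast$-free from each other and from $\mathcal{N}(\tau)$, with $\tilde{\tau}(v_i^\tau(T-s)) = e^{-(T-s)/2}$ by Biane's formula. Pushing the outer conjugation by $u_k$ through each factor of $W_\ell$ via $u_k u_k^* = 1$ rewrites $u_k^* W_\ell u_k$ as an alternating word $Y_0\,\widetilde{B}_1\,Y_1\cdots\widetilde{B}_L\,Y_L$, where each $Y_i \in \mathcal{N}(\tau)$ (produced by the $k$-type factors of $R_\ell L_\ell$) and $\widetilde{B}_i = u_k^* M_i u_k$ with $M_i$ in the subalgebra $\mathcal{M}' := W^*(\mathcal{N}(\tau)\cup\{v_i^\tau(T-s):i\neq k\})$. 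Expanding each occurrence of $u_k$ inside $\widetilde{B}_i$ as $e^{-(T-s)/2} + u_k^\circ$ and iterating the free-conjugation formula
\[
E_{\mathcal{M}'}(u_k^* X u_k) = |\tilde{\tau}(u_k)|^2 X + (1 - |\tilde{\tau}(u_k)|^2)\,\tilde{\tau}(X), \qquad X\in\mathcal{M}',
\]
together with the vanishing of $E_{\mathcal{M}'}$ on suitable alternating-centered products involving $u_k^\circ$, produces for each individual $\ell$ a ``Haar-limit'' contribution plus a remainder of operator norm at most $C_\ell(P)\,e^{-(T-s)/2}$. The Haar-limit contributions correspond to formally replacing $u_k$ and the $v_i^\tau(T-s)$'s by free Haar unitaries, at which point the families $\mathbf{X}_i^\tau$ become (after conjugation) freely independent. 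Since this is the equilibrium of the liberation flow, the liberation gradient must vanish there, and consequently the Haar-limit parts cancel identically when summed over $\ell$ --- an enhancement of the telescoping noted in the first paragraph. The chief obstacle is carrying out this expansion cleanly across all monomial shapes while tracking the cancellations and bounding the remainders uniformly in $\tau$, which amounts to a careful combinatorial analysis of the alternating-centered expansion in the reduced free product $\mathcal{N}(\tau)\ast W^*(u_k)\ast W^*(v_i^\tau(T-s):i\neq k)$.
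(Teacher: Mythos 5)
Your setup is correct: the reduction to a monomial $P=x_{i_1 j_1}\cdots x_{i_p j_p}$, the Leibniz-rule formula for $\mathfrak{D}_s^{(k)}\pi_T(P)$, the observation that the $v_k(T-s)$'s enter only through an outer conjugation, and the resulting commutator representation with $E_{\mathcal{N}(\tau)}(u_k^*W_\ell u_k)$ are all right. But the proof as outlined has a genuine gap precisely where you flag it: the claimed cancellation of the ``Haar-limit'' contributions when summed over $\ell$.

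The difficulty is that after expanding each $w=v_i^\tau(T-s)^*v_{i'}^\tau(T-s)$ as $\tilde{\tau}(w)1+w^\circ$, the ``constant'' part of $E_{\mathcal{N}(\tau)}(u_k^*W_\ell u_k)$ is \emph{not} generically a scalar --- it is a nontrivial element of $\mathcal{N}(\tau)$ --- so the commutator with $x_{kj_\ell}^\tau(s)$ does not automatically kill it. You therefore really do need the $\ell$-sum to cancel exactly, and the only justification you offer is the heuristic ``the liberation gradient must vanish at equilibrium,'' which you acknowledge you cannot carry out combinatorially. You also never explicitly invoke the hypothesis $I_{\sigma_0,\infty}^{\mathrm{lib}}(\tau)<+\infty$, which is indispensable: without knowing that $\tilde{\tau}((Q)_s)=\sigma_0(Q)$ for $Q\in\mathbb{C}\langle x_{i\diamond}\rangle$ (this is what finiteness of $I_{\sigma_0,\infty}^{\mathrm{lib}}$ delivers, via \cite[Proposition 5.7(1),(2)]{Ueda:JOTP19}), the time-$s$ marginals are uncontrolled and the ``equilibrium'' argument has no content.

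The paper's proof sidesteps the cancellation problem entirely by a different first reduction: rather than reducing to monomials in the generators $x_{ij}$, one iteratively writes $Q=\sigma_0(Q)1+Q^\circ$ to express $P$ as a scalar plus a sum of alternating words $Q_1^\circ\cdots Q_m^\circ$ with $Q_\ell^\circ\in\mathbb{C}\langle x_{i_\ell\diamond}\rangle$, $\sigma_0(Q_\ell^\circ)=0$ and $i_\ell\neq i_{\ell+1}$. After this $\sigma_0$-centering (and using the hypothesis to convert it into $\tilde{\tau}$-centering of $(Q_\ell^\circ)_s$), the iterative expansion of the $w_{i,i'}$'s forces every surviving constant term in $Z_\ell^{(k)}(s)$ to be a \emph{scalar} multiple of the identity (a product of $\sigma_0$-traces of the $Q_j^\circ$'s), which then vanishes inside the commutator $[\,\cdot\,,(Q_\ell^\circ)_s]$ \emph{term by term}, with no need for an $\ell$-sum cancellation or any appeal to equilibrium. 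You should replace your reduction-to-monomials with this centered alternating decomposition; once you do, your outlined expansion in $\tilde{\tau}(w_{i,i'})1+(w_{i,i'})^\circ$ (together with $0\leq\tilde{\tau}(w_{i,i'})\leq e^{(s-T)/2}$ and $\|(w_{i,i'})^\circ\|_\infty\leq 2$) goes through cleanly and yields the stated bound.
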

\begin{proof} Iteratively performing the decomposition $Q = \sigma_0(Q)1+Q^\circ$ with $Q^\circ = Q-\sigma_0(Q)1$ we observe that $P$ is a sum of a scalar and several monomials of the form: 
\[
Q_1^\circ \cdots Q_m^\circ, 
\]
where $Q_\ell^\circ \in \mathbb{C}\langle x_{i_\ell \diamond}\rangle$ with $\sigma_0(Q_\ell^\circ) = 0$ such that $m \geq 1$ and $i_\ell \neq i_{\ell+1}$ ($1 \leq \ell \leq m-1$). Hence we may and do assume that $P = Q_1^\circ \cdots Q_m^\circ$ in what follows, since any scalar term vanishes under $\mathfrak{D}^{(k)}_s$. We also observe that each $\delta_s^{(k)} \pi_T(Q_\ell^\circ)$, $1 \leq \ell \leq m$, becomes 
\[
\begin{cases} 
\pi_T(Q_\ell^\circ) v_k(T-s) \otimes v_k(T-s)^* -  v_k(T-s) \otimes v_k(T-s)^* \pi_T(Q_\ell^\circ) & (k = i_\ell, s \leq T), \\
0 & (\text{otherwise}). 
\end{cases}
\]
Hence we may and do restrict our consideration to the case $s \leq T$, and obtain that 
\begin{equation}\label{Eq5.2}
Z^{(k)}(s) := E_{\mathcal{N}(\tau)}(\pi_{\tilde{\tau}}(\Pi^s(\mathfrak{D}_s^{(k)}\pi_T(P)))) 
= \sum_{\ell=1}^m [Z^{(k)}_\ell(s), (Q_\ell^\circ)_s], 
\end{equation}
where $Z^{(k)}_\ell(s)$ is defined to be $0$ when $i_\ell \neq k$; otherwise to be
\[
\begin{cases} 
E_{\mathcal{N}(\tau)}(w_{i_\ell,i_{\ell+1}} (Q_{\ell+1}^\circ)_s \cdots w_{i_{m-1},i_m} (Q_m^\circ)_s w_{i_m,i_1} (Q_1^\circ)_s w_{i_1,i_2} \cdots (Q_{\ell-1}^\circ)_s w_{i_{\ell-1},i_\ell}) & (i_m \neq i_1), \\
E_{\mathcal{N}(\tau)}(w_{i_\ell,i_{\ell+1}} (Q_{\ell+1}^\circ)_s \cdots w_{i_{m-1},i_1} (Q_m^\circ Q_1^\circ)_s w_{i_1,i_2} \cdots (Q_{\ell-1}^\circ)_s w_{i_{\ell-1},i_\ell}) & (i_m = i_1) 
\end{cases}
\]
and we write $w_{i,i'} := v_i^\tau(T-s)^* v_{i'}^\tau(T-s)$ ($1 \leq i \neq i' \leq n+1$). ({\it n.b.}, $v^\tau_{n+1}(t) := 1$ for all $t \geq 0$) and $(Q)_s := \pi_{\tilde{\tau}}(\pi_s(Q))$ for $Q \in \mathbb{C}\langle x_{\bullet\diamond} \rangle$. By \cite[Proposition 5.7(1),(2)]{Ueda:JOTP19}, which still holds for $I_{\sigma_0,\infty}^\mathrm{lib}$ without any essential changes, $I_{\sigma_0,\infty}^\mathrm{lib}(\tau) < +\infty$ guarantees that $\tilde{\tau}((Q)_s) = \sigma_0(Q)$ for all $Q \in \mathbb{C}\langle x_{i\diamond}\rangle$ with each fixed $i = 1,\dots,n+1$. Hence the first case $i_m \neq i_1$ can be treated essentially in the same way as in the proof of Lemma \ref{L5.1}. Namely, when $i_m \neq i_1$ (and $i_\ell = k$), we have, for any $y \in \mathcal{N}(\tau)$ (see subsection 4.4 for this notation), 
\begin{align*} 
&\tau(y Z_\ell^{(k)}(s)) \\
&= 
\tilde{\tau}(y w_{i_\ell,i_{\ell+1}} (Q_{\ell+1}^\circ)_s \cdots w_{i_{m-1},i_m} (Q_m^\circ)_s w_{i_m,i_1} (Q_1^\circ)_s w_{i_1,i_2} \cdots (Q_{\ell-1}^\circ)_s w_{i_{\ell-1},i_\ell}) \\
&= 
\tilde{\tau}(w_{i_\ell,i_{\ell+1}})\tilde{\tau}(y (Q_{\ell+1}^\circ)_s \cdots w_{i_{m-1},i_m} (Q_m^\circ)_s w_{i_m,i_1} \pi_s(Q_1^\circ) w_{i_1,i_2} \cdots (Q_{\ell-1}^\circ)_s w_{i_{\ell-1},i_\ell})\\
&\quad+
\tilde{\tau}(y(w_{i_\ell,i_{\ell+1}})^\circ (Q_{\ell+1}^\circ)_s \cdots w_{i_{m-1},i_m} (Q_m^\circ)_s w_{i_m,i_1} (Q_1^\circ)_s w_{i_1,i_2} \cdots (Q_{\ell-1}^\circ)_s w_{i_{\ell-1},i_\ell}), 
\end{align*}
and obtain that 
\begin{align*}
&Z_\ell^{(k)}(s) = \\
&\quad\tilde{\tau}(w_{i_\ell,i_{\ell+1}}) E_{\mathcal{N}(\tau)}((Q_{\ell+1}^\circ)_s \cdots w_{i_{m-1},i_m}(Q_m^\circ)_s w_{i_m,i_1} (Q_1^\circ)_s w_{i_1,i_2} \cdots (Q_{\ell-1}^\circ)_s w_{i_{\ell-1},i_\ell}) \\
&\qquad+ 
E_{\mathcal{N}(\tau)}((w_{i_\ell,i_{\ell+1}})^\circ (Q_{\ell+1}^\circ)_s \cdots w_{i_{m-1},i_m} (Q_m^\circ)_s w_{i_m,i_1} (Q_1^\circ)_s w_{i_1,i_2} \cdots (Q_{\ell-1}^\circ)_s w_{i_{\ell-1},i_\ell})
\end{align*}
with $(w_{i,i'})^\circ := w_{i,i'} - \tilde{\tau}(w_{i,i'})1$. Making the same computation for the second term and iterating this procedure until $w_{i_{\ell-2},i_{\ell-1}}$, we finally arrive at the following formula: $Z_\ell^{(k)}(s)$ is the sum of $\tilde{\tau}(w_{i_j,i_{j+1}})$ times 
\[
E_{\mathcal{N}(\tau)}((w_{i_\ell,i_{\ell+1}})^\circ (Q_{\ell+1}^\circ)_s \cdots (w_{i_{j-1},i_j})^\circ (Q_j^\circ)_s \overbrace{w_{i_j,i_{j+1}}}^\text{delete} (Q_{j+1}^\circ)_s w_{i_{j+1},i_{j+2}} \cdots (Q_{\ell-1}^\circ)_s w_{i_{\ell-1},i_\ell})
\]
over all $j=l,\dots,m,1,\dots,\ell-2$ (where we read $m+1$ as $1$). Therefore, we have obtained that 
\[
\Vert Z_\ell^{(k)}(s) \Vert_\infty \leq (2^{m-1}-1) \Big(\sup_{1\leq j \leq m} \Vert Q_j^\circ\Vert_\infty\Big)^{m-1} e^{(s-T)/2} 
\]
since $\Vert (w_{i,i'})^\circ\Vert_\infty \leq 2$ and $0 \leq \tilde{\tau}(w_{i,i'}) = \tilde{\tau}(v_i^\tau(T-s)^* v_{i'}^\tau(T-s)) \leq e^{(s-T)/2}$ with $i\neq i'$ (see \eqref{Eq5.1} for a similar computation). Hence we get 
\begin{equation}\label{Eq5.3} 
\Vert [Z^{(k)}_\ell(s), (Q_\ell^\circ)_s]\Vert_\infty 
\leq 
C_1\,e^{(s-T)/2}  
\end{equation}
with a positive constant $C_1$ depending only on $P$ and $\ell$. 

We then consider the case $i_m = i_1$ (and $s \leq T$). This case is a bit complicated, but can still be treated similarly as above. In fact, if $i_{m-1} \neq i_2$, then 
\begin{align*}
Z_\ell^{(k)}(s) &= \tilde{\tau}((Q_m^\circ Q_1^\circ)_s) E_{\mathcal{N}(\tau)}(w_{i_\ell,i_{\ell+1}} (Q_{l+1}^\circ)_s \cdots w_{i_{m-1},i_2} \cdots (Q_{\ell-1}^\circ)_s w_{i_{\ell-1},i_\ell}) \\
&\quad+ 
E_{\mathcal{N}(\tau)}(w_{i_\ell,i_{\ell+1}} (Q_{\ell+1}^\circ)_s \cdots w_{i_{m-1},i_1} ((Q_m^\circ Q_1^\circ)^\circ)_s w_{i_1,i_2} \cdots (Q_{\ell-1}^\circ)_s w_{i_{\ell-1},i_\ell})
\end{align*}
since $w_{i_{m-1},i_1} w_{i_1,i_2} = w_{i_{m-1},i_2}$. Thus, we apply the previous procedure to the first and the second terms, respectively, and conclude 
\[
\Vert Z_\ell^{(k)}(s) \Vert_\infty \leq \big\{ (2^{m-3}-1) + (2^{m-2}-1)\}  \Big(\sup_{1\leq j \leq m} \Vert Q_j^\circ\Vert_\infty\Big)^{m-1} e^{(s-T)/2}. 
\]
Iterating this procedure in the cases e.g.\ $i_m = i_1$, $i_{m-1} = i_2$ and $i_{m-2} \neq i_3$, we can estimate $\Vert Z_\ell^{(k)}(s) \Vert_\infty$ by $e^{(s-T)/2}$ times a positive constant only depending on $P$ except the case when $i_m=i_1, i_{m-1} = i_2,\dots, i_{\ell+1} = i_{\ell-1}$ (i.e, $m$ is odd and $\ell= (m+1)/2$). In the remaining case, we can easily observe that 
\[
Z_\ell^{(k)}(s) = \sigma_0(Q_m^\circ Q_1^\circ)\sigma_0(Q_{m-1}^\circ Q_2^\circ) \cdots \sigma_0(Q_{\ell+1}^\circ Q_{\ell-1}^\circ)1 + Z_\ell^{(k)}(s)^\sim
\]
with an element $Z_\ell^{(k)}(s)^\sim \in \mathcal{N}(\tau)$ whose operator norm $\Vert Z_\ell^{(k)}(s)^\sim\Vert_\infty$ is not greater than $e^{(s-T)/2}$ times a positive constant only depending on $P$. Then we have 
\begin{equation}\label{Eq5.4}
\Vert [Z^{(k)}_\ell(s), (Q_\ell^\circ)_s] \Vert_\infty = 
\Vert [Z^{(k)}_\ell(s)^\sim, (Q_\ell^\circ)_s] \Vert_\infty 
\leq 
2 \Vert Z_\ell^{(k)}(s)^\sim \Vert_\infty \Vert Q_\ell^\circ\Vert_\infty 
\leq C_2\,e^{(s-T)/2}. 
\end{equation}
with a positive constant $C_2$ depending only on $P$ and $\ell$. 

Consequently, the expansion \eqref{Eq5.2} of $Z^{(k)}(s)$ together with the above norm estimates \eqref{Eq5.3}, \eqref{Eq5.4} shows the desired norm estimate. 
\end{proof}

A more explicit description on $E_{\mathcal{N}(\tau)}(\pi_{\tilde{\tau}}(\Pi^s(\mathfrak{D}_s^{(k)}P)))$ is possible based on the combinatorial techniques introduced by Speicher (see e.g.\ Nica--Speicher \cite{NicaSpeicher:Book} as a standard textbook). See section 8. 

\medskip
With the above lemmas we will prove that the rate function $J_{\sigma_0}^\mathrm{lib}$ admits a unique minimizer, and moreover, we will explicitly compute the minimizer. Moreover, we will also prove that the modification $J_{\sigma_0,\infty}^\mathrm{lib}$ of $J_{\sigma_0}^\mathrm{lib}$ by replacing $I_{\sigma_0}^\mathrm{lib}$ with $I_{\sigma_0,\infty}^\mathrm{lib}$, i.e.,
\[
J_{\sigma_0,\infty}^\mathrm{lib}(\sigma) := \lim_{\substack{m\to\infty \\ \delta\searrow0}}\varlimsup_{T\to\infty}\inf\{I^\mathrm{lib}_{\sigma_0,\infty}(\tau) \mid \tau \in TS^c(C_R^*\langle x_{\bullet\diamond}(\,\cdot\,)\rangle), \pi_T^*(\tau) \in O_{m,\delta}(\sigma)\}
\] 
admits the same unique minimizer. 

\begin{theorem}\label{T5.3} For any $\sigma \in TS(C^*_R\langle x_{\bullet\diamond}\rangle)$ the following are equivalent:
\begin{itemize}
\item[(1)] $\sigma=\sigma_0^\mathrm{fr}$. 
\item[(2)] $J_{\sigma_0}^\mathrm{lib}(\sigma) = 0$. 
\item[(3)] $J_{\sigma_0,\infty}^\mathrm{lib}(\sigma) = 0$.
\end{itemize}
\end{theorem}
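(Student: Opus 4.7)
The plan is the cycle $(1) \Rightarrow (2) \Rightarrow (3) \Rightarrow (1)$, with the last implication carrying the main content. For $(1) \Rightarrow (2)$, recall that $\sigma_0^{\mathrm{lib}}$ is itself the unique minimizer of $I^{\mathrm{lib}}_{\sigma_0}$, with value $0$, by \cite[Proposition 5.6]{Ueda:JOTP19} as recalled in subsection~4.6. Using $\sigma_0^{\mathrm{lib}}$ as a test tracial state and invoking Lemma~\ref{L5.1}, for every $m$ and $\delta$ we have $\pi_T^*(\sigma_0^{\mathrm{lib}}) \in O_{m,\delta}(\sigma_0^{\mathrm{fr}})$ for all sufficiently large $T$, so the infimum appearing in $J_{\sigma_0}^{\mathrm{lib}}(\sigma_0^{\mathrm{fr}})$ is at most $I_{\sigma_0}^{\mathrm{lib}}(\sigma_0^{\mathrm{lib}}) = 0$. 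The implication $(2) \Rightarrow (3)$ is immediate: the remark $I^{\mathrm{lib}}_{\sigma_0,\infty} \leq I^{\mathrm{lib}}_{\sigma_0}$ noted in subsection~4.6 propagates directly to $J^{\mathrm{lib}}_{\sigma_0,\infty} \leq J^{\mathrm{lib}}_{\sigma_0}$ from the definitions.

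The substance is $(3) \Rightarrow (1)$. It suffices to prove $\sigma(Q) = \sigma_0^{\mathrm{fr}}(Q)$ for every self-adjoint $Q \in \mathbb{C}\langle x_{\bullet\diamond}\rangle$. Fix such a $Q$, choose $m \in \mathbb{N}$ larger than both the total degree of $Q$ and the largest second index among its generators, and take an arbitrary $\delta > 0$. Because $J^{\mathrm{lib}}_{\sigma_0,\infty}(\sigma) = 0$ and the inner infimum is nonnegative, the $\varlimsup_{T \to \infty}$ in the definition of $J^{\mathrm{lib}}_{\sigma_0,\infty}$ is an honest limit equal to zero, which yields a sequence $T_k \to \infty$ and $\tau_k \in TS^c(C^*_R\langle x_{\bullet\diamond}(\,\cdot\,)\rangle)$ with $\pi_{T_k}^*(\tau_k) \in O_{m,\delta}(\sigma)$ and $\varepsilon_k := I^{\mathrm{lib}}_{\sigma_0,\infty}(\tau_k) \to 0$.

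The key analytic input is a Cauchy--Schwarz-type bound obtained by exploiting the linear-minus-quadratic dependence of $I^{\mathrm{lib}}_{\sigma_0,\infty}(\tau_k, \lambda P)$ on the real scalar $\lambda$. By linearity of $\mathfrak{D}_s^{(\kappa)}$, $\Pi^s$, $\pi_{\tilde{\tau}_k}$ and $E_{\mathcal{N}(\tau_k)}$, with the abbreviations $A(P) := \tau_k(P) - \sigma_0^{\mathrm{lib}}(P)$ and $B(P) := \sum_{\kappa=1}^n \int_0^\infty \bigl\Vert E_{\mathcal{N}(\tau_k)}(\pi_{\tilde{\tau}_k}(\Pi^s(\mathfrak{D}_s^{(\kappa)} P)))\bigr\Vert_{\tilde{\tau}_k,2}^2\,ds$, one obtains $I^{\mathrm{lib}}_{\sigma_0,\infty}(\tau_k, \lambda P) = \lambda A(P) - \tfrac{\lambda^2}{2} B(P)$ whenever $P = P^*$. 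Optimizing the inequality $I^{\mathrm{lib}}_{\sigma_0,\infty}(\tau_k, \lambda P) \leq \varepsilon_k$ over $\lambda \in \mathbb{R}$ produces $A(P)^2 \leq 2\,B(P)\,\varepsilon_k$. Setting $P = \pi_{T_k}(Q)$, Lemma~\ref{L5.2} bounds the integrand of $B(\pi_{T_k}(Q))$ pointwise by $C(Q)^2 \mathbf{1}_{[0,T_k]}(s) e^{s - T_k}$, so the integral is at most $n\,C(Q)^2$ uniformly in $k$. Hence
\[
\bigl|\tau_k(\pi_{T_k}(Q)) - \sigma_0^{\mathrm{lib}}(\pi_{T_k}(Q))\bigr|^2 \leq 2 n\,C(Q)^2\,\varepsilon_k \longrightarrow 0.
\]
Combined with $|\sigma(Q) - \pi_{T_k}^*(\tau_k)(Q)| \leq C'_Q\,\delta$ from the choice of $m$, and $\pi_{T_k}^*(\sigma_0^{\mathrm{lib}})(Q) \to \sigma_0^{\mathrm{fr}}(Q)$ from Lemma~\ref{L5.1}, the triangle inequality yields $|\sigma(Q) - \sigma_0^{\mathrm{fr}}(Q)| \leq C'_Q\,\delta$; since $\delta$ is arbitrary, $\sigma(Q) = \sigma_0^{\mathrm{fr}}(Q)$.

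The main obstacle is that a naive linear inequality $|A(P)| \leq \varepsilon_k + \tfrac{1}{2} B(P)$ is insufficient: Lemma~\ref{L5.2} only gives a \emph{bounded}, not a vanishing, estimate for $B(\pi_{T_k}(Q))$, so an additive constant of order $n\,C(Q)^2/2$ would survive. The rescaling-in-$\lambda$ device converts this additive defect into the product $\sqrt{2 B(P)\,\varepsilon_k}$, which does vanish because $\varepsilon_k \to 0$ and $B(\pi_{T_k}(Q))$ is merely bounded. Self-adjointness of $Q$ is essential so that $\lambda \pi_{T_k}(Q)$ is an admissible self-adjoint test polynomial for every real $\lambda$.
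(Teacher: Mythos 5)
Your proof is correct and follows essentially the same route as the paper's: (1)$\Rightarrow$(2) via Lemma~\ref{L5.1} and the fact that $\sigma_0^{\mathrm{lib}}$ minimizes $I_{\sigma_0}^{\mathrm{lib}}$, (2)$\Rightarrow$(3) via $I^{\mathrm{lib}}_{\sigma_0,\infty}\leq I^{\mathrm{lib}}_{\sigma_0}$, and (3)$\Rightarrow$(1) via the triangle-inequality decomposition combined with Lemmas~\ref{L5.1} and~\ref{L5.2}. The only cosmetic difference is that you re-derive the Cauchy--Schwarz-type bound $A(P)^2\leq 2B(P)\varepsilon_k$ inline by scaling $P\mapsto\lambda P$ and optimizing over $\lambda\in\mathbb{R}$, whereas the paper simply cites it as \cite[Lemma 5.3]{Ueda:JOTP19}.
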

\begin{proof} 
(1) $\Rightarrow$ (2): Since $I_{\sigma_0}^\mathrm{lib}(\sigma_0^\mathrm{lib}) = 0$ and moreover since $\pi_T^*(\sigma_0^\mathrm{lib}) \rightarrow \sigma_0^\mathrm{fr}$ as $T \to +\infty$ by Lemma \ref{L5.1}, we have $J_{\sigma_0}^\mathrm{lib}(\sigma_0^\mathrm{fr}) = 0$. 

\medskip
(2) $\Rightarrow$ (3): Trivial because $0 \leq J_{\sigma_0,\infty}^\mathrm{lib} \leq J_{\sigma_0}^\mathrm{lib}$, which follows from $0 \leq I_{\sigma_0,\infty}^\mathrm{lib} \leq I_{\sigma_0}^\mathrm{lib}$. 

\medskip
(3) $\Rightarrow$ (1): $J_{\sigma_0,\infty}^\mathrm{lib}(\sigma) = 0$ implies that for every $m \in \mathbb{N}$ and $\delta>0$ we have 
$$
\lim_{T\to\infty} \inf\{ I_{\sigma_0,\infty}^\mathrm{lib}(\tau)\mid 
\tau \in TS^c(C^*\langle x_{\bullet\diamond}(\,\cdot\,)\rangle), \pi_T^*(\tau) \in O_{m,\delta}(\sigma) \big\} = 0. 
$$
Thus we can choose a sequence $0 < T_1 < T_2 < \cdots < T_m \nearrow +\infty$ as $m \nearrow \infty$ and $\tau_{T_m} \in  TS^c(C^*\langle x_{\bullet\diamond}(\,\cdot\,)\rangle)$ for each $m \in \mathbb{N}$ such that $\pi_{T_m}^*(\tau_{T_m}) \in O_{m,1/m}(\sigma)$ and $I_{\sigma_0,\infty}^\mathrm{lib}(\tau_{T_m}) < 1/m$ for every $m \in \mathbb{N}$. 
For each $P = P^* \in \mathbb{C}\langle x_{\bullet\diamond} \rangle$ we have 
\begin{align*} 
|\sigma(P) - \sigma_0^\mathrm{fr}(P)| 
&\leq 
|\sigma(P) - \pi_{T_m}^*(\tau_{T_m})(P)| \\
&\qquad+ 
|\tau_{T_m}(\pi_{T_m}(P)) - \sigma_0^\mathrm{lib}(\pi_{T_m}(P))| \\
&\qquad\qquad+ 
|\pi_{T_m}^*(\sigma_0^\mathrm{lib})(P) - \sigma_0^\mathrm{fr}(P)| \\
&\leq 
|\sigma(P) - \pi_{T_m}^*(\tau_{T_m})(P)| + |\pi_{T_m}^*(\sigma_0^\mathrm{lib})(P) - \sigma_0^\mathrm{fr}(P)| \\
&\quad+ 
\sqrt{2 I_{\sigma_0,\infty}^\mathrm{lib}(\tau_{T_m}) \sum_{k=1}^n \int_0^\infty \Vert E_{\mathcal{N}(\tau)}(\pi_{\tilde{\tau}}(\Pi^s(\mathfrak{D}_s^{(k)}(\pi_{T_m}(P)))))\Vert_{\tilde{\tau},2}^2\,ds}  
\end{align*}
by \cite[Lemma 5.3]{Ueda:JOTP19} that still holds true for $I_{\sigma_0,\infty}^\mathrm{lib}$ without any essential changes. Now, we use Lemma \ref{L5.2} to get 
\[
\sum_{k=1}^n \int_0^\infty \Vert E_{\mathcal{N}(\tau)}(\pi_{\tilde{\tau}}(\Pi^s(\mathfrak{D}_s^{(k)}(\pi_{T_m}(P)))))\Vert_{\tilde{\tau},2}^2\,ds 
\leq C\,\int_0^{T_m} e^{s-T_m}\,ds = C(1-e^{-T_m}) \leq C
\]
for all $m$ with a constant $C > 0$ only depending on $P$. Consequently, we obtain that 
\[
|\sigma(P) - \sigma_0^\mathrm{fr}(P)| 
\leq 
|\sigma(P) - \pi_{T_m}^*(\tau_{T_m})(P)| + |\pi_{T_m}^*(\sigma_0^\mathrm{lib})(P) - \sigma_0^\mathrm{fr}(P)| + \sqrt{\frac{2C}{m}}, 
\]
whose right-hand side converges to $0$ as $m\to\infty$ thanks to $\pi_{T_m}^*(\tau_{T_m}) \in O_{m,1/m}(\sigma)$ (that guarantees that $\sigma = \lim_{m\to\infty} \pi_{T_m}^*(\tau_{T_m})$ in $TS(C^*_R\langle x_{\bullet\diamond}\rangle)$) and Lemma \ref{L5.1}. Hence we conclude that $\sigma = \sigma_0^\mathrm{fr}$. 
\end{proof} 

Thanks to the standard Borel-Cantelli argument (see e.g.\ the proof of \cite[Corollary 5.9]{Ueda:JOTP19}) the above proposition together with Corollary \ref{C4.3} implies that \emph{$\mathrm{tr}_{\mathbf{U}_N}^{\Xi(N)}$ converges to $\sigma_0^\mathrm{fr}$ almost surely as $N\to\infty$}. This is nothing less than a consequence of the asymptotic freeness of independent Haar-distributed unitary random matrices. On the other hand, the corresponding result for the matrix liberation process \cite[Corollary 5.9]{Ueda:JOTP19} was not known prior to it. 

\medskip
We would also like to point out that both $J_{\sigma_0}^\mathrm{lib}, J_{\sigma_0,\infty}^\mathrm{lib}$ can be regarded as a kind of mutual information in free probability, since they characterize the free independence as a unique minimizer (see the third paragraph of section 1). Thus it is natural to reformulate the functionals $J_{\sigma_0}^\mathrm{lib}, J_{\sigma_0,\infty}^\mathrm{lib}$ as well as their sources $I_{\sigma_0}^\mathrm{lib}, I_{\sigma_0,\infty}^\mathrm{lib}$ in a coordinate-free fashion. This will be done in the next section. 

\section{A coordinate-free approach: A new kind of free mutual information} 

Let $(\mathcal{M},\tau)$ be a tracial $W^*$-probability space. We consider unital $C^*$-subalgebras $\mathcal{A}_i \subset \mathcal{M}$, $1 \leq i \leq n+1$, and define a kind of free mutual information $i^{**}(\mathcal{A}_1;\dots;\mathcal{A}_n:\mathcal{A}_{n+1})$, without appealing to any kind of (matricial) microstates, whose definition comes from the rate functions discussed so far. 

\subsection{Universal algebras}
Let $\mathfrak{A} := \bigstar_{i=1}^{n+1} \mathcal{A}_i$ be the universal free product $C^*$-algebra. Let $\mathfrak{A}(t)$, $t \geq 0$, be copies of $\mathfrak{A}$, and define $\mathfrak{A}(\mathbb{R}_+)$ to be the universal free product $C^*$-algebra $\bigstar_{t \geq 0} \mathfrak{A}(t)$. (Here we write $\mathbb{R}_+ = [0,+\infty)$.) We denote by $\lambda_i : \mathcal{A}_i \to \mathfrak{A}$ and $\rho_t : \mathfrak{A} \twoheadrightarrow \mathfrak{A}(t) \subset \mathfrak{A}(\mathbb{R}_+)$ the canonical $*$-homomorphisms, which are known to be injective, see the appendix for an explicit reference about this fact. Write $\rho_{t,i} := \rho_t\circ\lambda_i : \mathcal{A}_i \to \mathfrak{A}(\mathbb{R}_+)$. By Lemma \ref{LA1}, $\mathfrak{A}(\mathbb{R}_+)$ with $*$-homomorphisms $\rho_{t,i}$ can naturally be identified with the universal free product of the copies of $\mathcal{A}_i$, $1 \leq i \leq n+1$, over $\mathbb{R}_+$.  

\subsection{Time-dependent liberation derivatives}
Let $\mathfrak{P}$ be the $*$-subalgebra of $\mathfrak{A}$ algebraically generated by $\lambda_i(\mathcal{A}_i)$, $1 \leq i \leq n+1$. Consider the $*$-subalgebra $\mathfrak{P}(\mathbb{R}_+)$ of $\mathfrak{A}(\mathbb{R}_+)$ algebraically generated by $\rho_t(\mathfrak{P})$, $t \geq 0$. Remark that $\lambda_i(\mathcal{A}_i)$, $1 \leq i \leq n+1$, and $\rho_{t,i}(\mathcal{A}_i)$, $1 \leq i \leq n+1$, $t \geq 0$, are algebraically free families of $*$-subalgebras, and the resulting $\mathfrak{P}$ and $\mathfrak{P}(\mathbb{R}_+)$ are naturally identified with the algebraic free products of the $\lambda_i(\mathcal{A}_i)$, $1 \leq i \leq n+1$, and of the $\rho_{t,i}(\mathcal{A}_i)$, $1 \leq i \leq n+1$, $t \geq 0$, respectively. See Proposition \ref{PA4}. 

We extend $\mathfrak{A}(\mathbb{R}_+)$ to $\tilde{\mathfrak{A}}(\mathbb{R}_+)$ by taking its universal free product with the universal $C^*$-algebra generated by $u_i(t)$, $1 \leq i \leq n$, $t \geq 0$ with subject to $u_i(t)^* u_i(t) = u_i(t)u_i(t)^* = 1$ and $u_i(0) = 1$. This procedure is justified by Proposition \ref{PA3}. Consider the derivation $\Delta_s^{(k)} : \mathfrak{P}(\mathbb{R}_+) \to \tilde{\mathfrak{A}}(\mathbb{R}_+)\otimes_\mathrm{alg}\tilde{\mathfrak{A}}(\mathbb{R}_+)$, $1 \leq k \leq n$, sending each $\rho_{t,i}(x)$ with $x \in \mathcal{A}_i$ to     
\[
\delta_{i,k}\mathbf{1}_{[0,t]}(s)\,(\rho_{t,k}(x) u_k(t-s)\otimes u_k(t-s)^* - u_k(t-s)\otimes u_k(t-s)^* \rho_{t,k}(x))
\]
({\it n.b.}, the algebraic freeness among the $\rho_{t,i}(\mathcal{A}_i)$ makes every $\Delta_s^{(k)}$ well-defined). Therefore, with the flip-multiplication map $\theta : \tilde{\mathfrak{A}}(\mathbb{R}_+)\otimes_\mathrm{alg}\tilde{\mathfrak{A}}(\mathbb{R}_+) \to \tilde{\mathfrak{A}}(\mathbb{R}_+)$ sending $a\otimes b$ to $ba$, we obtain the cyclic derivative $\nabla_s^{(k)} := \theta\circ\Delta_s^{(k)} : \mathfrak{P}(\mathbb{R}_+) \to \tilde{\mathfrak{A}}(\mathbb{R}_+)$. 

\subsection{Continuous tracial states}
\emph{Differently from the previous sections we will use symbols $\varphi, \psi$, etc., instead of $\tau$ for tracial states on $\mathfrak{A}(\mathbb{R}_+)$, etc., in order to avoid any confusion of symbols.}  

\medskip
A tracial state $\varphi \in TS(\mathfrak{A}(\mathbb{R}_+))$ is said to be continuous, if $t \mapsto \pi_\varphi(\rho_t(x))$ is strongly continuous for every $x \in \mathfrak{A}$, where $\pi_\varphi : \mathfrak{A}(\mathbb{R}_+) \curvearrowright \mathcal{H}_\tau$ denotes the GNS representation associated with $\tau$. 
In what follows, we denote by $TS^c(\mathfrak{A}(\mathbb{R}_+))$ all the continuous tracial states on $\mathfrak{A}(\mathbb{R}_+)$. 

\begin{lemma}\label{L6.1} For a given $\varphi \in TS(\mathfrak{A}(\mathbb{R}_+))$ the following are equivalent: 
\begin{itemize}
\item[(i)] $\varphi$ is continuous. 
\item[(ii)] For every $m \in \mathbb{N}$ and every $x_1,\dots,x_m \in \mathfrak{A}$ the function 
\[
(t_1,\dots,t_m) \mapsto \varphi(\rho_{t_1}(x_1)\cdots\rho_{t_m}(x_m)) 
\] 
is continuous. 
\item[(iii)] For every $m \in \mathbb{N}$ and every $x_k \in \mathcal{A}_{i_j}$, $1 \leq i_k \leq n+1$, $1 \leq k \leq m$, the function 
\[
(t_1,\dots,t_m) \mapsto \varphi(\rho_{t_1,i_1}(x_1)\cdots\rho_{t_m,i_m}(x_m)) 
\]
is continuous.  
\item[(iv)] For every $1 \leq i \leq n+1$, there exists a $C^*$-generating set $\mathcal{X}_i$ consisting of self-adjoint elements in $\mathcal{A}_i$ such that for every $m \in \mathbb{N}$ and every $x_j \in \mathcal{X}_{i_j}$, $1 \leq i_j \leq n+1$, $1 \leq j \leq m$, the function 
\[
(t_1,\dots,t_m) \mapsto \varphi(\rho_{t_1,i_1}(x_1)\cdots\rho_{t_m,i_m}(x_m)) 
\]
is continuous.  

\end{itemize} 
\end{lemma}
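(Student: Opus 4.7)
The plan is to establish the two obvious implications (ii)$\Rightarrow$(iii)$\Rightarrow$(iv), and then close the circle via (i)$\Rightarrow$(ii), (iv)$\Rightarrow$(iii)$\Rightarrow$(ii), and (ii)$\Rightarrow$(i). All of the non-trivial steps rely on the fact that every $\rho_t$, hence every $\rho_{t,i}=\rho_t\circ\lambda_i$, is a contractive $*$-homomorphism.

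For (i)$\Rightarrow$(ii), since $\pi_\varphi\circ\rho_t$ is contractive, the operators $\pi_\varphi(\rho_{t_k}(x_k))$ form a norm-bounded family indexed by $t_k$; operator multiplication being jointly strongly continuous on norm-bounded sets, the map
\[
(t_1,\dots,t_m)\mapsto\pi_\varphi(\rho_{t_1}(x_1))\cdots\pi_\varphi(\rho_{t_m}(x_m))\Omega_\varphi
\]
is norm-continuous in $\mathcal{H}_\varphi$, and hence so is its pairing with $\Omega_\varphi$, which is precisely $\varphi(\rho_{t_1}(x_1)\cdots\rho_{t_m}(x_m))$.

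For the chain (iv)$\Rightarrow$(iii)$\Rightarrow$(ii), I would perform two successive norm-density arguments. First, each $x\in\mathcal{A}_i$ is a norm-limit of $*$-polynomials in $\mathcal{X}_i$, and contractivity of $\rho_{t,i}$ makes the approximation of $\rho_{t,i}(x)$ uniform in $t$. Substituting this into $\varphi(\rho_{t_1,i_1}(x_1)\cdots\rho_{t_m,i_m}(x_m))$ and expanding multilinearly writes this function as a uniform limit of sums of functions of the form in (iv), where every generator carries a time coordinate drawn from the set $\{t_1,\dots,t_m\}$, so continuity passes to the uniform limit; this yields (iii). Second, $\mathfrak{P}$ is norm-dense in $\mathfrak{A}$ (the defining property of the universal free product $C^*$-algebra), so each $x_k\in\mathfrak{A}$ is approximated by some $x'_k\in\mathfrak{P}$, and by contractivity of $\rho_{t_k}$ the approximation of $\rho_{t_k}(x_k)$ by $\rho_{t_k}(x'_k)$ is uniform in $t_k$. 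Because $\rho_{t_k}$ is a $*$-homomorphism, $\rho_{t_k}(x'_k)$ is a polynomial in the $\rho_{t_k,i}(a)$'s (all with common time $t_k$), so expanding $\rho_{t_1}(x'_1)\cdots\rho_{t_m}(x'_m)$ under $\varphi$ and invoking (iii) exhibits $\varphi(\rho_{t_1}(x_1)\cdots\rho_{t_m}(x_m))$ as a uniform limit of continuous functions, proving (ii).

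Finally, for (ii)$\Rightarrow$(i), I would use the GNS construction. Since $\pi_\varphi(\mathfrak{A}(\mathbb{R}_+))\Omega_\varphi$ is norm-dense in $\mathcal{H}_\varphi$ and $\sup_{t\geq 0}\Vert\pi_\varphi(\rho_t(x))\Vert\leq\Vert x\Vert$, a standard $3\varepsilon$-argument reduces strong continuity of $t\mapsto\pi_\varphi(\rho_t(x))$ to showing $\Vert\pi_\varphi(\rho_t(x)-\rho_{t_0}(x))\pi_\varphi(y)\Omega_\varphi\Vert\to 0$ as $t\to t_0$ for $y$ in the norm-dense subset $\mathfrak{P}(\mathbb{R}_+)$. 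Squaring this norm yields
\[
\varphi(y^*\rho_t(x^*)\rho_t(x)y)-\varphi(y^*\rho_t(x^*)\rho_{t_0}(x)y)-\varphi(y^*\rho_{t_0}(x^*)\rho_t(x)y)+\varphi(y^*\rho_{t_0}(x^*)\rho_{t_0}(x)y),
\]
each summand of which is a continuous function of $t$ by (ii) once $y$ is written out as a linear combination of products of $\rho_{s,i}$'s, and the four summands collapse at $t=t_0$. The main technical subtlety throughout is ensuring that each approximation inside $\mathcal{A}_i$ or $\mathfrak{A}$ lifts to one of $\rho_{t,i}(x)$ or $\rho_t(x)$ that is uniform in the time parameter, which is precisely what the contractivity of the $\rho_{t,i}$ provides; everything else is routine.
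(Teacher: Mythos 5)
Your proof is correct and follows essentially the same strategy as the paper's: the paper handles (i)$\Leftrightarrow$(ii) by invoking the proof of \cite[Lemma 2.1]{Ueda:JOTP19}, which is exactly the GNS/$3\varepsilon$-on-a-dense-subspace argument you spell out, and (iii)$\Rightarrow$(ii) and (iv)$\Rightarrow$(iii) by the same kind of norm-density plus contractivity-of-$\rho_{t,i}$ approximation you use. The only difference is one of exposition: you write out the details that the paper delegates to the earlier lemma and to ``standard approximation argument.''
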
 
\begin{proof} Since $\Vert \rho_t(x) \Vert_\infty = \Vert x \Vert_\infty$ for every $x \in \mathfrak{A}$ and since the $\rho_t(\mathfrak{A})$ over $t \geq 0$ generate $\mathfrak{A}(\mathbb{R}_+)$ as a $C^*$-algebra, the proof of \cite[Lemma 2.1]{Ueda:JOTP19} works for showing that item (i) $\Leftrightarrow$ item (ii) without any essential changes. Item (ii) $\Rightarrow$ item (iii) is trivial. The standard approximation argument using the norm density of the unital $*$-algebra algebraically generated by $\lambda_i(\mathcal{A}_i)$ in $\mathfrak{A}$ shows that item (iii) $\Rightarrow$ item (ii). Item (iii) $\Leftrightarrow$ item (iv) is also confirmed similarly by using the norm density of the unital $*$-algebra algebraically generated by $\mathcal{X}_i$ in $\mathcal{A}_i$. 
\end{proof} 

We extend each $\varphi \in TS^c(\mathfrak{A}(\mathbb{R}_+))$ to a unique $\tilde{\varphi} \in TS(\tilde{\mathfrak{A}}(\mathbb{R}_+))$ in such a way that the $u_i(t)$'s are $*$-freely independent of $\mathfrak{A}(\mathbb{R}_+)$ and form a $*$-freely independent family of left-multiplicative free unitary Brownian motions under this extension $\tilde{\varphi}$. It is not difficult to see that $\tilde{\varphi}$ is `continuous', that is, both $t \mapsto \pi_{\tilde{\varphi}}(\rho_t(x))$ with $x \in \mathfrak{A}$ and $t \mapsto \pi_{\tilde{\varphi}}(u_i(t))$ are strongly continuous. Denote by $\pi_{\tilde{\varphi}} : \tilde{\mathfrak{A}}(\mathbb{R}_+) \curvearrowright \mathcal{H}_{\tilde{\varphi}}$ the GNS representation associated with $\tilde{\varphi}$. We have a unique surjective unital $*$-homomorphism $\Lambda^s : \tilde{\mathfrak{A}}(\mathbb{R}_+) \to \tilde{\mathfrak{A}}(\mathbb{R}_+)$ sending each $\rho_{t,i}(x)$ with $x \in \mathcal{A}_i$, $t \geq 0$ to 
\begin{equation}\label{Eq6.1}
\rho^s_{t,i}(x) := 
\begin{cases} 
u_i((t-s)\vee0)\rho_{s\wedge t,i}(x)u_i((t-s)\vee0)^* & (1 \leq i \leq n), \\
\rho_{t,n+1}(x) & (i=n+1)  
\end{cases}
\end{equation}
and keeping each $u_i(t)$ as it is. Note that each $\rho_{t,i}^s$ clearly defines a unital $*$-homomorphism from $\mathcal{A}_i$ to $\tilde{\mathfrak{A}}(\mathbb{R}_+)$ for every $1 \leq i \leq n+1$, and moreover, by universality, those $\rho_{t,i}^s$ give rise to a unital $*$-homomorphism $\rho_t^s : \mathfrak{A} \to \tilde{\mathfrak{A}}(\mathbb{R}_+)$. Observe that $\Lambda^s\circ\rho_t := \rho_t^s$ holds for every $s, t \geq 0$. We define $\varphi^s := \tilde{\varphi}\circ\Lambda^s$ on $\mathfrak{A}(\mathbb{R}_+)$. Since  
\[
\tilde{\varphi}\circ\Lambda^s(\rho_{t_1,i_1}(x_1)\cdots\rho_{t_m,i_m}(x_m)) 
= 
\tilde{\varphi}(\rho_{t_1,i_1}^s(x_1)\cdots\rho_{t_m,i_m}^s(x_m)), 
\]
we observe, by \eqref{Eq6.1}, that $\varphi^s$ is a continuous tracial state. 

By the $*$-homomorphism $\Gamma : \mathfrak{A}(\mathbb{R}_+) \to \mathfrak{A}$ sending each $\rho_{t,i}(x)$ with $x \in \mathcal{A}_i$ to $\lambda_i(x)$ we construct $\Gamma^*(\sigma_0) := \sigma_0\circ\Gamma \in TS^c(\mathfrak{A}(\mathbb{R}_+))$ with a given $\sigma_0 \in TS(\mathfrak{A})$ and set $\sigma_0^\mathrm{lib} := \Gamma^*(\sigma_0)^0 \in TS^c(\mathfrak{A}(\mathbb{R}_+))$.  

\subsection{The new free mutual information}
For a given $\sigma_0 \in TS(\mathfrak{A})$ let us define two functionals $\mathcal{I}_{\sigma_0}^\mathrm{lib}, \mathcal{I}_{\sigma_0,\infty}^\mathrm{lib} : TS^c(\mathfrak{A}(\mathbb{R}_+)) \to [0,+\infty]$ as follows. Let $\varphi \in TS^c(\mathfrak{A}(\mathbb{R}_+))$ be arbitrarily given. Let $E_{\mathcal{Q}(\varphi)}$ denote the $\tilde{\varphi}$-preserving conditional expectation from $\mathcal{P}(\varphi) := \pi_{\tilde{\varphi}}(\tilde{\mathfrak{A}}(\mathbb{R}_+))''$ onto $\mathcal{Q}(\varphi) := \pi_{\tilde{\varphi}}(\mathfrak{A}(\mathbb{R}_+))''$, where the double commutants are taken on $\mathcal{H}_{\tilde{\varphi}}$. For any $P=P^* \in \mathfrak{P}(\mathbb{R}_+)$ and $t \in [0,\infty]$ we define
\[
\mathcal{I}_{\sigma_0,t}^\mathrm{lib}(\varphi,P) 
= \varphi^t(P) - \sigma_0^\mathrm{lib}(P) - \frac{1}{2}\sum_{k=1}^n \int_0^t \Vert E_{\mathcal{Q}(\varphi)}(\pi_{\tilde{\varphi}}(\Lambda^s(\nabla_s^{(k)}P)))\Vert_{\tilde{\varphi},2}^2\,\mathrm{d}s
\]
with regarding $\varphi$ as $\varphi^\infty$ (since $\varphi^t(P) = \varphi(P)$ when $t$ is large enough). We observe that $s \mapsto \Vert E_{\mathcal{Q}(\varphi)}(\pi_{\tilde{\varphi}}(\Lambda^s(\nabla_s^{(k)}P)))\Vert_{\tilde{\varphi},2}^2$ is piecewise continuous in $s$ and becomes zero when $s$ is large enough thanks to $P \in \mathfrak{A}(\mathbb{R}_+)$. These two facts guarantee that $\mathcal{I}_{\sigma_0,t}^\mathrm{lib}(\varphi,P)$ is well defined for every $t$ possibly with $t = \infty$. Then we define
\[
\mathcal{I}_{\sigma_0}^\mathrm{lib}(\varphi) 
= \sup_{\substack{P = P^* \in \mathfrak{P}(\mathbb{R}_+) \\ t \geq 0}} 
\mathcal{I}_{\sigma_0,t}^\mathrm{lib}(\varphi,P), \quad
\mathcal{I}_{\sigma_0,\infty}^\mathrm{lib}(\varphi) 
= \sup_{P = P^* \in \mathfrak{A}(\mathbb{R}_+)} 
\mathcal{I}_{\sigma_0,\infty}^\mathrm{lib}(\varphi,P).
\]
Clearly, $\mathcal{I}_{\sigma_0}^\mathrm{lib}(\varphi) \geq \mathcal{I}_{\sigma_0,\infty}^\mathrm{lib}(\varphi)$ holds, and it is a question again whether equality holds or not. 

We then introduce two functionals $\mathcal{J}_{\sigma_0}^\mathrm{lib}, \mathcal{J}_{\sigma_0,\infty}^\mathrm{lib} : TS(\mathfrak{A}) \to [0,+\infty]$ as before. To this end, we have to endow $TS(\mathfrak{A})$ with the weak$^*$ topology. Let $\sigma \in TS(\mathfrak{A})$ be arbitrarily given. Let $\mathcal{O}(\sigma)$ be the open neighborhoods at $\sigma$ in the weak$^*$ topology on $TS(\mathfrak{A})$. Then we define 
\begin{equation}\label{Eq6.2} 
\mathcal{J}_{\sigma_0}^\mathrm{lib}(\sigma) := \sup_{O \in \mathcal{O}(\sigma)}\varlimsup_{T\to\infty}\inf\{\mathcal{I}^\mathrm{lib}_{\sigma_0}(\varphi) \mid \varphi \in TS^c(\mathfrak{A}(\mathbb{R}_+)), \rho_T^*(\varphi) \in O \} 
\end{equation}
and also $\mathcal{J}_{\sigma_0,\infty}^\mathrm{lib}(\sigma)$ in the same manner as above with replacing $\mathcal{I}^\mathrm{lib}_{\sigma_0}(\varphi)$ with $\mathcal{I}^\mathrm{lib}_{\sigma_0,\infty}(\varphi)$. Here the infimum over the empty set is taken to be $+\infty$ as usual. Remark that the supremum over $O \in \mathcal{O}(\sigma)$ coincides with the limit over a neighborhood basis at $\sigma$. We also remark that $\mathcal{O}(\sigma)$ can be replaced with the smaller neighborhood basis consisting of  
\[
O_{\mathcal{W},\delta}(\sigma) := 
\{ 
\sigma' \in TS(\mathfrak{A}) \mid \text{
$|\sigma'(W) - \sigma(W)| < \delta$ for all $W \in \mathcal{W}$} \}
\]
all over the finite collections $\mathcal{W}$ of words $W$ like $\lambda_{i_1}(a_1)\cdots\lambda_{i_m}(a_m)$ with $a_{i_k} \in \mathcal{A}_{i_k}$ and $\delta>0$, since all the linear combinations of words form a norm dense $*$-subalgebra of $\mathfrak{A}$.   

\begin{definition}\label{D6.1} Thanks to the universality of $\mathfrak{A}$, we have a unique $*$-homomorphism  $\Upsilon : \mathfrak{A} \to \mathcal{M}$ sending each $\lambda_i(x)$ to $x$ with $x \in \mathcal{A}_i \subset \mathcal{M}$, $1 \leq i \leq n+1$. Then we define 
\[
\mathcal{J}_{\sigma_0}^\mathrm{lib}(\mathcal{A}_1;\dots;\mathcal{A}_n:\mathcal{A}_{n+1}) := 
\mathcal{J}_{\sigma_0}^\mathrm{lib}(\Upsilon^*(\tau)) \geq 
\mathcal{J}_{\sigma_0,\infty}^\mathrm{lib}(\Upsilon^*(\tau)) =:
\mathcal{J}_{\sigma_0,\infty}^\mathrm{lib}(\mathcal{A}_1;\dots;\mathcal{A}_n:\mathcal{A}_{n+1}).  
\]
Moreover, we write
\[
i^{**}(\mathcal{A}_1;\dots;\mathcal{A}_n:\mathcal{A}_{n+1}) 
:= 
\mathcal{J}_{\Upsilon^*(\tau),\infty}^\mathrm{lib}(\mathcal{A}_1;\dots;\mathcal{A}_n:\mathcal{A}_{n+1}).
\]
\end{definition} 

These quantities will be shown to satisfy that (i) characterizing free independence, (ii) invariance under taking closure $\overline{\mathcal{A}_i}^w$ and (iii) the monotonicity in $\mathcal{A}_i$. Hence they can be understood as a kind of mutual information in free probability. Here is a remark on the choice of $\sigma_0$. 

\begin{remark}\label{R6.2} If $\mathcal{J}_{\sigma_0}^\mathrm{lib}(\mathcal{A}_1;\dots;\mathcal{A}_n:\mathcal{A}_{n+1})$ is finite, then $\lambda_i^*(\sigma_0)$ must agree with $\tau$ on $\mathcal{A}_i$ for every $1 \leq i \leq n+1$. 
\end{remark} 
\begin{proof} 
Assume that $\lambda_i^*(\sigma_0)$ does not agree with $\tau$ for some $i$. Namely, there is an element $x \in \mathcal{A}_i$ such that $\sigma_0(\lambda_i(x)) \neq \tau(x)$. Remark that $\tau(x) = \Upsilon^*(\tau)(\lambda_i(x))$. Then we can choose an open neighborhood $O \in \mathcal{O}(\Upsilon^*(\tau))$ in such a way that $\sigma(\lambda_i(x)) \neq \sigma_0(\lambda_i(x))$ for every $\sigma \in O$. As in the proof of \cite[Proposition 5.7]{Ueda:JOTP19} we have
\[
r(\rho_T^*(\varphi)(\lambda_i(x)) - \sigma_0(\lambda_i(x))) 
=
\mathcal{I}_{\sigma_0,\infty}^\mathrm{lib}(\varphi,\rho_{T,i}(x)) \leq \mathcal{I}_{\sigma_0,\infty}^\mathrm{lib}(\varphi)
\]
for all $r \in \mathbb{R}$ and $T \geq 0$.  It follows that $\mathcal{I}_{\sigma_0,\infty}^\mathrm{lib}(\varphi) = +\infty$ as long as $\rho_T^*(\varphi) \in O$. It follows that $\mathcal{J}_{\sigma_0}^\mathrm{lib}(\mathcal{A}_1;\dots;\mathcal{A}_n:\mathcal{A}_{n+1}) = \mathcal{J}_{\sigma_0}^\mathrm{lib}(\Upsilon^*(\tau)) \geq \mathcal{J}_{\sigma_0,\infty}^\mathrm{lib}(\Upsilon^*(\tau)) = +\infty$. 
\end{proof}

\emph{Consequently, we will assume that $\lambda_i^*(\sigma_0)$ agrees with $\tau$ on $\mathcal{A}_i$ for every $1 \leq i \leq n+1$ throughout the rest of this section. In particular, the natural two choices of $\sigma_0$ are $\Upsilon^*(\tau)$ and the so-called free product state $\bigstar_{i=1}^{n+1}(\lambda_i^{-1})^*(\tau)$.} 

\subsection{Relation to the matrix liberation process}
Assume that each $\mathcal{A}_i$, $1 \leq i \leq n+1$, is generated by a self-adjoint random multi-variable $\mathbf{X}_i = (X_{ij})_{j=1}^{r(i)}$ as in section 3, that is, $\mathcal{A}_i = C^*(\mathbf{X}_i)$. Assume further that $R := \sup_{i,j} \Vert X_{ij}\Vert_\infty < +\infty$. Then we have two unique surjective unital $*$-homomorphisms $\Phi : C^*_R\langle x_{\bullet\diamond}\rangle \to \mathfrak{A}$ and $\Psi : C^*_R\langle x_{\bullet\diamond}(\,\cdot\,), v_\bullet(\,\cdot\,)\rangle \to \tilde{\mathfrak{A}}(\mathbb{R}_+)$ sending $x_{ij}$, $x_{ij}(t)$ and $v_i(t)$ to $\lambda_i(X_{ij})$, $\rho_{t,i}(X_{ij}) = \rho_t(\lambda_i(X_{ij}))$ and $u_i(t)$, respectively. Clearly, $\Psi(C^*_R\langle x_{\bullet\diamond}(\,\cdot\,)\rangle) = \mathfrak{A}(\mathbb{R}_+)$ and $\Psi(x_{ij}(t)) = \rho_t(\Phi(x_{ij}))$ hold. In particular, the latter implies that $\Psi\circ\pi_0 = \rho_0\circ\Phi$. 

\medskip
For the reader's convenience we summarize the notations of algebras and maps that we have introduced so far. The algebras and the maps between them are: 
\[
\xymatrix{
& C^*_R\langle x_{\bullet\diamond} \rangle \ar[r]^-{\pi_t} \ar[d]_-\Phi & 
C^*_R\langle x_{\bullet\diamond}(\,\cdot\,)\rangle \ar@{^{(}-^{>}}[r] \ar[d]^-\Psi & 
C^*_R\langle x_{\bullet\diamond}(\,\cdot\,),v_\bullet(\,\cdot\,)\rangle \ar[d]^-\Psi 
\\
\mathcal{A}_i  \ar[r]^-{\lambda_i}  \ar@(dr,dl)[rr]_{\rho_{t,i} = \rho_t\circ\lambda_i} &  \mathfrak{A} \ar[r]^-{\rho_t} & \mathfrak{A}(\mathbb{R}_+) \ar@{^{(}-^{>}}[r]  & \tilde{\mathfrak{A}}(\mathbb{R}_+).  
}
\]
The liberation cyclic derivatives $\mathfrak{D}_s^{(k)}$ (see subsection 4.2) and the maps $\Pi^s$ (see subsection 4.5) on the upper line of the above diagram correspond to $\nabla_s^{(k)}$ (see  subsection 6.2) and $\Lambda^s$ (see subsection 6.3) on the lower line, respectively. Moreover, the spaces of (continuous) tracial states and the dual maps between them are:
\[
\xymatrix{ 
& TS(C^*_R\langle x_{\bullet\diamond} \rangle)  & TS^c(C^*_R\langle x_{\bullet\diamond}(\,\cdot\,)\rangle) \ar[l]_-{\pi_t^*} \ar[r]^-{\tau \mapsto \tilde{\tau}} & TS^c(C^*_R\langle x_{\bullet\diamond}(\,\cdot\,),v_\bullet(\,\cdot\,)\rangle) \\ 
TS(\mathcal{A}_i) & TS(\mathfrak{A}) \ar[l]_{\lambda_i^*} \ar[u]^-{\Phi^*} & TS^c(\mathfrak{A}(\mathbb{R}_+)) \ar[l]_-{\rho_t^*} \ar[u]_-{\Psi^*} \ar@(dl,dr)[ll]^{\rho_{t,i}^* = \lambda_i^*\circ\rho_t^*} \ar[r]_-{\varphi \mapsto \tilde{\varphi}} & TS^c(C^*_R\langle x_{\bullet\diamond}(\,\cdot\,),v_\bullet(\,\cdot\,)\rangle). \ar[u]_-{\Psi^*}
}
\]

\begin{lemma}\label{L6.3} For any $\varphi \in TS^c(\mathfrak{A}(\mathbb{R}_+))$ we have $\Psi^*(\varphi) := \varphi\circ\Psi \in TS^c(C^*_R\langle x_{\bullet\diamond}(\,\cdot\,)\rangle)$ and $\Psi^*(\tilde{\varphi}) = \Psi^*(\varphi)^\sim$. Hence $\Psi^*(\varphi)^s = \Psi^*(\varphi^s)$ holds for every $s \geq 0$. Moreover, for any $P \in \mathbb{C}\langle x_{\bullet\diamond}(\,\cdot\,)\rangle$, we have 
\[
\Vert E_{\mathcal{Q}(\varphi)}(\pi_{\tilde{\varphi}}(\Lambda^s(\nabla_s^{(k)}\Psi(P)))) \Vert_{\tilde{\varphi},2} 
= 
\Vert E_{\mathcal{N}(\Psi^*(\varphi))}(\pi_{\Psi^*(\varphi)^\sim}(\Pi^s(\mathfrak{D}_s^{(k)}P))) \Vert_{\Psi^*(\varphi)^\sim,2} 
\]
for every $1 \leq k \leq n$ and $s \geq 0$. 
\end{lemma}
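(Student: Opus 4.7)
The plan is to verify the three structural assertions in order — continuity of $\Psi^*(\varphi)$, the extension identity $\Psi^*(\tilde\varphi) = \Psi^*(\varphi)^\sim$, and the semigroup-type identity $\Psi^*(\varphi)^s = \Psi^*(\varphi^s)$ — each of which reduces to checking $*$-homomorphism identities on generators together with a uniqueness statement. For continuity, since $\Psi(x_{ij}(t)) = \rho_{t,i}(X_{ij})$, applying criterion (iv) of Lemma \ref{L6.1} to the generating sets $\mathcal{X}_i = \{X_{ij}\}_j \subset \mathcal{A}_i$ shows that the continuity of $\varphi$ transfers to continuity of $\Psi^*(\varphi)$ as a tracial state on $C^*_R\langle x_{\bullet\diamond}(\,\cdot\,)\rangle$. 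For the extension identity, $\Psi^*(\tilde\varphi)$ makes the $v_i(t)$'s (which $\Psi$ sends to the $u_i(t)$'s) a $*$-freely independent family of free unitary Brownian motions $*$-free from $C^*_R\langle x_{\bullet\diamond}(\,\cdot\,)\rangle$ under $\Psi^*(\varphi)$; the uniqueness of the extension described in subsection 4.4 then forces $\Psi^*(\tilde\varphi) = \Psi^*(\varphi)^\sim$.

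For $\Psi^*(\varphi)^s = \Psi^*(\varphi^s)$, I will check the intertwining $\Psi\circ\Pi^s = \Lambda^s\circ\Psi$ on generators of $C^*_R\langle x_{\bullet\diamond}(\,\cdot\,),v_\bullet(\,\cdot\,)\rangle$: on $v_i(t)$ both sides return $u_i(t)$, and on $x_{ij}(t)$ with $1\le i\le n$ both sides return $u_i((t-s)\vee 0)\,\rho_{s\wedge t,i}(X_{ij})\,u_i((t-s)\vee 0)^*$ (and $\rho_{t,n+1}(X_{n+1\,j})$ for $i=n+1$) directly from the definitions in \eqref{Eq6.1} and in subsection 4.4. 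Hence
\[
\Psi^*(\varphi^s) = \tilde\varphi\circ\Lambda^s\circ\Psi = \tilde\varphi\circ\Psi\circ\Pi^s = \Psi^*(\tilde\varphi)\circ\Pi^s = \Psi^*(\varphi)^\sim\circ\Pi^s = \Psi^*(\varphi)^s.
\]

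For the $2$-norm identity, the plan is to exploit the canonical linear map $V : \mathcal{H}_{\Psi^*(\varphi)^\sim} \to \mathcal{H}_{\tilde\varphi}$ defined by $V\widehat{a} := \widehat{\Psi(a)}$. The crucial point is that $\Psi$ is actually \emph{surjective} onto $\tilde{\mathfrak{A}}(\mathbb{R}_+)$: its image contains all the $u_i(t)$'s and all the $\rho_{t,i}(X_{ij}) = \Psi(x_{ij}(t))$'s, which together generate $\tilde{\mathfrak{A}}(\mathbb{R}_+)$, and similarly $\Psi|_{C^*_R\langle x_{\bullet\diamond}(\,\cdot\,)\rangle}$ is surjective onto $\mathfrak{A}(\mathbb{R}_+)$. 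Thus $V$ has dense range, is isometric (by $\tilde\varphi\circ\Psi = \Psi^*(\varphi)^\sim$), and extends to a unitary that intertwines the GNS representations, fixes the cyclic vector, identifies the traces, and carries $\mathcal{N}(\Psi^*(\varphi))$ onto $\mathcal{Q}(\varphi)$ and $\mathcal{M}(\Psi^*(\varphi))$ onto $\mathcal{P}(\varphi)$. Consequently $\mathrm{Ad}\,V$ intertwines the trace-preserving conditional expectations and preserves the $2$-norm. Combined with the algebraic identity $\Psi(\Pi^s(\mathfrak{D}_s^{(k)}P)) = \Lambda^s(\nabla_s^{(k)}\Psi(P))$ — obtained from $\Psi\circ\Pi^s = \Lambda^s\circ\Psi$ together with the derivation check $\Delta_s^{(k)}\circ\Psi = (\Psi\otimes\Psi)\circ\delta_s^{(k)}$ on the generators $x_{ij}(t)$ and the flip-multiplication identity $\theta\circ(\Psi\otimes\Psi) = \Psi\circ\theta$ — this yields the desired norm equality. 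I expect the only point requiring care to be the surjectivity of $\Psi$ (so that $V$ is genuinely unitary, not just an isometry into $\mathcal{H}_{\tilde\varphi}$), which is what makes the conditional expectations transport faithfully along $V$; all remaining steps are routine verifications on generators.
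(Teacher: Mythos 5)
Your proof is correct and follows essentially the same path as the paper: the first three assertions are handled by checking $*$-homomorphism identities on generators, using Lemma~\ref{L6.1} and the uniqueness of the free-Brownian extension $\tilde\varphi$, together with the intertwining $\Psi\circ\Pi^s = \Lambda^s\circ\Psi$. For the $2$-norm equality the underlying idea is the same as the paper's — the joint distributions of $(x_{ij}(t),v_i(t))$ under $\Psi^*(\varphi)^\sim$ and of $(\rho_{t,i}(X_{ij}),u_i(t))$ under $\tilde\varphi$ coincide — but you make two things explicit that the paper leaves implicit. First, you obtain the algebraic identity $\Psi(\Pi^s(\mathfrak{D}_s^{(k)}P)) = \Lambda^s(\nabla_s^{(k)}\Psi(P))$ cleanly from the derivation-level intertwining $\Delta_s^{(k)}\circ\Psi = (\Psi\otimes\Psi)\circ\delta_s^{(k)}$ on generators together with $\theta\circ(\Psi\otimes\Psi)=\Psi\circ\theta$, whereas the paper writes out both cyclic derivatives explicitly in \eqref{Eq6.6} and compares by inspection. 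Second, you build the actual GNS intertwining unitary $V$, correctly isolating the surjectivity of $\Psi$ as the key point that makes $V$ unitary (not merely isometric) and hence transports the trace-preserving conditional expectations $E_{\mathcal{N}(\Psi^*(\varphi))}\leftrightarrow E_{\mathcal{Q}(\varphi)}$; the paper instead invokes the coincidence of joint distributions and the generating sets more informally. Your version is a tighter and more self-contained rendering of the same argument rather than a genuinely different route.
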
 
\begin{proof} 
Observe that 
\[
\Psi^*(\varphi)(x_{i_1 j_1}(t_1) \cdots x_{i_m j_m}(t_m)) = \varphi(\rho_{t_1,i_1}(X_{i_1 j_1})\cdots\rho_{t_m,i_m}(X_{i_m j_m})),
\]
which implies that $\Psi^*(\varphi)$ falls in $TS^c(C^*_R\langle x_{\bullet\diamond}(\,\cdot\,)\rangle)$ by \cite[Lemma 2.1]{Ueda:JOTP19} and Lemma \ref{L6.1}. Moreover, we have 
\[
\Psi^*(\tilde{\varphi})(a_1 v_{i_1}(t_1)^{\epsilon_1} \cdots a_m v_{i_m}(t_m)^{\epsilon_m}) 
= 
\tilde{\varphi}(\Psi(a_1) u_{i_1}(t_1)^{\epsilon_1} \cdots \Psi(a_m) u_{i_m}(t_m)^{\epsilon_m})  
\]
for any $a_k \in C^*_R\langle x_{\bullet\diamond}(\,\cdot\,)\rangle$, $1 \leq i_k \leq n$, $t_k \geq 0$ and $\epsilon_k = \pm1$. Since $\Psi(C^*_R\langle x_{\bullet\diamond}(\,\cdot\,)\rangle) = \mathfrak{A}(\mathbb{R}_+)$, we conclude that the $v_i(t)$ are freely independent of $C^*_R\langle x_{\bullet\diamond}(\,\cdot\,)\rangle$ and form a freely independent family of left-multiplicative free unitary Brownian motions under $\Psi^*(\tilde{\varphi})$. Therefore, we conclude that $\Psi^*(\tilde{\varphi}) = \Psi^*(\varphi)^\sim$. We observe that 
\begin{align*}
&\Psi(\Pi^s(x_{ij}(t))) 
= 
\Psi(x_{ij}^s(t)) \\
&= 
\begin{cases}
\Psi(v_i((t-s)\wedge0)x_{ij}(s\wedge t)v_i((t-s)\wedge0)^*) \\
\phantom{aaaaaaaaaaaaa} = 
u_i((t-s)\wedge0)\rho_{s\wedge t,i}(X_{ij})u_i((t-s)\wedge0)^* & (1 \leq i \leq n), \\
\Psi(x_{n+1\,j}(t)) = \rho_{t,n+1}(X_{n+1\,j}) & (i=n+1)
\end{cases} \\
&= 
\rho_{t,i}^s(X_{ij}) \\
&= 
\Lambda^s(\rho_{t,i}(X_{ij})) = \Lambda^s(\Psi(x_{ij}(t))), 
\end{align*} 
implying that $\Psi\circ\Pi^s = \Lambda^s\circ\Psi$ on $C^*_R\langle x_{\bullet\diamond}(\,\cdot\,)\rangle$. Therefore, we obtain that  
\[
\Psi^*(\varphi^s) = \tilde{\varphi}\circ\Lambda^s\circ\Psi = \tilde{\varphi}\circ\Psi\circ\Pi^s = \Psi^*(\tilde{\varphi})\circ\Pi^s = \Psi^*(\varphi)^\sim \circ \Pi^s = \Psi^*(\varphi)^s.
\] 

Choose an arbitrary monomial $P = x_{i_1 j_1}(t_1) \cdots x_{i_m j_m}(t_m) \in \mathbb{C}\langle x_{\bullet\diamond}(\,\cdot\,)\rangle$. By definition we have $\Psi(P) = \rho_{t_1,i_1}(X_{i_1 j_1})\cdots\rho_{t_m,i_m}(X_{i_m j_m})$. We observe that 
\begin{equation}\label{Eq6.6}
\begin{aligned} 
&\Pi^s(\mathfrak{D}_s^{(k)}P) \\
&= 
\sum_{\substack{i_l = k \\ t_l \geq s}} 
\Pi^s([v_k(t_l-s)^* x_{i_{l+1}j_{l+1}}(t_{l+1})\cdots x_{i_{l-1}j_{l-1}}(t_{l-1})v_k(t_l - s), x_{i_l j_l}(s)])) \\
&= 
\sum_{\substack{i_l = k \\ t_l \geq s}} 
[v_k(t_l-s)^* x^s_{i_{l+1}j_{l+1}}(t_{l+1})\cdots x^s_{i_{l-1}j_{l-1}}(t_{l-1})v_k(t_l - s), x^s_{i_l j_l}(s)], \\
&\Lambda^s(\nabla_s^{(k)}(\Psi(P))) \\
&=   
\sum_{\substack{i_l = k \\ t_l \geq s}} 
\Lambda^s([u_k(t_l-s)^* \rho_{t_{l+1},i_{l+1}}(X_{i_{l+1}j_{l+1}})\cdots \rho_{t_{l-1},i_{l-1}}(X_{i_{l-1}j_{l-1}})u_k(t_l - s), \rho_{s,i_l}(X_{i_l j_l})])) \\
&=   
\sum_{\substack{i_l = k \\ t_l \geq s}} 
([u_k(t_l-s)^* \rho^s_{t_{l+1},i_{l+1}}(X_{i_{l+1}j_{l+1}})\cdots \rho^s_{t_{l-1},i_{l-1}}(X_{i_{l-1}j_{l-1}})u_k(t_l - s),\rho^s_{s,i_l}(X_{i_l j_l})])). 
\end{aligned}
\end{equation}
Since $\Psi^*(\varphi)^\sim = \Psi^*(\tilde{\varphi})$ and since $\Psi(x_{ij}(t)) = \rho_{t,i}(X_{ij})$ and $\Psi(v_i(t)) = u_i(t)$, we observe that the joint distribution of the $x_{ij}(t)$ and the $v_i(t)$ under $\Psi^*(\varphi)^\sim$ coincides with that of the $\rho_{t,i}(X_{ij})$ and the $u_i(t)$ under $\tilde{\varphi}$. Moreover, $\mathcal{N}(\Psi^*(\varphi))$ is generated by the $\pi_{\Psi^*(\varphi)^\sim}(x_{ij}(t))$ and also $\mathcal{Q}(\varphi)$ is by the $\pi_{\tilde{\varphi}}(\rho_{t,i}(X_{ij}))$. These together with the definitions of $x_{ij}^s(t)$ and $\rho_{t,i}^s(X_{ij})$ imply the desired $2$-norm equality.  
\end{proof} 

\begin{proposition}\label{P6.4} With $\Phi^*(\sigma_0) := \sigma_0\circ\Phi \in TS(C^*_R\langle x_{\bullet\diamond}\rangle)$ we have 
\[
I_{\Phi^*(\sigma_0)}^\mathrm{lib}(\Psi^*(\varphi)) = \mathcal{I}_{\sigma_0}^\mathrm{lib}(\varphi), \quad 
I_{\Phi^*(\sigma_0),\infty}^\mathrm{lib}(\Psi^*(\varphi)) = \mathcal{I}_{\sigma_0}^\mathrm{lib}(\varphi).  
\]
for any $\varphi \in TS^c(\mathfrak{A}(\mathbb{R}_+))$. Moreover, $\Psi^*(TS^c(\mathfrak{A}(\mathbb{R}_+)))$ is an essential domain of both the functionals $I_{\Phi^*(\sigma_0)}^\mathrm{lib}, I_{\Phi^*(\sigma_0),\infty}^\mathrm{lib}$, that is, the functionals take $+\infty$ outside it. 
\end{proposition}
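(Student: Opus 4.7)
The plan is to reduce to a pointwise identity in the test polynomial $P$, handle the supremum by a density/approximation argument, and treat the essential-domain claim via \cite[Proposition 5.7(1),(2)]{Ueda:JOTP19}. First, I would verify that for every $\varphi \in TS^c(\mathfrak{A}(\mathbb{R}_+))$, $t \in [0,\infty]$, and $P = P^* \in \mathbb{C}\langle x_{\bullet\diamond}(\,\cdot\,)\rangle$,
\[
I_{\Phi^*(\sigma_0),t}^{\mathrm{lib}}(\Psi^*(\varphi),P) = \mathcal{I}_{\sigma_0,t}^{\mathrm{lib}}(\varphi,\Psi(P)).
\]
The integral term is exactly the content of Lemma \ref{L6.3}, while $\Psi^*(\varphi)^t(P) = \varphi^t(\Psi(P))$ is the identity $\Psi^*(\varphi^t) = \Psi^*(\varphi)^t$ proved there. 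For the base-point term I need $\Psi^*(\sigma_0^{\mathrm{lib}}) = \Phi^*(\sigma_0)^{\mathrm{lib}}$, which I would obtain by checking on generators that the two $\Gamma$'s in \S4.4 and \S6.3 satisfy $\Gamma\circ\Psi = \Phi\circ\Gamma$ on $C^*_R\langle x_{\bullet\diamond}(\,\cdot\,)\rangle$ (since $\Gamma(\Psi(x_{ij}(t))) = \Gamma(\rho_{t,i}(X_{ij})) = \lambda_i(X_{ij}) = \Phi(x_{ij}) = \Phi(\Gamma(x_{ij}(t)))$), then combining with $\Psi\circ\Pi^0 = \Lambda^0\circ\Psi$ from the proof of Lemma \ref{L6.3} applied to the state $\Gamma^*(\sigma_0)$.

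Next I pass to the supremum. The inclusion $\Psi(\mathbb{C}\langle x_{\bullet\diamond}(\,\cdot\,)\rangle)_{\mathrm{sa}} \subseteq \mathfrak{P}(\mathbb{R}_+)_{\mathrm{sa}}$ together with the pointwise identity immediately gives $I_{\Phi^*(\sigma_0)}^{\mathrm{lib}}(\Psi^*(\varphi)) \leq \mathcal{I}_{\sigma_0}^{\mathrm{lib}}(\varphi)$ and the analogue at $\infty$. For the reverse inequality, any self-adjoint $Q \in \mathfrak{P}(\mathbb{R}_+)$ is a noncommutative polynomial in finitely many $\rho_{t_l,i_l}(a_l)$ with $a_l \in \mathcal{A}_{i_l}$, and since $\mathcal{A}_{i_l} = C^*(\mathbf{X}_{i_l})$ the $a_l$'s can be norm-approximated by self-adjoint polynomials in $\{X_{i_l j}\}_{j}$, producing $Q_n = \Psi(P_n) \in \Psi(\mathbb{C}\langle x_{\bullet\diamond}(\,\cdot\,)\rangle)_{\mathrm{sa}}$ with $Q_n \to Q$ in norm. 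By the Leibniz rule, $\nabla_s^{(k)}Q_n \to \nabla_s^{(k)}Q$ in the projective tensor norm, uniformly in $s$, with $s$-support contained in $[0,\max_l t_l]$; dominated convergence then yields $\mathcal{I}_{\sigma_0,t}^{\mathrm{lib}}(\varphi,Q_n) \to \mathcal{I}_{\sigma_0,t}^{\mathrm{lib}}(\varphi,Q)$ for every $t \in [0,\infty]$, completing both identities. The main technical hurdle here is controlling $\nabla_s^{(k)}$ under this approximation so that the integrand converges in $L^1(ds)$ even when $t=\infty$, but the compact $s$-support keeps this straightforward.

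For the essential-domain claim, suppose $I_{\Phi^*(\sigma_0),\infty}^{\mathrm{lib}}(\tau) < +\infty$ (the $I_{\Phi^*(\sigma_0)}^{\mathrm{lib}}$ case is a fortiori). By \cite[Proposition 5.7(1),(2)]{Ueda:JOTP19} (which as noted remains valid for the $\infty$-variant), finite rate forces $\tau(\pi_s(Q)) = \Phi^*(\sigma_0)(Q) = \sigma_0(\Phi(Q))$ for every $Q \in \mathbb{C}\langle x_{i\diamond}\rangle$, $s \geq 0$, and every $1 \leq i \leq n+1$; this extends to all $Q \in C^*_R\langle x_{i\diamond}\rangle$ by norm-continuity. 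For $Q \in \mathcal{K}_i := \ker(\Phi|_{C^*_R\langle x_{i\diamond}\rangle})$ we then get $\tau(\pi_s(Q)^*\pi_s(Q)) = \tau(\pi_s(Q^*Q)) = 0$, and faithfulness of the normal trace extending $\tau$ on $\pi_\tau(C^*_R\langle x_{\bullet\diamond}(\,\cdot\,)\rangle)''$ forces $\pi_\tau(\pi_s(Q)) = 0$. The universal-free-product description $\mathfrak{A}(\mathbb{R}_+) \cong \bigstar_{t\geq 0}\bigstar_{i=1}^{n+1}\mathcal{A}_i$ identifies the kernel of $\Psi|_{C^*_R\langle x_{\bullet\diamond}(\,\cdot\,)\rangle}$ with the closed two-sided ideal generated by $\{\pi_s(Q) : Q \in \mathcal{K}_i,\ s\geq 0,\ 1 \leq i \leq n+1\}$, so $\pi_\tau$ annihilates $\ker\Psi$ and $\tau$ descends to $\tau = \Psi^*(\varphi)$ for a (unique) tracial state $\varphi$ on $\mathfrak{A}(\mathbb{R}_+)$. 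Continuity of $\varphi$ is immediate from Lemma \ref{L6.1}(iv) with $\mathcal{X}_i = \{X_{ij}\}_{j}$, since $\varphi(\rho_{t_1,i_1}(X_{i_1 j_1})\cdots\rho_{t_m,i_m}(X_{i_m j_m})) = \tau(x_{i_1 j_1}(t_1)\cdots x_{i_m j_m}(t_m))$ inherits joint continuity from that of $\tau$.
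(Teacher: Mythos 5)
Your proposal is correct and follows essentially the same route as the paper's proof: the pointwise identity $I_{\Phi^*(\sigma_0),t}^{\mathrm{lib}}(\Psi^*(\varphi),P) = \mathcal{I}_{\sigma_0,t}^{\mathrm{lib}}(\varphi,\Psi(P))$ via Lemma~\ref{L6.3} and the intertwining of the two basepoint constructions, the norm-approximation of letters from $\mathcal{A}_{i_\ell}$ by noncommutative polynomials in the $X_{i_\ell j}$ (with uniform-in-$s$ control on the cyclic derivative, which has compact $s$-support), and then the constraint from \cite[Proposition 5.7(1),(2)]{Ueda:JOTP19} plus the trace property to kill $\pi_\tau$ on $\pi_s(\ker\Phi_i)$ and descend to $\mathfrak{A}(\mathbb{R}_+)$. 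The only cosmetic divergence is in the essential-domain step: you identify $\ker\Psi$ as the closed two-sided ideal generated by $\{\pi_s(Q):Q\in\ker\Phi_i\}$ and factor $\pi_\tau$ through the quotient, whereas the paper avoids characterizing $\ker\Psi$ altogether and instead invokes the universal property of $\mathfrak{A}(\mathbb{R}_+)$ directly to build a representation out of the well-defined maps $\mathcal{A}_i\cong C^*_R\langle x_{i\diamond}\rangle/\ker\Phi_i\to B(\mathcal{H}_\psi)$; these are equivalent, though the paper's phrasing is slightly more economical since it does not require proving the ideal-generation statement. (One small imprecision in your write-up: $\nabla_s^{(k)}=\theta\circ\Delta_s^{(k)}$ already lands in $\tilde{\mathfrak{A}}(\mathbb{R}_+)$, not a tensor product, so the relevant convergence is norm convergence of $\Lambda^s(\nabla_s^{(k)}Q_n)$ uniformly in $s$, as the paper states, rather than projective-tensor-norm convergence.)
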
 
\begin{proof} 
We first remark the following facts: 
\begin{itemize} 
\item $\Psi^*(\varphi)^t(P) = \Psi^*(\varphi^t)(P) = \varphi^t(\Psi(P))$ for any $P \in \mathbb{C}\langle x_{\bullet\diamond}(\,\cdot\,)\rangle$.  
\item If $\rho_0^*(\varphi) = \sigma_0$, then $\pi_0^*(\Psi^*(\varphi)) = \varphi\circ\Psi\circ\pi_0 = \varphi\circ\rho_0\circ\Phi = \Phi^*(\sigma_0)$. Thus, $\Phi^*(\sigma_0)^\mathrm{lib}(P) = \sigma_0^\mathrm{lib}(\Psi(P))$ for any $P \in \mathbb{C}\langle x_{\bullet\diamond}(\,\cdot\,)\rangle$.    
\end{itemize}
Thus, (the last equation in) Lemma \ref{L6.3} shows that 
\[
I_{\Phi^*(\sigma_0),t}^\mathrm{lib}(\Psi^*(\varphi),P) = \mathcal{I}_{\sigma_0,t}^\mathrm{lib}(\varphi,\Psi(P))
\]
holds for any $P \in \mathbb{C}\langle x_{\bullet\diamond}(\,\cdot\,)\rangle$. Note that $\Psi(\mathbb{C}\langle x_{\bullet\diamond}(\,\cdot\,)\rangle) \subset \mathfrak{P}(\mathbb{R}_+)$. Hence the above identity at least gives 
\[
I_{\Phi^*(\sigma_0)}^\mathrm{lib}(\Psi^*(\varphi)) \leq \mathcal{I}_{\sigma_0}^\mathrm{lib}(\varphi), \quad I_{\Phi^*(\sigma_0),\infty}^\mathrm{lib}(\Psi^*(\varphi)) \leq \mathcal{I}_{\sigma_0,\infty}^\mathrm{lib}(\varphi). 
\]
To show the reverse inequality in both, it suffices to prove: \begin{center} 
($\diamondsuit$) For any $Q = Q^* \in \mathfrak{A}(\mathbb{R}_+)$ there is a sequence $Q_k = Q_k^*$ in $\Psi(\mathbb{C}\langle x_{\bullet\diamond}(\,\cdot\,)\rangle)$ \\ such that $\mathcal{I}_{\sigma_0,t}^\mathrm{lib}(\tau,Q_k) \to \mathcal{I}_{\sigma_0,t}^\mathrm{lib}(\tau,Q)$ for all $t \in [0,\infty]$.
\end{center} 
Remark that $Q$ is a finite sum of monomials, say $W = \rho_{t_1,i_1}(x_1)\cdots\rho_{t_m,i_m}(x_m)$ with $x_\ell \in \mathcal{A}_{i_\ell}$. Since the unital $*$-subalgebra $\mathcal{A}_{i,0}$ algebraically generated by $(X_{ij})_{j=1}^{r(i)}$ is norm-dense in $\mathcal{A}_i$, we can choose norm-bounded sequences $x_\ell^{(p)}$ in $\mathcal{A}_{i_{\ell},0}$ in such a way that $x_\ell^{(p)} \to x_\ell$ in norm as $p \to \infty$ for every $1 \leq \ell \leq m$. Since $\Psi(x_{ij}(t)) = \rho_{t,i}(X_{ij})$ and $\rho_{t,i}$ is a unital $*$-homomorphism, $W_p := \rho_{t_1,i_1}(x_1^{(p)})\cdots\rho_{t_m,i_m}(x_m^{(p)})$ falls into $\Psi(\mathbb{C}\langle x_{\bullet\diamond}(\,\cdot\,)\rangle)$ and converges to $W$ in norm as $p\to\infty$. Moreover, using expression \eqref{Eq6.6} we can easily see that both $\Lambda^s(\nabla_s^{(k)} W_p) \to \Lambda^s(\nabla_s^{(k)}W)$ and $\Lambda^s(\nabla_s^{(k)} W_p^*) \to \Lambda^s(\nabla_s^{(k)}W^*)$ in norm and uniformly in $s$ as $p \to \infty$. Since all the maps involved are linear, we have proved the desired assertion ($\diamondsuit$) by taking, if necessary, the (operator-theoretic) real part of the approaching sequence that we have obtained. Hence, we complete the proof of the first part of the proposition. 

We will then prove the second part of the proposition. Choose $\psi \in TS^c(C^*_R\langle x_{\bullet\diamond}(\,\cdot\,)\rangle)$ with $I_{\Phi^*(\sigma_0),\infty}^\mathrm{lib}(\psi) < +\infty$. By (the proof of) \cite[Proposition 5.7]{Ueda:JOTP19} we have $\pi_t^*(\psi) = \Phi^*(\sigma_0)$ on $C_R^*\langle x_{i\diamond}\rangle$, the unital $C^*$-subalgebra generated by the $x_{ij}$, $j \geq 1$, with fixing $i$, for each $1 \leq i \leq n+1$. Denote by $\Phi_i$ the restriction of $\Phi : C^*_R\langle x_{\bullet\diamond} \rangle \to \mathfrak{A}$ to each $C^*_R\langle x_{i\diamond} \rangle$. Since $\Phi_i :  C^*_R\langle x_{i\diamond} \rangle \to \lambda_i(\mathcal{A}_i)$ is a surjective $*$-homomorphism, we obtain a bijective unital $*$-homomorphism $\lambda_i(\mathcal{A}_i) \cong C^*_R\langle x_{i\diamond} \rangle/\mathrm{Ker}(\Phi_i)$ sending $\lambda_i(X_{ij})$ to $x_{ij} + \mathrm{Ker}(\Phi_i)$ for $j \geq 1$. Consider the GNS representation $\pi_\psi : C^*_R\langle x_{\bullet\diamond}(\,\cdot\,)\rangle \curvearrowright \mathcal{H}_\psi$. For any $y \in \mathrm{Ker}(\Phi_i)$ we have 
\[
\psi(\pi_t(y)^* \pi_t(y)) = \pi_t^*(\psi)(y^* y) = \Phi^*(\sigma_0)(y^* y) = \sigma_0(\Phi_i(y)^* \Phi_i(y)) = 0, 
\]
and hence $\pi_\psi(\pi_t(y)) = 0$ thanks to the trace property of $\psi$. Therefore, by the $C^*$-algebraic freeness among the $\rho_{t,i}(\mathcal{A}_i)$ ($\cong \lambda_i(\mathcal{A}_i) \cong C^*_R\langle x_{i\diamond} \rangle/\mathrm{Ker}(\Phi_i)$ by $\rho_{t,i}(X_{ij}) \leftrightarrow \lambda_i(X_{ij}) \leftrightarrow x_{ij} + \mathrm{Ker}(\Phi_i)$ as remarked before), we obtain a unique unital $*$-homomorphism from $\mathfrak{A}(\mathbb{R}_+)$ to $B(\mathcal{H}_{\tau'})$ sending each $\rho_{t,i}(X_{ij})$ to $\pi_\psi(\pi_t(x_{ij})) = \pi_\psi(x_{ij}(t))$. Then the pull-back of $\psi$ by this $*$-homomorphism defines a tracial state $\varphi$ on $\mathfrak{A}(\mathbb{R}_+)$, under which the $\rho_{t,i}(X_{ij})$ have the same joint distribution as that of the $x_{ij}(t)$ under $\psi$. This means that $\Psi^*(\varphi) = \psi$ and the continuity of $\varphi$ follows thanks to Lemma \ref{L6.1}. Hence we are done. 
\end{proof} 

\begin{corollary}\label{C6.5} In the same setting as in Proposition \ref{P6.4} we have 
\begin{equation}\label{Eq6.4} 
J^\mathrm{lib}_{\Phi^*(\sigma_0)}(\Phi^*(\sigma)) = \mathcal{J}^\mathrm{lib}_{\sigma_0}(\sigma), \quad 
J^\mathrm{lib}_{\Phi^*(\sigma_0),\infty}(\Phi^*(\sigma)) = \mathcal{J}^\mathrm{lib}_{\sigma_0,\infty}(\sigma) 
\end{equation}
for any $\sigma \in TS(\mathfrak{A})$. In particular, the following are equivalent: 
\begin{itemize}
\item[(1)] $\mathcal{A}_i$, $1 \leq i \leq n+1$, are freely independent. 
\item[(2)] $\mathcal{J}_{\sigma_0}^\mathrm{lib}(\mathcal{A}_1;\dots;\mathcal{A}_n:\mathcal{A}_{n+1}) = 0$. 
\item[(3)] $\mathcal{J}^\mathrm{lib}_{\sigma_0,\infty}(\mathcal{A}_1;\dots;\mathcal{A}_n:\mathcal{A}_{n+1}) = 0$. 
\end{itemize}
Moreover, 
\begin{equation}\label{Eq6.5} 
\chi_\mathrm{orb}(\mathbf{X}_1,\dots,\mathbf{X}_{n+1}) 
\leq 
-\mathcal{J}^\mathrm{lib}_{\sigma_0}(\mathcal{A}_1;\dots;\mathcal{A}_n:\mathcal{A}_{n+1})
\leq 
-\mathcal{J}^\mathrm{lib}_{\sigma_0,\infty}(\mathcal{A}_1;\dots;\mathcal{A}_n:\mathcal{A}_{n+1}), 
\end{equation}
at least when $\sigma_0$ is either $\Upsilon^*(\tau)$ or $\bigstar_{i=1}^{n+1} (\lambda_i^{-1})^*(\tau)$.  
\end{corollary}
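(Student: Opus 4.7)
The plan is to reduce everything to Proposition~6.4, Theorem~5.3, and Corollary~4.3 via the naturality of the coordinate and coordinate-free setups.

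First I verify the identity $\Psi\circ\pi_T = \rho_T\circ\Phi$ on the generators (both sides send $x_{ij}$ to $\rho_{T,i}(X_{ij})$), which dualizes to $\pi_T^*\circ\Psi^* = \Phi^*\circ\rho_T^*$. To prove \eqref{Eq6.4}, I unpack the definition of $J^\mathrm{lib}_{\Phi^*(\sigma_0)}(\Phi^*(\sigma))$: the essential-domain statement of Proposition~6.4 restricts the infimum over $\tau$ to those of the form $\tau = \Psi^*(\varphi)$, and the rate-function identity of the same proposition then replaces $I^\mathrm{lib}_{\Phi^*(\sigma_0)}(\Psi^*(\varphi))$ by $\mathcal{I}^\mathrm{lib}_{\sigma_0}(\varphi)$. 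The constraint $\pi_T^*(\Psi^*(\varphi)) \in O_{m,\delta}(\Phi^*(\sigma))$ transports, via $\pi_T^*\circ\Psi^* = \Phi^*\circ\rho_T^*$, into $\rho_T^*(\varphi) \in (\Phi^*)^{-1}(O_{m,\delta}(\Phi^*(\sigma)))$, which is exactly the basic weak-$*$ neighborhood $O_{\mathcal{W}_m,\delta}(\sigma)$ with $\mathcal{W}_m := \{\lambda_{i_1}(X_{i_1 j_1})\cdots \lambda_{i_p}(X_{i_p j_p}) : p,j_k \leq m\}$. As $m\to\infty$, $\delta\searrow 0$ these exhaust a neighborhood basis at $\sigma$ because $\bigcup_m \mathcal{W}_m$ spans a norm-dense $*$-subalgebra of $\mathfrak{A}$, so the $\lim_{m,\delta}$ on the left matches the $\sup_{O\in\mathcal{O}(\sigma)}$ on the right of \eqref{Eq6.2}; the $\infty$-variant is identical.

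For the equivalence (1)$\Leftrightarrow$(2)$\Leftrightarrow$(3), set $\sigma_{(\mathbf{X}_i)} := \Phi^*(\Upsilon^*(\tau))$, the joint distribution of the $\mathbf{X}_i$. By \eqref{Eq6.4} and Definition~6.1,
$$
\mathcal{J}^\mathrm{lib}_{\sigma_0}(\mathcal{A}_1;\dots;\mathcal{A}_n:\mathcal{A}_{n+1}) = J^\mathrm{lib}_{\Phi^*(\sigma_0)}(\sigma_{(\mathbf{X}_i)}),
$$
and the same identity with $\infty$-subscripts. Theorem~5.3 says this vanishes iff $\sigma_{(\mathbf{X}_i)} = (\Phi^*(\sigma_0))^\mathrm{fr}$. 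Under the standing assumption $\lambda_i^*(\sigma_0) = \tau|_{\mathcal{A}_i}$, the marginals of $(\Phi^*(\sigma_0))^\mathrm{fr}$ on each $C^*_R\langle x_{i\diamond}\rangle$ automatically agree with those of $\sigma_{(\mathbf{X}_i)}$, so the equality reduces to free independence of the blocks $\mathbf{X}_1^{\sigma_{(\mathbf{X}_i)}},\dots,\mathbf{X}_{n+1}^{\sigma_{(\mathbf{X}_i)}}$, equivalently of $\mathcal{A}_1,\dots,\mathcal{A}_{n+1}$ inside $(\mathcal{M},\tau)$.

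For \eqref{Eq6.5}, the right inequality comes from $\mathcal{I}^\mathrm{lib}_{\sigma_0,\infty} \leq \mathcal{I}^\mathrm{lib}_{\sigma_0}$, inherited at the level of $\mathcal{J}$. For the left inequality, apply Corollary~4.3 with the role of $\sigma_0$ played by $\Phi^*(\sigma_0)$ and with $\sigma = \sigma_{(\mathbf{X}_i)}$ to obtain
$$
\chi_\mathrm{orb}(\sigma_{(\mathbf{X}_i)} \mid \Phi^*(\sigma_0)) \leq -J^\mathrm{lib}_{\Phi^*(\sigma_0)}(\sigma_{(\mathbf{X}_i)}) = -\mathcal{J}^\mathrm{lib}_{\sigma_0}(\mathcal{A}_1;\dots;\mathcal{A}_n:\mathcal{A}_{n+1}).
$$
When $\sigma_0 = \Upsilon^*(\tau)$ we have $\Phi^*(\sigma_0) = \sigma_{(\mathbf{X}_i)}$, and the equality case ($\sigma=\sigma_0$) of Proposition~3.2 identifies the left-hand side with $\chi_\mathrm{orb}(\mathbf{X}_1,\dots,\mathbf{X}_{n+1})$; when $\sigma_0 = \bigstar_{i=1}^{n+1}(\lambda_i^{-1})^*(\tau)$ the $\mathbf{X}^{\Phi^*(\sigma_0)}_i$ are freely independent and Proposition~3.3 yields the same identification. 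The only delicate point is the topological identification in the first paragraph---that the pullback under $\Phi^*$ of the coordinate-based system $\{O_{m,\delta}(\Phi^*(\sigma))\}$ is a weak-$*$ neighborhood basis at $\sigma$---but this rests only on surjectivity of $\Phi$ and norm-density of polynomials in the $\lambda_i(X_{ij})$, and the remainder is bookkeeping against machinery already in place.
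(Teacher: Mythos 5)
Your argument is correct and follows the same route as the paper: transport the weak-$*$ neighborhood bases through the pair $\pi_T^*\circ\Psi^*=\Phi^*\circ\rho_T^*$, use the rate-function identity and essential-domain statement of Proposition \ref{P6.4} to get \eqref{Eq6.4}, then read off the equivalence from Theorem \ref{T5.3} and deduce \eqref{Eq6.5} from Corollary \ref{C4.3} combined with the equality cases of Propositions \ref{P3.2} and \ref{P3.3}. The only cosmetic difference is that you state the intertwining identity $\Psi\circ\pi_T=\rho_T\circ\Phi$ up front, while the paper verifies the resulting matching of neighborhood conditions directly by evaluating both sides on monomials.
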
 
\begin{proof} 
We will first prove two identities \eqref{Eq6.4}, which enables us to derive the equivalence of (1) -- (3) from Theorem \ref{T5.3} immediately.  
In the current setting, an open neighborhood basis at $\sigma$ in $TS(\mathfrak{A})$ should be given as a collection of $O_{m,\delta}(\sigma)$, where $O_{m,\delta}(\sigma)$ is all the $\sigma' \in TS(\mathfrak{A})$ such that 
\[
|\sigma'(\lambda_{i_1}(X_{i_1 j_1})\cdots \lambda_{i_p}(X_{i_p j_p})) - \sigma(\lambda_{i_1}(X_{i_1 j_1})\cdots \lambda_{i_p}(X_{i_p j_p}))| < \delta
\]
whenever $1 \leq i_k \leq n+1$, $1 \leq j_k \leq m$, $1 \leq k \leq p$ and $1 \leq p \leq m$. Thus, $\sup_{O \in \mathcal{O}(\sigma)}$ and $\rho_T^*(\varphi) \in O$ can/should be replaced with $\lim_{m,\delta}$ and $\rho_T^*(\varphi) \in O_{m,\delta}(\sigma)$, respectively. By definition we observe that 
\begin{align*}
&|\pi_T^*(\Psi^*(\varphi))(x_{i_1 j_1}\cdots x_{i_p j_p}) - \Phi^*(\sigma)(x_{i_1 j_1}\cdots x_{i_p j_p})| \\
&= 
|\rho_T^*(\varphi)(\lambda_{i_1}(X_{i_1 j_1})\cdots \lambda_{i_p}(X_{i_p j_p})) - \sigma(\lambda_{i_1}(X_{i_1 j_1})\cdots \lambda_{i_p}(X_{i_p j_p}))|. 
\end{align*}
Hence $\pi_T^*(\Psi(\tau)) \in O_{m,\delta}(\Phi^*(\sigma))$ if and only if $\rho_T^*(\varphi) \in O_{m,\delta}(\sigma)$. Moreover, $\Psi^*(TS^c(C^*_R\langle x_{\bullet\diamond}\rangle)$ is an essential domain for the functionals by Proposition \ref{P6.4}. Therefore, the main identities in Proposition \ref{P6.4} imply two identities \eqref{Eq6.5}.  

Since 
\begin{align*} 
\Phi^*(\Upsilon^*(\tau))(x_{i_1 j_1}\cdots x_{i_m j_m}) 
&= 
\tau(X_{i_1 j_1}\cdots X_{i_m j_m}), \\
\Phi^*(\bigstar_{i=1}^{n+1} (\lambda_i^{-1})^*(\tau))(x_{i_1 j_1}\cdots x_{i_m j_m}) 
&= 
\bigstar_{i=1}^{n+1} (\lambda_i^{-1})^*(\tau)(\lambda_{i_1}(X_{i_1 j_1})\cdots\lambda_{i_m}(X_{i_m j_m})), 
\end{align*} 
Corollary \ref{C4.3} together with Propositions \ref{P3.2}, \ref{P3.3} implies inequality \eqref{Eq6.5}. 
\end{proof} 

\begin{remarks}\label{R6.6} (1) The part characterizing free independence by $\mathcal{J}_{\sigma_0}^\mathrm{lib}$ as well as $\mathcal{J}_{\sigma_0,\infty}^\mathrm{lib}$ in the above corollary can directly be proved by using the same argument as in \S5 without appealing to generators of each $\mathcal{A}_i$. 

(2) The last two assertions of the above corollary suggests that $\mathcal{J}_{\sigma_0}^\mathrm{lib}(\mathcal{A}_1;\cdots;\mathcal{A}_n:\mathcal{A}_{n+1})$ may be independent of $\sigma_0$, at least under some constraint. However, this question is untouched yet due to the lack of techniques to discuss `minimal paths' of tracial states under the functionals. 
\end{remarks}  

\subsection{Invariance under weak closure}
Corollary \ref{C6.5} suggests that $\mathcal{J}_{\sigma_0}^\mathrm{lib}(\mathcal{A}_1;\dots;\mathcal{A}_n:\mathcal{A}_{n+1})$ as well as $\mathcal{J}^\mathrm{lib}_{\sigma_0,\infty}(\mathcal{A}_1;\dots;\mathcal{A}_n:\mathcal{A}_{n+1})$ are $W^*$-invariants, that is, they are unchanged if each $\mathcal{A}_i$ is replaced with its $\sigma$-weak closure $\overline{\mathcal{A}_i}^w$. This is indeed the case, as we will see below. The proof is rather technical, but the idea behind it is simple. 

\medskip
Let us denote by $\mathfrak{M}$ and $\mathfrak{M}(\mathbb{R}_+) \subset \tilde{\mathfrak{M}}(\mathbb{R_+})$ the $C^*$-algebras corresponding to $\mathfrak{A}$ and $\mathfrak{A}(\mathbb{R}_+) \subset \tilde{\mathfrak{A}}(\mathbb{R_+})$ when each $\mathcal{A}_i$ is replaced with $\mathcal{M}_i:=\overline{\mathcal{A}_i}^w$. Observe that the original $\mathfrak{A}$ and $\mathfrak{A}(\mathbb{R}_+) \subset \tilde{\mathfrak{A}}(\mathbb{R_+})$ are naturally embedded into $\mathfrak{M}$ and $\mathfrak{M}(\mathbb{R}_+) \subset \tilde{\mathfrak{M}}(\mathbb{R_+})$. See Proposition \ref{PA3}. Notations $\lambda_i, \rho_{t,i}, \rho_t$ of morphisms are used simultaneously in what follows. To this end, we need several technical, purely operator algebraic facts (Lemmas \ref{L6.7}--\ref{L6.9}). 

\medskip
The first lemma seems a folklore among operator algebraists, but we do give its proof because it plays a key role in the discussion below.

\begin{lemma}\label{L6.7} Let $\mathcal{A}$ be a $\sigma$-weakly dense, unital $C^*$-subalgebra of a $W^*$-algebra $\mathcal{M}$ and $\varphi$ be a normal state on $\mathcal{M}$. Let $\pi : \mathcal{A} \curvearrowright \mathcal{H}$ be a unital $*$-representation with a distinguished vector $\xi_0 \in \mathcal{H}$ such that $\xi_0$ is separating for $\pi(\mathcal{A})$ and that $(\pi(a)\xi_0|\xi_0)_\mathcal{H} = \varphi(a)$ holds for every $a \in \mathcal{A}$. Then there is a unique normal unital $*$-representation $\bar{\pi} : \mathcal{M} \curvearrowright \mathcal{H}$ extending $\pi$ such that $\bar{\pi}(\mathcal{M}) = \overline{\pi(\mathcal{A})}^w$. 
\end{lemma}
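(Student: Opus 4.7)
The plan is to handle uniqueness via normality and $\sigma$-weak density, and to construct the extension $\bar\pi$ as the composition of (i) the GNS representation of $\varphi$, (ii) the intertwining unitary identifying this GNS with $\pi$ restricted to the cyclic subspace generated by $\xi_0$, and (iii) the inverse of the reduction map $\mathcal{N}\to\mathcal{N}p$, where $\mathcal{N}:=\overline{\pi(\mathcal{A})}^w$ and $p$ is the projection onto that cyclic subspace. Uniqueness is immediate: two normal unital $*$-representations extending $\pi$ agree on the $\sigma$-weakly dense subalgebra $\mathcal{A}$, and hence on all of $\mathcal{M}$ by $\sigma$-weak continuity.

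For existence, I would first form the GNS triple $(\pi_\varphi,\mathcal{H}_\varphi,\xi_\varphi)$ of the normal state $\varphi$; normality of $\varphi$ ensures $\pi_\varphi$ is normal and $\pi_\varphi(\mathcal{M})=\pi_\varphi(\mathcal{A})''$, while Kaplansky density applied to the $\sigma$-weakly dense inclusion $\mathcal{A}\subset\mathcal{M}$ gives that $\pi_\varphi(\mathcal{A})\xi_\varphi$ is norm-dense in $\mathcal{H}_\varphi$. Setting $\mathcal{K}:=\overline{\pi(\mathcal{A})\xi_0}$, the triple $(\pi|_\mathcal{K},\mathcal{K},\xi_0)$ is a second cyclic datum for $\varphi|_\mathcal{A}$, so GNS uniqueness supplies a unitary $U:\mathcal{H}_\varphi\to\mathcal{K}$ with $U\xi_\varphi=\xi_0$ and $U\pi_\varphi(a)U^*=\pi(a)|_\mathcal{K}$ for all $a\in\mathcal{A}$. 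Next let $p$ be the projection onto $\mathcal{K}$; since $\mathcal{K}$ is $\pi(\mathcal{A})$-reducing, $p\in\mathcal{N}'$, and the reduction map
\[
 r:\mathcal{N}\to B(\mathcal{K}),\qquad r(x):=xp,
\]
is a normal surjective $*$-homomorphism. A short $\sigma$-weak density argument (approximating $x\in\mathcal{N}$ by bounded nets in $\pi(\mathcal{A})$ and using $\sigma$-weak-to-WOT continuity of the restriction to $\mathcal{K}$) identifies the image as $\mathcal{N}p=(\pi|_\mathcal{K}(\mathcal{A}))''=U\pi_\varphi(\mathcal{M})U^*$. The critical step is that $r$ is injective: for $x\in\mathcal{N}$ with $xp=0$ one has $x\xi_0=0$ (since $\xi_0\in\mathcal{K}$), so the separating hypothesis for $\xi_0$ forces $x=0$. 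Granting this, $r$ is a normal $*$-isomorphism, and
\[
 \bar\pi\;:=\;r^{-1}\circ\operatorname{Ad}(U)\circ\pi_\varphi\ :\ \mathcal{M}\longrightarrow\mathcal{N}
\]
is a composition of three normal $*$-homomorphisms. A direct check using the intertwining relation yields $\bar\pi(a)=\pi(a)$ for $a\in\mathcal{A}$, and the image is $r^{-1}(U\pi_\varphi(\mathcal{M})U^*)=\mathcal{N}=\overline{\pi(\mathcal{A})}^w$.

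The main obstacle is the injectivity of $r$, which requires $\xi_0$ to be separating for the weak closure $\mathcal{N}$ rather than only for the $*$-subalgebra $\pi(\mathcal{A})$. I would read the stated hypothesis in its customary operator-algebraic sense, namely separating for $\overline{\pi(\mathcal{A})}^w$, in which case the argument goes through as above; if one insists on the strictly literal reading, an extra step is needed to upgrade separating from $\pi(\mathcal{A})$ to $\mathcal{N}$, by exploiting that $\omega_{\xi_0}(\,\cdot\,)=(\,\cdot\,\xi_0\mid\xi_0)$ is a normal state on $\mathcal{N}$ whose restriction to $\pi(\mathcal{A})$ matches $\varphi|_\mathcal{A}$ through $\pi$.
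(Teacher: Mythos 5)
Your argument is sound and follows essentially the same route as the paper's: form the GNS triple of the normal state $\varphi$, identify $\pi_\varphi$ with $\pi$ restricted to $\mathcal{K}=\overline{\pi(\mathcal{A})\xi_0}$ via GNS uniqueness, and then invert the reduction $x\mapsto xp$ from $\mathcal{N}:=\overline{\pi(\mathcal{A})}^w$ onto $\mathcal{N}p$; the paper packages this last step as an appeal to quasi-equivalence via \cite[Theorem 10.3.3(ii)]{KadisonRingrose:Book2}, but its content is exactly your injectivity of $r$. Your reservation about the separating hypothesis is substantive, not merely interpretive: separating for the $C^*$-algebra $\pi(\mathcal{A})$ does not in general entail separating for $\overline{\pi(\mathcal{A})}^w$, and the lemma as literally stated does fail. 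Take $\mathcal{M}=L^\infty[0,1]$ with Lebesgue measure, $\mathcal{A}=C[0,1]$, $\varphi$ the Lebesgue integral, $\mathcal{H}=L^2[0,1]\oplus\mathbb{C}$, let $\pi(f)$ act by multiplication by $f$ on the first summand and by the scalar $f(0)$ on the second, and put $\xi_0=(1,0)$ with $1$ the constant function. Then $\xi_0$ is separating for $\pi(\mathcal{A})$ (a continuous $f$ vanishing a.e.\ is zero) and $(\pi(a)\xi_0|\xi_0)_\mathcal{H}=\varphi(a)$, yet $\overline{\pi(\mathcal{A})}^w=L^\infty[0,1]\oplus\mathbb{C}$ cannot be the range of a normal unital $*$-representation of $L^\infty[0,1]$ extending $\pi$, because the second block would have to be a normal character of $L^\infty[0,1]$ extending evaluation at $0$, and none exist. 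In particular, the salvage you suggest at the end --- that normality of the vector state of $\xi_0$ on $\mathcal{N}$ together with its agreement with $\varphi$ on $\pi(\mathcal{A})$ upgrades the separating property --- does not work: the example satisfies both conditions and yet $(0,1)\in\mathcal{N}$ annihilates $\xi_0$. So the hypothesis should indeed be read as ``$\xi_0$ separating for $\overline{\pi(\mathcal{A})}^w$''; this is what actually holds in the two invocations (Lemmas \ref{L6.8} and \ref{L6.9}), where $\xi_0$ is a GNS trace vector and hence separating for the full weak closure.
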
 
\begin{proof}
Let $(\mathcal{H}_\varphi,\pi_\varphi,\xi_\varphi)$ be the GNS triple of $(\mathcal{M,\varphi})$. Set $\mathcal{K} := \overline{\pi(\mathcal{A})\xi_0}$, a reducing subspace for $\pi(\mathcal{A})$. Observe, by the uniqueness of GNS representations, that the restriction of $\pi$ to $\mathcal{K}$ with $\xi_0$ is a realization of $(\mathcal{H}_\varphi, {\pi_\varphi}\!\upharpoonright_\mathcal{A},\xi_\varphi)$. Since $\xi_0$ is separating for $\pi(\mathcal{A})$, $\pi$ is quasi-equivalent to $\pi_\varphi$ by \cite[Theorem 10.3.3(ii)]{KadisonRingrose:Book2}. This means that there exists a normal unital, bijective $*$-homomorphism $\rho : \pi_\varphi(\mathcal{M}) = \overline{\pi_\varphi(\mathcal{A})}^w \to \overline{\pi(\mathcal{A})}^w$ sending $\pi_\varphi(a)$ to $\pi(a)$ for every $a \in \mathcal{A}$. Thus, $\bar{\pi} := \rho\circ\pi_\varphi : \mathcal{M} \to \overline{\pi(\mathcal{A})}^w$ is the desired $*$-homomorphism. 
\end{proof} 

We need the next two state extension properties. The proofs crucially use the previous lemma with the universality of universal free products. 

\begin{lemma}\label{L6.8} Any $\sigma_0 \in TS(\mathfrak{A})$ with $\lambda_i^*(\sigma_0) = \tau$ on $\mathcal{A}_i$ for all $1 \leq i \leq n+1$ has a unique extension $\bar{\sigma}_0 \in TS(\mathfrak{M})$ with $\lambda_i^*(\bar{\sigma}_0) = \tau$ on $\mathcal{M}_i$ for all $1 \leq i \leq n+1$. 
\end{lemma}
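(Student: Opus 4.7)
The plan is to construct $\bar{\sigma}_0$ via the GNS representation of $\sigma_0$, to use Lemma \ref{L6.7} to promote each restriction $\pi_{\sigma_0}\circ\lambda_i$ from $\mathcal{A}_i$ to a normal representation of $\mathcal{M}_i$, and then to assemble these by the universal property of $\mathfrak{M}=\bigstar_{i=1}^{n+1}\mathcal{M}_i$. Uniqueness will be reduced to continuity of $\bar{\sigma}_0$ on words through a Kaplansky $/$ $L^2$-approximation that exploits the trace property. Concretely, let $(\mathcal{H},\pi_{\sigma_0},\xi)$ denote the GNS triple of $(\mathfrak{A},\sigma_0)$. Since $\sigma_0$ is tracial, $\xi$ is a cyclic trace vector, hence separating for $\pi_{\sigma_0}(\mathfrak{A})$ and automatically for every unital $*$-subalgebra thereof. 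For each $i$, the composition $\pi_{\sigma_0}\circ\lambda_i:\mathcal{A}_i\to B(\mathcal{H})$ is a unital $*$-representation with $\xi$ separating for its image and $(\pi_{\sigma_0}(\lambda_i(a))\xi\mid\xi)=\tau(a)$, so Lemma \ref{L6.7} yields a unique normal unital $*$-representation $\bar{\pi}_i:\mathcal{M}_i\to B(\mathcal{H})$ extending $\pi_{\sigma_0}\circ\lambda_i$ with $\bar{\pi}_i(\mathcal{M}_i)=\overline{\pi_{\sigma_0}(\lambda_i(\mathcal{A}_i))}^w$. The universal property of $\mathfrak{M}$ assembles the $\bar{\pi}_i$ into a $*$-homomorphism $\bar{\pi}:\mathfrak{M}\to B(\mathcal{H})$, and I set $\bar{\sigma}_0(\,\cdot\,):=(\bar{\pi}(\,\cdot\,)\xi\mid\xi)$.

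To verify the required properties, note that $\bar{\pi}(\mathfrak{M})$ is generated as a $C^*$-algebra by the $\bar{\pi}_i(\mathcal{M}_i)\subset\pi_{\sigma_0}(\mathfrak{A})''$, hence is contained in $\pi_{\sigma_0}(\mathfrak{A})''$. The trace-vector property of $\xi$ extends from $\pi_{\sigma_0}(\mathfrak{A})$ to $\pi_{\sigma_0}(\mathfrak{A})''$ by $\sigma$-weak continuity of the vector state, so $\xi$ is a trace vector for $\bar{\pi}(\mathfrak{M})$ and hence $\bar{\sigma}_0$ is tracial. Also $\bar{\sigma}_0=\sigma_0$ on $\mathfrak{A}$ because $\bar{\pi}\circ\lambda_i=\pi_{\sigma_0}\circ\lambda_i$ on each $\mathcal{A}_i$ and $\mathfrak{A}$ is generated by the $\lambda_i(\mathcal{A}_i)$; and $\lambda_i^*(\bar{\sigma}_0)$ is a normal state on $\mathcal{M}_i$ (through the normality of $\bar{\pi}_i$) agreeing with $\tau$ on the $\sigma$-weakly dense subalgebra $\mathcal{A}_i$, hence equal to $\tau$.

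For uniqueness, let $\bar{\sigma}_0,\bar{\sigma}_0'$ be two such extensions. The $*$-algebra linearly spanned by words $w=\lambda_{i_1}(a_1)\cdots\lambda_{i_m}(a_m)$ with $a_k\in\mathcal{M}_{i_k}$ is norm-dense in $\mathfrak{M}$, so by norm-continuity of states it suffices to show $\bar{\sigma}_0(w)=\bar{\sigma}_0'(w)$ for all such $w$. By Kaplansky density pick bounded nets $(a_k^{(\alpha)})\subset\mathcal{A}_{i_k}$ with $\|a_k^{(\alpha)}\|_\infty\le\|a_k\|_\infty$ and $a_k^{(\alpha)}\to a_k$ $\sigma$-strongly, hence in $L^2(\tau)$, and write $w_\alpha$ for the resulting word in $\mathfrak{A}$, so $\bar{\sigma}_0(w_\alpha)=\sigma_0(w_\alpha)=\bar{\sigma}_0'(w_\alpha)$. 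Exchanging entries one at a time yields $\bar{\sigma}_0(w)-\bar{\sigma}_0(w_\alpha)$ as a telescoping sum of terms of the form $\bar{\sigma}_0(v_1\lambda_{i_k}(a_k-a_k^{(\alpha)})v_2)$; the trace property rewrites this as $\bar{\sigma}_0(\lambda_{i_k}(a_k-a_k^{(\alpha)})v_2v_1)$, and Cauchy--Schwarz for $\bar{\sigma}_0$ bounds its modulus by $\|a_k-a_k^{(\alpha)}\|_{\tau,2}\,\|v_2v_1\|_\infty$, which tends to $0$ uniformly in $\alpha$ thanks to the $L^2$-approximation together with the uniform $C^*$-bound on the $v_\ell$. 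The same bound holds for $\bar{\sigma}_0'$, giving $\bar{\sigma}_0(w)=\bar{\sigma}_0'(w)$.

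The delicate point is that a norm-continuous $*$-homomorphism defined on all of $\mathcal{M}_i$ need not be normal, so $\sigma$-weak convergence cannot be transferred through it a priori. The trace property plays a twofold role: it is what lets Lemma \ref{L6.7} produce the normal extensions $\bar{\pi}_i$ in the existence step, and it is what provides the Cauchy--Schwarz $/$ $L^2$-control that makes the Kaplansky approximation effective in the uniqueness step.
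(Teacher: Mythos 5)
Your construction of $\bar{\sigma}_0$ (GNS of $\sigma_0$, apply Lemma \ref{L6.7} to each $\pi_{\sigma_0}\circ\lambda_i$, assemble via the universal property of $\mathfrak{M}$, take the vector state) is exactly the paper's construction. The additional verification that $\bar{\sigma}_0$ is tracial and that $\lambda_i^*(\bar{\sigma}_0)=\tau$ on all of $\mathcal{M}_i$ is spelled out more explicitly than in the paper, which essentially asserts $\bar{\sigma}_0 \in TS(\mathfrak{M})$ after the construction; your observation that $\bar{\pi}(\mathfrak{M})\subseteq\pi_{\sigma_0}(\mathfrak{A})''$ so the trace-vector property propagates, and that two normal states agreeing on a $\sigma$-weakly dense $C^*$-subalgebra coincide, is correct and useful bookkeeping.

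Where you genuinely diverge is the uniqueness step. The paper's uniqueness argument derives the limit formula
$\bar{\sigma}_0(\lambda_{i_1}(x_1)\cdots\lambda_{i_m}(x_m)) = \lim_\kappa\sigma_0(\lambda_{i_1}(a_1^{(\kappa)})\cdots\lambda_{i_m}(a_m^{(\kappa)}))$
for the \emph{constructed} $\bar{\sigma}_0$, using normality of the $\bar{\pi}_{\sigma_0,i}$, and then concludes; applying this to a \emph{second} extension $\bar{\sigma}_0'$ requires one to know, a priori, that the corresponding approximation limit also holds for $\bar{\sigma}_0'$, which is not completely spelled out there. Your telescoping $+$ Cauchy--Schwarz argument fixes this cleanly: it works directly for an \emph{arbitrary} extension with $\lambda_i^*(\bar{\sigma}_0')=\tau$, because the trace property plus the constraint on marginal distributions gives the uniform $L^2(\tau)$-control
$|\bar{\sigma}_0'(v_1\lambda_{i_k}(a_k-a_k^{(\alpha)})v_2)|\le \|a_k-a_k^{(\alpha)}\|_{\tau,2}\,\|v_2 v_1\|_\infty$
without ever invoking normality of a representation built from $\bar{\sigma}_0'$. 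This is a more robust route; it is essentially the same mechanism the paper's Lemma \ref{L6.9} and Remark \ref{R6.10} employ for the analogous extension over $\mathbb{R}_+$, pulled back into the uniqueness argument for Lemma \ref{L6.8}. In short: same construction, but your uniqueness argument is a self-contained improvement that closes a gap the paper leaves to the reader.
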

\begin{proof} Let $(\mathcal{H}_{\sigma_0}, \pi_{\sigma_0},\xi_{\sigma_0})$ be the GNS triple of $(\mathcal{A},\sigma_0)$. Since $\sigma_0$ is tracial, $\xi_{\sigma_0}$ must be separating for $\pi_{\sigma_0}(\mathfrak{A})$. In particular, $\xi_{\sigma_0}$ is separating for each $\pi_{\sigma_0}(\lambda_i(\mathcal{A}_i))$ too. Set $\pi_{\sigma_0,i} := \pi_{\sigma_0}\circ\lambda_i : \mathcal{A}_i \curvearrowright \mathcal{H}_{\sigma_0}$. Then we have $(\pi_{\sigma_0,i}(a)\xi_{\sigma_0}|\xi_{\sigma_0})_{\mathcal{H}_{\sigma_0}} = \sigma_0\circ\lambda_i(a) = \lambda_i^*(\sigma_0)(a) = \tau(a)$ for every $a \in \mathcal{A}_i$. Thus, the previous lemma shows that there exists a unique normal extension $\bar{\pi}_{\sigma_0,i} : \mathcal{M}_i := \overline{\mathcal{A}_i}^w \curvearrowright \mathcal{H}_{\sigma_0}$ such that $\bar{\pi}_{\sigma_0,i}(\mathcal{M}_i) = \overline{\pi_{\sigma_0}(\lambda_i(\mathcal{A}_i))}^w$ and $\bar{\pi}_{\sigma_0,i}\!\upharpoonright_{\mathcal{A}_i} = \pi_{\sigma_0,i}$. By the universality of universal free products, there exists a unique $*$-homomorphism $\bar{\pi}_{\sigma_0} : \mathfrak{M} \to B(\mathcal{H}_{\sigma_0})$ such that $\bar{\pi}_{\sigma_0}\circ\lambda_i = \bar{\pi}_{\sigma_0,i} : \mathcal{M}_i \curvearrowright \mathcal{H}_{\sigma_0}$ is normal for every $1 \leq i \leq n+1$. By construction, it is clear that $\bar{\pi}_{\sigma_0}\!\upharpoonright_\mathfrak{A} = \pi_{\sigma_0}$. Set $\bar{\sigma}_0 := (\bar{\pi}_{\sigma_0}(\,\cdot\,)\xi_{\sigma_0}|\xi_{\sigma_0})_{\mathcal{H}_{\sigma_0}} \in TS(\mathfrak{M})$. Trivially, $\bar{\sigma}_0\!\upharpoonright_{\mathfrak{A}} = \sigma_0$. For each $x_k \in \mathcal{M}_{i_k}$, $1 \leq k \leq m$, by the Kaplansky density theorem, one can choose a net $a_k^{(\kappa)} \in \mathcal{A}_i$ (with a common index set) such that $\Vert a_k^{(\kappa)}\Vert_\infty \leq \Vert x_k\Vert_\infty$ and $a_k^{(\kappa)} \to x_k$ in the $\sigma$-strong$^*$ topology on $\mathcal{M}_{i_k}$. Since each $\bar{\pi}_{\sigma_0,i}$ is normal on $\mathcal{M}_i$, we observe that 
\begin{align*}
\pi_{\sigma_0}(\lambda_{i_1}(a_1^{(\kappa)})\cdots\lambda_{i_m}(a_m^{(\kappa)})) 
&= 
\pi_{\sigma_0,i_1}(a_1^{(\kappa)})\cdots\pi_{\sigma_0,i_m}(a_m^{(\kappa)}) \\
&=
\bar{\pi}_{\sigma_0,i_1}(a_1^{(\kappa)})\cdots\bar{\pi}_{\sigma_0,i_m}(a_m^{(\kappa)}) \\ 
&\to 
\bar{\pi}_{\sigma_0,i_1}(x_1)\cdots\bar{\pi}_{\sigma_0,i_m}(x_m) 
=
\bar{\pi}_{\sigma_0}(\lambda_{i_1}(x_1)\cdots\lambda_{i_m}(x_m)),
\end{align*}
and hence $\bar{\sigma}_0(\lambda_{i_1}(x_1)\cdots\lambda_{i_m}(x_m)) = \lim_\kappa \sigma_0(\lambda_{i_1}(a_1^{(\kappa)})\cdots\lambda_{i_m}(a_m^{(\kappa)}))$. Since the $\lambda_i(\mathcal{M}_i)$ generate $\mathfrak{M}$ as a $C^*$-algebra, we conclude that $\bar{\sigma}_0$ is a unique extension of $\sigma_0$. Moreover, $\lambda_i^*(\bar{\sigma}_0)(x) = \bar{\sigma}_0(\lambda_i(x)) = \lim_\kappa \sigma_0(\lambda_i(a_\kappa)) = \lim_\kappa \lambda_i^*(\sigma_0)(a_\kappa) = \lim_\kappa \tau(a_\kappa) = \tau(x)$ for every $x \in \mathcal{M}_i$ with approximation $a_\kappa \to x$ as above. 
\end{proof} 

\begin{lemma}\label{L6.9} 
Any $\varphi \in TS^c(\mathfrak{A}(\mathbb{R}_+))$ with $\rho_{t,i}^*(\varphi) = \tau$ on $\mathcal{A}_i$ for all $t \geq 0$ and $1 \leq i \leq n+1$ has a unique extension $\bar{\varphi} \in TS^c(\mathfrak{M}(\mathbb{R}_+))$ with $\rho_{t,i}^*(\bar{\varphi}) = \tau$ on $\mathcal{M}_i$ for all $t \geq 0$ and $1 \leq i \leq n+1$. 
\end{lemma}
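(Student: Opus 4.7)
The strategy will follow the template of Lemma \ref{L6.8}, with two additional wrinkles: the time parameter and the continuity requirement on the extension. My first step is to form the GNS triple $(\mathcal{H}_\varphi, \pi_\varphi, \xi_\varphi)$ of $\varphi$. Traciality makes $\xi_\varphi$ separating for each $\pi_\varphi(\rho_{t,i}(\mathcal{A}_i))$, with associated vector state $\tau$ on $\mathcal{A}_i$ by hypothesis, so Lemma \ref{L6.7} applied to $\mathcal{A}_i \subset \mathcal{M}_i$ with reference state $\tau$ supplies, for each $(t, i)$, a unique normal extension $\bar{\pi}_{\varphi, t, i} \colon \mathcal{M}_i \to B(\mathcal{H}_\varphi)$ of $\pi_\varphi \circ \rho_{t,i}$. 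By Lemma \ref{LA1}, $\mathfrak{M}(\mathbb{R}_+)$ is the universal free product of copies of $\mathcal{M}_i$ indexed by $(t,i) \in [0,\infty) \times \{1,\dots,n+1\}$, so universality assembles the family $\{\bar{\pi}_{\varphi, t, i}\}$ into a single unital $*$-homomorphism $\bar{\pi}_\varphi \colon \mathfrak{M}(\mathbb{R}_+) \to B(\mathcal{H}_\varphi)$, and I set $\bar{\varphi}(\,\cdot\,) := (\bar{\pi}_\varphi(\,\cdot\,)\xi_\varphi \mid \xi_\varphi)$. Since $\bar{\pi}_\varphi \circ \rho_{t,i}$ and $\pi_\varphi \circ \rho_{t,i}$ agree on $\mathcal{A}_i$, one has $\bar{\pi}_\varphi|_{\mathfrak{A}(\mathbb{R}_+)} = \pi_\varphi$ and hence $\bar{\varphi}|_{\mathfrak{A}(\mathbb{R}_+)} = \varphi$; the restriction identity $\rho_{t,i}^*(\bar{\varphi}) = \tau$ on $\mathcal{M}_i$ falls out immediately from the uniqueness of normal extensions of $\tau$.

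Next I verify that $\bar{\varphi}$ is tracial, precisely as in Lemma \ref{L6.8}: on monomials $x = \prod_k \rho_{t_k, i_k}(z_k)$, $y = \prod_l \rho_{s_l, j_l}(w_l)$ with entries in the various $\mathcal{M}_{i_k}, \mathcal{M}_{j_l}$, Kaplansky density supplies norm-bounded nets $a_k^{(\kappa)} \in \mathcal{A}_{i_k}$, $b_l^{(\kappa)} \in \mathcal{A}_{j_l}$ converging $\sigma$-strongly$^*$ to the corresponding $z_k$, $w_l$; normality of each $\bar{\pi}_{\varphi, t, i}$ together with joint $\sigma$-strong$^*$-continuity of multiplication on bounded sets passes to the limit, so the trace identity for $\varphi$ transfers to $\bar{\varphi}(xy) = \bar{\varphi}(yx)$. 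Continuity is then checked via Lemma \ref{L6.1}(iii), which reduces to showing that $(t_1,\dots,t_m) \mapsto \bar{\varphi}(\prod_k \rho_{t_k, i_k}(z_k))$ is continuous for $z_k \in \mathcal{M}_{i_k}$. When every $z_k$ lies in $\mathcal{A}_{i_k}$ this is just continuity of $\varphi$. For general $z_k$ I approximate by $a_k^{(\kappa)} \in \mathcal{A}_{i_k}$ with $\|a_k^{(\kappa)}\|_\infty \leq \|z_k\|_\infty$ in the trace $2$-norm $\|\cdot\|_{\tau,2}$ (possible by Kaplansky since $\tau$ is normal) and telescope via the standard noncommutative Hölder bound
\[
|\bar{\varphi}(W_1 \cdots (W_k - W_k') \cdots W_m)| \leq \Bigl(\prod_{l \neq k} \|W_l\|_\infty\Bigr) \|W_k - W_k'\|_{\bar{\varphi}, 2}.
\]

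The only place where the time parameter could bite --- and the main obstacle of the proof --- is ensuring that this telescoping estimate is uniform in $(t_1, \dots, t_m)$; but the hypothesis does all the work, since
\[
\|\rho_{t,i}(z_k - a_k^{(\kappa)})\|_{\bar{\varphi}, 2}^2 = \bar{\varphi}\bigl(\rho_{t,i}((z_k - a_k^{(\kappa)})^*(z_k - a_k^{(\kappa)}))\bigr) = \tau\bigl((z_k - a_k^{(\kappa)})^*(z_k - a_k^{(\kappa)})\bigr) = \|z_k - a_k^{(\kappa)}\|_{\tau, 2}^2
\]
is independent of $t$ thanks to $\rho_{t,i}^*(\bar{\varphi}) = \tau$. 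Hence continuity on $\mathcal{A}$-words transfers to $\mathcal{M}$-words uniformly in times, giving $\bar{\varphi} \in TS^c(\mathfrak{M}(\mathbb{R}_+))$. Uniqueness comes for free from the same estimate: any other extension $\bar{\varphi}'$ satisfying the stated constraints automatically obeys the same $t$-independent identity $\|\rho_{t,i}(z)\|_{\bar{\varphi}', 2} = \|z\|_{\tau, 2}$, so the Kaplansky-plus-Hölder argument forces $\bar{\varphi}'(\prod_k \rho_{t_k, i_k}(z_k))$ to equal $\lim_\kappa \varphi(\prod_k \rho_{t_k, i_k}(a_k^{(\kappa)}))$, which is precisely the limit defining $\bar{\varphi}$.
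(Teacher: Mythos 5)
Your proof is correct and follows essentially the same route as the paper: build $\bar{\pi}_\varphi$ via Lemma \ref{L6.7} and universality, check the restriction identity and traciality via Kaplansky, and exploit the key observation that $\Vert\rho_{t,i}(z-a)\Vert_{\bar{\varphi},2} = \Vert z-a\Vert_{\tau,2}$ is $t$-independent to make the approximation uniform in time. The one cosmetic difference is that the paper establishes uniform (in $t$) strong operator convergence of $\pi_{\bar{\varphi}}(\rho_{t,i}(a_\kappa)) \to \pi_{\bar{\varphi}}(\rho_{t,i}(x))$ on all of $\mathcal{H}_{\bar\varphi}$ by approximating an arbitrary vector $\eta$ by $Y'\xi_{\bar\varphi}$ with $Y'$ in the commutant (this also yields Remark \ref{R6.10}, reused later), whereas you verify Lemma \ref{L6.1}(iii) directly at the scalar level via the telescoping H\"older bound—both hinge on the same estimate.
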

\begin{proof} Let $(\mathcal{H}_\varphi, \pi_\varphi,\xi_\varphi)$ be the GNS triple of $(\mathfrak{A}(\mathbb{R}_+),\varphi)$. The same argument as in the previous lemma shows that there exists a $*$-representation $\bar{\pi}_\varphi : \mathfrak{M}(\mathbb{R}_+) \curvearrowright \mathcal{H}_\varphi$ such that $\bar{\pi}_\varphi\circ\rho_{t,i} : \mathcal{M}_i \to B(\mathcal{H}_\varphi)$ is normal as well as that $\bar{\pi}_\varphi\circ\rho_{t,i}\!\upharpoonright_{\mathcal{A}_i} = \pi_\varphi\circ\rho_{t,i}$ holds for every $t \geq 0$ and $1 \leq i \leq n+1$. Define $\bar{\varphi} := (\bar{\pi}_\varphi(\,\cdot\,)\xi_\varphi|\xi_\varphi)_{\mathcal{H}_\varphi} \in TS(\mathfrak{M}(\mathbb{R}_+)$. Remark that $\rho_{t,i}^*(\bar{\varphi}) = \tau$ on $\mathcal{M}_i$ holds for every $t \geq 0$ and $1 \leq i \leq n+1$. By the uniqueness of GNS representations, the triple $(\mathcal{H}_\varphi, \bar{\pi}_\varphi,\xi_\varphi)$ is identified with the GNS triple of $(\mathfrak{M}(\mathbb{R}_+),\bar{\varphi})$. Namely, we may and do assume that $\pi_{\bar{\varphi}} = \bar{\pi}_\varphi$, $\mathcal{H}_{\bar{\varphi}} = \mathcal{H}_\varphi$ and $\xi_{\bar{\varphi}} = \xi_\varphi$. 

Since the given $\varphi$ is continuous, the mapping $t \mapsto \pi_{\bar{\varphi}}(\rho_{t,i}(a)) = \pi_\varphi(\rho_{t,i}(a))$ is strongly continuous for every $a \in \mathcal{A}_i$. We claim that this is the case even when $a \in \mathcal{A}_i$ is replaced with an arbitrary $x \in \mathcal{M}_i$. By the Kaplansky density theorem, we can choose a net $a_\kappa \in \mathcal{A}_i$ in such a way that $\Vert a_\kappa\Vert_\infty \leq \Vert x \Vert_\infty$ and $\Vert a_\kappa - x \Vert_{\tau,2} := \sqrt{\tau((a_\kappa - x)^* (a_\kappa - x))} \to 0$. We have 
\begin{align*} 
\Vert \pi_{\bar{\varphi}}(\rho_{t,i}(a_\kappa - x))\xi_{\bar{\varphi}}\Vert_{\mathcal{H}_{\bar{\varphi}}} 
&= 
\sqrt{\rho_{t,i}^*(\bar{\varphi})((a_\kappa - x)^* (a_\kappa - x))} \\
&= 
\sqrt{\tau((a_\kappa - x)^* (a_\kappa - x))} 
= 
\Vert a_\kappa - x \Vert_{\tau,2}.
\end{align*} 
For any $\eta \in \mathcal{H}_{\bar{\varphi}}$ and any $\varepsilon > 0$, there is a $Y' \in \pi_{\bar{\varphi}}(\mathfrak{M}(\mathbb{R}_+))'$ such that $\Vert \eta - Y'\xi_{\bar{\varphi}}\Vert_{\mathcal{H}_{\bar{\varphi}}} < \varepsilon$ ({\it n.b.}, $\xi_\varphi$ is separating for $\pi_{\bar{\varphi}}(\mathfrak{M}(\mathbb{R}_+))$, and the existence of such a $Y'$ is guaranteed). Then 
\begin{align*}
\Vert \pi_{\bar{\varphi}}(\rho_{t,i}(a_\kappa - x))\eta\Vert_{\mathcal{H}_{\bar{\varphi}}} 
&\leq 
2\Vert x\Vert_\infty \Vert \eta - Y'\xi_{\bar{\varphi}}\Vert_{\mathcal{H}_{\bar{\varphi}}} 
+ 
\Vert Y'\Vert_\infty \Vert \pi_{\bar{\varphi}}(\rho_{t,i}(a_\kappa - x))\xi_{\bar{\varphi}}\Vert_{\mathcal{H}_{\bar{\varphi}}} \\
&\leq 
2\Vert x\Vert_\infty \varepsilon + \Vert Y'\Vert_\infty \Vert a_\kappa - x \Vert_{\tau,2},
\end{align*}
and hence 
\[
\lim_\kappa \big(\sup_{t \geq 0} \Vert \pi_{\bar{\varphi}}(\rho_{t,i}(a_\kappa - x))\eta\Vert_{\mathcal{H}_{\bar{\varphi}}}\big) = 0.  
\]
Then, we can see that $t \mapsto \pi_{\bar{\varphi}}(\rho_{t,i}(x))$ is strongly continuous for every $x \in \mathcal{M}_i$. It follows thanks to Lemma \ref{L6.1} (iii) that $\bar{\varphi}$ is continuous. 
\end{proof} 

Here is an important remark obtained from the above proof. 

\begin{remark}\label{R6.10} We keep the notations $\varphi$, $\bar{\varphi}$, etc., of the previous lemma. If a bounded net $a^{(\kappa)}$ in $\mathcal{A}_i$ converges to $x \in \mathcal{M}_i$ in $\Vert\,\cdot\,\Vert_{\tau,2}$ or equivalently, in the $\sigma$-strong$^*$ topology on $\mathcal{M}_i$, then 
\[
\lim_\kappa \big(\sup_{t \geq 0} \Vert \pi_{\bar{\varphi}}(\rho_{t,i}(a^{(\kappa)} - x))\xi\Vert_{\mathcal{H}_{\bar{\varphi}}}\big) = 0
\]
for every $\xi \in \mathcal{H}_{\bar{\varphi}}$, that is, the convergence $\pi_{\bar{\varphi}}(\rho_{t,i}(a^{(\kappa)})) \to \pi_{\bar{\varphi}}(\rho_{t,i}(x))$ in the strong operator topology is uniform for $t \geq 0$. 
\end{remark}

\begin{lemma}\label{L6.11} For any $\varphi \in TS^c(\mathfrak{A}(\mathbb{R}_+)$ with $\rho_{t,i}^*(\varphi) = \tau$ on $\mathcal{A}_i$ for all $t \geq 0$ as well as $\lambda_i^*(\sigma_0) = \tau$ on $\mathcal{A}_i$ for all $1 \leq i \leq n+1$, we have $\mathcal{I}^\mathrm{lib}_{\sigma_0}(\varphi) = \mathcal{I}^\mathrm{lib}_{\bar{\sigma}_0}(\bar{\varphi})$ as well as $\mathcal{I}^\mathrm{lib}_{\sigma_0,\infty}(\varphi) = \mathcal{I}^\mathrm{lib}_{\bar{\sigma}_0,\infty}(\bar{\varphi})$ with the notations in the previous lemmas. 
\end{lemma}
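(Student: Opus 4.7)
The plan is to show both identities by first observing that the extensions $\varphi \leadsto \bar{\varphi}$ and $\sigma_0 \leadsto \bar{\sigma}_0$ are fully compatible with all GNS data entering the rate functionals, after which the inequality $(\leq)$ is automatic on the common domain $\mathfrak{P}(\mathbb{R}_+) \subseteq \bar{\mathfrak{P}}(\mathbb{R}_+)$ (resp.\ $\mathfrak{A}(\mathbb{R}_+) \subseteq \mathfrak{M}(\mathbb{R}_+)$), and then to deduce the reverse inequality by Kaplansky density together with the uniform strong-operator convergence provided by Remark \ref{R6.10}. For the compatibility step, the proof of Lemma \ref{L6.9} realizes $\mathcal{H}_{\bar{\varphi}}$ on $\mathcal{H}_\varphi$ with $\pi_{\bar{\varphi}}$ a normal extension of $\pi_\varphi$ taking values in $\overline{\pi_\varphi(\mathfrak{A}(\mathbb{R}_+))}^w$; forming the canonical free product with the unitary Brownian motion factors then yields $\mathcal{H}_{\tilde{\bar{\varphi}}} = \mathcal{H}_{\tilde{\varphi}}$, $\mathcal{Q}(\bar{\varphi}) = \mathcal{Q}(\varphi)$, $\mathcal{P}(\bar{\varphi}) = \mathcal{P}(\varphi)$, and $E_{\mathcal{Q}(\bar{\varphi})} = E_{\mathcal{Q}(\varphi)}$. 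Functoriality of $\Lambda^s$ and of the liberation construction gives $\bar{\varphi}^t\!\upharpoonright_{\mathfrak{A}(\mathbb{R}_+)} = \varphi^t$ and $\bar{\sigma}_0^{\mathrm{lib}}\!\upharpoonright_{\mathfrak{A}(\mathbb{R}_+)} = \sigma_0^{\mathrm{lib}}$; together this yields $\mathcal{I}^{\mathrm{lib}}_{\sigma_0, t}(\varphi, P) = \mathcal{I}^{\mathrm{lib}}_{\bar{\sigma}_0, t}(\bar{\varphi}, P)$ for every common $P$ and every $t \in [0, \infty]$, hence the $(\leq)$ direction in both identities.

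For the reverse inequality in $\mathcal{I}^{\mathrm{lib}}_{\sigma_0}$, I would fix self-adjoint $\bar{P} \in \bar{\mathfrak{P}}(\mathbb{R}_+)$, expand it as a finite sum of monomials $\rho_{t_1, i_1}(x_1) \cdots \rho_{t_m, i_m}(x_m)$ with $x_\ell \in \mathcal{M}_{i_\ell}$, and apply Kaplansky density to pick bounded nets $a_\ell^{(\kappa)} \in \mathcal{A}_{i_\ell}$ with $\Vert a_\ell^{(\kappa)}\Vert_\infty \leq \Vert x_\ell \Vert_\infty$ and $a_\ell^{(\kappa)} \to x_\ell$ in $\sigma$-strong$^*$; form the monomial-wise substitutions $P_\kappa \in \mathfrak{P}(\mathbb{R}_+)$ and replace by $(P_\kappa + P_\kappa^*)/2$ to restore self-adjointness. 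The linear piece $\bar{\varphi}^t(P_\kappa) - \bar{\sigma}_0^{\mathrm{lib}}(P_\kappa)$ then converges to $\bar{\varphi}^t(\bar{P}) - \bar{\sigma}_0^{\mathrm{lib}}(\bar{P})$ by Remark \ref{R6.10} applied to $\bar{\varphi}$ and by the analogous statement for $\bar{\sigma}_0$ (obtained from the normal extension property supplied by Lemma \ref{L6.8}). For the integral term, Remark \ref{R6.10} applied to $\tilde{\bar{\varphi}}$ --- legitimate because $\rho_{t,i}^*(\tilde{\bar{\varphi}}) = \rho_{t,i}^*(\bar{\varphi}) = \tau$ on $\mathcal{M}_i$ --- gives $\pi_{\tilde{\bar{\varphi}}}(\rho_{t,i}(a_\ell^{(\kappa)})) \to \pi_{\tilde{\bar{\varphi}}}(\rho_{t,i}(x_\ell))$ in the strong operator topology \emph{uniformly in} $t \geq 0$. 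Substituting into the explicit formula \eqref{Eq6.6} for $\Lambda^s(\nabla_s^{(k)}P_\kappa)$, whose summands are products of such $\rho$-factors bracketed by uniformly bounded unitary Brownian motion operators $u_k(t_l - s)^{\pm 1}$, and using the $L^2$-contractivity of $E_{\mathcal{Q}(\bar{\varphi})}$, would yield
\[
\sup_{s \geq 0} \big\Vert E_{\mathcal{Q}(\bar{\varphi})}\big(\pi_{\tilde{\bar{\varphi}}}(\Lambda^s(\nabla_s^{(k)}(P_\kappa - \bar{P})))\big) \big\Vert_{\tilde{\bar{\varphi}}, 2} \longrightarrow 0.
\]
Since the integrands are uniformly bounded in $\kappa$ and supported on the common compact interval $[0, T_{\bar{P}}]$, where $T_{\bar{P}}$ is the maximum time parameter occurring in $\bar{P}$, dominated convergence gives $\mathcal{I}^{\mathrm{lib}}_{\sigma_0, t}(\varphi, P_\kappa) \to \mathcal{I}^{\mathrm{lib}}_{\bar{\sigma}_0, t}(\bar{\varphi}, \bar{P})$ for every $t \in [0, \infty]$, completing the reverse inequality.

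For the second identity, the only extra complication is that the supremum runs over the full $C^*$-algebra $\mathfrak{M}(\mathbb{R}_+)$ rather than $\bar{\mathfrak{P}}(\mathbb{R}_+)$; this will be handled by first norm-approximating $\bar{P} \in \mathfrak{M}(\mathbb{R}_+)$ by elements of the dense $*$-subalgebra $\bar{\mathfrak{P}}(\mathbb{R}_+)$ (norm-continuity of the rate functional in $P$ follows from inspection of \eqref{Eq6.6}, exactly as in the $(\diamondsuit)$ step in the proof of Proposition \ref{P6.4}) and then running the Kaplansky argument above. The hard part will be securing the $s$-uniformity of the convergence of the integrand; without it the interchange of the limit with the $s$-integration fails. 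This $s$-uniform convergence is precisely the content of Remark \ref{R6.10}, whose technical proof (Kaplansky density applied through the GNS separating vector) was inserted ahead of this lemma exactly so that this step becomes available; everything else reduces to the standard facts that bounded strong-operator convergence passes through products and that $\tilde{\bar{\varphi}}$-preserving conditional expectations are contractive on $L^2$.
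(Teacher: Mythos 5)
Your proposal is correct and implements exactly what the paper's terse proof of Lemma~\ref{L6.11} indicates: mimic the two-sided argument of Proposition~\ref{P6.4} (with Lemma~\ref{L6.3} as the compatibility step that pins down $\mathcal{Q}(\bar{\varphi})=\mathcal{Q}(\varphi)$, $E_{\mathcal{Q}(\bar{\varphi})}=E_{\mathcal{Q}(\varphi)}$, and the restriction of $\bar{\varphi}^t$, $\bar{\sigma}_0^{\mathrm{lib}}$ to $\mathfrak{A}(\mathbb{R}_+)$), replace the norm approximation of $(\diamondsuit)$ by a bounded $\sigma$-strong$^*$ approximation via Kaplansky density, and invoke Remark~\ref{R6.10} for the $t$-uniform strong convergence that justifies passing limits through the $s$-integral. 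Your reading of the second supremum as running over all of $\mathfrak{M}(\mathbb{R}_+)$ (hence the extra preliminary norm-approximation step) hews to the paper's literal display of $\mathcal{I}_{\sigma_0,\infty}^{\mathrm{lib}}$, but since $\nabla_s^{(k)}$ is only defined on $\mathfrak{P}(\mathbb{R}_+)$ the supremum is really over the $*$-algebra and that extra step is vacuous; either way the argument goes through.
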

\begin{proof} 
The same pattern as in the proof of Proposition \ref{P6.4} (and Lemma \ref{L6.3}) works well by replacing the norm convergence $x_\ell^{(p)} \to x_\ell$ with a bounded net convergence $a_\ell^{(\kappa)} \to x_\ell$ in the $\sigma$-strong$^*$ topology with the help of Remark \ref{R6.10}.  
\end{proof} 

Here is the desired statement. Namely, the next proposition tells us that taking the $\sigma$-weak closure does not give any effect to $\mathcal{J}^\mathrm{lib}_{\sigma_0}$ as well as $\mathcal{J}^\mathrm{lib}_{\sigma_0,\infty}$. This is analogous to \cite[Remarks 10.2]{Voiculescu:AdvMath99}.  

\begin{proposition}\label{P6.12} With the notations as in the previous lemmas we have
\begin{align*} 
\mathcal{J}_{\sigma_0}^\mathrm{lib}(\mathcal{A}_1;\dots;\mathcal{A}_n:\mathcal{A}_{n+1}) 
&= 
\mathcal{J}^\mathrm{lib}_{\bar{\sigma}_0}(\mathcal{M}_1;\cdots;\mathcal{M}_n:\mathcal{M}_{n+1}), \\
\mathcal{J}_{\sigma_0,\infty}^\mathrm{lib}(\mathcal{A}_1;\dots;\mathcal{A}_n:\mathcal{A}_{n+1}) 
&= 
\mathcal{J}^\mathrm{lib}_{\bar{\sigma}_0,\infty}(\mathcal{M}_1;\cdots;\mathcal{M}_n:\mathcal{M}_{n+1})
\end{align*} 
as long as $\lambda_i^*(\sigma_0) = \tau$ on $\mathcal{A}_i$ for all $1 \leq i \leq n+1$. 
\end{proposition}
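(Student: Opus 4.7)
The plan is to prove both inequalities $\le$ and $\ge$ by transporting continuous tracial states between $\mathfrak{A}(\mathbb{R}_+)$ and $\mathfrak{M}(\mathbb{R}_+)$ via Lemma~\ref{L6.9} and matching the values of $\mathcal{I}^{\mathrm{lib}}$ on the two sides through Lemma~\ref{L6.11}. I will argue only the $\mathcal{J}^{\mathrm{lib}}_{\sigma_0}$ part, as the proof for $\mathcal{J}^{\mathrm{lib}}_{\sigma_0,\infty}$ is identical. First, I would record the direct analogue of Remark~\ref{R6.2} on this coordinate-free level: whenever $\mathcal{I}^{\mathrm{lib}}_{\sigma_0,\infty}(\varphi)$ is finite, the marginal $\rho_{t,i}^*(\varphi)$ must agree with $\lambda_i^*(\sigma_0) = \tau$ on $\mathcal{A}_i$ for every $t \ge 0$ and $1 \le i \le n+1$; the corresponding statement holds for $\bar\varphi$ on $\mathcal{M}_i$. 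Thus the unique extension $\bar\varphi$ of Lemma~\ref{L6.9} is always available at tracial states that contribute a finite value to the infimum in \eqref{Eq6.2}, and restriction $\bar\varphi \mapsto \bar\varphi|_{\mathfrak{A}(\mathbb{R}_+)}$ sends admissible states on $\mathfrak{M}(\mathbb{R}_+)$ to admissible states on $\mathfrak{A}(\mathbb{R}_+)$. Since each $\rho_{t,i}^*(\bar\varphi)$ is normal on $\mathcal{M}_i$ (by the construction in the proof of Lemma~\ref{L6.9}), the marginal condition on $\mathcal{A}_i$ automatically propagates to $\mathcal{M}_i$ by $\sigma$-weak density.

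For $\mathcal{J}^{\mathrm{lib}}_{\sigma_0}(\mathcal{A}_1;\dots;\mathcal{A}_n:\mathcal{A}_{n+1}) \le \mathcal{J}^{\mathrm{lib}}_{\bar\sigma_0}(\mathcal{M}_1;\dots;\mathcal{M}_n:\mathcal{M}_{n+1})$, I would fix any basic neighborhood $O = O_{\mathcal{W},\delta}(\Upsilon^*(\tau))$ in $TS(\mathfrak{A})$ determined by a finite family $\mathcal{W}$ of words in the $\lambda_i(\mathcal{A}_i)$, and take $O' := O_{\mathcal{W},\delta}(\bar\Upsilon^*(\tau))$, viewing $\mathcal{W}$ inside $\mathfrak{M}$ via the inclusions $\mathcal{A}_i \subset \mathcal{M}_i$. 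Since $\bar\Upsilon|_{\mathfrak{A}} = \Upsilon$, any $\bar\varphi \in TS^c(\mathfrak{M}(\mathbb{R}_+))$ with $\rho_T^*(\bar\varphi) \in O'$ restricts to a continuous tracial state $\varphi := \bar\varphi|_{\mathfrak{A}(\mathbb{R}_+)}$ with $\rho_T^*(\varphi) \in O$; whenever $\mathcal{I}^{\mathrm{lib}}_{\bar\sigma_0}(\bar\varphi)$ is finite, the preceding marginal observation applies, $\bar\varphi$ is exactly the Lemma~\ref{L6.9} extension of $\varphi$, and Lemma~\ref{L6.11} yields $\mathcal{I}^{\mathrm{lib}}_{\sigma_0}(\varphi) = \mathcal{I}^{\mathrm{lib}}_{\bar\sigma_0}(\bar\varphi)$. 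Consequently $\inf\{\mathcal{I}^{\mathrm{lib}}_{\sigma_0}(\varphi) : \rho_T^*(\varphi) \in O\} \le \inf\{\mathcal{I}^{\mathrm{lib}}_{\bar\sigma_0}(\bar\varphi) : \rho_T^*(\bar\varphi) \in O'\}$ at every $T \ge 0$, and taking $\varlimsup_{T\to\infty}$ followed by $\sup_O$ delivers the claim.

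For the reverse inequality I would fix $O' = O_{\mathcal{W}',\delta}(\bar\Upsilon^*(\tau))$ with $\mathcal{W}'$ a finite family of words $W' = \lambda_{i_1}(x_1)\cdots\lambda_{i_m}(x_m)$, $x_k \in \mathcal{M}_{i_k}$, and invoke the Kaplansky density theorem to approximate each $x_k$ by $a_k \in \mathcal{A}_{i_k}$ with $\|a_k\|_\infty \le \|x_k\|_\infty$ and $\|a_k - x_k\|_{\tau,2}$ arbitrarily small; set $W := \lambda_{i_1}(a_1)\cdots\lambda_{i_m}(a_m) \in \mathfrak{A}$ and let $\mathcal{W}$ collect one such $W$ for every $W' \in \mathcal{W}'$. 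With $\varphi := \bar\varphi|_{\mathfrak{A}(\mathbb{R}_+)}$, the triangle inequality gives
\[
|\rho_T^*(\bar\varphi)(W') - \bar\Upsilon^*(\tau)(W')| \le |\rho_T^*(\bar\varphi)(W' - W)| + |\rho_T^*(\varphi)(W) - \Upsilon^*(\tau)(W)| + |\bar\Upsilon^*(\tau)(W - W')|.
\]
The middle term drops below $\delta/3$ as soon as $\rho_T^*(\varphi) \in O := O_{\mathcal{W},\delta/3}(\Upsilon^*(\tau))$. For the first and third terms, the Cauchy--Schwarz bound $|\rho(a)| \le \|a\|_{\rho,2}$ together with telescoping and the Hilbert--Schmidt estimate $\|BYC\|_{\rho,2} \le \|B\|_\infty\|C\|_\infty\|Y\|_{\rho,2}$, applied with $\rho = \bar\varphi \circ \rho_T$ on monomials in $\rho_{T,i}(\mathcal{M}_i)$ (the marginal identity $\rho_{T,i}^*(\bar\varphi) = \tau$ on $\mathcal{M}_i$ collapsing $\|\rho_{T,i}(y)\|_{\bar\varphi,2}$ to $\|y\|_{\tau,2}$) and with $\rho = \tau$ on $\mathcal{M}$, bounds both by a constant multiple of $\max_k \|a_k - x_k\|_{\tau,2}$, the constant depending only on $\max_k\|x_k\|_\infty$ and on the length $m$ and independent of $T$ and of $\bar\varphi$. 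A sufficiently fine initial approximation forces both remainders below $\delta/3$. Hence for every $\varphi$ with $\rho_T^*(\varphi) \in O$ and $\mathcal{I}^{\mathrm{lib}}_{\sigma_0}(\varphi) < +\infty$, the extension $\bar\varphi$ satisfies $\rho_T^*(\bar\varphi) \in O'$, and Lemma~\ref{L6.11} identifies the two $\mathcal{I}$-values; the infimum-taking and $\varlimsup$--$\sup$ procedure closes the loop.

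The main obstacle is precisely the uniformity required in the second direction: the neighborhood $O$ must be constructed from $O'$ alone, with no dependence on $T$ or on the as-yet-unspecified $\bar\varphi$. That uniformity is delivered by the marginal identity $\rho_{T,i}^*(\bar\varphi) = \tau$ on $\mathcal{M}_i$, which collapses the $\|\cdot\|_{\bar\varphi,2}$ estimates on the $\mathfrak{M}(\mathbb{R}_+)$-side to the $\|\cdot\|_{\tau,2}$ norm on $\mathcal{M}_i$, and which is in turn ensured by the hypothesis $\lambda_i^*(\sigma_0) = \tau$ on $\mathcal{A}_i$ together with the finiteness of $\mathcal{I}^{\mathrm{lib}}_{\sigma_0}(\varphi)$.
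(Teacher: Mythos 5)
Your proposal is correct and follows essentially the same route as the paper's own proof: both directions are obtained by transporting admissible (finite-rate) continuous tracial states between $\mathfrak{A}(\mathbb{R}_+)$ and $\mathfrak{M}(\mathbb{R}_+)$ via the extension of Lemma~\ref{L6.9}, matching rate-function values with Lemma~\ref{L6.11}, and, for the nontrivial $\geq$ direction, using Kaplansky density plus a $T$- and $\varphi$-uniform $\Vert\cdot\Vert_{\tau,2}$ estimate (your Cauchy--Schwarz / Hilbert--Schmidt trace bound is an equivalent rendering of the paper's GNS and modular-conjugation computation). The only cosmetic difference is in the $\leq$ direction, where the paper appeals to the general inequality $\mathcal{I}_{\sigma_0}^{\mathrm{lib}}(\underline{\psi}) \leq \mathcal{I}_{\bar\sigma_0}^{\mathrm{lib}}(\psi)$ for the restriction $\underline{\psi}$ directly, whereas you route through the finiteness of the rate function plus Lemma~\ref{L6.11}; both are sound and yield the same conclusion on infima.
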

\begin{proof} 
For the ease of notations we will write $\sigma := \Upsilon^*(\tau) \in TS(\mathfrak{A})$ and $\bar{\sigma} := \bar{\Upsilon}^*(\tau) \in TS(\mathfrak{M})$, where $\Upsilon : \mathfrak{A} \to \mathcal{M}$ and $\bar{\Upsilon} : \mathfrak{M} \to \mathcal{M}$ the unital $*$-homomorphisms sending each $\lambda_i(a)$ with $a \in \mathcal{A}_i$ to $a$ and $\lambda_i(x)$ with $x \in \mathcal{M}_i$ to $x$, respectively. In particular, $\bar{\Upsilon}$ is an extension of $\Upsilon$, and hence $\bar{\sigma}$ is an extension of $\sigma$ too.   

We denote by $W$ a word whose letters from the $\lambda_i(\mathcal{A}_i)$ and also by $\bar{W}$ a word whose letters from the $\lambda_i(\mathcal{M}_i)$. According to this notation, we will also denote by $\mathcal{W}$ a finite collection of words $W$ and by $\bar{\mathcal{W}}$ a finite collection of words $\bar{W}$. These play parts of parameters to define neighborhood base of the weak$^*$ topologies on $TS(\mathfrak{A})$ and $TS(\mathfrak{M})$, respectively. 

Let $T \geq 0$, $\delta > 0$, and $\psi \in TS^c(\mathfrak{M}(\mathbb{R}_+))$ be arbitrarily chosen. Denote by $\underline{\psi}$ the restriction of $\psi$ to $\mathfrak{A}(\mathbb{R}_+)$, which clearly falls into $TS^c(\mathfrak{A}(\mathbb{R}_+))$. By construction, it is easy to see that $\mathcal{I}_{\sigma_0}^\mathrm{lib}(\underline{\psi}) \leq \mathcal{I}_{\bar{\sigma}_0}^\mathrm{lib}(\psi)$ holds in general. Hence 
\begin{align*} 
&\inf\{\mathcal{I}_{\sigma_0}^\mathrm{lib}(\varphi) \mid \varphi \in TS^c(\mathfrak{A}(\mathbb{R}_+)), \rho_T^*(\varphi) \in O_{\mathcal{W},\delta}(\sigma) \} \\
&\leq 
\inf\{\mathcal{I}_{\sigma_0}^\mathrm{lib}(\underline{\psi}) \mid \psi \in TS^c(\mathfrak{M}(\mathbb{R}_+)), \rho_T^*(\underline{\psi}) \in O_{\mathcal{W},\delta}(\sigma)\} \\
&\leq 
\inf\{\mathcal{I}_{\bar{\sigma}_0}^\mathrm{lib}(\psi) \mid \psi \in TS^c(\mathfrak{M}(\mathbb{R}_+)), \rho_T^*(\psi) \in O_{\mathcal{W},\delta}(\bar{\sigma})\},  
\end{align*}
where we use that $\rho_T^*(\underline{\psi}) \in O_{\mathcal{W},\delta}(\sigma) \Leftrightarrow \rho_T^*(\psi) \in O_{\mathcal{W},\delta}(\bar{\sigma})$, since every $W \in \mathcal{W}$ falls into $\mathfrak{A}$ (and hence $\sigma(W) = \bar{\sigma}(W)$ and $\underline{\psi}(\rho_t(W)) = \psi(\rho_t(W))$). Taking the $\varlimsup_{T\to\infty}$ of the above inequality, we get 
\begin{align*}
&\varlimsup_{T\to\infty}\inf\{\mathcal{I}_{\sigma_0}^\mathrm{lib}(\varphi) \mid \varphi \in TS^c(\mathfrak{A}(\mathbb{R}_+)), \rho_T^*(\varphi) \in O_{\mathcal{W},\delta}(\sigma) \} \\
&\leq 
\varlimsup_{T\to\infty} \inf\{\mathcal{I}_{\bar{\sigma}_0}^\mathrm{lib}(\psi) \mid \psi \in TS^c(\mathfrak{M}(\mathbb{R}_+)), \rho_T^*(\psi) \in O_{\mathcal{W},\delta}(\bar{\sigma})\} \\
&\leq 
\sup_{\bar{\mathcal{W}},\delta} \varlimsup_{T\to\infty} \inf\{\mathcal{I}_{\bar{\sigma}_0}^\mathrm{lib}(\psi) \mid \psi \in TS^c(\mathfrak{M}(\mathbb{R}_+)), \rho_T^*(\psi) \in O_{\bar{\mathcal{W}},\delta}(\bar{\sigma})\} 
= 
\mathcal{J}_{\bar{\sigma}_0}^\mathrm{lib}(\bar{\sigma}).  
\end{align*} 
Since $(\mathcal{W},\delta)$ is arbitrary, $\mathcal{J}_{\sigma_0}^\mathrm{lib}(\mathcal{A}_1;\dots;\mathcal{A}_n:\mathcal{A}_{n+1}) =\mathcal{J}_{\sigma_0}^\mathrm{lib}(\sigma) \leq \mathcal{J}_{\bar{\sigma}_0}^\mathrm{lib}(\bar{\sigma}) = \mathcal{J}^\mathrm{lib}_{\bar{\sigma}_0}(\mathcal{M}_1;\cdots;\mathcal{M}_n:\mathcal{M}_{n+1})$. The same assertion also holds with the same proof even if $\mathcal{J}_{\sigma_0}^\mathrm{lib}$ and $\mathcal{J}_{\bar{\sigma}_0}^\mathrm{lib}$ are replaced with $\mathcal{J}_{\sigma_0,\infty}^\mathrm{lib}$ and $\mathcal{J}_{\bar{\sigma}_0,\infty}^\mathrm{lib}$, respectively. We remark that the discussion in this paragraph uses only inclusion relation $\mathcal{A}_i \subset \mathcal{M}_i$, $1 \leq i \leq n+1$. This remark will be summarized into the corollary following this proposition.   

\medskip
We will then prove the reverse inequality. To this end, we may assume that $\mathcal{J}_{\sigma_0}^\mathrm{lib}(\mathcal{A}_1;\dots;\mathcal{A}_n:\mathcal{A}_{n+1}) =\mathcal{J}_{\sigma_0}^\mathrm{lib}(\sigma) < +\infty$; otherwise the reverse inequality trivially holds as $-\infty = -\infty$ by the first part of this proof. Let $(\bar{\mathcal{W}},\delta)$ is arbitrarily given. For each $\bar{W} \in \bar{\mathcal{W}}$, we can choose a word $W$ in such a way that 
\[
|\sigma(W) - \bar{\sigma}(\bar{W})| < \frac{\delta}{3}, \quad  
\sup_{T\geq 0}|\rho_T^*(\varphi)(W) - \rho_T^*(\bar{\varphi})(\bar{W})| < \frac{\delta}{3}
\]
whenever $\varphi \in TS^c(\mathfrak{A}(\mathbb{R}_+))$ satisfies that $\rho_{t,i}^*(\varphi) = \tau$ on $\mathcal{A}_i$ for all $t \geq 0$ and $1 \leq i \leq n+1$, where $\bar{\varphi}$ is in the sense of Lemma \ref{L6.11}. This fact can be confirmed by the iterative use of the following observation: Let $X, Y \in \mathfrak{M}$ be given. For any $x \in \mathcal{M}_i$ and $a \in \mathcal{A}_i$ we have 
\begin{align*} 
|\bar{\varphi}(\rho_t(X)\rho_{t,i}(x-a)\rho_t(Y))| 
&\leq 
|(\pi_{\bar{\varphi}}(\rho_t(X))\pi_{\bar{\varphi}}(\rho_{t,i}(x-a))\pi_{\bar{\varphi}}(\rho_t(Y))\xi_{\bar{\varphi}}|\xi_{\bar{\varphi}})_{\mathcal{H}_{\bar{\varphi}}} \\
&\leq 
\Vert X\Vert_\infty \Vert \pi_{\bar{\varphi}}(\rho_{t,i}(x-a))J_{\bar{\varphi}}\pi_{\bar{\varphi}}(\rho_t(Y^*))J_{\bar{\varphi}}\xi_{\bar{\varphi}}\Vert_ {\mathcal{H}_{\bar{\varphi}}} \\
&\leq 
\Vert X\Vert_\infty \Vert J_{\bar{\varphi}}\pi_{\bar{\varphi}}(\rho_t(Y^*))J_{\bar{\varphi}}\pi_{\bar{\varphi}}(\rho_{t,i}(x-a))\xi_{\bar{\varphi}}\Vert_ {\mathcal{H}_{\bar{\varphi}}} \\
&\leq 
\Vert X\Vert_\infty \Vert Y\Vert_\infty \Vert \pi_{\bar{\varphi}}(\rho_{t,i}(x-a))\xi_{\bar{\varphi}}\Vert_ {\mathcal{H}_{\bar{\varphi}}} \\
&= 
\Vert X\Vert_\infty \Vert Y\Vert_\infty \Vert x - a\Vert_{\tau,2} 
\end{align*}
for every $t \geq 0$, where $(\mathcal{H}_{\bar{\varphi}},\pi_{\bar{\varphi}},\xi_{\bar{\varphi}})$ is the GNS triple of $(\mathfrak{M}(\mathbb{R}_+),\bar{\varphi})$ and $J_{\bar{\varphi}}$ is the the so-called modular conjugation, that is, a conjugate-linear isometric map  defined by $J_{\bar{\varphi}}Z\xi_{\bar{\varphi}} = Z^*\xi_{\bar{\varphi}}$ for every $Z \in \pi_{\bar{\varphi}}(\mathfrak{M}(\mathbb{R}_+))''$, the double commutant is taken on $\mathcal{H}_{\bar{\varphi}}$. Similarly, we have
\[
|\bar{\sigma}(X\lambda_i(x-a)Y)| \leq \Vert X\Vert_\infty \Vert Y\Vert_\infty \Vert x-a\Vert_{\tau,2}.
\]
We denote by $\mathcal{W}$ the collection of $W$ with $\bar{W} \in \bar{\mathcal{W}}$ obtained in this way. Let $\varphi \in TS^c(\mathfrak{A}(\mathbb{R}_+))$ be arbitrarily chosen in such a way that $\rho_T^*(\varphi) \in O_{\mathcal{W},\delta/3}(\sigma)$ as well as $\mathcal{I}_{\sigma_0}^\mathrm{lib}(\varphi) < +\infty$. The latter requirement guarantees, by the same proof as in \cite[Proposition 5.7]{Ueda:JOTP19}, that $\rho_{t,i}^*(\varphi) = \tau$ on $\mathcal{A}_i$ for all $t \geq 0$ and $1 \leq i \leq n+1$. By the above consideration we observe that $\bar{\varphi} \in O_{\bar{\mathcal{W}},\delta}(\bar{\sigma})$. Therefore, we conclude that 
\begin{align*} 
&\inf\{ \mathcal{I}_{\bar{\sigma}_0}^\mathrm{lib}(\psi) \mid \psi \in TS^c(\mathfrak{M}(\mathbb{R}_+)), \rho_T^*(\psi) \in O_{\bar{W},\delta}(\bar{\sigma})\} \\
&\leq 
\inf\{ \mathcal{I}_{\bar{\sigma}_0}^\mathrm{lib}(\bar
{\varphi}) = \mathcal{I}_{\sigma_0}^\mathrm{lib}(\varphi) \mid \varphi \in TS^c(\mathfrak{A}(\mathbb{R}_+)), \mathcal{I}_{\sigma_0}^\mathrm{lib}(\varphi) < +\infty, \rho_T^*(\varphi) \in O_{W,\delta/3}(\sigma)\} \\
&= 
\inf\{\mathcal{I}_{\sigma_0}^\mathrm{lib}(\varphi) \mid \varphi \in TS^c(\mathfrak{A}(\mathbb{R}_+)), \rho_T^*(\varphi) \in O_{W,\delta/3}(\sigma)\}. 
\end{align*} 
Taking $\varlimsup_{T\to\infty}$ of this inequality we obtain that
\[
\varlimsup_{T\to\infty}\inf\{ \mathcal{I}_{\bar{\sigma}_0}^\mathrm{lib}(\psi) \mid \psi \in TS^c(\mathfrak{M}(\mathbb{R}_+)), \rho_T^*(\psi) \in O_{\bar{W},\delta}(\bar{\sigma})\} \leq \mathcal{J}_{\sigma_0}^\mathrm{lib}(\sigma),
\]
which implies the desired inequality since $(\bar{\mathcal{W}},\delta)$ is arbitrary. The discussion so far in this paragraph also works again when $\mathcal{J}_{\sigma_0}^\mathrm{lib}$ and $\mathcal{J}_{\bar{\sigma}_0}^\mathrm{lib}$ are replaced with $\mathcal{J}_{\sigma_0,\infty}^\mathrm{lib}$ and $\mathcal{J}_{\bar{\sigma}_0,\infty}^\mathrm{lib}$, respectively. Hence we are done.
\end{proof} 

As remarked in the above proof, we have essentially proved the next monotonicity fact too. 

\begin{corollary}\label{C6.13} If $\mathcal{B}_i \subseteq \mathcal{A}_i$ be a unital $C^*$-subalgebra (possibly $W^*$-subalgebra) for each $1 \leq i \leq n+1$, then 
\[
\mathcal{J}_{\sigma_0}^\mathrm{lib}(\mathcal{B}_1;\cdots;\mathcal{B}_n : \mathcal{B}_{n+1}) \leq 
\mathcal{J}_{\sigma_0}^\mathrm{lib}(\mathcal{A}_1;\cdots;\mathcal{A}_n : \mathcal{A}_{n+1}), 
\]
where $\sigma_0$ on the left-hand side should be understood as the restriction of $\sigma_0$ to the universal $C^*$-algebra obtained from the $\mathcal{B}_i$. 
\end{corollary}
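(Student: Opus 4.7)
The plan is to adapt the first half of the proof of Proposition \ref{P6.12}, replacing the pair $(\mathcal{A}_i,\mathcal{M}_i)$ there with the pair $(\mathcal{B}_i,\mathcal{A}_i)$ here; the adaptation is in fact simpler because everything takes place at the $C^*$-level, without any Kaplansky-type density approximation. Form $\mathfrak{B} := \bigstar_{i=1}^{n+1}\mathcal{B}_i$ and its analogues $\mathfrak{B}(\mathbb{R}_+) \subset \tilde{\mathfrak{B}}(\mathbb{R}_+)$ in parallel with the construction of $\mathfrak{A}$. By universality (Proposition \ref{PA3}), the inclusions $\mathcal{B}_i \hookrightarrow \mathcal{A}_i$ induce a compatible family of unital $*$-homomorphisms $\iota$ from $\mathfrak{B}$, $\mathfrak{B}(\mathbb{R}_+)$, $\tilde{\mathfrak{B}}(\mathbb{R}_+)$ into $\mathfrak{A}$, $\mathfrak{A}(\mathbb{R}_+)$, $\tilde{\mathfrak{A}}(\mathbb{R}_+)$, which intertwine $\rho_t$, $\lambda_i$, $\Lambda^s$, $\nabla_s^{(k)}$ and send $u_i(t)$ to $u_i(t)$. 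With $\sigma_0^\mathcal{B} := \sigma_0\circ\iota$ (the meaning of ``$\sigma_0$'' on the left-hand side) and $\sigma := \Upsilon^*(\tau)$, the task reduces to proving $\mathcal{J}_{\sigma_0^\mathcal{B}}^\mathrm{lib}(\sigma\circ\iota) \leq \mathcal{J}_{\sigma_0}^\mathrm{lib}(\sigma)$.

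Given $\psi \in TS^c(\mathfrak{A}(\mathbb{R}_+))$, set $\underline{\psi} := \psi\circ\iota \in TS^c(\mathfrak{B}(\mathbb{R}_+))$ (continuity by Lemma \ref{L6.1}); freeness of the $u_i$'s is inherited, so $\tilde{\underline{\psi}} = \tilde{\psi}\circ\iota$, and $\iota$ descends to a normal trace-preserving embedding of tracial $W^*$-probability spaces $\mathcal{M}(\underline{\psi}) \hookrightarrow \mathcal{M}(\psi)$ carrying $\mathcal{Q}(\underline{\psi})$ into $\mathcal{Q}(\psi)$. I would then check the pointwise bound $\mathcal{I}_{\sigma_0^\mathcal{B}}^\mathrm{lib}(\underline{\psi}) \leq \mathcal{I}_{\sigma_0}^\mathrm{lib}(\psi)$: the supremum defining the left-hand side runs only over $P = P^* \in \mathfrak{P}_\mathcal{B}(\mathbb{R}_+)$, and for each such $P$ the scalar terms $\underline{\psi}^t(P)$ and $\sigma_0^{\mathcal{B},\mathrm{lib}}(P)$ coincide with their $\iota(P)$-counterparts on the $\mathfrak{A}$-side by functoriality of the liberation-process construction, while the $2$-norm of $E_{\mathcal{Q}(\underline{\psi})}(\pi_{\tilde{\underline{\psi}}}(\Lambda^s(\nabla_s^{(k)}P)))$ equals that of its image under the embedding by the standard fact that trace-preserving conditional expectations intertwine with normal trace-preserving inclusions.

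The remaining step is a straightforward neighborhood comparison: for any finite collection $\mathcal{W}$ of words in the $\lambda_i(\mathcal{B}_i)$ and any $\delta > 0$, the image $\iota(\mathcal{W})$ is a finite collection of words in the $\lambda_i(\mathcal{A}_i)$, and $\rho_T^*(\underline{\psi}) \in O_{\mathcal{W},\delta}(\sigma\circ\iota)$ if and only if $\rho_T^*(\psi) \in O_{\iota(\mathcal{W}),\delta}(\sigma)$. Combining with the pointwise bound, then applying $\varlimsup_{T\to\infty}$ and the supremum over $(\mathcal{W},\delta)$, yields the desired inequality. The only mildly nontrivial point—which I view as the main obstacle—is verifying the normality and trace-preservation of the induced embedding $\mathcal{M}(\underline{\psi}) \hookrightarrow \mathcal{M}(\psi)$ and the intertwining identity $E_{\mathcal{Q}(\psi)}\circ\iota_* = \iota_*\circ E_{\mathcal{Q}(\underline{\psi})}$; this follows from uniqueness of trace-preserving conditional expectations once one checks (via Lemma \ref{LA1} and Proposition \ref{PA4}) that $\iota$ is compatible with the relevant GNS constructions. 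The $W^*$-subalgebra case in the parenthetical of the statement is then recovered by first specializing to norm-dense unital $C^*$-subalgebras of the $\mathcal{B}_i$ and invoking Proposition \ref{P6.12}.
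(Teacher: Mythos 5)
Your argument follows the same route as the paper's, which simply cross-references the first half of the proof of Proposition \ref{P6.12} (noting there that that portion uses only the inclusion $\mathcal{A}_i\subset\mathcal{M}_i$ and hence applies verbatim to any inclusion $\mathcal{B}_i\subset\mathcal{A}_i$). The restriction $\underline{\psi}:=\psi\circ\iota$, the identification of the scalar terms $\underline{\psi}^t(P)$ and $(\sigma_0^{\mathcal{B}})^{\mathrm{lib}}(P)$ with their $\iota(P)$-counterparts, and the neighborhood comparison are all fine.

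The one genuine gap is in your justification of the $2$-norm identity, i.e.\ of the intertwining $E_{\mathcal{Q}(\psi)}\circ\iota_*=\iota_*\circ E_{\mathcal{Q}(\underline{\psi})}$ on $\mathcal{M}(\underline{\psi})$. There is no ``standard fact'' that trace-preserving conditional expectations automatically intertwine with normal trace-preserving inclusions: from the tower property one gets only $E_{\mathcal{Q}(\underline{\psi})}=E_{\mathcal{Q}(\underline{\psi})}\circ E_{\mathcal{Q}(\psi)}$ on $\mathcal{M}(\underline{\psi})$, hence $\Vert E_{\mathcal{Q}(\underline{\psi})}(x)\Vert_2\le\Vert E_{\mathcal{Q}(\psi)}(x)\Vert_2$, and this inequality goes the \emph{wrong} way --- it yields $\mathcal{I}^{\mathrm{lib}}_{\sigma_0^{\mathcal{B}},t}(\underline{\psi},Q)\ge\mathcal{I}^{\mathrm{lib}}_{\sigma_0,t}(\psi,\iota(Q))$, which does not feed into the supremum comparison. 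Similarly, ``uniqueness of trace-preserving conditional expectations'' only becomes available once one has already verified the range condition $E_{\mathcal{Q}(\psi)}\bigl(\iota_*(\mathcal{M}(\underline{\psi}))\bigr)\subseteq\iota_*(\mathcal{Q}(\underline{\psi}))$, i.e.\ the commuting-square property, and that is precisely the content you must supply. What makes it hold here is the free independence of the $u_i(t)$'s from $\mathcal{Q}(\psi)$ under $\tilde{\psi}$: since $\mathcal{M}(\underline{\psi})$ is generated by $\mathcal{Q}(\underline{\psi})$ together with the $u_i(t)$'s, the $E_{\mathcal{Q}(\psi)}$-conditional expectation of an alternating word in $\mathcal{Q}(\underline{\psi})$ and the $u_i(t)$'s is given by the freeness moment/cumulant formulas entirely in terms of scalars and $\mathcal{Q}(\underline{\psi})$-elements, hence lies in $\mathcal{Q}(\underline{\psi})$; Proposition \ref{P8.1} makes this explicit. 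Once that observation is inserted, the rest of your argument is sound. Your closing reduction of the $W^*$-subalgebra case to Proposition \ref{P6.12} is harmless but unnecessary, since the corollary is stated for unital $C^*$-subalgebras and your main argument already covers that case.
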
  

\subsection{Summary of basic properties}
We have established the next properties of $i^{**}$ so far. 
\begin{itemize} 
\item $i^{**}(\mathcal{A}_1;\cdots;\mathcal{A}_n:\mathcal{A}_{n+1}) = i^{**}(W^*(\mathcal{A}_1);\cdots;W^*(\mathcal{A}_n):W^*(\mathcal{A}_{n+1}))$. 
\item If $\mathcal{B}_i \subset \mathcal{A}_i$, then $i^{**}(\mathcal{B}_1;\cdots;\mathcal{B}_n:\mathcal{B}_{n+1}) \leq i^{**}(\mathcal{A}_1;\cdots;\mathcal{A}_n:\mathcal{A}_{n+1})$. 
\item $i^{**}(\mathcal{A}_1;\cdots;\mathcal{A}_n:\mathcal{A}_{n+1}) = 0$ if and only if $\mathcal{A}_1,\dots,\mathcal{A}_{n+1}$ are freely independent. 
\item $\chi_\mathrm{orb}(\mathbf{X}_1,\dots,\mathbf{X}_{n+1}) \leq -i^{**}(W^*(\mathbf{X}_1);\dots;W^*(\mathbf{X}_n):W^*(\mathbf{X}_{n+1}))$. 
\end{itemize}  
Here $W^*(\mathcal{A}_i)$ and $W^*(\mathbf{X}_i)$ denote the von Neumann subalgebras generated by $\mathcal{A}_i$ and $\mathbf{X}_i$, respectively. An important question is whether or not $i^* = i^{**}$. It is also an interesting question whether or not $\mathcal{J}_{\sigma_0}^\mathrm{lib}$ and $\mathcal{J}_{\sigma_0,\infty}^\mathrm{lib}$ are independent of the choice of $\sigma_0$. 

\section{Unitary Brownian motions} 

Let $\Xi(N)$ and $U_N^{(i)}(t)$, $1\leq i \leq n$ be as in subsection 4.7, that is, $\Xi(N)$ is a countable family of deterministic $N\times N$ self-adjoint matrices and the $U_N^{(i)}(t)$ are independent, left-increment unitary Brownian motions on $\mathrm{U}(N)$. For the ease of notation, we number the elements of $\Xi(N)$ as $\xi_j(N)$ rather than $\xi_{ij}(N)$. In this section, we will explain how the proofs in \cite{Ueda:JOTP19} work well for the $U_N^{(i)}(t)$ together with $\Xi(N)$ and compare their consequences on the matrix liberation process $\Xi^\mathrm{lib}(N)$ with the corresponding results on the $U_N^{(i)}(t)$ together with $\Xi(N)$. 

\subsection{Malliavin derivatives of unitary Brownian motions}
We begin with the SDE representation of $U_N^{(k)}(t)$: Let $B_{\alpha\beta}^{(i)}(t)$, $1 \leq \alpha,\beta \leq N$, $1 \leq i \leq n$, be the $nN^2$ independent Brownian motions on the real line with natural filtration $\mathcal{F}_t$. Consider the system of SDEs in the $2nN^2$-dimensional Euclidean space $(M_N)^n$: 
\begin{equation} \label{Eq7.1}
\mathrm{d}X^{(i)}(t) = \frac{\sqrt{-1}}{\sqrt{N}}\sum_{1 \leq \alpha,\beta \leq N}C_{\alpha\beta}\,X^{(i)}(t)\,\mathrm{d}B^{(i)}_{\alpha\beta}(t) - \frac{1}{2}X^{(i)}(t)\,\mathrm{d}t \quad (1 \leq i \leq n), 
\end{equation} 
where $C_{\alpha\beta}$, $1 \leq \alpha,\beta \leq N$, form an orthonormal basis of the Euclidean space $M_N^{sa}$. This system of SDEs are linear, and thus each system of them admits a unique strong solution after fixing initial $X^{(i)}(0)$. The unitary Brownian motions $U_N^{(i)}(t)$, $1 \leq i \leq n$, are constructed as a unique strong solution $X^{(i)}(t)$ of the system \eqref{Eq7.1} under initial condition $X^{(i)}(0) = I$. 

\begin{lemma}\label{L7.1} Let $\mathrm{D}_s^{(k;\alpha,\beta)}$ be the Malliavin derivative along the Brownian motion $B_{\alpha\beta}^{(k)}$. Then 
\begin{align*}
\mathrm{D}_s^{(k;\alpha,\beta)}U_N^{(i)}(t) 
&= \delta_{k,i}\,\mathbf{1}_{[0,t]}(s)\Big(\sqrt{-1}\,U_N^{(k)}(t)U_N^{(k)}(s)^* \Big(\frac{1}{\sqrt{N}}\,C_{\alpha\beta}\Big) U_N^{(k)}(s)\Big), \\
\mathrm{D}_s^{(k;\alpha,\beta)}U_N^{(i)}(t)^* 
&= \delta_{k,i}\,\mathbf{1}_{[0,t]}(s)\Big(-\sqrt{-1}\,U_N^{(k)}(s)^*\Big(\frac{1}{\sqrt{N}}C_{\alpha\beta}\Big)U_N^{(k)}(s) U_N^{(k)}(t)^*\Big)
\end{align*}
for almost every $t \geq 0$. 
\end{lemma}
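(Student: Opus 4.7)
The plan is to apply Malliavin calculus directly to the system of SDEs \eqref{Eq7.1}, exploiting the linearity of these SDEs and the independence of the driving Brownian motions to reduce the computation to a linear matrix SDE with an explicit multiplicative ansatz.

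First I would dispose of the trivial cases. Since $U_N^{(i)}(t)$ is built from the strong solution of \eqref{Eq7.1} driven only by the Brownian motions $B^{(i)}_{\bullet\bullet}$, the independence of the families $\{B^{(i)}_{\bullet\bullet}\}$ across $i$ together with the adaptedness of $U_N^{(i)}(t)$ to its own filtration implies that $\mathrm{D}_s^{(k;\alpha,\beta)}U_N^{(i)}(t) = 0$ whenever $k \neq i$. Similarly, for $s > t$ the random variable $U_N^{(i)}(t)$ is $\mathcal{F}_t$-measurable, hence its Malliavin derivative on $(t,\infty)$ vanishes, which accounts for the indicator $\mathbf{1}_{[0,t]}(s)$.

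Next I would treat the remaining case $i = k$, $0 \leq s \leq t$. Applying $\mathrm{D}_s^{(k;\alpha,\beta)}$ to the integral form of \eqref{Eq7.1} and using the standard rule $\mathrm{D}_s \int_0^t u(r)\,\mathrm{d}B^{(k)}_{\alpha'\beta'}(r) = \delta_{(\alpha,\beta),(\alpha',\beta')}\,u(s) + \int_s^t \mathrm{D}_s u(r)\,\mathrm{d}B^{(k)}_{\alpha'\beta'}(r)$ (applicable since $U_N^{(k)}$ lies in the domain of $\mathrm{D}$, being the strong solution of a linear SDE with smooth coefficients), I obtain that the process $V(t) := \mathrm{D}_s^{(k;\alpha,\beta)}U_N^{(k)}(t)$ for $t \geq s$ solves the same linear SDE as $U_N^{(k)}(t)$,
\begin{equation*}
\mathrm{d}V(t) = \frac{\sqrt{-1}}{\sqrt{N}}\sum_{\alpha',\beta'} C_{\alpha'\beta'}\,V(t)\,\mathrm{d}B^{(k)}_{\alpha'\beta'}(t) - \frac{1}{2}V(t)\,\mathrm{d}t, \qquad t > s,
\end{equation*}
with initial condition at $t=s$ given by
\begin{equation*}
V(s) = \frac{\sqrt{-1}}{\sqrt{N}}\,C_{\alpha\beta}\,U_N^{(k)}(s).
\end{equation*}

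The final step is to identify $V(t)$ explicitly. Guided by the left-multiplicative structure of \eqref{Eq7.1}, I would propose the ansatz $V(t) = U_N^{(k)}(t)\,U_N^{(k)}(s)^*\,V(s)$ and verify by direct It\^o differentiation that it satisfies the SDE with the prescribed value at $t=s$; uniqueness of strong solutions to this linear SDE then forces equality. Substituting the value of $V(s)$ yields the first displayed formula. The second follows at once by applying $\mathrm{D}_s^{(k;\alpha,\beta)}$ to the identity $U_N^{(k)}(t)^* U_N^{(k)}(t) = I_N$, giving $\mathrm{D}_s^{(k;\alpha,\beta)}U_N^{(k)}(t)^* = -U_N^{(k)}(t)^*\,V(t)\,U_N^{(k)}(t)^*$, and simplifying.

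I do not anticipate any serious obstacle: the only point requiring mild care is the justification of commuting $\mathrm{D}_s^{(k;\alpha,\beta)}$ with the stochastic integral, but this is standard for the (globally Lipschitz, in fact linear) coefficients appearing in \eqref{Eq7.1}, so the argument is essentially a bookkeeping exercise.
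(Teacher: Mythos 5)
Your proof is correct and is, at bottom, the same argument as the paper's, packaged slightly differently. The paper dispatches both formulas by citing Nualart's \cite[Theorem 2.2.1]{Nualart:Book06}, which expresses $D_s^{(k;\alpha,\beta)}X^{(k)}(t)$ as (Jacobian of the flow from $s$ to $t$) applied to the time-$s$ diffusion coefficient, and then identifies the Jacobian as the left-multiplication $L_{U_N^{(k)}(t)}$ (resp.\ the right-multiplication $R_{U_N^{(k)}(t)^*}$) by exploiting the linearity in the initial condition; to handle $U_N^{(k)}(t)^*$ it introduces the companion SDE \eqref{Eq7.2}. You instead differentiate the integral form of \eqref{Eq7.1} under $D_s$ to obtain a linear SDE for $V(t)=D_s^{(k;\alpha,\beta)}U_N^{(k)}(t)$ on $t>s$ with the identified initial condition $V(s)$, then solve it by the ansatz $V(t)=U_N^{(k)}(t)U_N^{(k)}(s)^*V(s)$ and uniqueness. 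The Malliavin differentiability needed to commute $D_s$ through the stochastic integral is exactly what the paper's citation to Nualart supplies, so the two routes rest on the same foundation. One genuine simplification in your version: you obtain the formula for $D_s^{(k;\alpha,\beta)}U_N^{(k)}(t)^*$ by differentiating the unitarity identity $U_N^{(k)}(t)^*U_N^{(k)}(t)=I_N$ rather than analyzing the second system of SDEs, which is shorter and avoids introducing \eqref{Eq7.2} at all.
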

\begin{proof}
We also consider the system of SDEs
\begin{equation} \label{Eq7.2}
\mathrm{d}Y^{(i)}(t) = \frac{-\sqrt{-1}}{\sqrt{N}}\sum_{1 \leq \alpha,\beta \leq N} Y^{(i)}(t)\,C_{\alpha\beta}\,\mathrm{d}B^{(i)}_{\alpha\beta}(t) - \frac{1}{2}Y^{(i)}(t)\,\mathrm{d}t \quad (1 \leq i \leq n). 
\end{equation}
For a given $X \in M_N$, it is easy to see that $X^{(i)}(t) := U_N^{(i)}(t)X$ and $Y^{(i)}(t) := X U_N^{(i)}(t)^*$ satisfy the systems \eqref{Eq7.1}, \eqref{Eq7.2} of SDEs, respectively. Thus, the unique strong solutions of the system of SDEs \eqref{Eq7.1},\eqref{Eq7.2} with initial condition $X^{(i)}(0)=X$, $Y^{(i)}(0)=X$ must be $U_N^{(i)}(t) X$, $X U_N^{(i)}(t)^*$. Thus, $U^{(i)}_N(t) X$, $X U^{(i)}_N(t)^*$ are both linear in the variable $X$, and hence their gradients (or `Jacobian matrix') in $X$ become the linear transformations $L_{U_N^{(i)}(t)}$ and $R_{U_N^{(i)}(t)^*}$ on $M_N$, respectively, where $L_A X := AX$, $R_B X := XB$ for $A, B, X \in M_N$. By a standard fact on Malliavin derivatives for strong solutions of SDEs \cite[Theorem 2.2.1; Eq.(2.59)]{Nualart:Book06} it follows that
\begin{align*} 
\mathrm{D}_s^{(k;\alpha,\beta)} U_N^{(i)}(t) 
&= 
\delta_{k,i}\,\mathbf{1}_{[0,t]}(s)\,L_{U_N^{(k)}(t)} (L_{U_N^{(k)}(s)})^{-1} \Big(\frac{\sqrt{-1}}{\sqrt{N}} C_{\alpha\beta} U_N^{(k)}(s)\Big) \\
&= 
\delta_{k,i}\,\mathbf{1}_{[0,t]}(s)\Big(\sqrt{-1}\,U_N^{(k)}(t)U_N^{(k)}(s)^* \Big(\frac{1}{\sqrt{N}}\,C_{\alpha\beta}\Big) U_N^{(k)}(s)\Big), \\
\mathrm{D}_s^{(k;\alpha,\beta)} U_N^{(i)}(t)^* 
&= 
\delta_{k,i}\,\mathbf{1}_{[0,t]}(s)\,R_{U_N^{(k)}(t)^*} (R_{U_N^{(k)}(s)^*})^{-1} \Big(\frac{-\sqrt{-1}}{\sqrt{N}} U_N^{(k)}(s)^* C_{\alpha\beta}\Big) \\
&= 
\delta_{k,i}\,\mathbf{1}_{[0,t]}(s)\Big(-\sqrt{-1}\,U_N^{(k)}(s)^*\Big(\frac{1}{\sqrt{N}}C_{\alpha\beta}\Big)U_N^{(k)}(s) U_N^{(k)}(t)^*\Big). 
\end{align*}
Hence we are done.    
\end{proof}

By the linearity and the Leibniz rule of $\mathrm{D}_s^{(k;\alpha,\beta)}$ we have, for a monomial $W$ in $U_N^{(i)}(t), U_N^{(i)}(t)^*$ and $\xi_j(N)$, 
\begin{equation}\label{Eq7.3}
\begin{aligned} 
&\mathrm{D}_s^{(k;\alpha,\beta)}\mathrm{tr}_N(W)
=\sum_{\substack{ W = W_1 U_N^{(k)}(t) W_2 \\ s \leq t}} 
\mathrm{tr}_N\Big(W_1\Big(\sqrt{-1}\,U_N^{(k)}(t)U_N^{(k)}(s)^* \Big(\frac{1}{\sqrt{N}}\,C_{\alpha\beta}\Big) U_N^{(k)}(s)\Big)W_2\Big) \\
&\qquad\quad+ 
\sum_{\substack{ W = W_3 U_N^{(k)}(t)^* W_4 \\ s \leq t}} 
\mathrm{tr}_N\Big(W_3\Big(-\sqrt{-1}\,U_N^{(k)}(s)^*\Big(\frac{1}{\sqrt{N}}C_{\alpha\beta}\Big)U_N^{(k)}(s) U_N^{(k)}(t)^*\Big)W_4\Big).
\end{aligned}
\end{equation}
With these remarks it is a straightforward task to modify the proof of the large deviation upper bound for the matrix liberation process in \cite{Ueda:JOTP19} to the case of unitary Brownian motions with deterministic matrices. The consequence is as follows.  

\subsection{Non-commutative derivations}
We assume the norm constraint $\Vert \xi_j(N)\Vert_\infty \leq R$ for all $j \geq 1$, and moreover that $\Xi(N)$ has a limit distribution as $N \to \infty$. Thus we consider the universal $C^*$-algebras $C^*_R\langle x_\diamond\rangle \subset C^*_R\langle x_\diamond, u_\bullet(\,\cdot\,) \rangle \subset C^*_R\langle x_\diamond, u_\bullet(\,\cdot\,), v_\bullet(\,\cdot\,)\rangle$ generated by $x_j= x_j^*$, $j \geq 1$, and $u_i(t), v_i(t)$, $1 \leq i \leq n$, $t \geq 0$, with subject to $\Vert x_j\Vert_\infty \leq R$ and $u_i(t)^* u_i(t) = u_i(t)u_i(t)^* = v_i(t)^* v_i(t) = v_i(t)v_i(t)^* = u_i(0) = v_i(0) = 1$, $1 \leq i \leq n$, $t \geq 0$. Remark that the universal $*$-algebra $\mathbb{C}\langle x_\diamond, u_\bullet(\,\cdot\,) \rangle$ generated by the same indeterminates with the same algebraic constraints (and without the norm constraint) is naturally embedded into $C^*_R\langle  x_\diamond, u_\bullet(\,\cdot\,)\rangle$ as a norm-dense $*$-subalgebra. By formula \eqref{Eq7.3} we introduce derivations $\delta_s^{(k)} : \mathbb{C}\langle x_\diamond, u_\bullet(\,\cdot\,) \rangle \to \mathbb{C}\langle x_\diamond, u_\bullet(\,\cdot\,) \rangle\otimes_\mathrm{alg}\mathbb{C}\langle x_\diamond, u_\bullet(\,\cdot\,) \rangle$ determined by 
\begin{align*} 
\delta_s^{(k)} u_i(t)\ \,  
&:= 
\delta_{k,i}\mathbf{1}_{[0,t]}(s)\,\big(\sqrt{-1}\,u_k(t) u_k(s)^*\otimes u_k(s)\big), \\  
\delta_s^{(k)} u_i(t)^* 
&:= 
\delta_{k,i}\mathbf{1}_{[0,t]}(s)\big(-\sqrt{-1}\,u_k(s)^*\otimes u_k(s)u_k(t)^*\big), \\  
\delta_s^{(k)} x_j\ \  &:= 0. 
\end{align*}
(In fact, one can easily check $(u\delta_s^{(k)}u_k(t))\cdot u_k(t)^* - u_k(t)\cdot(u\delta_s^{(k)}u_k(t)^*) = 0$ for example, and hence the above definition works well.) With the linear mapping $\theta : a\otimes b \mapsto ba$ we define cyclic derivatives $\mathfrak{D}_s^{(k)} := \theta\circ \delta_s^{(k)} : \mathbb{C}\langle x_\diamond, u_\bullet(\,\cdot\,) \rangle \to \mathbb{C}\langle x_\diamond, u_\bullet(\,\cdot\,) \rangle$. If we denote by $P(\xi_\diamond(N),U_\bullet^{(i)}(\,\cdot\,))$ the specialization of a given $P \in \mathbb{C}\langle x_\diamond, u_\bullet(\,\cdot\,) \rangle$ with  $x_j = \xi_j(N)$ and $u_i(t) = U_N^{(i)}(t)$, then formula \eqref{Eq7.3} admits a `compact' expression
\[
\mathrm{D}_s^{(k;\alpha,\beta)}\mathrm{tr}_N(P(\xi_\diamond(N),U_N^{(\bullet)}(\,\cdot\,))) = 
\mathrm{tr}_N\Big((\mathfrak{D}_s^{(k)}P)(\xi_\diamond(N),U_N^{(\bullet)}(\,\cdot\,))\,\Big(\frac{1}{\sqrt{N}}\,C_{\alpha\beta}\Big)\Big)
\]
for any $P \in \mathbb{C}\langle x_\diamond, v_\bullet(\,\cdot\,) \rangle $. Thus, the Clark--Ocone formula (see e.g., \cite[Proposition 6.11]{Hu:Book16} for any dimension and \cite[subsection 1.3.4]{Nualart:Book06} for $1$ dimension) shows that 
\begin{align*}
&\mathbb{E}[\mathrm{tr}_N(P(\xi_\diamond(N),U_N^{(\bullet)}(\,\cdot\,)))\mid\mathcal{F}_t] 
= 
\mathbb{E}[\mathrm{tr}_N(P(\xi_\diamond(N),U_N^{(\bullet)}(\,\cdot\,)))] \\ 
&\phantom{aaaaa}+ \sum_{k=1}^n \sum_{\alpha,\beta=1}^N \int_0^t \mathbb{E}\Big[\mathrm{tr}_N\Big((\mathfrak{D}_s^{(k)}P)(\xi_\diamond(N),U_N^{(\bullet)}(\,\cdot\,))\,\Big(\frac{1}{\sqrt{N}}\,C_{\alpha\beta}\Big)\Big)\mid\mathcal{F}_s\Big]\,\mathrm{d}B_{\alpha\beta}^{(k)}(s).
\end{align*}

\subsection{Continuous tracial states}
A tracial state $\varphi$ on $C^*_R\langle x_\diamond, u_\bullet(\,\cdot\,)\rangle$ (or $C^*_R\langle x_\diamond, u_\bullet(\,\cdot\,),v_\bullet(\,\cdot\,)\rangle$) is said to be continuous if $t \mapsto u_i^\varphi(t) := \pi_\varphi(u_i(t))$ is strongly continuous (resp.\, $t \mapsto \pi_\varphi(u_i(t)), \pi_\varphi(v_i(t))$ are strongly continuous) for every $1 \leq i \leq n$, where $\pi_\varphi : C^*_R\langle x_\diamond,u_\bullet(\,\cdot\,) \rangle \curvearrowright \mathcal{H}_\varphi$ (resp.\, $\pi_\varphi : C^*_R\langle x_\diamond,u_\bullet(\,\cdot\,),v_\bullet(\,\cdot\,) \rangle \curvearrowright \mathcal{H}_\varphi$) is the GNS representation associated with $\varphi$. We then denote by $TS^c(C^*_R\langle x_\diamond,u_\bullet(\,\cdot\,) \rangle)$ and $TS^c(C^*_R\langle x_\diamond,u_\bullet(\,\cdot\,), v_\bullet(\,\cdot\,) \rangle)$ all the continuous tracial states on $C^*_R\langle x_\diamond, u_\bullet(\,\cdot\,) \rangle$ and $C^*_R\langle x_\diamond,u_\bullet(\,\cdot\,), v_\bullet(\,\cdot\,) \rangle$, respectively. Set $x_j(t) := x_j$, $t \geq 0$, for each $j$ for the ease of notation below. Then, the same facts as \cite[Lemmas 2.1,2.2]{Ueda:JOTP19} holds and the metric $d$ on $TS^c(C^*_R\langle x_\diamond, u_\bullet(\,\cdot\,)\rangle)$ can be defined in the exactly same manner as \eqref{Eq1.1} by considering words in $x_j(t)$ and $u_i(t), u_i(t)^*$ in place of $x_{i_1 j_1}(t_1)\cdots x_{i_m j_m}(t_m)$ for $w(t_1,\dots,t_m)$.  We remark that $\tau((x_{ij}(s)-x_{ij}(t))^2)$ in \cite[Lemma 2.2(2)]{Ueda:JOTP19} should be replaced with $\varphi((u_i(s)-u_i(t))^*(u_i(s) - u_i(t))) = 2(1 - \mathrm{Re}\,\varphi(u_i(s)^*u_i(t)))$ in this context. 

\subsection{Rate function}
By universality, we have the $*$-homomorphism 
\[
\Pi^s : C^*_R\langle x_\diamond,u_\bullet(\,\cdot\,)\rangle \to C^*_R\langle x_\diamond,u_\bullet(\,\cdot\,),v_\bullet(\,\cdot\,)\rangle
\] 
for each $s \geq 0$, which sends each $u_i(t)$ to $u_i^s(t)$ and keeping each $x_j$ as it is, where 
\[
u^s_i(t) := v_i((t-s)\vee0)u_i(s\wedge t), \quad 1 \leq i \leq n, t \geq 0. 
\]
We can extend each $\varphi \in TS^c(C^*_R\langle x_\diamond,u_\bullet(\,\cdot\,)\rangle)$ to a unique $\tilde{\varphi} \in TS^c(C^*_R\langle x_\diamond,u_\bullet(\,\cdot\,),v_\bullet(\,\cdot\,)\rangle)$ in such a way that the $v_i(t)$ are freely independent of $C^*_R\langle x_\diamond,u_\bullet(\,\cdot\,)\rangle$ and form a freely independent family of left-multiplicative free unitary Brownian motions under $\tilde{\varphi}$. For each $\varphi \in TS^c(C^*_R\langle x_\diamond,u_\bullet(\,\cdot\,)\rangle)$ we define $\varphi^s := \tilde{\varphi}\circ\Pi^s \in TS^c(C^*_R\langle x_\diamond,u_\bullet(\,\cdot\,)\rangle)$, $s \geq 0$, and also write   
\[
(\mathcal{N}(\varphi) \subset \mathcal{M}(\varphi)) := \big(\pi_{\tilde{\varphi}}(C^*_R\langle x_\diamond,u_\bullet(\,\cdot\,) \rangle)'' \subset \pi_{\tilde{\varphi}}(C^*_R\langle x_\diamond,u_\bullet(\,\cdot\,),v_\bullet(\,\cdot\,)\rangle)''\big)
\] 
on $\mathcal{H}_{\tilde{\varphi}}$, where $\pi_{\tilde{\varphi}} : C_R^*\langle x_\diamond,u_\bullet(\,\cdot\,),v_\bullet(\,\cdot\,)\rangle \curvearrowright \mathcal{H}_ {\tilde{\varphi}}$ is the GNS representation associated with ${\tilde{\varphi}}$. We fix a distribution of the $x_j$, say $\sigma_0 \in TS(C^*_R\langle x_\diamond \rangle)$. Let $\sigma_0^\mathrm{frBM}$ be $\varphi^0$ with $\varphi \in TS^c(C^*_R\langle x_\diamond,u_\bullet\rangle)$ such that the restriction of $\varphi$ to $C^*_R\langle x_\diamond\rangle$ is $\sigma_0$. Such a continuous tracial state $\varphi^0$ is uniquely determined; in fact, it is the joint distribution of the $x_j$'s and the $v_i(t)$'s such that the $v_i(t)$ form a freely independent family of left-multiplicative free unitary Brownian motions and are freely independent of the $x_j$'s, and moreover that the distribution of the $x_j$'s is $\sigma_0$.  For any $\varphi \in TS^c(C^*_R\langle x_\diamond,u_\bullet(\,\cdot\,)\rangle)$, $P = P^* \in \mathbb{C}\langle x_\diamond,u_\bullet(\,\cdot\,)\rangle$ and $t \in [0,\infty]$ we define 
\[
I^\mathrm{uBM}_{\sigma_0,t}(\varphi,P) := \varphi^t(P) - \sigma_0^\mathrm{frBM}(P) - \frac{1}{2}\sum_{k=1}^n \int_0^t \Vert E_{\mathcal{N}(\tau)}(\pi_{\tilde{\varphi}}(\Pi^s(\mathfrak{D}_s^{(k)}P)))\Vert_{\tilde{\varphi},2}^2\,ds
\]
with regarding $\varphi$ as $\varphi^\infty$. Then we introduce two functionals $I^\mathrm{uBM}_{\sigma_0}, I^\mathrm{uBM}_{\sigma_0,\infty} : TS^c(C^*_R\langle x_\diamond,u_\bullet(\,\cdot\,)\rangle) \to [0,+\infty]$ defined by  
\[
I^\mathrm{uBM}_{\sigma_0}(\varphi) := \sup_{\substack{ P = P^* \in \mathbb{C}\langle x_\diamond,u_\bullet(\,\cdot\,)\rangle \\ t > 0}} I^\mathrm{uBM}_{\sigma_0,t}(\varphi,P), \quad 
I^\mathrm{uBM}_{\sigma_0,\infty}(\varphi) := \sup_{P = P^* \in \mathbb{C}\langle x_\diamond,u_\bullet(\,\cdot\,)\rangle} I^\mathrm{uBM}_{\sigma_0,\infty}(\varphi,P) 
\]
for $\varphi \in TS^c(C^*_R\langle x_\diamond, u_\bullet(\,\cdot\,) \rangle)$. 

\subsection{Consequences} 
Here is the main consequence of this section. 

\begin{theorem}\label{T7.2} Assume that $\sigma_0 \in TS(C^*_R\langle x_\diamond\rangle)$ is the limit distribution of $\Xi(N)$ as $N\to\infty$. We denote by $P \in C^*_R\langle x_\diamond,u_\bullet(\,\cdot,)\rangle \mapsto P(\xi_\diamond(N),U_N^{(\bullet)}(\,\cdot\,)) \in M_N$ the $*$-homomorphism sending $u_i(t)$ and $x_j$ to $U_N^{(i)}(t)$ and $\xi_j(N)$, respectively. Let $\varphi_{\Xi(N)}^\mathrm{uBM} \in TS^c(C^*_R\langle x_\diamond,u_\bullet(\,\cdot\,)\rangle)$ be the random tracial state sending $P \in C^*_R\langle x_\diamond,u_\bullet(\,\cdot,)\rangle$ to $\mathrm{tr}_N(P(\xi_\diamond(N),U_N^{(\bullet)}(\,\cdot\,)))$. Then we have the following large deviation upper bound: 
\[
\varlimsup_{N\to\infty} \frac{1}{N^2}\log\mathbb{P}(\varphi_{\Xi(N)} ^\mathrm{uBM} \in \Lambda) \leq -\inf\{ I^\mathrm{uBM}_{\sigma_0}(\varphi) \mid \varphi \in \Lambda\}
\]
for every closed $\Lambda \subset TS^c(C^*_R\langle x_\diamond,u_\bullet(\,\cdot\,)\rangle)$.  Moreover, both $I^\mathrm{uBM}_{\sigma_0} \geq I^\mathrm{uBM}_{\sigma_0,\infty}$ are good rate functions and admit the same unique minimizer $\sigma_0^\mathrm{frBM}$.  
\end{theorem}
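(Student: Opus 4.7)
The plan is to mimic the proof strategy of \cite[Theorem 5.8, Propositions 5.6, 5.7]{Ueda:JOTP19}, adapting the arguments to the present setting where the ``moving'' variables are the unitary Brownian motions $U_N^{(i)}(t)$ themselves rather than their adjoint actions on self-adjoint matrices. The Malliavin derivative computation of Lemma \ref{L7.1}, together with the Clark--Ocone representation of $\mathrm{tr}_N(P(\xi_\diamond(N),U_N^{(\bullet)}(\,\cdot\,)))$ displayed in subsection 7.2, plays exactly the same role as the corresponding formula for the matrix liberation process does in \cite{Ueda:JOTP19}; the It\^{o}-type exponential (super)martingale machinery of \cite{BianeCapitaineGuionnet:InventMath03} and \cite{Ueda:JOTP19} therefore carries over essentially verbatim.

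For the large deviation upper bound, I would fix a test pair $P = P^* \in \mathbb{C}\langle x_\diamond, u_\bullet(\,\cdot\,)\rangle$ and $t > 0$, and form the exponential supermartingale attached to the martingale $s \mapsto N^2\,\mathbb{E}[\mathrm{tr}_N(P(\xi_\diamond(N),U_N^{(\bullet)}(\,\cdot\,)))\mid\mathcal{F}_s]$ via the usual It\^{o}--Girsanov recipe. Applying Chebyshev's inequality at time $s = t$ yields, at speed $N^2$, an upper bound involving the quantity
\[
\mathrm{tr}_N(P) - \mathbb{E}[\mathrm{tr}_N(P)] - \tfrac{1}{2}\sum_{k,\alpha,\beta}\int_0^t \bigl\vert \mathbb{E}[\mathrm{tr}_N((\mathfrak{D}_s^{(k)}P)(\xi_\diamond(N),U_N^{(\bullet)}(\,\cdot\,))\,\tfrac{1}{\sqrt{N}}C_{\alpha\beta})\mid\mathcal{F}_s]\bigr\vert^2\,ds,
\]
whose $N\to\infty$ limit is to be identified with $\varphi^t(P) - \sigma_0^\mathrm{frBM}(P) - \tfrac{1}{2}\sum_k\int_0^t \Vert E_{\mathcal{N}(\varphi)}(\pi_{\tilde{\varphi}}(\Pi^s(\mathfrak{D}_s^{(k)}P)))\Vert_{\tilde{\varphi},2}^2\,ds = I_{\sigma_0,t}^\mathrm{uBM}(\varphi,P)$, by combining the convergence of $\Xi(N)$ to $\sigma_0$ with the asymptotic freeness of independent unitary Brownian motions from each other and from $\Xi(N)$. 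Taking a supremum over $(P,t)$, together with exponential tightness (automatic from compactness of $TS^c(C^*_R\langle x_\diamond, u_\bullet(\,\cdot\,)\rangle)$ in $d$), then yields the claimed upper bound.

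Goodness of both $I_{\sigma_0}^\mathrm{uBM}$ and $I_{\sigma_0,\infty}^\mathrm{uBM}$ follows at once from this compactness together with the lower semicontinuity built into their supremum definitions; the inequality $I_{\sigma_0}^\mathrm{uBM} \geq I_{\sigma_0,\infty}^\mathrm{uBM}$ is immediate from the fact that $I_{\sigma_0,\infty}^\mathrm{uBM}(\varphi,P) = I_{\sigma_0,t}^\mathrm{uBM}(\varphi,P)$ for all $t$ large enough (since $\mathfrak{D}_s^{(k)}P = 0$ for $s$ large). That $I_{\sigma_0,\infty}^\mathrm{uBM}(\sigma_0^\mathrm{frBM}) = 0$ is verified by an explicit computation of the two contributions in the defining formula when $\varphi = \sigma_0^\mathrm{frBM}$, using the free It\^{o} formula for free unitary Brownian motion exactly as in \cite[Proposition 5.6]{Ueda:JOTP19}; this forces $I_{\sigma_0}^\mathrm{uBM}(\sigma_0^\mathrm{frBM}) = 0$ as well. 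For uniqueness, assume $I_{\sigma_0,\infty}^\mathrm{uBM}(\varphi) = 0$: the sign-flip $P \leftrightarrow -P$ forces both the linear part $\varphi(P) - \sigma_0^\mathrm{frBM}(P)$ and the nonnegative quadratic part to vanish separately for every $P$, which forces $\varphi = \sigma_0^\mathrm{frBM}$ on the norm-dense subalgebra $\mathbb{C}\langle x_\diamond, u_\bullet(\,\cdot\,)\rangle$ and hence everywhere. The main technical obstacle I anticipate is the careful bookkeeping needed to reconcile the two-branch Malliavin formula of Lemma \ref{L7.1}, with its asymmetric patterns $u_k(t)u_k(s)^*$ versus $u_k(s)^*u_k(t)$, with the single flip-multiplied algebraic cyclic derivative $\mathfrak{D}_s^{(k)}$ when identifying the $N\to\infty$ limit of the quadratic variation term.
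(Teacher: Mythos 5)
Your overall strategy — adapt the Clark--Ocone/exponential supermartingale machinery of \cite{Ueda:JOTP19} (after \cite{BianeCapitaineGuionnet:InventMath03}) to the derivation $\delta_s^{(k)}$ derived from the Malliavin formulas of Lemma \ref{L7.1} — is exactly what the paper does, and the identification of the $N\to\infty$ limit via asymptotic freeness, the trivial inequality $I^\mathrm{uBM}_{\sigma_0} \geq I^\mathrm{uBM}_{\sigma_0,\infty}$, and the scaling argument (replace $P$ by $rP$, not just $P$ by $-P$; a pure sign-flip only yields $|\varphi(P)-\sigma_0^\mathrm{frBM}(P)| \leq \tfrac{1}{2}(\text{integral})$, which is insufficient) for the unique minimizer all carry over in the way you indicate.

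However, there is a genuine gap in your treatment of exponential tightness and goodness: you claim both are ``automatic from compactness of $TS^c(C^*_R\langle x_\diamond,u_\bullet(\,\cdot\,)\rangle)$ in $d$,'' but this space is \emph{not} compact. The metric $d$ of \eqref{Eq1.1} induces the topology of uniform convergence on finite time intervals, which is strictly finer than the weak$^*$ topology; $TS^c$ is a complete metric space, not a compact one. (It is the smaller space $TS(C^*_R\langle x_\diamond\rangle)$ in Lemma \ref{L4.1} that is weak$^*$ compact, not $TS^c$.) Consequently, neither exponential tightness nor compactness of the sublevel sets of $I^\mathrm{uBM}_{\sigma_0}$ is free. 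Precisely because of this, the paper explicitly flags that ``proving that the rate functions are good along the line of the proof of \cite[Proposition 5.6]{Ueda:JOTP19} needs the formula'' for $E_{\mathcal{N}(\varphi)}(\Pi^s(\mathfrak{D}_s^{(k)}((u_i(t_1)-u_i(t_2))^*(u_i(t_1)-u_i(t_2)))))$: it is this formula, together with the Arzel\`a--Ascoli type characterization of compactness in $TS^c$ (the analog of \cite[Lemma 2.2(2)]{Ueda:JOTP19} with $\tau((x_{ij}(s)-x_{ij}(t))^2)$ replaced by $2(1-\mathrm{Re}\,\varphi(u_i(s)^*u_i(t)))$, as noted in subsection 7.3), that yields the equicontinuity estimate showing that any sublevel set $\{I^\mathrm{uBM}_{\sigma_0} \leq L\}$ is $d$-compact. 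Your proposal skips precisely this step, which is the one genuinely new ingredient of the proof in the unitary Brownian motion setting and the one the paper singles out.

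The ``two-branch'' versus ``flip-multiplied cyclic derivative'' reconciliation you worry about at the end is already handled by the definition of $\delta_s^{(k)}$ in subsection 7.2, which is built to act on $u_i(t)$ and $u_i(t)^*$ asymmetrically, mirroring Lemma \ref{L7.1} verbatim; the compatibility check $(u\delta_s^{(k)}u_k(t))\cdot u_k(t)^* - u_k(t)\cdot(u\delta_s^{(k)}u_k(t)^*) = 0$ recorded there confirms there is no obstruction.
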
 

Proving that the rate functions are good along the line of the proof of \cite[Proposition 5.6]{Ueda:JOTP19} needs the formula 
\begin{align*}
&E_{\mathcal{N}(\varphi)}(\Pi^s(\mathfrak{D}_s^{(k)}((u_i(t_1)-u_i(t_2))^*(u_i(t_1)-u_i(t_2))) \\
&\qquad= 
\delta_{k,i}\sqrt{-1} e^{-\frac{1}{2}(t_1\vee t_2 -s)}\mathbf{1}_{(t_1\wedge t_2,t_1\vee t_2]}(s)(u_k(t_1\wedge t_2)u_k(s)^* - u_k(s)u_k(t_1\wedge t_2)^*). 
\end{align*}

\medskip
Similarly to \cite[Corollary 5.9]{Ueda:JOTP19} the standard Borel--Cantelli argument shows the next corollary. 

\begin{corollary}\label{C7.3} Keep the same setting as in Theorem \ref{T7.2}. Let $\sigma_0^{\mathrm{frBM}} \in TS^c(C^*_R\langle x_\diamond,u_\bullet(\,\cdot\,)\rangle)$ be constructed in such a way that the distribution of the $x_j$ is $\sigma_0$ under $\sigma_0^{\mathrm{frBM}}$ and also that the $u_i(t)$ form a freely independent family of left-multiplicative free unitary Brownian motions and are freely independent of the $x_j$ under $\sigma_0^{\mathrm{frBM}}$. Then $d(\varphi_{\Xi(N)} ^\mathrm{uBM},\sigma_0^{\mathrm{frBM}}) \to 0$ almost surely as $N\to\infty$. 
\end{corollary}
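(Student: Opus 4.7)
The plan is to reduce the almost-sure convergence to exponentially fast convergence in probability via a standard first Borel--Cantelli argument, exactly in the spirit of the proof of \cite[Corollary 5.9]{Ueda:JOTP19}; all the non-trivial work has already been packaged into Theorem \ref{T7.2}.

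First, for each $\varepsilon > 0$ I would introduce the closed subset
\[
\Lambda_\varepsilon := \{\varphi \in TS^c(C^*_R\langle x_\diamond, u_\bullet(\,\cdot\,)\rangle) \mid d(\varphi, \sigma_0^\mathrm{frBM}) \geq \varepsilon\},
\]
which is closed because $d$ is continuous. Theorem \ref{T7.2} asserts that $I_{\sigma_0}^\mathrm{uBM}$ is a good rate function with $\sigma_0^\mathrm{frBM}$ as its unique minimizer (attaining value $0$); since $\sigma_0^\mathrm{frBM} \notin \Lambda_\varepsilon$, the infimum
\[
c_\varepsilon := \inf\{I_{\sigma_0}^\mathrm{uBM}(\varphi) \mid \varphi \in \Lambda_\varepsilon\}
\]
is attained on a non-empty compact sublevel set (by goodness) and is therefore strictly positive.

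Next, applying the large deviation upper bound of Theorem \ref{T7.2} to $\Lambda_\varepsilon$ yields
\[
\varlimsup_{N\to\infty}\frac{1}{N^2}\log\mathbb{P}\bigl(d(\varphi_{\Xi(N)}^\mathrm{uBM}, \sigma_0^\mathrm{frBM})\geq\varepsilon\bigr) \leq -c_\varepsilon < 0,
\]
so there exists $N_0 = N_0(\varepsilon)$ with $\mathbb{P}\bigl(d(\varphi_{\Xi(N)}^\mathrm{uBM}, \sigma_0^\mathrm{frBM})\geq\varepsilon\bigr) \leq e^{-N^2 c_\varepsilon/2}$ for every $N \geq N_0$. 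Because $\sum_N e^{-N^2 c_\varepsilon/2} < \infty$, the first Borel--Cantelli lemma forces
\[
\mathbb{P}\bigl(d(\varphi_{\Xi(N)}^\mathrm{uBM}, \sigma_0^\mathrm{frBM})\geq\varepsilon \text{ for infinitely many } N\bigr) = 0.
\]

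Finally, I would pick a sequence $\varepsilon_k \searrow 0$ and intersect the probability-one events obtained above over $k \in \mathbb{N}$; on the resulting probability-one event, for every $k$ the inequality $d(\varphi_{\Xi(N)}^\mathrm{uBM}, \sigma_0^\mathrm{frBM}) < \varepsilon_k$ holds for all sufficiently large $N$, which is precisely the claimed almost-sure convergence. There is no genuine obstacle here: the only substantive inputs, namely the large deviation upper bound with speed $N^2$ and the uniqueness of the minimizer of the good rate function $I_{\sigma_0}^\mathrm{uBM}$, are already supplied by Theorem \ref{T7.2}.
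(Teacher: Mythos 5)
Your proof is correct and is precisely the argument the paper invokes: the paper only says "the standard Borel--Cantelli argument shows the next corollary," referring to the proof of \cite[Corollary 5.9]{Ueda:JOTP19}, and your write-up (closed complement of a $d$-ball, strict positivity of the infimum of the good rate function via the uniqueness of the minimizer, exponential decay of the probabilities, first Borel--Cantelli, intersection over $\varepsilon_k \searrow 0$) is exactly that standard argument. No gaps.
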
 

This is a precise statement about the almost sure convergence \emph{as continuous process} for an independent family of unitary Brownian motions together with deterministic matrices, and seems to have been missing so far, even though the almost sure strong convergence for its time marginals was already established by Collins, Dahlqvist and Kemp \cite{CollinsDahlqvistKemp:PTRF18}. 

\subsection{Haar-distributed unitary random matrices} 
As in section 4, using Lemma \ref{L2.1} we can derive a large deviation upper bound for an independent family of $N\times N$ Haar-distributed unitary random matrices $U^{(i)}_N$, $1 \leq i \leq n$, with deterministic matrices $\Xi(N)$ from Theorem \ref{T7.2}. The resulting rate function is given as in Lemma \ref{L4.1}. Let $C^*_R\langle x_\diamond, u_\bullet\rangle$ be the universal $C^*$-algebra generated by $x_j$, $j \geq 1$, and $u_i$, $1 \leq i \leq n$, with subject to $\Vert x_j \Vert_\infty \leq R$ and $u_i^* u_i = u_i u_i^* = 1$. We denote by $P \in C^*_R\langle x_\diamond, u_\bullet\rangle \mapsto P(\xi_\diamond(N),U_N^{(\bullet)}) \in M_N$ the $*$-homomorphism sending $x_j$ and $u_i$ to $\xi_j(N)$ and $U_N^{(i)}$, respectively. Then we have the random tracial state $\varphi_{\Xi(N)}^\mathrm{uHaar} \in TS(C^*_R\langle x_\diamond, u_\bullet\rangle) \to \mathbb{C}$ defined by $\varphi_{\Xi(N)}^\mathrm{uHaar}(P) := \mathrm{tr}_N(P(\xi_\diamond(N),U_N^{(\bullet)}))$ for $P \in  
C^*_R\langle x_\diamond, u_\bullet\rangle$.  
Namely, let $\pi_T : C^*_R\langle x_\diamond, u_\bullet\rangle \to C^*_R\langle x_\diamond, u_\bullet(\,\cdot\,)\rangle$ be the $*$-homomorphism sending $x_j$ and $u_i$ to $x_j$ and $u_i(T)$, respectively, as before. Then we have the large deviation upper bound for the probability measures $\mathbb{P}(\varphi_{\Xi(N)}^\mathrm{uHaar} \in \,\cdot\,)$ with speed $N^2$ and the rate function
\begin{align*}
&\psi \in TS(C^*_R\langle x_\diamond, u_\bullet\rangle) \\
&\quad 
\mapsto \lim_{\substack{m\to\infty \\ \delta \searrow 0}}\varlimsup_{T\to\infty}\inf\{I_{\sigma_0}^\mathrm{uBM}(\varphi) \mid \varphi \in TS^c(C^*_R\langle u_\bullet(\,\cdot\,),x_\diamond\rangle), \pi_T^*(\varphi) \in \mathcal{O}_{m,\delta}(\psi) \} \in [0,+\infty],
\end{align*} 
where as before the infimum over the empty set is taken as $+\infty$ and $\mathcal{O}_{m,\delta}(\psi)$ is the open neighborhood consisting of all the $\chi \in TS(C^*_R\langle x_\diamond, u_\bullet\rangle)$ such that $|\chi(w) - \psi(w)| < \delta$ for all words $w$ in $x_j,u_i, u_i^*$ ($j \leq m$, $1 \leq i \leq n$) of length not greater than $m$. 

We remark that Cabanal Duvillard and Guionnet \cite[Corollary 4.2]{CabanalDuvillardGuionnet:AnnProbab01} have also obtained a large deviation upper bound for the $U_N^{(i)}$ with seemingly different rate function based on self-adjoint matrix Brownian motions. 

\subsection{Relation to the matrix liberation process} 
We will compare Theorem \ref{T7.2} with \cite[Theorem 5.8]{Ueda:JOTP19}. To this end, we re-number $\xi_j(N)$ and $x_j$ as $\xi_{ij}(N)$ and $x_{ij}$, respectively. Let $\pi_\mathrm{lib} : C^*_R\langle x_{\bullet\diamond}(\,\cdot\,)\rangle \to C^*_R\langle x_{\bullet\diamond},u_\bullet(\,\cdot\,)\rangle$ be the $*$-homomorphism sending $x_{ij}(t)$ to $u_i(t)x_{ij}u_i(t)^*$. This induces a continuous map $\pi_\mathrm{lib}^* : TS^c(C^*_R\langle x_{\bullet\diamond},u_\bullet(\,\cdot\,)\rangle) \to TS^c(C^*_R\langle x_{\bullet\diamond}(\,\cdot\,)\rangle)$ defined by $\pi_\mathrm{lib}^*(\varphi) := \varphi\circ\pi_\mathrm{lib}$.  We observe that $\pi_\mathrm{lib}^*(\varphi_{\Xi(N)} ^\mathrm{uBM}) = \tau_{\Xi^\mathrm{lib}(N)}$. Therefore, the contraction principle in large deviation theory implies the large deviation upper bound for $\mathbb{P}(\tau_{\Xi^\mathrm{lib}(N)} \in \,\cdot\,)$ in the same scale with the good rate function: 
\begin{equation}
\begin{aligned} 
&\tau \in TS^c(C^*_R\langle x_{\bullet\diamond}(\,\cdot\,)\rangle) \\
&\quad \mapsto I_{\sigma_0}^\mathrm{ulib}(\tau) :=  \inf\{ I^\mathrm{uBM}_{\sigma_0}(\varphi) \mid \varphi \in TS^c(C^*_R\langle u_\bullet(\,\cdot\,),x_\diamond\rangle), \pi_\mathrm{lib}^*(\varphi) = \tau \} \in [0,+\infty], 
\end{aligned}
\end{equation}
where the infimum over the empty set is taken as $+\infty$.   
Therefore, we have two large deviation upper bounds with (seemingly different) rate functions for $\mathbb{P}(\tau_{\Xi^\mathrm{lib}(N)} \in \,\cdot\,)$. 

Let $\tau \in TS^c(C^*_R\langle x_{\bullet\diamond}(\,\cdot\,)\rangle)$ be given. Consider an arbitrary $\varphi \in TS^c(C^*_R\langle x_{\bullet\diamond},u_\bullet(\,\cdot\,)\rangle)$ with $\pi_\mathrm{lib}^*(\varphi) = \tau$. It is not difficult to show that 
\[
\varphi^s(\pi_\mathrm{lib}(P)) = \tau^s(P), \quad 
E_{\mathcal{N}(\varphi)}(\Pi^s(\mathfrak{D}_s^{(k)} \pi_\mathrm{lib}(P))) = 
E_{\mathcal{N}(\tau)}(\Pi^s(\mathfrak{D}_s^{(k)}P)) 
\]
for every $P \in \mathbb{C}\langle x_{\bullet\diamond}(\,\cdot\,)\rangle$ and every $s \geq 0$. Therefore, $I_{\sigma_0,t}^\mathrm{lib}(\tau,P) = I_{\sigma_0,t}(\varphi,\pi_\mathrm{lib}(P))$ for every $P \in \mathbb{C}\langle x_{\bullet\diamond}(\,\cdot\,)\rangle$ and every $t \geq 0$, and hence 
\begin{equation}
I_{\sigma_0}^\mathrm{lib}(\tau) \leq I_{\sigma_0}^\mathrm{ulib}(\tau), \quad 
I_{\sigma_0,\infty}^\mathrm{lib}(\tau) \leq I_{\sigma_0,\infty}^\mathrm{ulib}(\tau), 
\end{equation}   
where $I_{\sigma_0,\infty}^\mathrm{ulib}(\tau) :=  \inf\{ I_{\sigma_0,\infty}^\mathrm{uBM}(\varphi) \mid \varphi \in TS^c(C^*_R\langle u_\bullet(\,\cdot\,).x_\diamond\rangle), \pi_\mathrm{lib}^*(\varphi) = \tau \}$. Therefore, the current approach using unitary Brownian motions directly gives an improved large deviation upper bound for the matrix liberation process, though the description of the resulting rate function is `indirect'. Remark that the above inequalities between two kinds of rate functions guarantee that $I_{\sigma_0}^\mathrm{ulib} \geq I_{\sigma_0,\infty}^\mathrm{ulib}$ also have a unique minimizer, which is given by $\sigma_0^\mathrm{lib}$. Remark that this fact on the rate functions $I_{\sigma_0}^\mathrm{ulib} \geq I_{\sigma_0,\infty}^\mathrm{ulib}$ holds even when $\sigma_0$ does not fall into $TS_\mathrm{fda}(C^*\langle x_{\bullet\diamond}\rangle)$. 

\section{Conditional expectations of liberation cyclic derivatives}

We will give a technical result on liberation cyclic derivatives $\mathfrak{D}_s^{(k)}$, $1 \leq k \leq n$, for future work. The most non-trivial component of the rate functions $I^\mathrm{lib}_{\sigma_0}, I^\mathrm{lib}_{\sigma_0,\infty}$ is $E_{\mathcal{N}(\tau)}(\pi_{\tilde{\tau}}(\Pi^s(\mathfrak{D}^{(k)}_s P)))$, which will be described in terms of free cumulants when $P$ is a monomial. In what follows, we use the notations in section 4.  

\medskip
We first introduce some terminology: Let $(\mathcal{A},\varphi)$ be a non-commutative probability space, and $a_1,\dots,a_n \in \mathcal{A}$ be arbitrarily chosen. For a `block' $V = (i_1 < \cdots < i_s)$ of $[n] = \{1,\dots,n\}$, we define $\mathrm{id}(V)[a_1,\dots,a_n] := a_{i_1}\cdots a_{i_s}$ (i.e., the  word obtained by arranging $a_{i_1},\dots,a_{i_s}$ in order). For a partition $\pi = \{V_1,\dots,V_m\}$ of $[n]$, we define 
\[
C(\varphi;\pi)[a_1,\dots,a_n] := \sum_{k=1}^m \Big(\prod_{\substack{1 \leq \ell \leq m \\ \ell \neq k}} \varphi(V_\ell)[a_1,\dots,a_n]\Big) \mathrm{id}(V_k)[a_1,\dots,a_n], 
\]
where $\varphi(V_\ell)[a_1,\dots,a_n]$ is defined as in \cite[Lecture 11]{NicaSpeicher:Book}; namely, we have $\varphi(V_\ell)[a_1,\dots,a_n] = \varphi(\mathrm{id}(V_\ell)[a_1,\dots,a_n])$.  
  
\begin{proposition}\label{P8.1} Write
\[
w_\ell := v_{i_{\ell-1}}((t_{\ell-1}-s)_+)^* v_{i_\ell}((t_\ell-s)_+), \quad 1 \leq \ell \leq n, 
\]
with $i_0 := i_n$ and $(t-s)_+ := 0 \vee (t-s)$. Then, we have 
\begin{align*} 
&E_{\mathcal{N}(\tau)}(\pi_{\tilde{\tau}}(\Pi^s(\mathfrak{D}^{(k)}_s x_{i_1 j_1}(t_1)\cdots x_{i_n j_n}(t_n)))) \\
&= 
\sum_{\pi \in NC(n)} \kappa_\pi[w_1,\dots,w_n]\,\pi_{\tilde{\tau}}(\mathfrak{D}_s^{(k)}C(\tau;K(\pi))[x_{i_1 j_1}(s\wedge t_1),\dots,x_{i_n j_n}(s\wedge t_n)]),      
\end{align*}
where $NC(n)$ denotes the non-crossing partitions of $[n]$, $\kappa_\pi$ the free cumulant associated with $\pi$, and $K : NC(n) \to NC(n)$ the Kreweras complementation map; see \cite[Lecture 11]{NicaSpeicher:Book}. 
\end{proposition}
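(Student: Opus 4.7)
The plan is to expand both sides and identify them combinatorially using the alternating free moment formula together with the Kreweras complement. First I would apply the Leibniz rule for $\delta_s^{(k)}$ followed by the flip $\theta$; only positions $\ell$ with $i_\ell = k$ and $t_\ell \geq s$ contribute. Writing $y_r := x_{i_r j_r}(t_r)$, the image $\Pi^s(y_r) = v_{i_r}((t_r-s)_+) X^s_r v_{i_r}((t_r-s)_+)^*$ with $X^s_r := x_{i_r j_r}(s\wedge t_r)$, together with the telescoping identities $y_\ell^{\sim}v_k(t_\ell-s) = v_k(t_\ell-s)X^s_\ell$ and $v_k(t_\ell-s)^* y_\ell^{\sim} = X^s_\ell v_k(t_\ell-s)^*$, collapses the intervening $v$-pairs into the $w_r$'s and reduces the $\ell$-th contribution to the commutator $\bigl[w_{\ell+1}X^s_{\ell+1}w_{\ell+2}\cdots w_{\ell-1}X^s_{\ell-1}w_\ell,\,X^s_\ell\bigr]$ (indices cyclic mod $n$). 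Since $X^s_\ell \in \mathcal{N}(\tau)$, bimodularity of $E_{\mathcal{N}(\tau)}$ pulls the commutator outside, reducing the left-hand side to $\sum_\ell \bigl[\,\pi_{\tilde{\tau}}\,E_{\mathcal{N}(\tau)}(w_{\ell+1}X^s_{\ell+1}\cdots X^s_{\ell-1}w_\ell),\,X^s_\ell\bigr]$.

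Next I would determine each $E_{\mathcal{N}(\tau)}$-value by testing against arbitrary $Y \in \mathcal{N}(\tau)$. Using free independence of $\mathcal{V}:=\pi_{\tilde{\tau}}(v_\bullet(\,\cdot\,))''$ and $\mathcal{N}(\tau)$ in $(\mathcal{M}(\tau),\tilde{\tau})$ together with vanishing of mixed free cumulants (and cyclicity of $\tilde{\tau}$), one obtains the standard alternating-product identity
\[
\tilde{\tau}\bigl(Y\,w_{\ell+1}X^s_{\ell+1}\cdots w_\ell\bigr) = \tilde{\tau}\bigl(w_{\ell+1}X^s_{\ell+1}\cdots w_\ell\,Y\bigr) = \sum_{\pi \in NC(n)} \kappa_\pi[w_{\ell+1},\ldots,w_\ell]\,\tau_{K(\pi)}[X^s_{\ell+1},\ldots,X^s_{\ell-1},Y],
\]
where $\tau_\sigma[a_1,\ldots,a_n] := \prod_{V\in\sigma}\tau(V)[a_1,\ldots,a_n]$ (a direct consequence of Nica--Speicher's moment-cumulant machinery for free products; see e.g.\ \cite{NicaSpeicher:Book}). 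Extracting the coefficient of $\tau(Y\,\cdot\,)$: for each $\pi$, the unique block $V^Y \in K(\pi)$ containing the $Y$-slot contributes $\tau(V^Y)=\tau(Y\cdot Z^\pi_{V^Y})$, where $Z^\pi_{V^Y}$ is the ordered $X^s$-product over the other entries of $V^Y$, while the remaining blocks contribute scalar moment factors $\tau(V)[X^s]$. This yields
\[
E_{\mathcal{N}(\tau)}(w_{\ell+1}X^s_{\ell+1}\cdots w_\ell) = \sum_{\pi \in NC(n)} \kappa_\pi[w_{\ell+1},\ldots,w_\ell]\,\bigl(\textstyle\prod_{V \in K(\pi),\,V \neq V^Y}\tau(V)[X^s]\bigr)\,Z^\pi_{V^Y}.
\]

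The last step is the cyclic/Kreweras identification with the right-hand side. Pictured on a circle with $w$'s and $X^s$'s alternating and $X^s_\ell$ marking the distinguished commutator slot, cyclic invariance of non-crossing partitions re-parametrizes the sum $\sum_{\pi\text{ flat at }\ell+1}$ as $\sum_{\pi\in NC(n)}$ with $\kappa_\pi[w_{\ell+1},\ldots,w_\ell] = \kappa_\pi[w_1,\ldots,w_n]$ after the corresponding relabeling; the block $V^Y$ of $K(\pi)$ containing the $Y$-slot becomes precisely the block of (the relabeled) $K(\pi)$ containing the canonical $X$-position $\ell$, and the ordered product $Z^\pi_{V^Y}$ is exactly the cyclic rotation of $\mathrm{id}(V^Y)[X^s_1,\ldots,X^s_n]$ about $\ell$. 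Hence $[Z^\pi_{V^Y},X^s_\ell]$ is the $\ell$-th summand of $\mathfrak{D}_s^{(k)}\mathrm{id}(V^Y)[X^s_1,\ldots,X^s_n]$; summing over active $\ell \in V^Y$ and then over $V^Y \in K(\pi)$ reconstructs $\mathfrak{D}_s^{(k)}C(\tau;K(\pi))[X^s_1,\ldots,X^s_n]$, as required. The main obstacle is this cyclic-combinatorial bookkeeping---verifying that the Kreweras complement commutes with cyclic relabeling and that the flat $Y$-block matches the canonical $\ell$-containing block in $K(\pi)$---which I would resolve by working directly on the circular picture and invoking the cyclic-rotation covariance of $K$ (standard, see e.g.\ \cite[Lecture~9]{NicaSpeicher:Book}).
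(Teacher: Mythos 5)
Your proposal is correct and follows essentially the same route as the paper: both test against a generic element of $\mathcal{N}(\tau)$, invoke the moment–free-cumulant formula for alternating products over two free subalgebras (\cite[Theorem 14.4]{NicaSpeicher:Book}), and identify the Kreweras block containing the distinguished slot with the cyclic-gradient commutator. The only organizational difference is that you pull the commutator $[\,\cdot\,,X^s_\ell]$ outside $E_{\mathcal{N}(\tau)}$ by bimodularity first and then cyclically re-center at position $\ell$, whereas the paper uses traciality to insert $y$ directly at slot $\ell$ in the original ordering and thus never needs the rotation-covariance of $K$; both bookkeeping choices are valid and lead to the same block-by-block identification.
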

\begin{proof} 
Write $P = x_{i_1 j_1}(t_1)\cdots x_{i_n j_n}(t_n)$ for simplicity. Let $y \in C^*_R\langle x_{\bullet\diamond}(\,\cdot\,)\rangle$ be arbitrarily chosen. Then we compute 
\[
\tilde{\tau}(E_{\mathcal{N}(\tau)}(\pi_{\tilde{\tau}}(\Pi^s(\mathfrak{D}^{(k)}_s P)))\pi_{\tilde{\tau}}(y)) = 
\tilde{\tau}(\Pi^s(\mathfrak{D}^{(k)}_s P)y), 
\]
where we use the same symbol $\tilde{\tau}$ as a different meaning on each side; see subsection 4.D. By a direct computation using the trace property, we have 
\begin{align*} 
&\tilde{\tau}(\Pi^s(\mathfrak{D}^{(k)}_s P)y) 
= 
\sum_{\substack{i_\ell = k \\ s \leq t_\ell}} 
\tilde{\tau}([w_{\ell+1} x_{i_{\ell+1} j_{\ell+1}}(s\wedge t_{\ell+1}) w_{\ell+1} \cdots x_{i_{\ell-1} j_{\ell-1}}(s\wedge t_{\ell-1})w_\ell, x_{i_\ell j_\ell}(s\wedge t_\ell)]y) \\
&\qquad= 
\sum_{\substack{i_\ell = k \\ s \leq t_\ell}} 
\tilde{\tau}(w_1 x_{i_1 j_1}(s\wedge t_1) \cdots w_\ell [x_{i_\ell j_\ell}(s\wedge t_\ell),y] w_{\ell+1} x_{i_{\ell+1} j_{\ell+1}}(s\wedge t_{\ell+1}) \cdots w_n x_{i_n j_n}(s\wedge t_n)), 
\end{align*} 
each of whose terms is the $\tilde{\tau}$-value of the monomial obtained from $\Pi^s(P)$ by replacing $x_{i_\ell j_\ell}(s\wedge t_\ell)$ with $[x_{i_\ell j_\ell}(s\wedge t_\ell),y]$. By \cite[Theorem 14.4]{NicaSpeicher:Book} we obtain that
\begin{align*} 
&\sum_{\substack{i_\ell = k \\ s \leq t_\ell}} \tilde{\tau}(w_1 x_{i_1 j_1}(s\wedge t_1) \cdots w_\ell [x_{i_\ell j_\ell}(s\wedge t_\ell),y] w_{\ell+1} x_{i_{\ell+1} j_{\ell+1}}(s\wedge t_{\ell+1}) \cdots w_n x_{i_n j_n}(s\wedge t_n)) \\
&= 
\sum_{\substack{i_\ell = k \\ s \leq t_\ell}} \sum_{\pi \in NC(n)} \kappa_\pi[w_1,\dots,w_n]\,\tilde{\tau}_{K(\pi)}[x_{i_1 j_1}(s\wedge t_1),\dots,[x_{i_\ell j_\ell}(s\wedge t_\ell),y],\dots,x_{i_n j_n}(s\wedge t_n)] \\
&= 
\sum_{\pi \in NC(n)} \kappa_\pi[w_1,\dots,w_n] \Big(\sum_{\substack{i_\ell = k \\ s \leq t_\ell}} \tau_{K(\pi)}[x_{i_1 j_1}(s\wedge t_1),\dots,[x_{i_\ell j_\ell}(s\wedge t_\ell),y],\dots,x_{i_n j_n}(s\wedge t_n)]\Big) 
\end{align*}
When $K(\pi) = \{V_1,\dots,V_m\}$ with $\ell \in V_p$ ($1 \leq p \leq m$), we have 
\begin{align*} 
&\sum_{\substack{i_\ell = k \\ s \leq t_\ell}} \tau(w_1 x_{i_1 j_1}(s\wedge t_1) \cdots w_l [x_{i_\ell j_\ell}(s\wedge t_\ell),y] w_{\ell+1} x_{i_{\ell+1} j_{\ell+1}}(s\wedge t_{\ell+1}) \cdots w_n x_{i_n j_n}(s\wedge t_n)) \\
&= 
\sum_{\substack{i_\ell = k \\ s \leq t_\ell}} \Big(\prod_{\substack{1 \leq q \leq m \\ q \neq p}} \tau(V_q)[x_{i_1 j_1}(s\wedge t_1),\dots,[x_{i_\ell j_\ell}(s\wedge t_\ell),y],\dots,x_{i_n j_n}(s\wedge t_n)]\Big) \\
&\qquad\qquad\qquad \times \tau(V_p)[x_{i_1 j_1}(s\wedge t_1),\dots,[x_{i_\ell j_\ell}(s\wedge t_\ell),y],\dots,x_{i_n j_n}(s\wedge t_n)] \\
&= 
\sum_{\substack{i_\ell = k \\ s \leq t_\ell}} \Big(\prod_{\substack{1 \leq q \leq m \\ q \neq p}} \tau(V_q)[x_{i_1 j_1}(s\wedge t_1),\dots,x_{i_\ell j_\ell}(s\wedge t_\ell),\dots,x_{i_n j_n}(s\wedge t_n)]\Big) \\
&\qquad\qquad\qquad \times \tau(V_p)[x_{i_1 j_1}(s\wedge t_1),\dots,[x_{i_\ell j_\ell}(s\wedge t_\ell),y],\dots,x_{i_n j_n}(s\wedge t_n)]
\end{align*}  
If $V_p = (s_1 < \cdots < s_f)$ with $s_g = \ell$, then 
\begin{align*} 
&\tau(V_p)[x_{i_1 j_1}(s\wedge t_1),\dots,[x_{i_\ell j_\ell}(s\wedge t_\ell),y],\dots,x_{i_n j_n}(s\wedge t_n)] \\
&\qquad=
\tau([x_{i_{s_{g+1}} j_{s_{g+1}}}(s\wedge t_{s_{g+1}})\cdots x_{i_{s_{g-1}} j_{s_{g-1}}}(s\wedge t_{s_{g-1}}), x_{i_\ell j_\ell}(s\wedge t_\ell)]y), 
\end{align*}
which together with the definition of $\mathfrak{D}_s^{(k)}$ implies that 
\begin{align*} 
&\sum_{\substack{i_\ell = k \\ s \leq t_\ell}} \Big(\prod_{\substack{1 \leq q \leq m \\ q \neq p}} \tau(V_q)[x_{i_1 j_1}(s\wedge t_1),\dots,x_{i_\ell j_\ell}(s\wedge t_\ell),\dots,x_{i_n j_n}(s\wedge t_n)]\Big) \\
&\qquad\qquad \times \tau([x_{i_{s_{g+1}} j_{s_{g+1}}}(s\wedge t_{s_{g+1}})\cdots x_{i_{s_{g-1}} j_{s_{g-1}}}(s\wedge t_{s_{g-1}}), x_{i_\ell j_\ell}(s\wedge t_\ell)]y) \\
&\qquad= 
\tilde{\tau}((\mathfrak{D}_s^{(k)}C(\tau;K(\pi))[x_{i_1 j_1}(s\wedge t_1),\dots,x_{i_\ell j_\ell}(s\wedge t_\ell),\dots,x_{i_n j_n}(s\wedge t_n)])y) \\
&\qquad= 
\tilde{\tau}(\pi_{\tilde{\tau}}(\mathfrak{D}_s^{(k)}C(\tau;K(\pi))[x_{i_1 j_1}(s\wedge t_1),\dots,x_{i_\ell j_\ell}(s\wedge t_\ell),\dots,x_{i_n j_n}(s\wedge t_n)])\pi_{\tilde{\tau}}(y)).   
\end{align*}  
Hence we conclude that 
\begin{align*}
&\tilde{\tau}(E_{\mathcal{N}(\tau)}(\pi_{\tilde{\tau}}(\Pi^s(\mathfrak{D}^{(k)}_s P)))\pi_{\tilde{\tau}}(y)) \\
&= 
\sum_{\pi \in NC(n)}\kappa_\pi[w_1,\dots,w_n] 
\tilde{\tau}(\pi_{\tilde{\tau}}(\mathfrak{D}_s^{(k)}C(\tau;K(\pi))[x_{i_1 j_1}(s\wedge t_1),\dots,x_{i_n j_n}(s\wedge t_n)])\pi_{\tilde{\tau}}(y)). 
\end{align*}
Hence we are done.  
\end{proof} 

It is interesting to compute $\kappa_\pi[w_1,\dots,w_n]$ in the above explicitly. 

\appendix
\section{Universal free products of unital $C^*$-algebras} 

The concept of universal free products in the category of unital $C^*$-algebras has been studied in detail by several hands, including Blackadar \cite{Blackadar:IUMJ78}, Pedersen \cite{Pedersen:JFA99} and others. However, almost all existing works deal with only universal free products of \emph{two} unital $C^*$-algebras. We have used universal free products of \emph{uncountably many} unital $C^*$-algebras crucially (even in \cite{Ueda:JOTP19} without any references). Hence, we will collect a few facts on universal free products of \emph{arbitrary number} of unital $C^*$-algebras with explicit explanations for the reader's convenience. However, we do not claim any credit to the materials in this appendix, because they all seem to be known among specialists. 

\medskip
Let $\mathcal{A}_i$, $i \in I$, be unital $C^*$-algebras. Consider their universal free product $\bigstar_{i\in I}\mathcal{A}_i$ with canonical unital $*$-homomorphisms $\lambda_i : \mathcal{A}_i \to \bigstar_{i\in I}\mathcal{A}_i$, $i\in I$, which is characterized by the universality asserting that for any family $\pi_i : \mathcal{A}_i \to \mathcal{B}$ of unital $*$-homomorphisms into a common unital $C^*$-algebra, then there exists a unital $*$-homomorphism $\pi : \bigstar_{i\in I}\mathcal{A}_i \to \mathcal{B}$ such that $\pi\circ\lambda_i = \pi_i$ for all $i \in I$. Note that the injectivity of each $\lambda_i$ was established in \cite[Theorem 3.1]{Blackadar:IUMJ78} (or \cite[Theorem 4.2]{Pedersen:JFA99}).

\begin{lemma}\label{LA1} For any disjoint decomposition $I = \bigsqcup_{j \in J} I_j$ of $I$ into non-empty subsets, we consider the universal free product $C^*$-algebras $\bigstar_{i\in I_j}\mathcal{A}_i$, $j \in J$. Then $\bigstar_{i\in I}\mathcal{A}_i \cong \bigstar_{j\in J}(\bigstar_{i\in I_j}\mathcal{A}_i)$ naturally, that is, each $\lambda_i(a)$ with $a \in \mathcal{A}_i$ is sent to the corresponding element in the $j$th free product component $\bigstar_{i\in I_j}\mathcal{A}_i$ on the right-hand side when $i \in I_j$.    
\end{lemma}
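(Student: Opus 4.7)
The plan is a standard ``universal property'' argument, carried out in both directions. Let me write $\mathcal{B}_j := \bigstar_{i\in I_j} \mathcal{A}_i$ with canonical unital $*$-homomorphisms $\lambda_i^{(j)} : \mathcal{A}_i \to \mathcal{B}_j$ for $i \in I_j$, set $\mathcal{C} := \bigstar_{j\in J} \mathcal{B}_j$ with canonical unital $*$-homomorphisms $\mu_j : \mathcal{B}_j \to \mathcal{C}$, and set $\mathcal{D} := \bigstar_{i\in I} \mathcal{A}_i$ with $\lambda_i : \mathcal{A}_i \to \mathcal{D}$. For each $i \in I$ denote by $j(i) \in J$ the unique index with $i \in I_{j(i)}$.

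First I would construct $\Phi : \mathcal{D} \to \mathcal{C}$. The family of unital $*$-homomorphisms $\{\mu_{j(i)} \circ \lambda_i^{(j(i))} : \mathcal{A}_i \to \mathcal{C}\}_{i\in I}$ exists by construction, so the universal property of $\mathcal{D}$ yields a unique unital $*$-homomorphism $\Phi$ with $\Phi \circ \lambda_i = \mu_{j(i)} \circ \lambda_i^{(j(i))}$ for all $i \in I$.

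Next I would construct the inverse $\Psi : \mathcal{C} \to \mathcal{D}$ in two steps. For each fixed $j \in J$, the family $\{\lambda_i : \mathcal{A}_i \to \mathcal{D}\}_{i\in I_j}$ is a family of unital $*$-homomorphisms into the common target $\mathcal{D}$, so the universal property of $\mathcal{B}_j$ furnishes a unital $*$-homomorphism $\psi_j : \mathcal{B}_j \to \mathcal{D}$ with $\psi_j \circ \lambda_i^{(j)} = \lambda_i$ for every $i \in I_j$. Feeding the family $\{\psi_j\}_{j\in J}$ into the universal property of $\mathcal{C}$ produces a unital $*$-homomorphism $\Psi : \mathcal{C} \to \mathcal{D}$ with $\Psi \circ \mu_j = \psi_j$ for every $j \in J$.

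Finally I would check that $\Phi$ and $\Psi$ are mutually inverse. For $a \in \mathcal{A}_i$,
\[
\Psi \circ \Phi (\lambda_i(a)) = \Psi(\mu_{j(i)}(\lambda_i^{(j(i))}(a))) = \psi_{j(i)}(\lambda_i^{(j(i))}(a)) = \lambda_i(a),
\]
and for $a \in \mathcal{A}_i$ with $i \in I_j$,
\[
\Phi \circ \Psi (\mu_j(\lambda_i^{(j)}(a))) = \Phi(\lambda_i(a)) = \mu_{j(i)}(\lambda_i^{(j(i))}(a)) = \mu_j(\lambda_i^{(j)}(a)),
\]
using $j(i)=j$. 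Since the images $\lambda_i(\mathcal{A}_i)$, $i \in I$, generate $\mathcal{D}$ as a $C^*$-algebra, and the images $\mu_j(\lambda_i^{(j)}(\mathcal{A}_i))$, $i \in I_j$, $j \in J$, generate $\mathcal{C}$ as a $C^*$-algebra, continuity of $*$-homomorphisms between $C^*$-algebras forces $\Psi \circ \Phi = \mathrm{id}_\mathcal{D}$ and $\Phi \circ \Psi = \mathrm{id}_\mathcal{C}$, giving the desired isomorphism which sends each $\lambda_i(a)$ to $\mu_{j(i)}(\lambda_i^{(j(i))}(a))$. I do not anticipate any serious obstacle; the only point that must be kept in mind is that all universal free products here are taken over possibly uncountable index sets, but this causes no issue since the universal property is formulated (and was verified in \cite{Blackadar:IUMJ78,Pedersen:JFA99}) for families of arbitrary cardinality.
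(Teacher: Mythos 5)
Your proof is correct and takes exactly the route the paper indicates: the paper's own proof is a single sentence ("This follows from the universality of the involved universal free product $C^*$-algebras"), and you have simply spelled out the two applications of the universal property and the check on generators.
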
 
\begin{proof} 
This follows from the universality of the involved universal free product $C^*$-algebras. 
\end{proof} 

\begin{lemma}\label{LA2} For each finite subset $F \Subset I$, we consider the universal free product $C^*$-algebra $\mathfrak{A}_F := \bigstar_{i\in F}\mathcal{A}_i$ with setting $\mathfrak{A}_\emptyset := \mathbb{C}1$.  Then the following hold true: 
\begin{itemize} 
\item[(1)] If $F_1 \subset F_2$, then the canonical unital $*$-homomorphism $\mathfrak{A}_{F_1} \to \mathfrak{A}_{F_1} \bigstar \mathfrak{A}_{F_2\setminus F_2} = \mathfrak{A}_{F_2}$ via Lemma \ref{LA1} is injective. 
\item[(2)] $\bigstar_{i\in I}\mathcal{A}_i \cong \varinjlim_F \mathfrak{A}_F$ naturally (see e.g.\ \cite[Proposition 11.4.1(i)]{KadisonRingrose:Book2} for the latter), that is, the isomorphism sends each $\lambda_i(a)$ with $a \in \mathcal{A}_i$ to the corresponding one in $\mathfrak{A}_F$ with $i \in F$. 
\end{itemize} 
\end{lemma}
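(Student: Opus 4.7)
The plan is to deduce both statements from the universal property characterizing $\bigstar_{i\in I}\mathcal{A}_i$, combined with Lemma~\ref{LA1} and the already-cited injectivity result of Blackadar \cite[Theorem 3.1]{Blackadar:IUMJ78} (equivalently \cite[Theorem 4.2]{Pedersen:JFA99}) for the two-factor universal free product.

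For item (1), I would first invoke Lemma~\ref{LA1} with the disjoint decomposition $F_2 = F_1 \sqcup (F_2\setminus F_1)$ (viewed inside the indexing set $F_2$, with the convention $\mathfrak{A}_\emptyset=\mathbb{C}1$) to identify $\mathfrak{A}_{F_2}$ canonically with the two-factor universal free product $\mathfrak{A}_{F_1}\bigstar \mathfrak{A}_{F_2\setminus F_1}$, in such a way that the canonical map in question is exactly the first-factor inclusion. Then \cite[Theorem~3.1]{Blackadar:IUMJ78} immediately gives its injectivity. (If $F_2\setminus F_1=\emptyset$, the map is the identity; if $F_1=\emptyset$, the map is the unital embedding $\mathbb{C}1\hookrightarrow\mathfrak{A}_{F_2}$, which is injective trivially.) Combined with Lemma~\ref{LA1} applied at a single index, this also yields injectivity of each $\mathcal{A}_i\hookrightarrow\mathfrak{A}_F$ for $i\in F$, so that the $\mathfrak{A}_F$ genuinely form a directed system with injective connecting maps, indexed by finite subsets of $I$ ordered by inclusion.

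For item (2), let $\mathfrak{A}_\infty:=\varinjlim_F \mathfrak{A}_F$ with the canonical unital $*$-homomorphisms $\iota_F:\mathfrak{A}_F\to\mathfrak{A}_\infty$. I would construct the isomorphism with $\bigstar_{i\in I}\mathcal{A}_i$ as a pair of mutually inverse maps:

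\emph{Forward map.} For each $i\in I$, compose the canonical inclusion $\mathcal{A}_i\hookrightarrow\mathfrak{A}_{\{i\}}$ with $\iota_{\{i\}}$ to obtain a unital $*$-homomorphism $\mathcal{A}_i\to\mathfrak{A}_\infty$. By the universal property of $\bigstar_{i\in I}\mathcal{A}_i$, these assemble into a unique unital $*$-homomorphism
\[
\Phi:\bigstar_{i\in I}\mathcal{A}_i\longrightarrow\mathfrak{A}_\infty,\qquad \Phi\circ\lambda_i = \iota_{\{i\}}\!\upharpoonright_{\mathcal{A}_i}.
\]

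\emph{Backward map.} For each finite $F\Subset I$, the unital $*$-homomorphisms $\lambda_i\!\upharpoonright_{\mathcal{A}_i}$, $i\in F$, give, by the universal property of $\mathfrak{A}_F$, a unital $*$-homomorphism $\psi_F:\mathfrak{A}_F\to\bigstar_{i\in I}\mathcal{A}_i$. Compatibility with the connecting maps $\mathfrak{A}_{F_1}\to\mathfrak{A}_{F_2}$ (which is immediate from the universal property of $\mathfrak{A}_{F_2}$) yields, by the universal property of the direct limit, a unital $*$-homomorphism $\Psi:\mathfrak{A}_\infty\to\bigstar_{i\in I}\mathcal{A}_i$ with $\Psi\circ\iota_F=\psi_F$.

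Checking $\Phi\circ\Psi=\mathrm{id}$ and $\Psi\circ\Phi=\mathrm{id}$ reduces in each case to verifying the identity on generators of the form $\lambda_i(a)$ (resp.\ $\iota_{\{i\}}(a)$), where both compositions act as the identity by construction; uniqueness in the relevant universal property then upgrades this to equality on the whole algebra. The map $\Phi$ is then the desired natural isomorphism, and by construction sends each $\lambda_i(a)$ to the image of $a\in\mathcal{A}_i$ in any $\mathfrak{A}_F$ with $i\in F$, as required.

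The only real content is item (1), where one must reduce to the two-factor case in order to apply the known injectivity; item (2) is a formal diagram chase once the direct system has been shown to have injective bonding maps, so I do not anticipate any serious obstacle beyond making the identifications in Lemma~\ref{LA1} precise.
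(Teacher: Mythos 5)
Your proof is correct and takes essentially the same route as the paper: reduce item~(1) to the two-factor case via Lemma~\ref{LA1} and invoke Blackadar's injectivity theorem, then establish item~(2) by the standard universal-property argument for inductive limits (the paper simply cites \cite[Proposition~11.4.1(ii)]{KadisonRingrose:Book2}, whose content you reproduce explicitly in your forward/backward map construction). The only difference is that you spell out what the paper compresses into two citations, and you correctly use $F_2\setminus F_1$ where the statement has the typo $F_2\setminus F_2$.
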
   
\begin{proof} (1) follows from Blackadar's result \cite[Theorem 3.1]{Blackadar:IUMJ78}. (2) follows from \cite[Theorem 3.1]{Blackadar:IUMJ78} and \cite[Proposition 11.4.1(ii)]{KadisonRingrose:Book2} for example. 
\end{proof} 

\begin{proposition}\label{PA3} Let $\mathcal{B}_i \subseteq \mathcal{A}_i$, $i \in I$, be unital $C^*$-subalgebras. Then the universal free product $C^*$-algebra $\bigstar_{i\in I}\mathcal{B}_i$ is naturally embedded into $\bigstar_{i\in I}\mathcal{A}_i$. Namely, $\bigstar_{i\in I}\mathcal{B}_i$ can be identified with the $C^*$-subalgebra generated by the $\lambda_i(\mathcal{B}_i)$ and the canonical unital $*$-homomorphisms from $\mathcal{B}_i$ into $\bigstar_{i\in I}\mathcal{B}_i$ is given by the restriction of $\lambda_i$ to $\mathcal{B}_i$. 
\end{proposition}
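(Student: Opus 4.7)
By universality of $\bigstar_{i\in I}\mathcal{B}_i$, the family of unital $*$-homomorphisms $\mathcal{B}_i \hookrightarrow \mathcal{A}_i \xrightarrow{\lambda_i^{\mathcal{A}}} \bigstar_{i\in I}\mathcal{A}_i$ induces a canonical unital $*$-homomorphism
\[
\iota : \bigstar_{i\in I}\mathcal{B}_i \longrightarrow \bigstar_{i\in I}\mathcal{A}_i
\]
sending each $\lambda_i^{\mathcal{B}}(b)$ with $b \in \mathcal{B}_i$ to $\lambda_i^{\mathcal{A}}(b)$. The image of $\iota$ is clearly the $C^*$-subalgebra of $\bigstar_{i\in I}\mathcal{A}_i$ generated by the $\lambda_i^{\mathcal{A}}(\mathcal{B}_i)$, $i \in I$. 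Hence, the only nontrivial thing to verify is that $\iota$ is injective.

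The plan is first to treat the case of finite $I$ by induction on $|I|$, and then to derive the general case from the direct limit description given in Lemma \ref{LA2}. For the finite case, the base step $|I|\leq 2$ is exactly the content of Blackadar \cite[Theorem 3.1]{Blackadar:IUMJ78} (which asserts that $\mathcal{B}_1\bigstar\mathcal{B}_2 \hookrightarrow \mathcal{A}_1\bigstar\mathcal{A}_2$ whenever $\mathcal{B}_i \subseteq \mathcal{A}_i$). For the inductive step, write $I = I' \sqcup \{i_0\}$ with $|I'| = |I| - 1$. By Lemma \ref{LA1} we have canonical identifications
\[
\bigstar_{i\in I}\mathcal{B}_i \cong \Big(\bigstar_{i\in I'}\mathcal{B}_i\Big) \bigstar \mathcal{B}_{i_0}, \quad
\bigstar_{i\in I}\mathcal{A}_i \cong \Big(\bigstar_{i\in I'}\mathcal{A}_i\Big) \bigstar \mathcal{A}_{i_0}
\]
under which $\iota$ corresponds to the unital $*$-homomorphism induced by the inclusions of the two free factors. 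The induction hypothesis gives the inclusion $\bigstar_{i\in I'}\mathcal{B}_i \hookrightarrow \bigstar_{i\in I'}\mathcal{A}_i$; coupled with $\mathcal{B}_{i_0} \hookrightarrow \mathcal{A}_{i_0}$, Blackadar's two-factor result then yields the desired injectivity.

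For arbitrary (possibly uncountable) $I$, the strategy is to use Lemma \ref{LA2}(2) to write
\[
\bigstar_{i\in I}\mathcal{B}_i \cong \varinjlim_{F \Subset I} \mathfrak{B}_F, \qquad
\bigstar_{i\in I}\mathcal{A}_i \cong \varinjlim_{F \Subset I} \mathfrak{A}_F,
\]
with $\mathfrak{B}_F := \bigstar_{i\in F}\mathcal{B}_i$ and $\mathfrak{A}_F := \bigstar_{i\in F}\mathcal{A}_i$. By the finite case just treated together with Lemma \ref{LA2}(1), each $\mathfrak{B}_F \hookrightarrow \mathfrak{A}_F$ is injective and these maps are compatible with the connecting maps in the two inductive systems. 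Hence $\iota$ is the direct limit of an inductive system of injective $*$-homomorphisms between $C^*$-algebras; such a direct limit is automatically injective (the connecting maps being isometric, any element represented at stage $F$ has the same norm computed in $\mathfrak{B}_F$ and $\mathfrak{A}_F$, so zero in the target forces zero at every finite stage).

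The main (minor) obstacle is simply bookkeeping: one must check carefully that the square
\[
\begin{CD}
\mathfrak{B}_F @>>> \mathfrak{B}_{F'} \\
@VVV @VVV \\
\mathfrak{A}_F @>>> \mathfrak{A}_{F'}
\end{CD}
\]
commutes for $F \subset F'$, so that the limit map $\iota$ really is the one induced by the universal property. This is straightforward from the universal characterization of all the maps involved. Everything else in the argument is formal once the two-factor Blackadar theorem is available.
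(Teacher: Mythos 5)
Your proof follows essentially the same route as the paper: reduce to the finite case by iterating a two-factor embedding theorem with the help of Lemma \ref{LA1}, then pass to the inductive limit over finite subsets using Lemma \ref{LA2}(2). (The paper phrases the finite step as ``iterative use'' rather than an explicit induction, and invokes \cite[Proposition 11.4.1(ii)]{KadisonRingrose:Book2} for the direct-limit injectivity instead of your hands-on norm argument, but these are the same idea.)

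There is, however, one concrete misattribution that matters. You assert that Blackadar \cite[Theorem 3.1]{Blackadar:IUMJ78} says that $\mathcal{B}_1 \bigstar \mathcal{B}_2 \hookrightarrow \mathcal{A}_1 \bigstar \mathcal{A}_2$ whenever $\mathcal{B}_i \subseteq \mathcal{A}_i$, and your entire induction rests on this two-factor sub-free-product embedding. But Blackadar's Theorem 3.1 only establishes the injectivity of the canonical map $\mathcal{A}_i \hookrightarrow \bigstar_j \mathcal{A}_j$ from each \emph{factor} into the free product (this is what the paper invokes for Lemma \ref{LA2}(1)). The stronger statement you need --- that a unital $C^*$-subalgebra of each factor yields a $C^*$-subalgebra of the two-fold free product --- is Pedersen's \cite[Theorem 4.2]{Pedersen:JFA99}, which is exactly the reference the paper uses iteratively in its proof of this proposition. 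Since Pedersen's theorem is available, your argument does go through once the citation is corrected; but as written, the claimed base step is not what your cited theorem actually proves.
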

\begin{proof}
Write $\mathfrak{B}_F := \bigstar_{i \in F}\mathcal{B}_i$  for each finite subset $F \Subset I$ with $\mathfrak{B}_\emptyset := \mathbb{C}1$. By the iterative use of Pedersen's result \cite[Theorem 4.2]{Pedersen:JFA99} with the help of Lemma \ref{LA1} we can see that $\mathfrak{B}_F \hookrightarrow \mathfrak{A}_F$ naturally. Then, by e.g.\ \cite[Proposition 11.4.1(ii)]{KadisonRingrose:Book2} we have a natural unital injective $*$-homomorphism from $\varinjlim_F\mathfrak{B}_F$ into $\varinjlim_F\mathfrak{A}_F$ by means of inductive limits. Thus the desired assertion follows thanks to Lemma \ref{LA2}(2).   
\end{proof} 

\begin{proposition}\label{PA4} 
Let $\bigstar_{i\in I}^\mathrm{alg}\mathcal{A}_i$ be the free product of the $\lambda_i(\mathcal{A}_i)$, $i \in I$, in the category of unital $*$-algebras, in which we regard each $\mathcal{A}_i$ as a unital $*$-subalgebra. Let $\lambda : \bigstar_{i\in I}^\mathrm{alg}\mathcal{A}_i \to \bigstar_{i\in I}\mathcal{A}_i$ be the unique $*$-homomorphism sending $a \in \mathcal{A}_i \subset \bigstar_{i\in I}^\mathrm{alg}\mathcal{A}_i$ to $\lambda_i(a) \in \bigstar_{i\in I}\mathcal{A}_i$, whose existence is guaranteed by universality. Then $\lambda$ must be injective. Namely, the $*$-subalgebra algebraically generated by the $\lambda_i(\mathcal{A}_i)$ in $\bigstar_{i\in I}\mathcal{A}_i$ can be identified with $\bigstar_{i\in I}^\mathrm{alg}\mathcal{A}_i$. 
\end{proposition}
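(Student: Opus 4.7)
The plan is to realize $\bigstar^{\mathrm{alg}}_{i\in I}\mathcal{A}_i$ concretely as a subalgebra of bounded operators on a Hilbert space via Voiculescu's free-product construction, and then to invoke the universal property of the $C^*$-algebraic free product to transfer this faithful representation to one of $\bigstar_{i\in I}\mathcal{A}_i$. A fixed $0 \neq x \in \bigstar^{\mathrm{alg}}_{i\in I}\mathcal{A}_i$ involves only finitely many indices and, for each such $i$, only countably many elements of $\mathcal{A}_i$; using Lemma \ref{LA1} and Proposition \ref{PA3}, I would first reduce to the case in which every $\mathcal{A}_i$ is a separable unital $C^*$-algebra (which guarantees the existence of a faithful state, constructed e.g.\ from a countable dense sequence of vectors in any faithful representation on a separable Hilbert space).

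For each $i$, fix a faithful state $\omega_i$ with GNS triple $(\pi_i, \mathcal{H}_i, \xi_i)$; the faithfulness of $\omega_i$ makes $\pi_i$ a faithful unital $*$-representation and the linear map $a \mapsto \pi_i(a)\xi_i$ injective on $\mathcal{A}_i$. Form the Voiculescu pointed free-product Hilbert space
\[
(\mathcal{H}, \xi) := \bigstar_{i\in I}(\mathcal{H}_i, \xi_i) = \mathbb{C}\xi \oplus \bigoplus_{n \geq 1} \bigoplus_{i_1 \neq i_2 \neq \cdots \neq i_n} \mathcal{H}_{i_1}^\circ \otimes \cdots \otimes \mathcal{H}_{i_n}^\circ,
\]
with $\mathcal{H}_i^\circ := \mathcal{H}_i \ominus \mathbb{C}\xi_i$, together with the induced family of unital $*$-representations $\tilde{\pi}_i : \mathcal{A}_i \to B(\mathcal{H})$. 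Universality of the algebraic free product assembles the $\tilde{\pi}_i$ into a unital $*$-homomorphism $\tilde{\pi} : \bigstar^{\mathrm{alg}}_{i\in I}\mathcal{A}_i \to B(\mathcal{H})$, while universality of $\bigstar_{i\in I}\mathcal{A}_i$ extends them to $\bar{\pi} : \bigstar_{i\in I}\mathcal{A}_i \to B(\mathcal{H})$ satisfying $\bar{\pi} \circ \lambda = \tilde{\pi}$. Hence once $\tilde{\pi}$ is known to be injective, $\lambda$ is also injective.

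The heart of the argument is the injectivity of $\tilde{\pi}$. Setting $\mathcal{A}_i^\circ := \ker\omega_i$, the splitting $\mathcal{A}_i = \mathbb{C}1 \oplus \mathcal{A}_i^\circ$ yields the standard direct-sum decomposition
\[
\bigstar^{\mathrm{alg}}_{i\in I}\mathcal{A}_i = \mathbb{C}1 \oplus \bigoplus_{n \geq 1} \bigoplus_{i_1 \neq i_2 \neq \cdots \neq i_n} \mathcal{A}_{i_1}^\circ \otimes \cdots \otimes \mathcal{A}_{i_n}^\circ,
\]
and a direct computation from Voiculescu's definition gives $\tilde{\pi}(a_1 a_2 \cdots a_n)\xi = \pi_{i_1}(a_1)\xi_{i_1} \otimes \cdots \otimes \pi_{i_n}(a_n)\xi_{i_n}$ for any reduced word with $a_k \in \mathcal{A}_{i_k}^\circ$ and $i_k \neq i_{k+1}$, with images lying in the corresponding orthogonal summands of $\mathcal{H}$. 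Because the factor maps $\mathcal{A}_i^\circ \to \mathcal{H}_i^\circ$ are injective (faithfulness of $\omega_i$), so are the induced tensor-product maps on each summand, and orthogonality of distinct summands forces $\tilde{\pi}(x)\xi = 0 \Rightarrow x = 0$.

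I expect the main obstacle to be the preliminary separability reduction, since it is exactly what provides faithful states on each $\mathcal{A}_i$ (without them Voiculescu's construction need not yield an injective representation of the algebraic free product); once separability is in hand, the remaining work is a direct comparison between the reduced-word decomposition of $\bigstar^{\mathrm{alg}}_{i\in I}\mathcal{A}_i$ and the orthogonal tensor decomposition of $(\mathcal{H}, \xi)$.
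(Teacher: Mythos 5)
Your proof is correct and takes essentially the same route as the paper: reduce to separable $\mathcal{A}_i$ via Proposition~\ref{PA3}, produce faithful states $\omega_i$, realize everything on the Voiculescu free-product Hilbert space (the paper phrases this as the GNS representation of the reduced $C^*$-free product $\bigstar_{i\in I}(\mathcal{A}_i,\omega_i)$, which acts precisely on your $(\mathcal{H},\xi)$), and exploit orthogonality of reduced-word summands. Your concluding appeal to injectivity of the algebraic tensor product of the injective factor maps $\mathcal{A}_i^\circ\to\mathcal{H}_i^\circ$ is a slightly more elementary substitute for the paper's argument via the faithfulness of the spatial tensor representation $\pi_{\omega_{i_1}}\otimes\cdots\otimes\pi_{\omega_{i_m}}$, but the content is identical.
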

\begin{proof} 
We have to show that if $a \in \bigstar_{i\in I}^\mathrm{alg}\mathcal{A}_i$ satisfies $\lambda(a) = 0$, then $a = 0$. To this end we will use the reduced free product construction, see e.g.\ \cite{VDN}, following Avitzour's idea \cite[Proposition 2.3]{Avitzour:TAMS82}. 

\medskip
Let $a \in \bigstar_{i\in I}^\mathrm{alg}\mathcal{A}_i$ be given. Then $a$ is nothing but a linear combination of words whose letters from the $\mathcal{A}_i$. For each $i \in I$ we let $\mathcal{A}_{i0}$ be the unital $C^*$-subalgebra of $\mathcal{A}_i$ generated by the letters from $\mathcal{A}_i$ (with fixed $i$) appearing in the words in the linear combination description of $a$.   Since there are only finitely many letters for each $i \in I$, $\mathcal{A}_{i0}$ must be separable. By Proposition \ref{PA3} we may and do regard $\bigstar_{i\in I}\mathcal{A}_{i0}$ as a unital $C^*$-algebra of $\bigstar_{i\in I}\mathcal{A}_i$ naturally, and $\lambda(a)$ falls into $\bigstar_{i\in I}\mathcal{A}_{i0}$. Hence we may and do regard each $\mathcal{A}_i$ as a separable unital $C^*$-algebra.  

\medskip
We claim that for each $i \in I$ there exists a faithful state $\omega_i$ on $\mathcal{A}_i$. Since $\mathcal{A}_i$ is separable, it faithfully acts on a separable Hilbert space, say $\pi : \mathcal{A}_i \curvearrowright \mathcal{K}$. See \cite[Theorem I.9.12]{Davidson:Book}. Then we choose a dense sequence of non-zero vectors $\xi_n \in \mathcal{K}$ and set $\omega_i(a) := \sum_{n=1}^\infty \frac{1}{2^n\Vert\xi_n\Vert_\mathcal{K}}(\pi(a)\xi_n|\xi_n)_\mathcal{K}$ for $a \in \mathcal{A}_i$. This clearly defines a faithful state. 

Consider the reduced $C^*$-free product $(\mathfrak{A},\omega) = \bigstar_{i\in I} (\mathcal{A}_i,\omega_i)$ with canonical $*$-homomorphisms $\gamma_i : \mathcal{A}_i \to \mathfrak{A}$. See e.g.\ \cite{VDN}. By universality, we have a unique $*$-homomorphism $\gamma : \bigstar_{i \in I}\mathcal{A}_i \to \mathfrak{A}$ such that $\gamma\circ\lambda_i = \gamma_i$ for every $i\in I$. Write 
\[
\bigstar_{i\in I}^\mathrm{alg}\mathcal{A}_i 
= 
\mathbb{C}1+\sum_{m\geq1}\sum_{\substack{i_k \neq i_{k+1}\\ (1\leq k \leq m-1)}} \mathcal{A}_{i_1}^\circ\cdots\mathcal{A}_{i_m}^\circ
\]
with $\mathcal{A}_i^\circ := \mathrm{Ker}(\omega_i)$, where $\mathcal{A}_{i_1}^\circ\cdots\mathcal{A}_{i_m}^\circ$ denotes all the linear combinations of words $a_1^\circ\cdots a_m^\circ$ with $a_k^\circ \in \mathcal{A}_{i_k}^\circ$. According to this representation we write 
\[
a = \alpha1 + \sum_{m\geq1}\sum_{\substack{i_k \neq i_{k+1}\\ (1\leq k \leq m-1)}} a(i_1,\dots,i_m),  
\]
where $a^\circ(i_1,\dots,i_m)$ is an element in $\mathcal{A}_{i_1}^\circ\cdots\mathcal{A}_{i_m}^\circ$. Remark that $a(i_1,\dots,i_m) = 0$ for all but except finitely many $(i_1,\dots,i_m)$. We denote by $a^\circ(i_1,\dots,i_m)^\otimes$ in the spacial (or minimal) $C^*$-tensor product $\mathcal{A}_1\otimes\cdots\otimes\mathcal{A}_{i_m}$ the corresponding elements obtained by changing each word $a_1^\circ\cdots a_m^\circ$ appearing in $a^\circ(i_1,\dots,i_m)$ to a simple tensor $a_1^\circ\otimes\cdots\otimes a_m^\circ \in \mathcal{A}_1\otimes\cdots\otimes\mathcal{A}_{i_m}$. By universality of algebraic tensor products sitting inside $\mathcal{A}_1\otimes\cdots\otimes\mathcal{A}_{i_m}$ (which is simply confirmed by the iterative use of a well-known fact, see e.g.\ \cite[Proposition 11.18]{KadisonRingrose:Book2} or a more direct statement \cite[Corollary 3.1]{BrownOzawa:Book}), we observe that  $a^\circ(i_1,\dots,i_m)^\otimes = 0$ implies $a^\circ(i_1,\dots,i_m) = 0$. 

Assume that $\lambda(a) = 0$. Since 
\[
\pi_\omega(\gamma(\lambda(a)))\xi_\omega 
= 
\alpha\xi_\omega + \sum_{m\geq1} \sum_{\substack{i_k \neq i_{k+1}\\ (1\leq k \leq m-1)}} \pi_\omega(\gamma(\lambda(a^\circ(i_1,\dots,i_m))))\xi_\omega,
\] 
where $(\mathcal{H}_\omega,\pi_\omega,\xi_\omega)$ is the GNS triple of $(\mathfrak{A},\omega)$. By the free independence among the $\lambda_i(\mathcal{A}_i)$, we can easily see that $\alpha\xi_\omega$ and the $\pi_\omega(\gamma(\lambda(a^\circ(i_1,\dots,i_m))))\xi_\omega$ are mutually orthogonal in $\mathcal{H}_\omega$. In particular, $\alpha$ as well as all the $\pi_\omega(\gamma(\lambda(a^\circ(i_1,\dots,i_m))))\xi_\omega$ must be $0$. Let $(\mathcal{H}_{\omega_i},\pi_{\omega_i},\xi_{\omega_i})$ be the GNS triple of $(\mathcal{A}_i,\omega_i)$. Then, it is easy to see that the norm of each $\pi_\omega(\gamma(\lambda(a^\circ(i_1,\dots,i_m))))\xi_\omega$ is the same as that of 
\[
(\pi_{\omega_{i_1}}\otimes\cdots\otimes\pi_{\omega_{i_m}})(a^\circ(i_1,\dots,i_m)^\otimes)(\xi_{\omega_{i_1}}\otimes\cdots\otimes\xi_{\omega_{i_m}}), 
\]
which must be $0$ too. Since $\omega_i$ is faithful, so is $\pi_{\omega_i}$ and hence the tensor product representation $\pi_{\omega_{i_1}}\otimes\cdots\otimes\pi_{\omega_{i_m}} : \mathcal{A}_{i_1}\otimes\cdots\otimes\mathcal{A}_{i_m} \curvearrowright \mathcal{H}_{\omega_{i_1}}\otimes\cdots\otimes\mathcal{H}_{i_m}$ too (see e.g.\ \cite[Theorem 11.1.3]{KadisonRingrose:Book2}). We conclude that $a^\circ(i_1,\dots,i_m)^\otimes = 0$ so that $a^\circ(i_1,\dots,i_m) = 0$. Consequently, $a$ must be $0$. 
\end{proof}

}


\begin{thebibliography}{99}    

\bibitem{AndersonGuionnetZeitouni-Book}
G.\ Anderson, A.\ Guionnet and O.\ Zeitouni, 
{\it An Introduction to Random Matrices.} Cambridge Studies in Advanced Mathematics, 118. Cambridge University Press, Cambridge, 2009. 

\bibitem{Avitzour:TAMS82} D.\ Avitzour, 
Free products of $C^*$-algebras. 
{\it Trans.\ Amer.\ Math.\ Soc.}, {\bf 271} (1982), 423--435. 

\bibitem{Biane:Fields97} Ph.\ Biane, 
Free Brownian motion, free stochastic calculus and random matrices. {\it Free Probability Theory}, Fields Institute Communications, {\bf 12} (1997), 1--19. 

\bibitem{BianeCapitaineGuionnet:InventMath03} P.\ Biane, M.\ Capitaine and A.\ Guionnet, 
Large deviation bounds for matrix Brownian motion. {\it Invent. math.}, {\bf 152} (2003), 433--459.  

\bibitem{BianeDabrowski:AdvMath13} Ph.\ Biane and Y.\ Dabrowski, 
Concavification of free entropy, 
{\it Adv.\ Math,}, {\bf 234} (2013), 667--696. 

\bibitem{Blackadar:IUMJ78} B.\ Blackadar, 
Weak expectations and nuclear $C^*$-algebras. 
{\it Indiana Univ.\ Math.\ J.}, {\bf 26} (1978), 1021--1026. 

\bibitem{BrownOzawa:Book} N.P.\ Brown and N.\ Ozawa, 
{\it C$^*$-algebras and Finite-dimensional Approximations.} Graduate Studies in Mathematics, {\bf 88}, American Mathematical Society, Providence, RI, 2008.

\bibitem{ByrdFriedman-Handbook} P.F.\ Byrd and M.D.\ Friedman, 
{\it Handbook of Elliptic Integrals for Engineers and Scientists.} 
Springer-Verlag Berlin Heidelberg New York, 1971. 

\bibitem{CabanalDuvillardGuionnet:AnnProbab01} T.\ Cabanal Duvillard and A.\ Guionnet, 
Large deviations upper bounds for the laws of matrix-valued processes and non-commutative entropies. 
{\it Ann.\ Probab.}, {\bf 29} (2001), 1205--1261.  

\bibitem{Chavel-Book} I.\ Chavel, 
{\it Eigenvalues in Riemannian Geometry.} 
Pure and applied mathematics, Academic Press, 1984. 

\bibitem{CollinsDahlqvistKemp:PTRF18} B.\ Collins, A.\ Dahlqvist and T.\ Kemp, 
The spectral edge of unitary Brownian motion. 
{\it Probab.\ Theory Relat.\ Fields}, {\bf 170} (2018), 49--93. 

\bibitem{Davidson:Book} K.R.\ Davidson, 
{\it $C^*$-algebras by Example}. 
Fields Inst.\ Monographs, 6, Amer.\ Math.\ Soc., 1996.

\bibitem{DemboZeitouni:Book} A.\ Dembo and O.\ Zeitouni, 
{\it Large Deviations Techniques and Applications.}  
Springer, 1998.

\bibitem{GuionnetMaida:PTRF05} A.\ Guionnet and M.\ Ma\"{i}da, 
Character expansion method for the first order asymptotics of a matrix integral. 
{\it Probab.\ Theory Relat.\ Fields}, {\bf 132} (2005), 539--578. 

\bibitem{HiaiMiyamotoUeda:IJM09} F.\ Hiai, T.\ Miyamoto and Y.\ Ueda, 
Orbital approach to microstate free entropy, 
{\it Internat.\ J.\ Math.}, {\bf 20} (2009), 227--273.

\bibitem{HiaiPetz:Book}
F.\ Hiai and D.\ Petz, 
{\it The Semicircle Law, Free Random Variables
and Entropy.} 
Mathematical Surveys and Monographs, Vol. 77, Amer. Math. Soc.,
Providence, 2000.

\bibitem{HiaiUeda:CMP15} 
F.\ Hiai and Y.\ Ueda, 
Orbital free pressure and its Legendre transform. 
{\it Comm.\ Math.\ Phys.}, {\bf 334} (2015), 275--300. 

\bibitem{Hu:Book16} Y.\ Hu, {\it Analysis on Gaussian Spaces.} 
World Scientific, 2016. 

\bibitem{KadisonRingrose:Book2} R.V.\ Kadison and J.R.\ Ringrose, 
{\it Fundamentals of the Theory of Operator Algebras, Vol.2.} 
Graduate studies in mathematics, 15, Amer.\ Math.\ Soc., Providence, RI, 1997.  

\bibitem{LevyMaida:JFA10} T.\ L\'{e}vy and M.\ Ma\"{i}da, 
Central limit theorem for the heat kernel measure on the unitary group. 
{\it J.\ Funct.\ Anal.}, {\bf 259} (2010), 3163--3204. 

\bibitem{LevyMaida:ESAIM:Proc15} T.\ L\'{e}vy and M.\ Ma\"{i}da, 
On the Douglas--Kazakov phase transition. 
{\it ESAIM: Proc.}, {\bf 51} (2015), 89--121.  

\bibitem{LiYau:ActaMath86} P.\ Li and S.T.\ Yau, 
On the parabolic kernel of the Schr\"{o}dinger operator. 
{\it Acta Math.}, {\bf 156} (1986), 153--201.

\bibitem{NicaSpeicher:Book} A.\ Nica and R.\ Speicher, 
{\it Lectures on the Combinatorics of Free Probability}, 
London Mathematical Society Lecture Notes Series, 335. Cambridge University Press, 2006. 

\bibitem{Nualart:Book06} D.\ Nualart, {\it Malliavin Calculus and its Related Topics}, 2nd edn., Springer, Berlin (2006).  

\bibitem{Pedersen:JFA99} G.K.\ Pedersen, 
Pullback and pushout constructions in $C^*$-algebra theory. 
{\it Jour.\ Funct.\ Anal.}, {\bf 167} (1999), 243--344. 

\bibitem{Rudin:CalculusBook} W.\ Rudin, 
{\it Principles of Mathematical Analysis, Third Edition.} 
McGraw-Hill, 1976.   

\bibitem{Ueda:IUMJ14} Y.\ Ueda, 
Orbital free entropy, revisited, 
{\it Indiana Univ.\ Math.\ J.}, {\bf 63} (2014), 551--577. 

\bibitem{Ueda:ArchMath17} Y.\ Ueda, 
A remark on orbital free entropy. 
{\it Arch.\ Math.}, {\bf 108} (2017), 629--638. 

\bibitem{Ueda:JOTP19} Y.\ Ueda, 
Matrix liberation process I: Large deviation upper bound and almost sure convergence. 
{\it J.\ Theor.\ Probab.}, {\bf 32} (2019), 806--847. 

\bibitem{Voiculescu:AdvMath99}
D.\ Voiculescu, The analogue of entropy and of Fisher's information measure in free probability theory VI: Liberation and mutual free information, {\it Adv. Math.} {\bf 146} (1999), 101--166.

\bibitem{Voiculescu:BLMS02}
D.\ Voiculescu, Free entropy. {\it Bull. London Math. Soc.} {\bf 34} (2002), 257--278. 

\bibitem{VDN} D.-V.\ Voiculescu, K.-J.\ Dykema and A.\ Nica, 
{\it Free Random Variables}, CRM Monograph Series {\bf I}, Amer.~Math.~Soc., Providence, RI, 1992.

\end{thebibliography}
\end{document}